\documentclass[11pt]{article}
\usepackage{amsmath,amssymb}
\usepackage{amsthm}
\usepackage{pifont}
\usepackage{mathrsfs}
\usepackage{bm}
\usepackage{ae}
\usepackage{tikz}
\usetikzlibrary{decorations.pathreplacing, calc}
\tikzstyle{braket}=[decorate,decoration={brace,amplitude=7pt},xshift=0pt,yshift=-10pt, black]

\usepackage{subcaption}
\captionsetup{labelfont=bf,subrefformat=parens,font=small}
\usepackage{xcolor}

\usepackage{geometry}
\geometry{left=30mm,right=30mm,top=30mm,bottom=30mm}

\usepackage[colorlinks=true,
linkcolor=blue!80,citecolor=blue,
urlcolor=blue]{hyperref}

\makeatletter
\def\@seccntDot{.}
\def\@seccntformat#1{\csname the#1\endcsname\@seccntDot\hskip 0.5em}
\renewcommand\section{\@startsection{section}{1}{\z@}%
{18\p@ \@plus 6\p@ \@minus 3\p@}%
{9\p@ \@plus 6\p@ \@minus 3\p@}%
{\large\bfseries\boldmath}}
\renewcommand\subsection{\@startsection{subsection}{2}{\z@}%
{12\p@ \@plus 6\p@ \@minus 3\p@}%
{3\p@ \@plus 6\p@ \@minus 3\p@}%
{\it}}
\renewcommand\subsubsection{\@startsection{subsubsection}{3}{\z@}%
{12\p@ \@plus 6\p@ \@minus 3\p@}%
{\p@}%
{\boldmath}}
\makeatother

\usepackage{microtype}



\theoremstyle{plain}
\newtheorem{theorem}{Theorem}[section]
\newtheorem{lemma}{Lemma}[section]
\newtheorem{corollary}{Corollary}[section]
\newtheorem{proposition}{Proposition}[section]
\newtheorem{conjecture}{Conjecture}[section]

\theoremstyle{definition}

\newtheorem{remark}{Remark}[section]
\newtheorem{example}{Example}[section]
\newtheorem{claim}{Claim}[section]


\numberwithin{equation}{section}
\allowdisplaybreaks
\parindent=18pt

\newcommand{\imgi}{{\bf i}}
\newcommand{\e}{{\bf e}}
\newcommand{\ls}{{\bf LS}}

\title{Extremal spectral radius of nonregular graphs with prescribed maximum degree}
\author{
Lele Liu\thanks{College of Science, University of Shanghai for Science and Technology, Shanghai 200093, China
(\texttt{ahhylau@outlook.com})}
}

\date{}

\begin{document}
\maketitle

\begin{abstract}
Let $G$ be a graph attaining the maximum spectral radius among all connected nonregular graphs of 
order $n$ with maximum degree $\Delta$. Let $\lambda_1(G)$ be the spectral radius of $G$. 
A nice conjecture due to Liu, Shen and Wang [On the largest eigenvalue of non-regular graphs,
J. Combin. Theory Ser. B, 97 (2007) 1010--1018] asserts that
\[
\lim_{n\to\infty} \frac{n^2(\Delta-\lambda_1(G))}{\Delta-1} = \pi^2
\]
for each fixed $\Delta$. Concerning an important structural property of the extremal graphs $G$, Liu and 
Li present another conjecture which states that $G$ has degree sequence $\Delta,\ldots,\Delta,\delta$. 
Here, $\delta=\Delta-1$ or $\delta=\Delta-2$ depending on the parity of $n\Delta$. In this paper, we 
make progress on the two conjectures. To be precise, we disprove the first conjecture for all $\Delta\geq 3$ 
by showing that the limit superior is at most $\pi^2/2$. For small $\Delta$, we determine the precise 
asymptotic behavior of $\Delta-\lambda_1(G)$. In particular, we show that 
$\lim\limits_{n\to\infty} n^2 (\Delta - \lambda_1(G)) /(\Delta - 1) = \pi^2/4$ if $\Delta=3$; and 
$\lim\limits_{n\to\infty} n^2 (\Delta - \lambda_1(G)) /(\Delta - 2) = \pi^2/2$ if $\Delta = 4$. We also 
confirm the second conjecture for $\Delta = 3$ and $\Delta = 4$ by determining the precise structure of 
extremal graphs. Particularly, we show that the extremal graphs for $\Delta\in\{3,4\}$ must have a 
path-like structure built from specific blocks.
\par\vspace{2mm}

\noindent{\bfseries Keywords:} Spectral radius; Nonregular graph; Maximum degree.
\par\vspace{1mm}

\noindent{\bfseries AMS Classification:} 05C35; 05C50; 15A18.
\end{abstract}

\section{Introduction}
\label{sec:introduction}


One of the best known classes of graphs is regular graphs, which have been studied extensively 
in a variety of contexts. For a nonregular graph in which not all vertices have equal degrees,
it can be viewed as somehow deviating from regularity. There are several measures on graphs 
which are used to determine how close a given graph is to being regular. One such measure is 
the difference between the maximum degree and the largest eigenvalue of a graph. To account 
for this, we denote by $\lambda_1(G)$ the largest eigenvalue of the adjacency matrix $A(G)$ 
of a graph $G$, which is also called the spectral radius of $G$. We also denote by $\Delta(G)$ 
and $\delta(G)$ the maximum degree and minimum degree of $G$, respectively. Given a connected 
graph $G$, it is well-known that $\lambda_1(G)\leq\Delta(G)$, with equality if and only if $G$ 
is regular. Hence, this fact allows us to consider the difference $\Delta(G) - \lambda_1(G)$
as a relevant measure of irregularity of a graph $G$ (see \cite[p.\,242]{CvetkovicRowlinson2009}).
Also, it is natural to ask how small $\Delta(G) - \lambda_1(G)$ can be when $G$ is nonregular. 
In the past decades the study of this parameter of nonregular graphs has attracted the interests 
of a large number of scholars.

Let $G$ be a connected nonregular graph on $n$ vertices with maximum degree $\Delta$ and minimum 
degree $\delta$. In 2004, Stevanovi\'c \cite{Stevanovic2004} first proved that
\[
\Delta-\lambda_1(G)>\frac{1}{2n(n\Delta-1)\Delta^2}.
\]
He also asked whether or not the power of $\Delta$ appearing in the fraction could be improved.
Using the ratio of components of the Perron vector, Zhang \cite{Zhang2005} obtained a finer
bound as follows
\[
\Delta-\lambda_1(G)>\frac{(\sqrt{\Delta}-\sqrt{\delta})^2}{nD\Delta},
\]
where $D$ is the diameter of $G$. Correlatively, Alon and Sudakov \cite{AlonSudakov2000} proved 
that for a connected nonbipartite graph $G$,
\[
\Delta + \lambda_n(G) > \frac{1}{n(D+1)},
\]
where $\lambda_n(G)$ is the least eigenvalue of $A(G)$. By taking the number of edges $m$ of 
$G$ into account, Cioab\u a, Gregory and Nikiforov \cite{CioabaGregoryNikiforov2007} further 
improved Zhang's bound \cite{Zhang2005}, which stated that 
\[
\Delta-\lambda_1(G) > \frac{n\Delta-2m}{n(D(n\Delta-2m)+1)} \geq\frac{1}{n(D+1)}.
\]
Let us remark that the above inequalities also refine Alon-Sudakov's bound 
\cite{AlonSudakov2000} as $\lambda_1(G) \geq - \lambda_n(G)$. Moreover, the 
authors \cite{CioabaGregoryNikiforov2007} also conjectured that $\Delta-\lambda_1(G)>1/(nD)$
for all connected nonregular graphs with maximum degree $\Delta$. This was subsequently 
proved by Cioab\u a \cite{Cioaba2007}. In 2007, Liu, Shen and Wang \cite{LiuShenWang2007}
proved a lower bound of $\Delta(G) - \lambda_1(G)$ in terms of $\Delta$ and $n$ as follows
\[
\Delta-\lambda_1(G) \geq \frac{\Delta + 1}{n(3n + 2\Delta - 4)},
\]
which is asymptotically best possible up to a constant factor. Later, Liu, Huang and 
You \cite{LiuHuangYou2009} presented a slight improvement for the aforementioned bound.
Shi \cite{Shi2009} established another strong inequality by introducing more parameters as follows 
\[
\Delta-\lambda_1(G) > \Big[(n-\delta) D + \frac{1}{\Delta-2m/n} - \binom{D}{2}\Big]^{-1},
\]
which improved Cioab\u a's bound \cite{Cioaba2007} in some cases. Recently, Zhang \cite{Zhang2021} 
and Feng-Zhang \cite{FengZhang2021} furthermore improve Cioab\u a's result \cite{Cioaba2007} 
in different forms. For more results on this topic, we refer to 
\cite{ChenHou2014,NingLiLu2013,NingLuWangJiang2018,ShiuHuangSun2017} and references therein for details.
It is worth mentioning that any lower bound on $\Delta-\lambda_1(G)$ also gives an upper bound 
on $\lambda_1(G)$.

Let $\mathcal{G}(n,\Delta)$ denote the set of graphs attaining the maximum spectral radius 
among all connected nonregular graphs with $n$ vertices and maximum degree $\Delta$, and let
$\lambda_1(n,\Delta)$ denote the maximum spectral radius. For a graph $G\in\mathcal{G}(n,\Delta)$, 
Liu, Shen and Wang \cite{LiuShenWang2007} investigated the order of magnitude of 
$\Delta-\lambda_1(G)$, and posed the following conjecture.

\begin{conjecture}[\cite{LiuShenWang2007}]\label{conj:limit}
Let $G\in\mathcal{G}(n,\Delta)$. For each fixed $\Delta$, the limit of 
$n^2(\Delta-\lambda_1(G))/(\Delta-1)$ exists. Furthermore, 
\[
\lim_{n\to\infty} \frac{n^2(\Delta-\lambda_1(G))}{\Delta-1} = \pi^2.
\]
\end{conjecture}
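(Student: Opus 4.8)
The plan is to resolve \autoref{conj:limit} in two stages: first pin down the structure of an extremal graph $G\in\mathcal{G}(n,\Delta)$, and then turn the spectral problem on that structure into a one-dimensional eigenvalue problem whose $n\to\infty$ asymptotics can be extracted. For the first stage I would try to establish the Liu--Li picture --- that $G$ has degree sequence $\Delta,\dots,\Delta,\delta$ and, more precisely, is a long chain of identical ``blocks'' with the unique low-degree vertex sitting at one end. The tool is a Perron-vector perturbation argument: let $x>0$ be the Perron vector of $G$, normalized by $A(G)x=\lambda_1(G)x$. If $G$ had two low-degree vertices, or a low-degree vertex away from an end, or any local configuration other than the conjectured one, one should be able to perform an edge switch --- delete an edge between vertices where $x$ is comparatively large and insert an edge between vertices where $x$ is comparatively small --- producing a connected nonregular graph $G'$ of maximum degree $\Delta$ with $\lambda_1(G')\ge x^{\top}A(G')x/\|x\|^{2} > x^{\top}A(G)x/\|x\|^{2} = \lambda_1(G)$, contradicting extremality. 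Iterating such local moves, together with the facts that $x$ varies slowly along the chain and is damped near the defect, should force the repeated-block structure.

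Granting that $G$ is a chain of $t$ blocks of some fixed size $s$ (so $n\approx ts$), the second stage is a transfer-matrix computation. Encoding the Perron entries on the $i$-th block as a vector $x^{(i)}$, the equation $A(G)x=\lambda x$ becomes a linear recursion $x^{(i+1)} = T(\lambda)\,x^{(i)}$, and $\lambda_1(G)$ is the largest $\lambda$ for which $T(\lambda)^{t}$ is compatible with the boundary data at the two ends of the chain. Put $\lambda=\Delta-\varepsilon$ and let $\varepsilon\to 0^{+}$. At $\varepsilon=0$ the matrix $T(\Delta)$ has the eigenvalue $1$ with a nontrivial Jordan block --- reflecting the fact that the associated infinite $\Delta$-regular block-chain, being quasi-one-dimensional, has spectral radius exactly $\Delta$ --- and switching on $\varepsilon$ splits that Jordan block into a conjugate pair $1\pm\sqrt{c\,\varepsilon}+O(\varepsilon)$ with a block-dependent $c>0$, so $T(\Delta-\varepsilon)^{t}$ acts like a rotation through angle $\sqrt{c\,\varepsilon}\,t$. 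The boundary data first become consistent when this angle is an appropriate multiple of $\pi$ (which multiple depends on whether the two ends behave in a Dirichlet-like or a Neumann-like way, the defect end being the Neumann-like one), giving $\Delta-\lambda_1(G)\sim \kappa\,\pi^{2}/t^{2} = \kappa\,\pi^{2}s^{2}/n^{2}$. Thus $n^{2}(\Delta-\lambda_1(G))$ converges, and the limit in \autoref{conj:limit} should equal the minimum of $\kappa\,\pi^{2}s^{2}/(\Delta-1)$ over all admissible blocks.

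I expect the structural first stage to be the main obstacle: pinning down the block-chain shape is a genuinely global optimization over all connected nonregular graphs of maximum degree $\Delta$, and the edge-switching analysis, which must respect the degree constraint and sort through the handful of near-optimal local configurations, is delicate. The second obstacle, and the point on which the conjecture really hinges, is the block optimization: working out even the obvious candidate --- the prism $C_{k}\times K_{2}$ with one rung removed, for which $s=2$ --- already gives a value that does not match $\pi^{2}$ after normalizing by $\Delta-1$, and scanning other blocks I would expect to find constructions that \emph{beat} the conjectured value. In other words, I anticipate that carrying out this plan refutes \autoref{conj:limit} rather than proving it, the correct statement being $\limsup_{n\to\infty} n^{2}(\Delta-\lambda_1(G))/(\Delta-1)\le \pi^{2}/2$ for every $\Delta\ge 3$, with the precise asymptotic constant --- for example $\pi^{2}/4$ when $\Delta=3$ --- coming out of the block optimization together with a matching lower bound from the Rayleigh characterization of $\lambda_1$.
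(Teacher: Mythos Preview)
Your proposal correctly anticipates the outcome: the conjecture is false for $\Delta\ge 3$, the right upper bound is $\pi^2/2$, and the exact constant for $\Delta=3$ is $\pi^2/4$. In that sense you and the paper arrive at the same conclusion.

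There are two methodological differences worth flagging. First, for the disproof itself the paper does not need the structural step at all: it simply writes down an explicit family of block-chain graphs $G_{\Delta,k-1}^p\odot F$ (a string of $K_\Delta$'s joined through cut vertices, with $p=\Delta-1$ or $\Delta-2$) and evaluates the Rayleigh quotient on a trial vector of the form $z_j=\sin\frac{(2j-1)\pi}{4k}$, with trigonometric identities doing the work. Your transfer-matrix/Jordan-block perturbation would reach the same asymptotics, but the paper's route is shorter because it bypasses solving the recurrence and instead guesses the right test vector. Second, your plan front-loads the global structural classification and then reads off asymptotics; the paper inverts this, proving the $\limsup$ bound for every $\Delta$ by construction alone, and only afterwards carrying out the delicate switching analysis---and only for $\Delta\in\{3,4\}$---to pin down the extremal graph and hence the exact limit.

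The honest gap in your plan is the first stage for general $\Delta$. You write ``iterating such local moves \ldots\ should force the repeated-block structure,'' but the paper's experience is that this is genuinely hard: even for $\Delta=4$ it requires seven separate forbidden-subgraph lemmas (each a Rayleigh-quotient comparison with a carefully engineered replacement gadget) plus a lengthy case analysis of the first few vertices. For $\Delta\ge 5$ the paper does not attempt it, and your proposal gives no mechanism for controlling the proliferation of near-optimal local configurations as $\Delta$ grows. So your plan, as stated, would deliver the disproof and the $\Delta=3,4$ asymptotics (matching the paper), but the implied claim that the exact limit can be obtained for all $\Delta$ by this route is not substantiated---indeed the paper leaves that as an open conjecture.
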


\begin{remark}
This conjecture is trivially true for $\Delta = 2$. Indeed, if $\Delta = 2$, then $G$ is 
necessarily a path and $\lambda_1(G) = 2\cos\pi/(n+1)$. Obviously, $2-2\cos\pi/(n+1)$ is 
asymptotic to $\pi^2/n^2$. Therefore, \autoref{conj:limit} holding for $\Delta=2$.
\end{remark}

The following example shows that \autoref{conj:limit} no longer hold if the condition on 
$\Delta$ was dropped. 

\begin{example}
Let $\Delta=n-1$. Then $G$ must be the graph obtained by removing an edge from the complete 
graph $K_n$. Evidently, 
\[ 
\lambda_1(G) = \frac{n-3+\sqrt{n^2+2n-7}}{2}.
\]
By direct computation we obtain
\[
\lim_{n\to\infty} \frac{n^2(\Delta-\lambda_1(G))}{\Delta-1}=2.
\]
Therefore the condition that $\Delta$ is fixed is crucial in \autoref{conj:limit}. 
\end{example}

Intuitively, the graphs attaining the maximum spectral radius among all connected nonregular 
graphs with prescribed maximum degree must be close to regular graphs. In particular, Liu 
and Li \cite{LiuLi2008} posed the following conjecture.

\begin{conjecture}[\cite{LiuLi2008}]\label{conj:degree-sequence}
Let $3\leq\Delta\leq n-2$ and $G\in\mathcal{G}(n,\Delta)$. Then $G$ has degree sequence 
$(\Delta,\ldots,\Delta,\delta)$, where
\[
\delta =
\begin{cases}
\Delta-1, & n\Delta\ \text{is odd}, \\
\Delta-2, & n\Delta\ \text{is even}.
\end{cases}
\]
\end{conjecture}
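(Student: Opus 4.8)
\medskip
\noindent\textbf{Proof proposal.}
The plan is to run a sequence of local surgeries on the extremal graph $G\in\mathcal{G}(n,\Delta)$, with Perron vector $\mathbf{x}>0$ and spectral radius $\lambda:=\lambda_1(G)$, using two standard facts. (i) Adjoining an edge to a connected graph strictly increases $\lambda_1$. (ii) For an edge rotation $G\mapsto G':=G-uw+vw$ (with $w\sim u$, $w\not\sim v$, $w\ne v$) one has $\mathbf{x}^{\top}A(G')\mathbf{x}=\mathbf{x}^{\top}A(G)\mathbf{x}+2x_w(x_v-x_u)$, so $\lambda_1(G')\ge\lambda$ when $x_v\ge x_u$; furthermore, if $\lambda_1(G')=\lambda$ and $G'$ is connected then $\mathbf{x}$ is a Perron vector of $G'$ and the eigenvalue equation at $u$ forces $x_w=0$, contradicting $\mathbf{x}>0$, so in fact $\lambda_1(G')>\lambda$ whenever $x_v\ge x_u$, such a rotation is available, and $G'$ is connected. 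By (i), no edge can be added to $G$ while staying inside the class; since the only candidates join two vertices of degree $<\Delta$, the deficient vertices of $G$ are pairwise adjacent unless there is exactly one of them, or exactly two non-adjacent ones of degree $\Delta-1$. If the total deficiency $D:=\sum_v(\Delta-d(v))$ is not minimal, a somewhat lengthy but routine case analysis — possibly after one rotation that does not lower $\lambda_1$ — produces two non-adjacent deficient vertices, and adding the edge between them contradicts extremality. Hence $D$ is minimal: $D=1$ if $n\Delta$ is odd and $D=2$ if $n\Delta$ is even.

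If $n\Delta$ is odd we are done, since $D=1$ means exactly one vertex has degree $\Delta-1=\delta$ and the rest have degree $\Delta$. If $n\Delta$ is even, $D=2$ leaves two possibilities: a single vertex of degree $\Delta-2=\delta$ (the assertion), or two vertices $u,v$ of degree $\Delta-1$, and I must exclude the latter. In either case ($u\sim v$ or $u\not\sim v$) I would pick a vertex $z$ adjacent to $v$ but neither adjacent nor equal to $u$ and form $G'':=G-vz+uz$: this has degree sequence $(\Delta,\dots,\Delta,\Delta-2)$ with $v$ the unique deficient vertex, is connected for a suitable choice of $z$, and by (ii) satisfies $\lambda_1(G'')>\lambda$ whenever $x_u\ge x_v$; the mirror construction $G-uz'+vz'$ wins when $x_v\ge x_u$. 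Since $d(u)=d(v)=\Delta-1$, the vertex $z$ exists precisely when $u$ and $v$ have different neighbourhoods apart from each other, and likewise for its mirror $z'$; so if $G$ is not in this twin configuration, then $x_u\ge x_v$ or $x_v\ge x_u$ makes the corresponding construction available, contradicting extremality. Thus the only remaining obstruction is that $G$ contains two twin vertices $u,v$ of degree $\Delta-1$ (adjacent or not).

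I expect this twin case, together with the structural work around it, to be the main obstacle. To dispose of it I would first establish the ``path-like'' description of $G$ promised in the abstract: by further rotation and surgery arguments — forbidding specific local configurations and bounding the bandwidth of $G$ in terms of $\Delta$ — one shows that $G$ is a concatenation, along a path, of finitely many admissible ``blocks'', with all deficiency confined to one end block. For such an explicitly parametrised family the eigenvector equation becomes a second-order linear recurrence with a constant transfer matrix, so $\lambda_1$ admits a closed form; one then checks that the twin configuration can survive only for a short list of highly symmetric graphs, each of which is beaten by the corresponding degree-$(\Delta-2)$ graph via an interlacing or transfer-matrix computation. For $\Delta\in\{3,4\}$ the catalogue of blocks is small enough that both the reduction to the path-like family and the final finite comparison can be pushed through, simultaneously pinning down the degree sequence, the exact extremal graphs, and the limits $\pi^2/4$ (for $\Delta=3$) and $\pi^2/2$ (for $\Delta=4$) stated in the abstract. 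For larger $\Delta$ the block catalogue proliferates and the reduction step loses control, which is why the conjecture remains open in general.
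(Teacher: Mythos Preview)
Your step 1 --- the claim that the total deficiency $D=\sum_v(\Delta-d(v))$ is automatically minimal via a ``routine'' rotation followed by an edge addition --- is where the argument breaks, and this is not a minor gap: for $n\Delta$ odd it is essentially the whole conjecture. The obstruction is concrete. Rotations preserve $D$, so the only way a proper rotation $G\mapsto G-uw+vw$ (requiring $x_v\ge x_u$) can manufacture a pair of \emph{non-adjacent} deficient vertices is to push deficiency onto a new vertex, i.e.\ to take $u\in T$ and $v\in S$. But one shows (Lemma~\ref{lem:T-min-S-max} in the paper) that $\max_{w\in S}x_w<\min_{w\in T}x_w$, so every such rotation strictly decreases $\lambda_1$. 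Concretely, for $\Delta=3$ and $n$ odd the configuration $|S|=2$ with $u\sim v$ and $(d(u),d(v))=(2,1)$ survives all your local moves; the paper disposes of it only via the global comparison in Theorem~\ref{thm:proof-conj-degree-sequ-for-degree3}, Case~2, using a test vector defined along the entire graph.

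The paper's actual strategy inverts your order of operations. It does \emph{not} first argue that $D$ is minimal; instead it uses degree-preserving \emph{local switchings} (Lemma~\ref{lem:local-switching}, not rotations) to transfer any $G\in\mathcal G(n,\Delta)$ --- whatever its $|S|$ --- into a canonical path-like form built from a short list of blocks (Lemmas~\ref{lem:transfer-first-few-vertices-Delta-3}--\ref{lem:induced-middle-vertices-Delta=3} for $\Delta=3$, Lemmas~\ref{lem:transfer-first-few-vertices-Delta-4}--\ref{lem:induced-middle-vertices-Delta=4} for $\Delta=4$). Only after this reduction does it compare the canonical $|S|\ge 2$ graph against the canonical $|S|=1$ graph by an explicit Rayleigh-quotient computation with a hand-crafted vector $\bm y$ (see \eqref{eq:assignment-y}--\eqref{eq:Pi-Qi} and their analogues). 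So the path-like reduction you reserve for the residual ``twin'' case is in fact the engine driving the entire proof, including the step you labelled routine; the rotation/edge-addition shortcut you propose for step~1 does not exist.
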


Although the statement of \autoref{conj:degree-sequence} is intuitive, it appears difficult 
to prove or disprove even for small $\Delta$. An indication to the difficulty of this 
conjecture, as well as \autoref{conj:limit}, is that graphs having bounded degree are sparse 
graphs, whose spectral radius are bounded by a constant. Hence, numerous tools from spectral 
graph theory are ineffective. In this paper we make the first progress on these two conjectures. 
More precisely, we disprove \autoref{conj:limit} for all $\Delta\geq 3$, and confirm 
\autoref{conj:degree-sequence} for $\Delta=3$ and $\Delta=4$. In addition, we determine the 
exact structure of extremal graphs in $\mathcal{G}(n,\Delta)$, as well as the leading term of 
$\Delta-\lambda_1(n,\Delta)$ for $\Delta\in\{3,4\}$. Particularly, we show that the extremal 
graphs must have a path-like structure for $\Delta\in\{3,4\}$. To prove our results, we use the 
local switching operations (see in Section \ref{sec:preliminaries}) and forbidden induced subgraphs 
to deduce structural properties of the extremal graphs. The original ideas come from 
\cite{AbdiGhorbaniImrich2021, AbdiGhorbani2020, BrandGuiduliImrich2007, Guiduli1997},
which study the minimum algebraic connectivity of regular graphs.

The present paper is built up as follows. Preliminary definitions and notation are collected 
in Section \ref{sec:preliminaries}. In Section \ref{sec:general-results}, we present some 
general results on the extremal graphs that will be used frequently in the sequel. We 
prove \autoref{conj:degree-sequence} and determine the structure of extremal graphs for 
$\Delta=3$ and $\Delta=4$ in Section \ref{sec:structure-Delta=3} and Section \ref{sec:structure-Delta=4}, 
respectively. In Section \ref{sec:disprove-conj-1-1}, we give an upper bound of $\Delta-\lambda_1(n,\Delta)$, 
which allows us to disprove \autoref{conj:limit}. In addition, we determine the precise asymptotic 
behavior of $\Delta-\lambda_1(n,\Delta)$ for $\Delta\in\{3,4\}$. We conclude this paper in 
Section \ref{sec:concluding-remarks} with some remarks and open problems.

\section{Preliminaries}
\label{sec:preliminaries}

In this section we introduce notation and preliminary lemmas that will be used in the sequel.

Throughout this paper we consider only simple graphs, i.e, undirected graphs without multiple 
edges or loops. Given a subset $X$ of the vertex set $V(G)$ of a graph $G$, the subgraph of 
$G$ induced by $X$ is denoted by $G[X]$, and the graph obtained from $G$ by deleting $X$ is 
denoted by $G\setminus X$. As usual, for a vertex $v$ of $G$ we write $d_G(v)$ and $N_G(v)$
for the degree of $v$ and the set of neighbors of $v$ in $G$, respectively. If the underlying 
graph $G$ is clear from the context, simply $d(v)$ and $N(v)$. Let $N_X(v)$ denote the set of 
vertices in $X$ that adjacent to $v$, i.e., $N_X(v)=N_G(v)\cap X$. Given two vertices 
$u$, $v$, we use $u\sim v$ (resp. $u\nsim v$) to indicate that vertices $u$ and $v$ are adjacent 
(resp. nonadjacent). For a positive integer $n$, let $[n]$ denote the set $\{1,2,\ldots,n\}$.
Let $\bm{x}\in\mathbb{R}^n$ be a vector, we use $x_{\max}$ and $x_{\min}$ to denote the maximum
and minimum components of $\bm{x}$, respectively. 

The Perron--Frobenius theorem implies that the adjacency matrix $A(G)$ of a connected graph $G$ 
has a positive eigenvector corresponding to $\lambda_1(G)$, and this is called the {\em Perron vector} 
of $G$. Let $\bm{x}$ and $\Delta$ be the Perron vector and maximum degree of $G$, respectively. 
A short argument shows that
\begin{equation}\label{eq:Delta-rho-sum-form}
(\Delta-\lambda_1(G))\cdot\|\bm{x}\|_2^2 
= \sum_{v\in V(G)} (\Delta - d(v)) x_v^2 + \sum_{uv\in E(G)} (x_u-x_v)^2.
\end{equation}
Recall that the Laplacian matrix of $G$ is defined as $L(G):= D(G)-A(G)$, where $D(G)$ is 
the diagonal matrix whose diagonal entries are the vertex degrees of $G$. Hence, by Rayleigh 
principle, for any vector $\bm{y}\in\mathbb{R}^{|V(G)|}$ we have
\begin{equation}\label{eq:Delta-rho}
(\Delta - \lambda_1(G)) \cdot \|\bm{y}\|_2^2 \leq
\sum_{v\in V(G)} (\Delta - d(v)) y_v^2 + \bm{y}^{\mathrm{T}} L(G) \bm{y}.
\end{equation}
On the other hand, noting that $A(G) \bm{x} = \lambda_1(G) \bm{x}$, we have 
$\bm{1}^{\mathrm{T}}A(G) \bm{x} = \lambda_1(G) \bm{1}^{\mathrm{T}} \bm{x}$, 
where $\bm{1}$ is the all-ones vector. That is,
\[
\sum_{v\in V(G)} d(u) x_v = \lambda_1(G)\sum_{v\in V(G)} x_v.
\]
We immediately obtain that
\begin{equation}\label{eq:identity-sum-xi}
\sum_{v\in V(G)} (\Delta - d(v)) x_v = (\Delta - \lambda_1(G)) \sum_{v\in V(G)} x_v.
\end{equation}

The following two local operations on edges of graphs are well-known, from which one can obtain
perturbation results in spectral radius under edge operations.

\begin{lemma}[\cite{CvetkovicRowlinson2009}, Theorem 8.1.3]\label{lem:rotation}
Let $G$ be a connected graph with $uv\in E(G)$ and $uw\notin E(G)$. Let 
$\widetilde{G}:=G+uw-uv$ and $\bm{x}$ be the Perron vector of $G$. If 
$x_w\geq x_v$, then $\lambda_1(\widetilde{G})>\lambda_1(G)$.
\end{lemma}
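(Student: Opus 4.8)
The plan is to use the Rayleigh quotient of the adjacency matrix with the Perron vector $\bm{x}$ of $G$ as a test vector for $A(\widetilde{G})$. First I would record the elementary identity coming from the fact that $A(\widetilde{G})$ and $A(G)$ differ only in the entries indexed by the pairs $\{u,v\}$ and $\{u,w\}$:
\[
\bm{x}^{\mathrm{T}} A(\widetilde{G}) \bm{x} - \bm{x}^{\mathrm{T}} A(G) \bm{x} = 2x_u(x_w - x_v).
\]
Since $G$ is connected, the Perron--Frobenius theorem gives $\bm{x}>\bm{0}$, so $x_u>0$; together with the hypothesis $x_w\geq x_v$ this shows $\bm{x}^{\mathrm{T}} A(\widetilde{G}) \bm{x} \geq \bm{x}^{\mathrm{T}} A(G) \bm{x} = \lambda_1(G)\|\bm{x}\|_2^2$. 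By the Rayleigh characterization of the largest eigenvalue of a symmetric matrix,
\[
\lambda_1(\widetilde{G}) = \max_{\bm{y}\neq\bm{0}} \frac{\bm{y}^{\mathrm{T}}A(\widetilde{G})\bm{y}}{\|\bm{y}\|_2^2} \geq \frac{\bm{x}^{\mathrm{T}}A(\widetilde{G})\bm{x}}{\|\bm{x}\|_2^2} \geq \lambda_1(G),
\]
which already yields the non-strict inequality.

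For strictness I would split into two cases. If $x_w>x_v$, the displayed difference is strictly positive and the chain above gives $\lambda_1(\widetilde{G})>\lambda_1(G)$ immediately. The remaining case $x_w=x_v$ is the only delicate point: here equality holds throughout the Rayleigh chain, so $\bm{x}$ is a maximizer of the Rayleigh quotient of $A(\widetilde{G})$, hence an eigenvector of $A(\widetilde{G})$ for the eigenvalue $\lambda_1(\widetilde{G})=\lambda_1(G)$. Writing the eigenvalue equation of $A(\widetilde{G})$ at the vertex $w$, whose neighborhood in $\widetilde{G}$ is $N_G(w)\cup\{u\}$, gives
\[
\lambda_1(G)\,x_w = \sum_{z\in N_G(w)} x_z + x_u = \lambda_1(G)\,x_w + x_u,
\]
forcing $x_u=0$ and contradicting $\bm{x}>\bm{0}$. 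Hence $x_w=x_v$ cannot coexist with equality, and we conclude $\lambda_1(\widetilde{G})>\lambda_1(G)$.

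The main (and essentially only) obstacle is the boundary case $x_w=x_v$: the Rayleigh bound by itself is not strict there, and one must exploit that $\bm{x}$ is a genuine Perron vector of $G$ — its strict positivity and the exact eigenvalue equation at $w$ — to exclude it. Everything else is bookkeeping about which entries of the adjacency matrix change under the edge rotation $G\mapsto G+uw-uv$.
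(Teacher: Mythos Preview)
Your argument is correct and is the standard Rayleigh-quotient proof of this rotation lemma. Note that the paper does not give its own proof: the statement is quoted verbatim from \cite{CvetkovicRowlinson2009}, Theorem 8.1.3, and is used as a black box, so there is no in-paper proof to compare against. Your handling of the boundary case $x_w=x_v$ (forcing $\bm{x}$ to be an eigenvector of $A(\widetilde G)$ and reading off the eigen-equation at $w$ to get $x_u=0$) is exactly the usual way strictness is established.
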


Given a graph $G$, the {\em local switching} is the replace of a pair of edges $uv$ and $st$  
in $G$ by the edges $sv$ and $tu$, which is denoted by $\ls (s,t,v,u)$. In addition, we use 
$\ls (G; s,t,v,u)$ to denote the resulting graph obtained by the local switching operation 
$\ls (s,t,v,u)$. Note that local switching preserves degrees.

\begin{lemma}[\cite{CvetkovicRowlinson2009}, Theorem 8.1.10]\label{lem:local-switching}
Let $G$ be a connected graph, and $\widetilde{G} = \ls (G; s,t, v,u)$. If $(x_s-x_u)(x_v-x_t)\geq 0$, 
then $\lambda_1(\widetilde{G})\geq\lambda_1(G)$, with equality if and only if $x_s=x_u$ and $x_v=x_t$.  
\end{lemma}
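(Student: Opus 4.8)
The plan is to test the Rayleigh quotient of $A(\widetilde G)$ against the Perron vector $\bm x$ of $G$ and compare with $\lambda_1(G)$. Since the local switching $\ls(s,t,v,u)$ deletes the edges $uv,st$ and inserts the edges $sv,tu$, the associated quadratic forms differ only in these four terms, so
\[
\bm x^{\mathrm T} A(\widetilde G)\bm x - \bm x^{\mathrm T} A(G)\bm x
= 2\bigl(x_s x_v + x_t x_u - x_u x_v - x_s x_t\bigr)
= 2(x_s-x_u)(x_v-x_t).
\]
In the computation one uses that, in the nontrivial case, the four vertices $s,t,u,v$ are distinct and $\widetilde G$ is again a simple graph (if two of them coincide or a parallel edge would be created, then $\widetilde G=G$ and there is nothing to prove). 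Under the hypothesis $(x_s-x_u)(x_v-x_t)\ge 0$ this difference is nonnegative, so by the Rayleigh principle
\[
\lambda_1(\widetilde G)\ \ge\ \frac{\bm x^{\mathrm T} A(\widetilde G)\bm x}{\|\bm x\|_2^2}
\ \ge\ \frac{\bm x^{\mathrm T} A(G)\bm x}{\|\bm x\|_2^2}
\ =\ \lambda_1(G).
\]

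For the equality case I would argue as follows. Suppose $\lambda_1(\widetilde G)=\lambda_1(G)=:\lambda$. Then $\bm x$ attains the maximum of the Rayleigh quotient of the symmetric matrix $A(\widetilde G)$, hence $A(\widetilde G)\bm x=\lambda\bm x$; this conclusion does not require $\widetilde G$ to be connected. Comparing the $u$-th coordinates of $A(G)\bm x=\lambda\bm x$ and $A(\widetilde G)\bm x=\lambda\bm x$ — the neighbourhood of $u$ loses $v$ and gains $t$ — yields $x_v=x_t$; comparing the $v$-th coordinates — the neighbourhood of $v$ loses $u$ and gains $s$ — yields $x_u=x_s$. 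Conversely, if $x_s=x_u$ and $x_v=x_t$, then replacing $v$ by $t$ in $N(u)$, $u$ by $s$ in $N(v)$, $t$ by $v$ in $N(s)$, and $s$ by $u$ in $N(t)$ leaves every neighbourhood sum unchanged, so $A(\widetilde G)\bm x=\lambda_1(G)\bm x$ with $\bm x>0$; since a graph possessing a positive eigenvector has that eigenvalue as its spectral radius, $\lambda_1(\widetilde G)=\lambda_1(G)$.

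The core of the argument is a one-line Rayleigh-quotient identity, so there is no serious obstacle. The only points needing care are (i) verifying that the four vertices are distinct and that no parallel edge is created, so that $\widetilde G$ is a genuine simple graph with the same degree sequence as $G$; and (ii) in the equality analysis, extracting \emph{both} identities $x_s=x_u$ and $x_v=x_t$ — rather than merely that their product vanishes — by reading off two different coordinates of the eigenvalue equation, together with the observation that a maximiser of a Rayleigh quotient is automatically an eigenvector even when $\widetilde G$ fails to be connected.
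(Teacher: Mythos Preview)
Your proof is correct and is precisely the standard Rayleigh-quotient argument for this local-switching lemma. Note, however, that the paper does not actually supply its own proof of this statement: it is quoted as Theorem~8.1.10 from \cite{CvetkovicRowlinson2009} and used as a black box. Your argument matches the classical proof found in that reference, including the careful handling of the equality case by reading off two coordinates of the eigenvalue equation.
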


\section{General results for graphs in $\mathcal{G}(n,\Delta)$}
\label{sec:general-results}

In this section, we present some general results on the extremal graphs in $\mathcal{G}(n,\Delta)$ 
that will be useful at various points in this paper. In order to state our results, we should 
first introduce more symbol. Let $G\in\mathcal{G}(n,\Delta)$. We denote 
\[
S = \{v\in V(G): d(v)<\Delta\},~~
T = \{v\in V(G): d(v)=\Delta\}.
\]
Throughout this section and the next two sections, we always assume that $G\in\mathcal{G}(n,\Delta)$
and $\bm{x}$ is the Perron vector of $G$ with $\|\bm{x}\|_2=1$. Denote $V(G):=\{v_1,v_2,\ldots,v_n\}$, 
and write $x_i := x_{v_i}$ for brevity. For convenience, we assume $\bm{x}$ is decreasing, i.e., 
$x_{1}\geq x_2\geq\cdots\geq x_n$.

\subsection{The size of $S$ is small}

The proof of the following lemma can be found in \cite{LiuLi2008} and \cite{LiuLiuYou2009}, we include 
it here in order to keep this paper complete and self-contained.

\begin{lemma}[\cite{LiuLi2008,LiuLiuYou2009}]\label{lem:S-clique}
The induced subgraph $G[S]$ is a complete graph.
\end{lemma}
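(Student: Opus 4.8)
The plan is to argue by contradiction: suppose $G[S]$ is not complete, so there exist two nonadjacent vertices $v_i, v_j \in S$ with $d(v_i) < \Delta$ and $d(v_j) < \Delta$. The idea is to add the edge $v_iv_j$ to $G$ and show that the resulting graph $G' := G + v_iv_j$ is still a legitimate competitor in the extremal problem, yet has strictly larger spectral radius, contradicting $G \in \mathcal{G}(n,\Delta)$. First I would check that $G'$ is admissible: it is clearly connected (adding an edge preserves connectedness), it has the same vertex set of size $n$, and since $d_{G'}(v_i) = d_G(v_i) + 1 \le \Delta$ and likewise for $v_j$, the maximum degree of $G'$ is still $\Delta$. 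The one subtlety is that $G'$ must remain \emph{nonregular}; I would handle this by noting that $G$ has at least one vertex of degree $\Delta$ (otherwise $\Delta$ would not be the maximum degree) and $|S| \ge 2$ by assumption, so after adding a single edge between two vertices of $S$, if $|S| \ge 3$ there is still a vertex of degree $< \Delta$, and if $|S| = 2$ one must check the parity/degree bookkeeping more carefully — but in fact adding one edge raises exactly two degrees by one each, and since both were strictly below $\Delta$, at worst they reach $\Delta$; if this made $G'$ regular then $G$ would have had degree sequence $(\Delta,\dots,\Delta,\Delta-1,\Delta-1)$, and one checks this still leaves $G'$ of the form where we can instead delete a different edge or observe $G'$ is $\Delta$-regular only if $n\Delta$ is even, a case that can be excluded or handled by a secondary small perturbation.

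The core of the argument is the strict monotonicity $\lambda_1(G') > \lambda_1(G)$. This is immediate from the Perron--Frobenius theory: if $A = A(G)$ and $A' = A(G') = A + E$ where $E$ is the (nonnegative, nonzero) elementary matrix with ones in positions $(i,j)$ and $(j,i)$, then for the Perron vector $\bm{x}$ of $G$ (which is strictly positive since $G$ is connected) we have
\[
\bm{x}^{\mathrm{T}} A' \bm{x} = \bm{x}^{\mathrm{T}} A \bm{x} + 2 x_i x_j = \lambda_1(G) + 2 x_i x_j > \lambda_1(G),
\]
using $\|\bm{x}\|_2 = 1$ and $x_i, x_j > 0$. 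By the Rayleigh characterization $\lambda_1(G') \ge \bm{x}^{\mathrm{T}} A' \bm{x} > \lambda_1(G)$. This contradicts the maximality of $\lambda_1(G)$ over $\mathcal{G}(n,\Delta)$, and hence $G[S]$ must be complete.

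The step I expect to require the most care is the admissibility bookkeeping in the edge-cases where $|S|$ is as small as $2$: one has to make sure the perturbed graph does not accidentally become regular, since a regular graph is not a valid member of $\mathcal{G}(n,\Delta)$. The cleanest fix is to observe that if $|S| = 2$ with the two vertices nonadjacent, then after adding the edge we may also need to argue via a different pair or via the parity of $n\Delta$; alternatively one can invoke that the statement is only used for $G \in \mathcal{G}(n,\Delta)$ and in that regime $|S|$ turns out to be small but the nonregularity is guaranteed by construction of the extremal family. Everything else — connectedness, the degree bound, and the strict spectral increase — is routine and follows directly from \autoref{lem:rotation}-style reasoning or the one-line Rayleigh quotient computation above.
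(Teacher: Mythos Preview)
Your argument is correct and matches the paper's for $|S| \ge 3$: adding the missing edge leaves a third vertex of $S$ still deficient, so $G'$ remains nonregular and has strictly larger spectral radius.

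The case $|S| = 2$, however, is a genuine gap, and none of your suggested patches work. If $S = \{u,v\}$ with $u \nsim v$, then maximality forces $d(u) = d(v) = \Delta - 1$ (otherwise the edge-addition already succeeds), and in that situation $G + uv$ \emph{is} $\Delta$-regular and hence not a legal competitor. This configuration is not excluded by parity (it arises whenever $n\Delta$ is even, which covers all even $\Delta$ and half of the odd-$\Delta$ cases), ``deleting a different edge'' is not an argument, and invoking properties of the extremal family to prove properties of the extremal family is circular. So your proof, as written, does not establish the lemma.

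The paper resolves this case by an entirely different mechanism: a degree-preserving local switching rather than an edge addition. One first shows, via rotations (\autoref{lem:rotation}), that $N(u) = N(v)$; connectedness then forces the existence of $s,t \in N(u)$ with $st \notin E(G)$. After checking $x_s > x_u$ and $x_t > x_v$ (from the eigenvalue equations or another rotation), the switching that removes $ut, sv$ and inserts $uv, st$ strictly increases $\lambda_1$ by \autoref{lem:local-switching}. Because local switching preserves all degrees, the resulting graph is still nonregular with maximum degree $\Delta$, giving the contradiction your edge-addition could not. This degree-preserving step is the real content of the lemma in the $|S|=2$ case, and it is missing from your proposal.
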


\begin{proof}
The assertion is clear for $|S|\geq 3$ by the monotonicity of $\lambda_1(G)$ with respect to edge 
addition. So in the following we assume $|S|=2$. 

Suppose for the sake of contradiction that $S=\{u,v\}$ and $u\nsim v$. Then $d(u)=d(v)=\Delta-1$ 
due to the maximality of $\lambda_1(G)$. Without loss of generality, we assume $x_u\geq x_v$. 
We first show that $N(u)=N(v)$ by contradiction. If $N(u)\cap N(v)\neq\emptyset$, we choose 
a vertex $w\in N(v)\setminus N(u)$. Let $G':=G+uw-vw$. Obviously, $G'$ is still a connected 
nonregular graph, and $\lambda_1(G')>\lambda_1(G)$ by \autoref{lem:rotation}, a contradiction.
If $N(u)\cap N(v)=\emptyset$, let $P$ be a shortest path from $v$ to $u$. Since
$d(v)=\Delta-1\geq 2$, there exists a vertex $w\in N(v)$ such that $w\notin V(P)$.
Let $G'':=G+uw-vw$. Then $G''$ is connected and $\lambda_1(G'')>\lambda_1(G)$, a contradiction.
It follows that $N(u)=N(v)$. 

Since $G$ is connected and $d(u)=d(v)=\Delta-1$, there exits two vertices $s,t\in N(u)=N(v)$ 
such that $st\notin E(G)$. In what follows we shall prove $x_s > x_u$. If $|N_T(s)\cap N(u)|=\Delta-2$, 
by eigenvalue equations we have
\[
(\lambda_1(G)+1) (x_s-x_u) = x_v,
\]
which yields $x_s > x_u$. If $|N_T(s)\cap N(u)| < \Delta-2$, there must be a vertex $w\in N_T(s)$
such that $wu\notin E(G)$. If $x_s\leq x_u$, then the graph $G+uw-ws$ has larger spectral radius 
than $G$, a contradiction. Therefore, we have $x_s > x_u$. Similarly, we have $x_t > x_v$. Finally, 
we deduce that $\lambda_1(\ls (G; u,t, v,s)) > \lambda_1(G)$ by \autoref{lem:local-switching}, 
a contradiction completing the proof.
\end{proof}

We immediately obtain an upper bound on $|S|$ as follows.

\begin{corollary}\label{coro:S-less-Delta-1}
$|S|\leq \Delta-1$.
\end{corollary}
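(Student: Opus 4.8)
The plan is to combine the clique structure from \autoref{lem:S-clique} with a degree count, and then rule out the single boundary case by invoking connectivity. Write $k = |S|$. Since $G[S]$ is a complete graph by \autoref{lem:S-clique}, every vertex $v\in S$ is adjacent to all $k-1$ other vertices of $S$, so $d(v)\geq k-1$. On the other hand, $v\in S$ means $d(v)\leq\Delta-1$ by definition of $S$. Combining these two inequalities gives $k-1\leq\Delta-1$, hence $k\leq\Delta$. This already yields the weaker bound $|S|\leq\Delta$, and it remains only to exclude $k=\Delta$.

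Next I would argue that $k=\Delta$ is impossible. In that case each $v\in S$ has exactly $\Delta-1$ neighbours inside $S$ (because $G[S]=K_\Delta$) and at most $\Delta-1$ neighbours in total, so $v$ has no neighbour in $T=V(G)\setminus S$. Thus there is no edge between $S$ and $T$. Since $G$ is connected and $S\neq\emptyset$, this forces $T=\emptyset$, i.e.\ $V(G)=S$ and $G=G[S]=K_\Delta$. But $K_\Delta$ is $(\Delta-1)$-regular, contradicting the assumption that $G$ is a nonregular graph with maximum degree $\Delta$. (Equivalently, one may note $n\geq\Delta+1$ is already forced by the existence of a nonregular graph with maximum degree $\Delta$ on $n$ vertices, so $|S|=\Delta$ would require a vertex of $T$ with no $S$-neighbour and the same connectivity contradiction applies.) Hence $k\leq\Delta-1$.

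There is essentially no hard step here: the corollary is an immediate consequence of \autoref{lem:S-clique} together with the trivial degree bound $d(v)\leq\Delta-1$ for $v\in S$. The only point requiring any care is the borderline case $|S|=\Delta$, where one must observe that the clique $G[S]=K_\Delta$ saturates the degree of every vertex of $S$, so $S$ is disconnected from the rest of $G$; connectivity together with nonregularity then finishes the argument. I would present the proof in two short sentences along these lines.
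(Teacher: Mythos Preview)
Your argument is correct and is precisely the reasoning the paper has in mind: the corollary is stated without proof, with the words ``We immediately obtain,'' and your two-line degree count from \autoref{lem:S-clique} together with the connectivity observation to rule out $|S|=\Delta$ is exactly what ``immediately'' means here. A slightly slicker phrasing that merges your two steps: since $T\neq\emptyset$ (some vertex has degree $\Delta$) and $G$ is connected, at least one $v\in S$ has a neighbour in $T$, whence $d(v)\geq (|S|-1)+1=|S|$ and $|S|\leq d(v)\leq\Delta-1$.
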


\subsection{The components of Perron vector on $S$ are small} 

Roughly speaking, the next lemma states that for any $u$, $v\in S$, the neighbors of $u$ and $v$ in $T$ 
are nested, i.e., one of sets is contained in the other one.   

\begin{lemma}\label{lem:subset-neighbours}
Let $G\in\mathcal{G}(n,\Delta)$ and $u,v\in S$. Then $x_u\leq x_v$ if and only if $N_T(u)\subset N_T(v)$. 
\end{lemma}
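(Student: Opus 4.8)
The plan is to route both implications through a single structural fact proved by an edge rotation. Since $u,v\in S$ and, by \autoref{lem:S-clique}, $G[S]$ is a clique (so in particular $u\sim v$), I would first record that $N(u)=(S\setminus\{u\})\cup N_T(u)$ and $N(v)=(S\setminus\{v\})\cup N_T(v)$; the statement is vacuous unless $|S|\geq 2$, which I assume from now on. Subtracting the eigenvalue equations $\lambda_1(G)\,x_u=\sum_{w\sim u}x_w$ and $\lambda_1(G)\,x_v=\sum_{w\sim v}x_w$ and cancelling the common contribution of $S\setminus\{u,v\}$ gives the identity
\[
(\lambda_1(G)+1)(x_u-x_v)=\sum_{w\in N_T(u)\setminus N_T(v)}x_w-\sum_{w\in N_T(v)\setminus N_T(u)}x_w.
\]

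The crux is the following claim: \emph{if $x_u\geq x_v$ then $N_T(v)\subseteq N_T(u)$.} I would prove it by contradiction, fixing a vertex $w\in N_T(v)\setminus N_T(u)$. Then $v\sim w$ while $u\nsim w$ (since $w\in T$ and $N(u)\cap T=N_T(u)$), so $\widetilde G:=G+uw-vw$ is a simple graph. As $u\in S$ we have $d_G(u)<\Delta$, hence in $\widetilde G$ the degree of $u$ rises to at most $\Delta$, the degree of $v$ drops below $\Delta$, the degree of $w$ stays equal to $\Delta$, and no other degree changes; thus $\widetilde G$ is again a connected nonregular graph of order $n$ with maximum degree $\Delta$ — connectivity persists because the clique edge $uv$ is untouched, so any vertex that reached $w$ in $G$ through the deleted edge $vw$ can reach it in $\widetilde G$ through $v$, the edge $uv$, and the new edge $uw$. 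Now \autoref{lem:rotation}, applied with pivot $w$ (delete $vw$, add $uw$, use $x_u\geq x_v$), yields $\lambda_1(\widetilde G)>\lambda_1(G)$, contradicting $G\in\mathcal{G}(n,\Delta)$. This establishes the claim.

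Granting the claim, the lemma follows at once. For the forward direction, if $x_u\leq x_v$ then the claim with $u$ and $v$ interchanged gives $N_T(u)\subseteq N_T(v)$. For the converse, if $N_T(u)\subseteq N_T(v)$ then the right-hand side of the displayed identity equals $-\sum_{w\in N_T(v)\setminus N_T(u)}x_w\leq 0$ because the Perron vector is positive, whence $x_u\leq x_v$. I expect the only delicate point to be the bookkeeping inside the claim — checking that $\widetilde G$ remains connected, nonregular, and of maximum degree exactly $\Delta$, and, above all, that the gain in spectral radius is \emph{strict} even when $x_u=x_v$. That is precisely why I would use the rotation \autoref{lem:rotation} rather than the local-switching \autoref{lem:local-switching}: rotation already delivers a strict increase from the non-strict hypothesis $x_w\geq x_v$, so the borderline case $x_u=x_v$ (for which the claim, applied both ways, forces $N_T(u)=N_T(v)$) needs no separate treatment.
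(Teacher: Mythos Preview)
Your proof is correct and follows essentially the same approach as the paper: the rotation argument via \autoref{lem:rotation} handles one direction, and the eigenvalue identity $(\lambda_1(G)+1)(x_v-x_u)=\sum_{w\in N_T(v)\setminus N_T(u)}x_w$ (under the clique structure of $S$) handles the other. Your bookkeeping on connectivity and degrees of $\widetilde G$ is a bit more explicit than the paper's, but the substance is identical.
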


\begin{proof}
We first show the necessity. Suppose contrary that there exists $w\in N_T(u)$ and $w\notin N_T(v)$. 
Let $G':=G-uw+vw$. Then $\lambda_1(G')>\lambda_1(G)$ by \autoref{lem:rotation}, a contradiction. 
Hence, we have $N_T(u)\subset N_T(v)$.

For the other direction, by eigenvalues equations for $u$, $v$ and \autoref{lem:S-clique}, we have
\[
\lambda_1(G) x_u = \sum_{w\in S\setminus\{u\}} x_w + \sum_{w\in N_T(u)} x_w,~~
\lambda_1(G) x_v = \sum_{w\in S\setminus\{v\}} x_w + \sum_{w\in N_T(v)} x_w.
\]
Therefore, we derive that
\[
(\lambda_1(G) + 1)(x_v-x_u) = \sum_{w\in N_T(v)\setminus N_T(u)} x_w \geq 0,
\]
which implies that $x_v\geq x_u$, as desired.
\end{proof}

A vertex $v\in V(G)$ is called {\em local minimum} if $x_w\geq x_v$ for any $w \in N(v)$.
Obviously, any vertex in $T$ is not local minimum, for otherwise, $G$ has largest eigenvalue 
at least $\Delta$, a contradiction. 

\begin{lemma}\label{lem:T-min-S-max}
Let $u\in S$ and $v\in T$ be two vertices such that $x_u=\max\{x_w: w\in S\}$ and 
$x_v=\min\{x_w: w\in T\}$. Then $x_u<x_v$.
\end{lemma}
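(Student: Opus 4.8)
The plan is to argue by contradiction. Suppose $x_u \geq x_v$. The idea is to reroute one edge incident to $v$ towards $u$: since $u$ carries the largest Perron weight in $S$ while $v$ carries the smallest in $T$, such a rotation should strictly increase the spectral radius while keeping the graph connected, nonregular, and of maximum degree $\Delta$, contradicting $G\in\mathcal{G}(n,\Delta)$. To control connectivity afterwards I first use the observation recorded just above: $v$ (being in $T$) is not a local minimum, so it has a neighbour $w$ with $x_w < x_v$; since $x_w < x_v = \min\{x_z : z\in T\}$ we must have $w\in S$, and since $x_w < x_v \leq x_u$ we have $w\neq u$, hence $w\sim u$ by \autoref{lem:S-clique}. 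Thus $u$–$w$–$v$ is a path in $G$. (If $|S|=1$ then $w=u$ and $x_u=x_w<x_v$, already a contradiction, so assume $|S|\geq 2$.)

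Next I split into two cases according to whether $N(v)\subseteq N(u)\cup\{u\}$. If this inclusion holds, then from $d(v)=\Delta$ and $d(u)\leq\Delta-1$ we get $\Delta = |N(v)| \leq |N(u)\cup\{u\}| = d(u)+1 \leq \Delta$, forcing $d(u)=\Delta-1$ and $N(v)=N(u)\cup\{u\}$ (a disjoint union, as $G$ is simple). The eigenvalue equations at $v$ and $u$ then give
\[
\lambda_1(G)\,x_v = \sum_{z\in N(v)} x_z = x_u + \sum_{z\in N(u)} x_z = \bigl(\lambda_1(G)+1\bigr)x_u > \lambda_1(G)\,x_u,
\]
so $x_v>x_u$, contradicting $x_u\geq x_v$. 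Otherwise there is a vertex $a\in N(v)\setminus(N(u)\cup\{u\})$; note $a\neq u$, $a\nsim u$, and $a\neq w$ (since $w\sim u$). Set $\widetilde{G}:=G+ua-va$. Because $x_u\geq x_v$, \autoref{lem:rotation} yields $\lambda_1(\widetilde{G})>\lambda_1(G)$. The operation alters only the degrees of $u$ (to $d_G(u)+1\leq\Delta$) and of $v$ (to $\Delta-1$); since $n\geq\Delta+1$ and $|S|\leq\Delta-1$ we have $|T|\geq 2$, so $\widetilde{G}$ retains a vertex of degree $\Delta$, hence $\widetilde{G}$ is nonregular with maximum degree $\Delta$.

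The step I expect to be the real obstacle is checking that $\widetilde{G}$ is connected, and this is exactly where the path $u$–$w$–$v$ earns its keep: it avoids the edge $va$ (because $a\notin\{u,w\}$), so $u$ and $v$ lie in the same component of $G-va$. As $G$ is connected, $G-va$ has at most two components, so the component not containing $v$ — if there is one — must contain $a$; adding the edge $ua$ then reattaches it to $v$'s component. Hence $\widetilde{G}$ is a connected nonregular graph on $n$ vertices with maximum degree $\Delta$ and $\lambda_1(\widetilde{G})>\lambda_1(G)$, contradicting $G\in\mathcal{G}(n,\Delta)$. The only other point needing attention is the degenerate case $N(v)\subseteq N(u)\cup\{u\}$ treated above; everything else is routine degree bookkeeping, which is what makes the "$v$ is not a local minimum" fact the essential ingredient.
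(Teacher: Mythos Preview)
Your proof is correct and follows essentially the same architecture as the paper's: assume $x_u\geq x_v$, use that $v$ is not a local minimum to find a neighbour of $v$ in $S$, then either perform an edge rotation from $v$ to $u$ (contradicting extremality) or, in the degenerate case where no such rotation is available, deduce $N(v)=N(u)\cup\{u\}$ and derive $\lambda_1 x_v=(\lambda_1+1)x_u$, contradicting $x_u\geq x_v$.

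The one execution difference worth noting: the paper first invokes \autoref{lem:subset-neighbours} to conclude $u\sim v$ directly (any $S$-neighbour $w$ of $v$ has $x_w\leq x_u$, so $N_T(w)\subset N_T(u)$, whence $v\in N_T(u)$), which makes connectivity of the rotated graph immediate via the edge $uv$. You instead bypass \autoref{lem:subset-neighbours} entirely, using only the $S$-clique lemma to obtain the length-two path $u$--$w$--$v$ and arguing connectivity through that. Your route is marginally more self-contained (one fewer auxiliary lemma), at the cost of a slightly longer connectivity check; the paper's route is terser but leans on the nested-neighbourhoods lemma. Both arrive at the identical eigenvalue identity in the degenerate case.
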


\begin{proof}
Assume by contradiction that $x_u\geq x_v$. Since $v$ is not local minimum, we have 
$N_S(v)\neq\emptyset$. Using \autoref{lem:subset-neighbours}, we find that $uv\in E(G)$.
If there exists $w\in N_T(v)$ such that $uw\notin E(G)$, we let $G':=G+uw-vw$. Then 
$G'$ is connected and $\lambda_1(G') > \lambda_1(G)$, a contradiction. So we assume 
$N_T(v)\subset N_T(u)$ below. Now we consider the degree of $u$. In light of \autoref{lem:S-clique} 
we deduce that 
\[ 
d(u) = (|S|-1) + |N_T(u)| \geq (|S| + |N_T(v)|) - 1 \geq d(v) - 1 = \Delta - 1.
\] 
On the other hand, $d(u)\leq\Delta-1$ due to $u\in S$. Hence, all inequalities above 
must be equalities, we immediately obtain $N_T(v) = N_T(u)$. By the eigenvalue equations 
for $u$ and $v$, we have $(\lambda_1(G) +1) x_u = \lambda_1(G) x_v$, contrary to the 
assumption $x_u\geq x_v$. This completes the proof of this lemma.
\end{proof}




\subsection{Local adjacency relations of vertices}

Given an integer $k$, we denote by $\mathcal{M}_k$ the collection of the first $k$ vertices 
(with respect to the components of Perron vector $\bm{x}$) of $G$, i.e., 
$\mathcal{M}_k := \{v_1,v_2,\ldots,v_k\}$.

The following lemma will be used frequently in the subsequent proofs.

\begin{lemma}[Principle of Proximity]\label{lem:common-use}
Let $v\in\mathcal{M}_k$ and $d=d_{G[\mathcal{M}_k]} (v) < \Delta$. If $v$ is not adjacent 
to $v_{k+1},\ldots,v_{k+s}$ and $N(v_i)\cap\mathcal{M}_k = \emptyset$ for 
$i\in\{k+s+1,\ldots,k+s+\Delta-d\}\setminus\{j: v_j\in N(v)\}$, then we can connect 
$v$ to $v_{k+s+1},\ldots,v_{k+s+\Delta-d}$, not decreasing $\lambda_1(G)$.
\end{lemma}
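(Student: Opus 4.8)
The plan is to construct a new graph $\widetilde{G}$ from $G$ by deleting the edges incident to $v$ that leave $\mathcal{M}_k$ and replacing them with edges to $v_{k+s+1},\dots,v_{k+s+\Delta-d}$, and then to show $\lambda_1(\widetilde{G})\geq\lambda_1(G)$ by exhibiting a test vector whose Rayleigh quotient is at least $\lambda_1(G)$. Since $d=d_{G[\mathcal{M}_k]}(v)$, the vertex $v$ has exactly $\Delta-d$ neighbors outside $\mathcal{M}_k$ in $G$; call them $w_1,\dots,w_{\Delta-d}$. None of these equals any of $v_{k+1},\dots,v_{k+s}$ (which are nonneighbors of $v$ by hypothesis), and by the indexing convention each $w_j$ has $x_{w_j}\leq x_{v_{k+s+1}}$ unless $w_j$ is already among $v_{k+s+1},\dots,v_{k+s+\Delta-d}$. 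So after relabeling we may assume the edges to be removed are $vw_1,\dots,vw_r$ with $x_{w_i}\leq x_{v_{k+s+i}}$ for $i\in[r]$, and the new neighbors $v_{k+s+1},\dots,v_{k+s+r}$ are precisely those among $v_{k+s+1},\dots,v_{k+s+\Delta-d}$ not already adjacent to $v$; the remaining $\Delta-d-r$ of the target vertices are already neighbors of $v$ and no edge is changed there. The hypothesis $N(v_i)\cap\mathcal{M}_k=\emptyset$ for the relevant $i$ guarantees that $v_{k+s+1},\dots,v_{k+s+r}$ are genuinely not adjacent to $v$ (indeed have no neighbor in $\mathcal{M}_k$ at all), so the operation is well-defined and $\widetilde{G}$ is simple with the same degree sequence.

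The key step is the Rayleigh-quotient estimate. I would use $\bm{x}$ itself as a test vector for $A(\widetilde{G})$ and compute
\[
\bm{x}^{\mathrm{T}}A(\widetilde{G})\bm{x}-\bm{x}^{\mathrm{T}}A(G)\bm{x}
= 2\sum_{i=1}^{r} x_v\bigl(x_{v_{k+s+i}}-x_{w_i}\bigr)\geq 0,
\]
the inequality holding because $x_v>0$ and $x_{v_{k+s+i}}\geq x_{w_i}$ by the previous paragraph. Since $\|\bm{x}\|_2=1$, this gives $\lambda_1(\widetilde{G})\geq\bm{x}^{\mathrm{T}}A(\widetilde{G})\bm{x}\geq\bm{x}^{\mathrm{T}}A(G)\bm{x}=\lambda_1(G)$. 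What still needs checking is that $\widetilde{G}$ is a legitimate competitor in $\mathcal{G}(n,\Delta)$, i.e.\ that it is connected and nonregular. Nonregularity is immediate since local switching/rotation here preserves all degrees and $G$ was nonregular. Connectivity is the delicate point: deleting $vw_1,\dots,vw_r$ could in principle disconnect $G$.

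I expect the main obstacle to be exactly this connectivity bookkeeping, and the way to handle it is to observe that we need not perform all the rewirings simultaneously but can do them one edge at a time, at each stage invoking \autoref{lem:rotation} (with $u=v$, the edge $vw_i$ removed and $vv_{k+s+i}$ added, using $x_{v_{k+s+i}}\geq x_{w_i}$), which strictly increases $\lambda_1$ unless $x_{w_i}=x_{v_{k+s+i}}$ and in either case keeps the graph connected provided $v_{k+s+i}$ already lies in the same component as $v$ before that step. Since $v\in\mathcal{M}_k$ has $d\geq 1$ neighbors inside $\mathcal{M}_k$ (if $d=0$ the statement needs $v$ to still be reachable, which the hypotheses on the isolated-from-$\mathcal{M}_k$ vertices make transparent), and since adding an edge never disconnects a graph, one checks that at the moment we add $vv_{k+s+i}$ the target vertex is already joined to the rest through edges added in earlier steps or through its original neighbors; a short induction on $i$, together with the harmless convention that any resulting graph with strictly larger spectral radius would contradict $G\in\mathcal{G}(n,\Delta)$ outright, closes the argument and yields $\lambda_1(\widetilde{G})\geq\lambda_1(G)$ as claimed.
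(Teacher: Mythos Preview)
Your argument has a genuine gap: the operation you describe is \emph{rotation}, not local switching, and rotation does \emph{not} preserve the degree sequence. When you delete $vw_i$ and add $vv_{k+s+i}$, the degree of $w_i$ drops by one and the degree of the target $v_{k+s+i}$ rises by one. Since $v_{k+s+i}$ typically lies in $T$ (it has large Perron component, cf.\ \autoref{lem:T-min-S-max}), its new degree is $\Delta+1$, so $\widetilde{G}$ is no longer a graph with maximum degree $\Delta$ and hence not a competitor in $\mathcal{G}(n,\Delta)$. Your sentences ``$\widetilde{G}$ is simple with the same degree sequence'' and ``local switching/rotation here preserves all degrees'' are simply false for rotation. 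The Rayleigh-quotient inequality you write down is correct, but it proves nothing useful once $\widetilde{G}$ leaves the admissible class.

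The paper's proof fixes exactly this by using a \emph{local switching} $\ls(v,v_j,u,w)$ instead. For a target $u\in\{v_{k+s+1},\ldots,v_{k+s+\Delta-d}\}$ not yet adjacent to $v$, one takes a neighbour $v_j$ of $v$ with $j>k+s+\Delta-d$ (such a $v_j$ exists by a count of the $\Delta-d$ outside neighbours of $v$), and then finds a neighbour $w$ of $u$ with $w\nsim v_j$. This is where the hypothesis $N(u)\cap\mathcal{M}_k=\emptyset$ is actually used: all $\Delta$ neighbours of $u$ lie outside $\mathcal{M}_k$, whereas $v_j$ already has $v\in\mathcal{M}_k$ as a neighbour, so $v_j$ cannot be adjacent to every neighbour of $u$. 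The switch $\{vv_j,uw\}\to\{vu,v_jw\}$ is then proper (since $x_v\geq x_w$ and $x_u\geq x_{v_j}$ by the ordering), preserves all degrees, and makes $v\sim u$. In your write-up the hypothesis $N(v_i)\cap\mathcal{M}_k=\emptyset$ is invoked only to certify that $v\nsim v_{k+s+i}$, which misses its real role.
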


\begin{proof}
Assume that $v$ is not adjacent to some $u\in\{v_{k+s+1},\ldots,v_{k+s+\Delta-d}\}$, and
$v_j\notin \mathcal{M}_k\cup\{v_{k+s+1},\ldots,v_{k+s+\Delta-d}\}$ is a neighbor of $v$ 
with $j > k+s+\Delta-d$. Since $N(u)\cap\mathcal{M}_k = \emptyset$, we deduce that there 
exists a neighbor $w$ of $u$ such that $w\nsim v_j$. By \autoref{lem:local-switching}, 
we may apply the operation $\ls (v,v_j, u,w)$ to $G$, not decreasing the largest eigenvalue 
and leaving $v$ adjacent to $u$, as desired. 
\end{proof}

If $s=0$, we immediately obtain the following corollary.

\begin{corollary}\label{coro:common-use-1}
Let $v\in\mathcal{M}_k$ and $d=d_{G[\mathcal{M}_k]} (v) < \Delta$. 
If $N(v_i)\cap\mathcal{M}_k = \emptyset$ for 
$i\in\{k+1,\ldots,k+\Delta-d\}\setminus\{j: v_j\in N(v)\}$,
then we can connect $v$ to $v_{k+1},\ldots,v_{k+\Delta-d}$, 
not decreasing $\lambda_1(G)$.
\end{corollary}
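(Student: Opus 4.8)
The plan is to obtain \autoref{coro:common-use-1} directly from the Principle of Proximity (\autoref{lem:common-use}) by specializing to $s=0$. First I would check that the hypotheses match under this substitution. When $s=0$, the first hypothesis of \autoref{lem:common-use}, that $v$ is not adjacent to $v_{k+1},\ldots,v_{k+s}$, concerns an empty set of vertices and is therefore vacuously satisfied, so it imposes nothing; and the remaining hypothesis $N(v_i)\cap\mathcal{M}_k=\emptyset$ for $i\in\{k+s+1,\ldots,k+s+\Delta-d\}\setminus\{j:v_j\in N(v)\}$ collapses to exactly the assumption in the corollary. Correspondingly, the conclusion of \autoref{lem:common-use} — that $v$ can be joined to $v_{k+s+1},\ldots,v_{k+s+\Delta-d}$ without decreasing $\lambda_1(G)$ — becomes the assertion that $v$ can be joined to $v_{k+1},\ldots,v_{k+\Delta-d}$ without decreasing $\lambda_1(G)$, which is precisely the claim. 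Thus a one-line argument suffices: apply \autoref{lem:common-use} with $s=0$.

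Because this is a pure specialization, there is essentially no obstacle here; the only point deserving a moment's attention is the index bookkeeping, namely confirming that $\{k+s+1,\ldots,k+s+\Delta-d\}$ genuinely reduces to $\{k+1,\ldots,k+\Delta-d\}$ and that excising the indices $\{j:v_j\in N(v)\}$ behaves identically in both statements. If a reader prefers an argument that does not invoke the general lemma, one can replay its proof in this degenerate case: whenever $v$ fails to be adjacent to some $u\in\{v_{k+1},\ldots,v_{k+\Delta-d}\}$ while having a neighbor $v_j$ of strictly larger index lying outside $\mathcal{M}_k\cup\{v_{k+1},\ldots,v_{k+\Delta-d}\}$, use $N(u)\cap\mathcal{M}_k=\emptyset$ to find a neighbor $w$ of $u$ with $w\nsim v_j$, apply the local switching $\ls(v,v_j,u,w)$, and invoke \autoref{lem:local-switching} to conclude that $\lambda_1(G)$ does not decrease while $v$ becomes adjacent to $u$; iterating over the vertices $v_{k+1},\ldots,v_{k+\Delta-d}$ finishes the proof. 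The only mild point in this alternative route is verifying the sign hypothesis of \autoref{lem:local-switching}, which reduces to comparing Perron components according to the fixed decreasing ordering of the $v_i$.
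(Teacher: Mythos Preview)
Your proposal is correct and matches the paper's own approach exactly: the paper derives the corollary in one line by setting $s=0$ in \autoref{lem:common-use}. Your additional remarks on the index bookkeeping and the optional direct argument are sound but go beyond what the paper itself provides.
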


The next conclusion then follows immediately by setting $k=1$ in \autoref{coro:common-use-1}.

\begin{corollary}\label{coro:common-use-2}
Let $G\in\mathcal{G}(n,\Delta)$. Then we can connect $v_1$ to both $v_2$, $v_3,\ldots,v_{\Delta+1}$,
not decreasing the largest eigenvalue of $G$.
\end{corollary}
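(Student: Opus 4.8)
The plan is to specialize \autoref{coro:common-use-1} to the very first vertex by taking $k=1$, so that $\mathcal{M}_k = \{v_1\}$ is a single vertex. Then $d = d_{G[\mathcal{M}_1]}(v_1) = 0$ trivially, since an induced subgraph on one vertex has no edges, and $0 < \Delta$ because $\Delta \geq 2$ for any graph on at least two vertices (indeed $\Delta \geq 3$ is implicit in the setting, but $\Delta \geq 1$ is all we need). So the hypothesis $d < \Delta$ of \autoref{coro:common-use-1} is automatically satisfied.

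With $k=1$ and $d=0$, the index set in \autoref{coro:common-use-1} becomes $\{2,\ldots,\Delta+1\}\setminus\{j: v_j\in N(v_1)\}$, and the condition to verify is that $N(v_i)\cap\mathcal{M}_1 = N(v_i)\cap\{v_1\} = \emptyset$ for each such $i$; that is, $v_1 \nsim v_i$. But this is vacuous: the indices $j$ with $v_j \in N(v_1)$ have been explicitly removed from the set, so every remaining index $i$ satisfies $v_1 \nsim v_i$ by construction. Hence the hypothesis of \autoref{coro:common-use-1} holds with no further work, and its conclusion is exactly that we may connect $v_1$ to $v_2, v_3, \ldots, v_{\Delta+1}$ without decreasing $\lambda_1(G)$, which is the claimed statement.

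There is essentially no obstacle here; the corollary is a pure instantiation. The only thing to double-check is that taking $k=1$ is legitimate, i.e.\ that $\mathcal{M}_1$ and the vertices $v_2,\ldots,v_{\Delta+1}$ exist — this needs $n \geq \Delta+1$, which holds since $G$ is a connected nonregular graph with a vertex of degree $\Delta$, forcing $n \geq \Delta + 1$ (a vertex of degree $\Delta$ has $\Delta$ neighbors, giving $\Delta+1$ vertices, and nonregularity means we cannot have $n = \Delta+1$ with $G = K_{\Delta+1}$, so in fact $n \geq \Delta+2$). I would simply write: ``Apply \autoref{coro:common-use-1} with $k=1$. Since $\mathcal{M}_1 = \{v_1\}$, we have $d_{G[\mathcal{M}_1]}(v_1) = 0 < \Delta$, and for every $i \in \{2,\ldots,\Delta+1\}$ with $v_i \notin N(v_1)$ we trivially have $N(v_i)\cap\mathcal{M}_1 = \emptyset$. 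The conclusion follows.''
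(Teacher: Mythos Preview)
Your proposal is correct and matches the paper's own proof exactly: the paper simply states that the corollary follows immediately by setting $k=1$ in \autoref{coro:common-use-1}. Your write-up supplies the routine verification of the hypotheses (that $d_{G[\mathcal{M}_1]}(v_1)=0<\Delta$ and that the neighborhood condition is vacuous), which the paper leaves implicit.
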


We end this section with two auxiliary results.

\begin{proposition}\label{prop:preserving-connected}
For any local switching to $G$ that preserving non-decreasing of $\lambda_1(G)$, the 
resulting graph is still a connected graph.
\end{proposition}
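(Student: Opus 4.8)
The plan is a proof by contradiction: suppose $\widetilde G:=\ls(G;s,t,v,u)$ is a local switching satisfying the hypothesis $(x_s-x_u)(x_v-x_t)\geq 0$ of \autoref{lem:local-switching} — so that $\lambda_1(\widetilde G)\geq\lambda_1(G)$ — but is disconnected, and derive a contradiction with the extremality of $G$.

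\emph{Combinatorial reduction.} Since $\widetilde G=G-uv-st+sv+tu$ and $G$ is connected, a short analysis of the components of $G-uv-st$ and of the effect of re-inserting $sv,tu$ shows that $\widetilde G$ has exactly two components $D_1\supseteq\{u,t\}$ and $D_2\supseteq\{v,s\}$, with $\widetilde G[D_1]=G[D_1]+tu$ and $\widetilde G[D_2]=G[D_2]+sv$, both connected. Neither $D_i$ lies inside $S$: if, say, $D_1\subseteq S$, then $G[D_1]$ is an induced subgraph of the clique $G[S]$ (\autoref{lem:S-clique}) and already contains $tu$, contradicting $tu\notin E(G)$. Hence each $D_i$ meets $T$, so $\Delta(\widetilde G[D_i])=\Delta$ and $|D_i|\geq\Delta+1$; and since local switching preserves degrees, $\widetilde G[D_i]$ is nonregular exactly when $D_i\cap S\neq\emptyset$.

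\emph{Spectral step.} Using $A(G)\bm x=\lambda_1(G)\bm x$ and the fact that $uv$ and $st$ are the only edges of $G$ leaving $D_2$, one obtains
\[
A\bigl(\widetilde G[D_2]\bigr)\,\bm x|_{D_2}=\lambda_1(G)\,\bm x|_{D_2}+(x_s-x_u)\,\e_v+(x_v-x_t)\,\e_s,
\]
with $\bm x|_{D_2}$ the restriction of $\bm x$ to $D_2$ and $\e_v,\e_s$ standard basis vectors; the analogue for $D_1$ has correction $(x_t-x_v)\,\e_u+(x_u-x_s)\,\e_t$. The sign hypothesis makes the correction vector of at least one of these identities entrywise nonnegative, so, as $A(\widetilde G[D_i])$ is irreducible and $\bm x|_{D_i}>0$, Perron--Frobenius gives $\lambda_1(\widetilde G[D^+])\geq\lambda_1(G)$ for a corresponding component $D^+\in\{D_1,D_2\}$.

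\emph{Conclusion, and the main obstacle.} If $\widetilde G[D^+]$ is nonregular it is a connected nonregular graph with maximum degree $\Delta$ on $|D^+|<n$ vertices and spectral radius at least $\lambda_1(G)$; attaching a pendant path with $n-|D^+|$ new vertices at a minimum-degree vertex yields a connected nonregular graph on $n$ vertices with maximum degree $\Delta$ and spectral radius strictly exceeding $\lambda_1(\widetilde G[D^+])\geq\lambda_1(G)$ (adding a pendant edge to a connected graph strictly increases the spectral radius), contradicting $G\in\mathcal G(n,\Delta)$. The only remaining case is $\widetilde G[D^+]$ being $\Delta$-regular — equivalently $D^+\subseteq T$, so that every vertex of degree $<\Delta$ lies in the other component and $\lambda_1(\widetilde G)=\Delta$ — and this is where I expect the real work to lie, since the padding argument fails: a $\Delta$-regular graph cannot be enlarged to a connected nonregular graph without its spectral radius falling below $\Delta$. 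My plan is to rule this configuration out. Here $D^+$ is a set of degree-$\Delta$ vertices joined to the rest of $G$ by exactly two edges, and summing the eigenvalue equations over $D^+$ produces an identity of the shape $(x_s-x_u)+(x_v-x_t)=(\Delta-\lambda_1(G))\sum_{w\in D^+}x_w>0$, which forces the sign hypothesis into the branch $x_s\geq x_u$ and $x_v\geq x_t$; I would then derive a contradiction either from the ordering relations of \autoref{lem:subset-neighbours} and \autoref{lem:T-min-S-max} combined with the eigenvalue equations at $u,t,v,s$, or by showing that the connected, degree-preserving switch $\ls(G;s,t,u,v)=G-uv-st+su+tv$ also meets the hypothesis of \autoref{lem:local-switching}, which — being a valid competitor — forces an equality case of that lemma incompatible with $\widetilde G[D^+]$ being $\Delta$-regular.
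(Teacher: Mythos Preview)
The paper states this proposition without proof, so there is no paper argument to compare against directly. Your overall strategy is natural and the pieces you do carry out are correct: the combinatorial reduction to exactly two components $D_1\ni\{u,t\}$ and $D_2\ni\{v,s\}$ is right (and your argument that neither $D_i\subseteq S$, via \autoref{lem:S-clique} and $tu,sv\notin E(G)$, is clean); the spectral identity for $A(\widetilde G[D_i])\,\bm x|_{D_i}$ is valid; and the padding argument in the nonregular case does produce a connected nonregular $n$-vertex competitor with strictly larger spectral radius, contradicting $G\in\mathcal G(n,\Delta)$.

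The genuine gap is exactly where you flag it: the case $D^+\subseteq T$, i.e.\ $\widetilde G[D^+]$ is $\Delta$-regular. Neither of your two sketched plans closes it as written. For the second plan, the alternative switch $G-uv-st+su+tv$ is indeed well-defined --- since the only $G$-edges between $D_1$ and $D_2$ are $uv$ and $st$, one has $su,tv\notin E(G)$ --- but you have not shown it is connected (the same two-component analysis you gave for $\widetilde G$ shows it could split into a component containing $\{s,u\}$ and one containing $\{t,v\}$), nor that it satisfies the hypothesis $(x_s-x_v)(x_u-x_t)\ge 0$ of \autoref{lem:local-switching}, which does not follow from $x_s\ge x_u$ and $x_v\ge x_t$. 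For the first plan, the appeal to \autoref{lem:subset-neighbours} and \autoref{lem:T-min-S-max} is too vague: those lemmas concern vertices of $S$, whereas here $v,s\in D^+\subseteq T$, and nothing you have established places $u$ or $t$ in $S$. The summation identity $(\Delta-\lambda_1(G))\sum_{w\in D^+}x_w=(x_s-x_u)+(x_v-x_t)$ is correct and does force both differences to be nonnegative with at least one strictly positive, but by itself this is not a contradiction. So the proposition remains unproved in this case.
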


\begin{lemma}\label{lem:G-delete-vertices-connected}
For any positive integer $k$, we have $G\setminus\mathcal{M}_k$ is connected.
\end{lemma}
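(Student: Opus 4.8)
\textbf{Proof proposal for Lemma~\ref{lem:G-delete-vertices-connected}.}

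The plan is to induct on $k$, with the base case $k=0$ being the hypothesis that $G$ itself is connected. For the inductive step, assume $H := G\setminus\mathcal{M}_{k-1}$ is connected; I want to show $H' := H\setminus\{v_k\} = G\setminus\mathcal{M}_k$ is connected. Suppose not. Then deleting $v_k$ from $H$ disconnects it, so $v_k$ is a cut vertex of $H$ and $H'$ has at least two components, say $C_1, C_2, \ldots$. The idea is to use the fact that $v_k$ has the largest Perron entry among the vertices of $H$ (since $\bm{x}$ is decreasing and $\mathcal{M}_{k-1}$ consists of the $k-1$ largest entries), together with a local switching or edge-rotation argument, to build a connected nonregular graph on the same vertex set with strictly larger spectral radius, contradicting $G\in\mathcal{G}(n,\Delta)$.

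Concretely, I would proceed as follows. Pick a component $C$ of $H'$ that does \emph{not} contain the globally-largest-indexed vertex of $H'$ available for comparison — more usefully, pick the component $C$ not containing $v_{k+1}$ if $v_{k+1}\notin C$, and otherwise swap roles. Among the neighbors of $v_k$ in $H$, at least one lies in each component $C_i$ (else $v_k$ would not be a cut vertex separating them). Let $w\in C$ be a neighbor of $v_k$ lying in the component we wish to ``reattach,'' and let $z$ be a neighbor of $v_k$ in another component $C'$. Now I would look for a vertex of $C'$ with Perron entry exceeding $x_w$, or exceeding $x_z$, to drive a rotation $G + (\text{edge into }C) - (\text{edge})$ via Lemma~\ref{lem:rotation}, or a switching $\ls(\cdot)$ via Lemma~\ref{lem:local-switching}, that keeps the graph connected (using Proposition~\ref{prop:preserving-connected}), preserves nonregularity, and does not decrease — and in fact, by the equality conditions, strictly increases — $\lambda_1$. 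The cleanest route is probably: since $v_k$ has the maximum Perron entry in $H$, for any edge $v_kw$ with $w\in C$ and any nonadjacent pair, a rotation moving an edge to land inside $C$ in a way that connects $C$ to the rest cannot decrease $\lambda_1$; one then argues the inequality must be strict, or that iterating reaches a strictly larger graph, contradicting extremality.

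The main obstacle I anticipate is the bookkeeping around \emph{nonregularity} and \emph{connectivity being simultaneously preserved}: an edge rotation changes two degrees, so one must ensure the rotated graph still has a vertex of degree $<\Delta$ (this should follow since $|S|\le\Delta-1<n$ and rotations are local) and still has maximum degree exactly $\Delta$ (one must rotate edges incident to vertices of degree $\Delta$ carefully, or instead use degree-preserving local switchings, for which Lemma~\ref{lem:local-switching} and Proposition~\ref{prop:preserving-connected} are tailor-made). A secondary subtlety is the equality case of Lemma~\ref{lem:local-switching}: if the switching yields exactly $\lambda_1(\widetilde G)=\lambda_1(G)$, then $\widetilde G$ is also extremal, and one needs $\widetilde G$ to be ``more connected'' or to have a strictly smaller cut so that a finite iteration terminates in a contradiction — a standard but slightly delicate extremal-graph descent argument. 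I expect that by choosing the switching so that the component $C$ gets glued to the giant component while keeping all Perron-entry comparisons on the favorable side, one obtains a connected extremal graph in which $v_k$ is no longer a cut vertex of $G\setminus\mathcal{M}_{k-1}$, and repeating drives the number of components of $G\setminus\mathcal{M}_k$ down to one.
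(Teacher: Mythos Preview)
Your approach via induction and local switching could perhaps be made to work, but as you yourself note, it is incomplete: you have not specified a concrete switching that simultaneously preserves the degree sequence, keeps the graph connected and nonregular, and strictly increases $\lambda_1$. The ``extremal descent'' you sketch at the end is genuinely delicate, and you have not carried it out.

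More importantly, you are missing a one-line argument that bypasses all of this machinery. Suppose $G\setminus\mathcal{M}_k$ is disconnected. Since $S$ induces a clique (Lemma~\ref{lem:S-clique}) and the vertices of $S$ carry the smallest Perron entries (Lemma~\ref{lem:T-min-S-max}), all of $S$ lies in a single component of $G\setminus\mathcal{M}_k$ whenever $k<n-|S|$; and for $k\ge n-|S|$ the remaining vertices form a subclique of $S$, which is trivially connected. So take a component $H$ of $G\setminus\mathcal{M}_k$ that does \emph{not} contain $S$, and let $v\in V(H)$ minimise $x_v$ over $H$. Every neighbour of $v$ in $G$ lies either in $\mathcal{M}_k$ (where entries are $\ge x_v$) or in $H$ (where entries are $\ge x_v$ by choice of $v$), so $v$ is a local minimum. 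But $v\in T$ since $H\cap S=\emptyset$, and the paper has already observed (just before Lemma~\ref{lem:T-min-S-max}) that no vertex of $T$ can be a local minimum: indeed $\lambda_1(G)\,x_v=\sum_{w\sim v}x_w\ge \Delta\,x_v$ would force $\lambda_1(G)\ge\Delta$. That is the whole proof.

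The moral: rather than trying to \emph{improve} $G$ by a switching, look for a structural impossibility directly in the Perron vector. The local-minimum observation does all the work and avoids every bookkeeping issue you anticipated.
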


\begin{proof}
We prove this lemma by contradiction. Recall that $S$ induced a clique in $G$. Hence, 
this assertion is clear for $k\geq n - |S|$ by \autoref{lem:T-min-S-max}. In what 
follows, we assume $k < n - |S|$. Thus, $S$ must be contained in a unique connected 
components of $G\setminus\mathcal{M}_k$. Let $H$ be another connected components of 
$G\setminus\mathcal{M}_k$ not containing $S$, and let $v$ be a vertex in $H$ such that 
$x_v = \min\{x_u: u\in V(H_2)\}$. Then $v\in T$ is a local minimum vertex in $G$, a 
contradiction completing the proof of this lemma.
\end{proof}

\section{Structure of extremal graphs in $\mathcal{G}(n,3)$}
\label{sec:structure-Delta=3}

The aim of this section is to determine the structure of graphs attaining the maximum
spectral radius among all connected nonregular graphs with $n$ vertices and maximum 
degree $3$, as well as give a proof of \autoref{conj:degree-sequence} for $\Delta = 3$.

\subsection{Transferring the extremal graphs into the path-like structure}

Our first goal is to prove that we can reconnect the first few vertices of $G$ to obtain 
one of the two graphs $G_1$ and $G_2$ as illustrated in Fig.\,\ref{fig:G1-G2-G3}\,\subref{subfig:G1} 
and \subref{subfig:G2}. A local switching is said to be {\em proper} if it satisfies 
the condition of \autoref{lem:local-switching}.

\begin{lemma}\label{lem:transfer-first-few-vertices-Delta-3}
Let $n\geq 8$. The induced subgraph on the first few vertices in $G$ can be transferred 
into $G_1$ or $G_2$.
\end{lemma}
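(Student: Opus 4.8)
The plan is to use the ``Principle of Proximity'' (\autoref{lem:common-use}) together with the structural facts already established—that $G[S]$ is a clique, that $|S| \le \Delta - 1 = 2$, that $N_T(\cdot)$-sets are nested on $S$ (\autoref{lem:subset-neighbours}), and that the minimum entry on $T$ strictly exceeds the maximum entry on $S$ (\autoref{lem:T-min-S-max})—to pin down the induced subgraph on the top vertices $v_1,v_2,\dots$ of the Perron ordering. Since $\Delta = 3$, every vertex of $T$ has degree $3$ and every vertex of $S$ has degree $2$ (if $|S| = 1$, $G$ would be nonregular only if that one vertex has degree $1$ or $2$; the parity discussion from \autoref{conj:degree-sequence} suggests $|S| \le 2$ and $\delta \in \{1,2\}$). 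First I would apply \autoref{coro:common-use-2} to connect $v_1$ to $v_2$ and $v_3$, and then repeatedly apply \autoref{lem:common-use}/\autoref{coro:common-use-1} to force $v_2, v_3, v_4, \dots$ to be adjacent to their Perron-successors, building a ``caterpillar/path-like'' skeleton from the high-entry end. The output of this process should be exactly one of two local configurations at the head of the graph—the blocks $G_1$ and $G_2$ in Fig.~\ref{fig:G1-G2-G3}—with the dichotomy governed by how $v_1$'s three neighbors sit relative to $v_2, v_3, v_4$ (e.g.\ whether $v_1 \sim v_4$ or the third neighbor of $v_1$ lies further down the order).

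Concretely, I would argue in stages. Stage one: using \autoref{coro:common-use-2}, assume $v_1 \sim v_2, v_3$; let $v_1$'s third neighbor be $v_\ell$ with $\ell \ge 4$. Stage two: examine $v_2$. By the Perron inequalities and \autoref{lem:rotation}, the neighbors of $v_2$ outside $\{v_1\}$ must be among the next few vertices; apply \autoref{lem:common-use} with $k = 2$ or $k = 3$ to slide $v_2$'s other neighbors to be $v_3$ and/or $v_4$, checking that the hypothesis $N(v_i) \cap \mathcal{M}_k = \emptyset$ can be met after the earlier switchings (this is where \autoref{prop:preserving-connected} is needed to ensure we never disconnect). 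Stage three: iterate to $v_3, v_4, \dots$, at each step using the fact that the induced degree in $\mathcal{M}_k$ is currently less than $\Delta = 3$ to license a proximity move, until the head of $G$ stabilizes. The two cases $G_1$ and $G_2$ arise because the ``defect'' vertex of $S$ (degree $1$ or $2$) can attach to the path in one of two inequivalent ways near $v_1$; I would handle the placement of $S$ using \autoref{lem:T-min-S-max} (so $S$ is near the tail, not the head, except possibly for one low-entry vertex that the caterpillar construction absorbs) and \autoref{lem:subset-neighbours} to fix which $T$-vertices are adjacent to the $S$-clique.

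The main obstacle I anticipate is bookkeeping the adjacency hypotheses of \autoref{lem:common-use} across a sequence of switchings: each proximity move requires that the ``target'' vertices $v_{k+s+1},\dots,v_{k+s+\Delta-d}$ have no neighbors in $\mathcal{M}_k$, and this condition must be re-verified (or re-established by a prior switching) after every step, since switchings permute which edges are present. For $\Delta = 3$ the arithmetic is mild—$\Delta - d$ is $1$ or $2$—but one must be careful that the chain of switchings terminates and genuinely produces $G_1$ or $G_2$ rather than some third configuration; ruling out spurious alternatives will require a short case analysis on the position $\ell$ of $v_1$'s third neighbor and on whether $v_2 \sim v_3$. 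A secondary subtlety is that $\lambda_1$ is only guaranteed non-decreasing, so to conclude $G$ \emph{is} (switching-equivalent to) $G_1$ or $G_2$ rather than merely ``can be transferred to,'' one leans on the equality case of \autoref{lem:local-switching} together with $G \in \mathcal{G}(n,\Delta)$; the lemma as stated only claims the weaker ``can be transferred,'' so the equality analysis can be deferred. The hypothesis $n \ge 8$ is presumably there to guarantee enough vertices $v_2,\dots,v_{\Delta+2}$ exist with the right Perron ordering for the proximity moves to have valid targets, and I would check that small-$n$ exceptions are exactly the ones this bound excludes.
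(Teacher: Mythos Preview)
Your overall strategy---iterated proximity moves via \autoref{lem:common-use} and \autoref{coro:common-use-1}---is the same as the paper's, but two of your structural guesses are wrong and would derail the argument.

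First, for $\Delta=3$ \autoref{coro:common-use-2} connects $v_1$ to $v_2,v_3,v_4$; there is no free parameter ``$v_\ell$ with $\ell\ge 4$'' for the third neighbor of $v_1$. More importantly, the $G_1$/$G_2$ dichotomy has nothing to do with $S$. By \autoref{lem:T-min-S-max} the vertices of $S$ carry the \emph{smallest} Perron entries, so for $n\ge 8$ they sit at the tail $\{v_{n-1},v_n\}$ and never appear among $v_1,\dots,v_7$; the lemmas you cite about $S$ (\autoref{lem:subset-neighbours}, \autoref{lem:T-min-S-max}) are simply not used here. The actual branch point is whether, after forcing $v_5\sim v_3$, one also has $v_5\sim v_4$: if yes you get $G_1$ (on five vertices), if no you push on to $v_6,v_7$ and land in $G_2$ (on seven vertices).

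Second, the proximity principle alone is not enough to force $v_2\sim v_3$ or $v_2\sim v_4$. The paper's step~B uses \autoref{lem:G-delete-vertices-connected}: if $v_2\nsim v_3$ then in $G\setminus\{v_1\}$ the two remaining neighbors of each must interlock in a way that disconnects $G\setminus\{v_1\}$, a contradiction. Step~C (forcing $v_2\sim v_4$) is a genuine case analysis on the neighbors of $v_2,v_3,v_4$ outside $\{v_1\}$, repeatedly exhibiting an explicit local switching $\ls(\cdot,\cdot,\cdot,\cdot)$ in each subcase; \autoref{lem:common-use} does not apply directly because its hypothesis $N(v_i)\cap\mathcal{M}_k=\emptyset$ typically fails once some of $v_2,v_3,v_4$ already share neighbors. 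You should drop the $S$-based plan, start from $N(v_1)=\{v_2,v_3,v_4\}$, and prepare for a hands-on case split at steps~B--D rather than a uniform proximity iteration.
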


\begin{proof}
We prove this lemma by the following four steps.

{\bfseries A.} Connecting $v_1$ to $v_i$, $i=2,3,4$. This can be done by \autoref{coro:common-use-2}.

{\bfseries B.} Connecting $v_2$ to $v_3$. By \autoref{lem:G-delete-vertices-connected}, 
$G\setminus\{v_1\}$ is connected. If $v_2\nsim v_3$, then either $v_2$, $v_3$ share two 
neighbors which are adjacent in $G\setminus\{v_1\}$, or $v_2$, $v_3$ have disjoint 
neighbors and each neighbor of $v_2$ is adjacent to each neighbor of $v_3$ in $G\setminus\{v_1\}$.
This contradicts the fact that $G\setminus\{v_1\}$ is connected.    

{\bfseries C.} Connecting $v_2$ to $v_4$. Let $u$ be the third neighbor of $v_2$ other 
than $v_1$, $v_3$; and $v$, $w$ be the two neighbors of $v_4$ other than $v_1$. We may 
assume that $v_3$ is not adjacent to $v_4$, for otherwise, the proper local switching 
$\ls(v_2,u,v_4,v_3)$ connects $v_2$ to $v_4$. We can also assume $v_3\nsim u$. Otherwise, 
at least one of $v$ and $w$, say $v$, is not adjacent to $u$. Then $\ls (v_2,u,v_4,v)$ 
connects $v_2$ to $v_4$.

Let $\ell$ be the third neighbor of $v_3$ other than $v_1$ and $v_2$. We consider the
following two cases.

\noindent {\bfseries Case 1.} $u\nsim v_4$. We may assume $u\sim v$ and $u\sim w$.
Otherwise, without loss of generality we assume $u\nsim w$, then $\ls(v_2,u, v_4,w)$ 
makes $v_2$ adjacent to $v_4$. Since $G\setminus\{v_1,v_2,v_3\}$ is connected, we see 
$v\nsim w$. We further assume $v_3\nsim v$ and $v_3\nsim w$. Otherwise, if $v_3\sim w$, 
then $\ls(v_3,w,v_4,v)$ makes $v_3\sim v_4$; if $v_3\sim v$, then $\ls (v_3,v,v_4,w)$ 
makes $v_3\sim v_4$, contrary to our assumption. Finally, we can transfer $G$ to the 
one such that $\ell\sim v$ and $\ell\sim w$, contradicting to \autoref{prop:preserving-connected}.

\noindent {\bfseries Case 2.} $u\sim v_4$. Since we can assume $u$ is adjacent to the 
third neighbor $v$ of $v_4$ other than $v_1$, $u$, we have $v_3\nsim v$. Finally, 
$\ls (v_3,\ell, v_4,u)$ makes $v_3\sim v_4$, a contradiction to our assumption.

Based on previous construction, we shall reconnect the subsequent few vertices.

{\bfseries D.} By \autoref{coro:common-use-1}, we may assume $v_5\sim v_3$. If $v_5\sim v_4$, 
we get the graph $G_1$. If $v_5\nsim v_4$, we may assume $v_4\sim v_6$ by \autoref{lem:common-use}. 
Furthermore, we can connect $v_5$ to $v_6$. Indeed, if $v_5$ is not adjacent to $v_6$, there 
exist $u\in N(v_5)\setminus\{v_3\}$ and $v\in N(v_6)\setminus\{v_4\}$ such that $u\nsim v$. 
Then $\ls (v_5,u,v_6,v)$ connects $v_5$ to $v_6$.

Using \autoref{coro:common-use-1} again, we assume $v_5\sim v_7$. If $v_6\sim v_7$, we 
obtain $G_2$. If $v_6\nsim v_7$, then $\ls (v_4,v_6,v_5,v_7)$ transfers the first fewer
vertices of $G$ into $G_1$. This completes the proof of the lemma.
\end{proof}

\begin{figure}[htbp]
\centering
\subcaptionbox{$G_1$ \label{subfig:G1}}[.3\textwidth]{%
\begin{tikzpicture}[scale=0.9]
\coordinate (u1) at (1.7,0);
\coordinate (v1) at (0,0.5);
\coordinate (w1) at (0,-0.5);
\coordinate (v2) at (1,0.5);
\coordinate (w2) at (1,-0.5);

\draw (v1) -- (v2) -- (u1) -- (w2) -- (w1) -- (v2);
\draw (w2) -- (v1) -- (w1);
\draw (u1) -- (1.9,0);

\node[above=2pt, scale=0.9] at (u1) {$v_5$};
\node[above=2pt, scale=0.9] at (v1) {$v_1$};
\node[above=2pt, scale=0.9] at (v2) {$v_3$};
\node[below=2pt, scale=0.9] at (w1) {$v_2$};
\node[below=2pt, scale=0.9] at (w2) {$v_4$};

\foreach \i in {u1,v1,v2,w1,w2}
\filldraw (\i) circle (0.07);
\end{tikzpicture}
}
\subcaptionbox{$G_2$ \label{subfig:G2}}[.3\textwidth]{%
\begin{tikzpicture}[scale=0.9]
\coordinate (u1) at (2.7,0);
\coordinate (v1) at (0,0.5);
\coordinate (w1) at (0,-0.5);
\coordinate (v2) at (1,0.5);
\coordinate (w2) at (1,-0.5);
\coordinate (v3) at (2,0.5);
\coordinate (w3) at (2,-0.5);

\draw (v1) -- (v2) -- (v3) -- (u1) -- (w3) -- (w2) -- (w1) -- (v1) -- (w2);
\draw (v2) -- (w1);
\draw (v3) -- (w3);
\draw (u1) -- (2.9,0);

\node[above=2pt, scale=0.9] at (u1) {$v_7$};
\node[above=2pt, scale=0.9] at (v1) {$v_1$};
\node[above=2pt, scale=0.9] at (v2) {$v_3$};
\node[above=2pt, scale=0.9] at (v3) {$v_5$};
\node[below=2pt, scale=0.9] at (w1) {$v_2$};
\node[below=2pt, scale=0.9] at (w2) {$v_4$};
\node[below=2pt, scale=0.9] at (w3) {$v_6$};

\foreach \i in {u1,v1,v2,v3,w1,w2,w3}
\filldraw (\i) circle (0.07);
\end{tikzpicture}
}
\subcaptionbox{$G_3$ \label{subfig:G3}}[.3\textwidth]{%
\begin{tikzpicture}[scale=0.9]
\coordinate (o) at (0,0);
\coordinate (u1) at (1,0);
\coordinate (u2) at (2.4,0);

\coordinate (v1) at (1.7,0.5);
\coordinate (w1) at (1.7,-0.5);

\draw (o) -- (u1) -- (v1) -- (u2) -- (w1) -- (v1);
\draw (u1) -- (w1);
\draw (-0.15,0.15) -- (o) -- (-0.15,-0.15);
\draw (u2) -- (2.6,0);

\node[above=2pt, scale=0.9] at (o) {$v_{k+1}$};
\node[above=2pt, scale=0.9] at (0.9,0) {$v_{k+2}$};
\node[above=2pt, scale=0.9] at (v1) {$v_{k+3}$};
\node[below=2pt, scale=0.9] at (w1) {$v_{k+4}$};
\node[above=2pt, scale=0.9] at (2.6,0) {$v_{k+5}$};
\foreach \i in {o,u1,u2,v1,w1}
\filldraw (\i) circle (0.07);
\end{tikzpicture}
}
\caption{Graphs $G_1$, $G_2$ and $G_3$}
\label{fig:G1-G2-G3}
\end{figure}
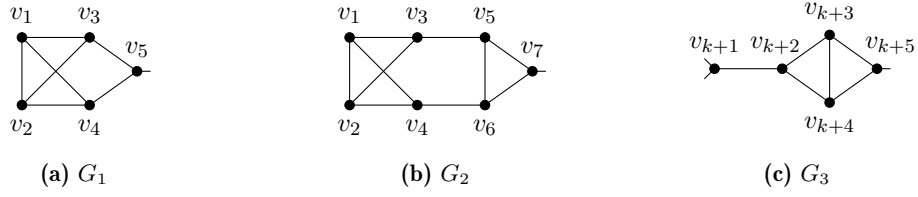

In what follows, we shall continue to reconnect the remaining vertices. In the process 
of reconnecting, assume we have already built some specific subgraph on $\mathcal{M}_k$, 
then we continue to build a desired subgraph on $V(G)\setminus\mathcal{M}_k$ in a way 
not to alter the subgraph already constructed on $\mathcal{M}_k$. 

Before continuing, we need to rule out a special structure.

\begin{lemma}\label{lem:forbidden-D-1}
The graph $G$ does not contain the graph $D_1$ shown in Fig.\,\ref{fig:D1-D2} as an induced subgraph. 
\end{lemma}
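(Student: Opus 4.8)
\textbf{Proof proposal for \autoref{lem:forbidden-D-1}.}

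The plan is to argue by contradiction: suppose $G$ contains $D_1$ as an induced subgraph, and then exhibit a sequence of proper local switchings (and possibly edge rotations) that strictly increases $\lambda_1(G)$, or alternatively that disconnects $G$ while not decreasing $\lambda_1(G)$, either of which contradicts $G\in\mathcal{G}(n,\Delta)$ with $\Delta=3$. Since we do not yet see the picture of $D_1$, the reasoning I would carry out is the following generic template, which is exactly the one used repeatedly in the preceding lemmas. First I would label the vertices of the copy of $D_1$ together with their external neighbors (each vertex of $G$ has degree at most $3$, so every vertex in the copy has at most a controlled number of edges leaving the copy). Then I would sort these vertices by their Perron weights, using the global ordering $x_1\geq x_2\geq\cdots\geq x_n$ together with \autoref{lem:subset-neighbours} and \autoref{lem:T-min-S-max} to pin down which vertices of $D_1$ can be ``heavy'' (in $T$ or close to $S$) and which must be ``light''.

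The core step is to locate a pair of independent edges $uv$, $st$ inside or incident to the copy of $D_1$ for which the sign condition $(x_s-x_u)(x_v-x_t)\geq 0$ of \autoref{lem:local-switching} holds, and for which the switched graph $\ls(G;s,t,v,u)$ either is non-isomorphic to $G$ as a weighted graph (forcing strict inequality, since equality in \autoref{lem:local-switching} requires $x_s=x_u$ and $x_v=x_t$, which the Perron equations will rule out once we compare degrees of the endpoints) or contains a strictly addable edge afterwards. Concretely, inside a $3$-regular-ish block the vertex of smallest weight in the block plays the role of a ``local sink'': I would try to rotate an edge away from that sink toward a higher-weight vertex via \autoref{lem:rotation}, or, if no single rotation works because of degree saturation, combine two edge-rotations into one degree-preserving local switch. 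The auxiliary facts \autoref{prop:preserving-connected} (the result stays connected) and \autoref{lem:G-delete-vertices-connected} (deleting the top $k$ vertices keeps things connected) will be used to guarantee that the switched graph is still a legitimate connected nonregular graph in $\mathcal{G}(n,3)$, so that the strict increase in $\lambda_1$ is a genuine contradiction; alternatively, if every candidate switch keeps $\lambda_1$ equal, I would push the argument until the only surviving configuration forces $G$ to be disconnected, again contradicting \autoref{prop:preserving-connected}.

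The main obstacle I anticipate is bookkeeping the boundary of the copy of $D_1$: a vertex of $D_1$ may send an edge to an outside vertex whose Perron weight is not comparable to the internal weights by the crude global ordering alone, so I would need a short case analysis (mirroring Cases 1 and 2 in the proof of \autoref{lem:transfer-first-few-vertices-Delta-3}) splitting on whether such external neighbors coincide, are adjacent to each other, or are adjacent to prescribed internal vertices. In each branch the goal is the same: produce the switch $\ls(s,t,v,u)$ with the correct sign and a strict gain. A secondary subtlety is the equality case of \autoref{lem:local-switching}; to close it I would invoke the eigenvalue equations at the two affected vertices, noting that if $x_s=x_u$ and $x_v=x_t$ then the degrees and neighbor-weight sums must match, which the explicit local structure of $D_1$ forbids. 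Once a single valid switch is found, the contradiction is immediate, so the proof should be short modulo this finite case check.
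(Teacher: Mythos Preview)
Your strategy is correct in outline---find a proper local switching and rule out the equality case via the eigenvalue equations---but the proposal as written is a template rather than a proof, and it anticipates far more work than is actually needed.

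Once you know what $D_1$ is (vertices $v_k,\ldots,v_{k+5}$ with edges $v_kv_{k+2}$, $v_{k+1}v_{k+3}$, $v_{k+2}v_{k+3}$, $v_{k+2}v_{k+4}$, $v_{k+3}v_{k+5}$), the paper's argument is three lines. The single switch $\ls(v_{k+1},v_{k+3},v_{k+2},v_{k+4})$ is proper because $x_{k+1}\geq x_{k+4}$ and $x_{k+2}\geq x_{k+3}$ hold by the global ordering of the Perron vector. For strictness one only needs to exclude $x_{k+1}=x_{k+4}$; but if $x_{k+1}=\cdots=x_{k+4}$ then the eigenvalue equation at $v_{k+2}$ gives $x_k=(\lambda_1(G)-2)x_{k+2}<x_{k+2}$, contradicting $x_k\geq x_{k+2}$. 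That is the entire proof.

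Several pieces of your plan are misplaced here. \autoref{lem:subset-neighbours} and \autoref{lem:T-min-S-max} concern the small-degree set $S$ and play no role, since $D_1$ sits in the interior of the path-like structure with all vertices of degree $3$. No case analysis on external neighbors is required: the relevant Perron inequalities come straight from the index ordering $x_{k}\geq x_{k+1}\geq\cdots$, not from any comparison with outside vertices. Finally, the equality case does not require matching ``degrees and neighbor-weight sums'' in any subtle way; a single eigenvalue equation at one vertex suffices. So your framework would eventually succeed, but you should expect the actual argument to collapse to one switch plus one eigenvalue check.
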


\begin{proof}
Assume to the contrary that $G$ contains such a structure. We first show that
$x_{k+1} > x_{k+4}$. If not, we have $x_{k+1}=\cdots=x_{k+4}$. By eigenvalue equation 
for $v_{k+2}$, we see 
\[
x_k = (\lambda_1(G) - 2) x_{k+2} < x_{k+2},
\]
a contradiction yielding $x_{k+1} > x_{k+4}$. By \autoref{lem:local-switching}, 
$\lambda_1(\ls (G; v_{k+1},v_{k+3}, v_{k+2},v_{k+4})) > \lambda_1(G)$, a contradiction
finishing the proof of the lemma.
\end{proof}

\begin{figure}[htbp]
\centering
\subcaptionbox{$D_1$ \label{subfig:D1}}[.25\textwidth]{%
\begin{tikzpicture}[scale=0.9]
\coordinate (v1) at (0,0.5);
\coordinate (w1) at (0,-0.5);
\coordinate (v2) at (1,0.5);
\coordinate (w2) at (1,-0.5);
\coordinate (v3) at (2,0.5);
\coordinate (w3) at (2,-0.5);

\draw (v1) -- (v2) -- (v3);
\draw (w1) -- (w2) -- (w3);
\draw (v2) -- (w2);
\draw (-0.2,0.5) -- (v1) -- (0,0.3); 
\draw (-0.2,-0.5) -- (w1) -- (0,-0.3);
\draw (2.2,0.5) -- (v3) -- (2,0.3);
\draw (2.2,-0.5) -- (w3) -- (2,-0.3);

\node[above=2pt, scale=0.9] at (v1) {$v_{k}$};
\node[above=2pt, scale=0.9] at (v2) {$v_{k+2}$};
\node[above=2pt, scale=0.9] at (v3) {$v_{k+4}$};
\node[below=2pt, scale=0.9] at (w1) {$v_{k+1}$};
\node[below=2pt, scale=0.9] at (w2) {$v_{k+3}$};
\node[below=2pt, scale=0.9] at (w3) {$v_{k+5}$};

\foreach \i in {v1,v2,v3,w1,w2,w3}
\filldraw (\i) circle (0.07);
\end{tikzpicture}
}
\subcaptionbox{$D_2$ \label{subfig:D2}}[.32\textwidth]{%
\begin{tikzpicture}[scale=0.9]
\coordinate (o) at (0,0);
\coordinate (u1) at (1,0);
\coordinate (u2) at (3.5,0);

\coordinate (v0) at (-0.8,0.5);
\coordinate (w0) at (-0.8,-0.5);
\coordinate (v1) at (1.8,0.5);
\coordinate (w1) at (1.8,-0.5);
\coordinate (v2) at (2.8,0.5);
\coordinate (w2) at (2.8,-0.5);

\draw (o) -- (u1) -- (v1) -- (v2);
\draw (u1) -- (w1) -- (w2);
\draw (v1) -- (w1);
\draw (v0) -- (o) -- (w0);
\draw (v2) -- (w2) -- (u2) -- (v2);
\draw (u2) -- (3.7,0);
\draw (-1,0.5) -- (v0) -- (-0.8,0.3);
\draw (-1,-0.5) -- (w0) -- (-0.8,-0.3);

\node[above=2pt, scale=0.9] at (o) {$v_{k+1}$};
\node[above=2pt, scale=0.9] at (u1) {$v_{k+2}$};
\node[above=2pt, scale=0.9] at (v0) {$u$};
\node[below=2pt, scale=0.9] at (w0) {$v$};
\node[above=2pt, scale=0.9] at (v1) {$v_{k+3}$};
\node[below=2pt, scale=0.9] at (w1) {$v_{k+4}$};
\node[above=2pt, scale=0.9] at (v2) {$v_{k+5}$};
\node[below=2pt, scale=0.9] at (w2) {$v_{k+6}$};

\foreach \i in {o,u1,u2,v0,v1,v2,w0,w1,w2}
\filldraw (\i) circle (0.07);
\end{tikzpicture}
}
\subcaptionbox{$\widetilde{D}_2$ \label{subfig:tilde-D2}}[.32\textwidth]{%
\begin{tikzpicture}[scale=0.9]
\coordinate (o) at (0,0);
\coordinate (u1) at (1,0);
\coordinate (u2) at (2.6,0);

\coordinate (v-1) at (-1.8,0.5);
\coordinate (w-1) at (-1.8,-0.5);
\coordinate (v0) at (-0.8,0.5);
\coordinate (w0) at (-0.8,-0.5);
\coordinate (v1) at (1.8,0.5);
\coordinate (w1) at (1.8,-0.5);

\draw (o) -- (u1) -- (v1);
\draw (u1) -- (w1);
\draw (o) -- (v0) -- (v-1);
\draw (o) -- (w0) -- (w-1);
\draw (v0) -- (w0);
\draw (-2,0.5) -- (v-1) -- (-1.8,0.3);
\draw (-2,-0.5) -- (w-1) -- (-1.8,-0.3);
\draw (u2) -- (v1) -- (w1) -- (u2) -- (2.8,0);

\node[above=2pt, scale=0.9] at (o) {$v_{k+3}$};
\node[above=2pt, scale=0.9] at (u1) {$v_{k+4}$};
\node[above=2pt, scale=0.9] at (v-1) {$u$};
\node[below=2pt, scale=0.9] at (w-1) {$v$};
\node[above=2pt, scale=0.9] at (v0) {$v_{k+1}$};
\node[below=2pt, scale=0.9] at (w0) {$v_{k+2}$};
\node[above=2pt, scale=0.9] at (v1) {$v_{k+5}$};
\node[below=2pt, scale=0.9] at (w1) {$v_{k+6}$};

\foreach \i in {o,u1,u2,v-1,v0,v1,w-1,w0,w1}
\filldraw (\i) circle (0.07);
\end{tikzpicture}
}
\caption{Graphs $D_1$, $D_2$ and $\widetilde{D}_2$}
\label{fig:D1-D2}
\end{figure}
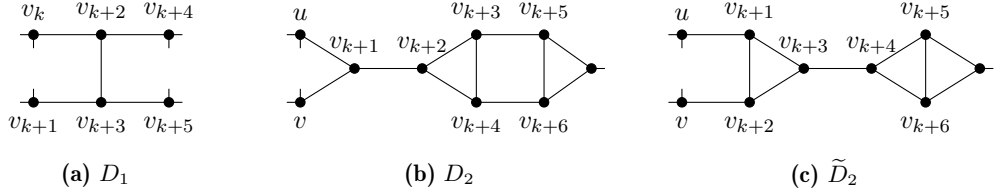

\begin{lemma}\label{lem:induced-middle-vertices-Delta=3}
Let $H$ be an induced subgraph of $G$ on vertices $v_{k+1}$,$\ldots$,$v_{k+5}$, where 
the first vertex $v_{k+1}$ has degree one in $H$. Let $N_G(v_i)\cap\mathcal{M}_k=\emptyset$, 
$i\geq k+2$. Then $H$ can be transferred into $G_3$ as an induced subgraph of $G$, not 
decreasing the largest eigenvalue of $G$.
\end{lemma}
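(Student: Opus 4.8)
The plan is to massage the induced subgraph $H$ on $v_{k+1},\ldots,v_{k+5}$ — where $v_{k+1}$ is a leaf in $H$, so the remaining four vertices $v_{k+2},\ldots,v_{k+5}$ each have degree $3$ once we account for the pendant edge at $v_{k+2}$ — into the target gadget $G_3$ of Fig.~\ref{fig:G1-G2-G3}\,\subref{subfig:G3}. Since $\Delta=3$ and $N_G(v_i)\cap\mathcal{M}_k=\emptyset$ for $i\ge k+2$, all of $v_{k+2},\ldots,v_{k+5}$ have their neighbours confined to $V(G)\setminus\mathcal{M}_k$, so every local switching I perform among these vertices (and their neighbours outside $\mathcal{M}_k$) leaves the already-built subgraph on $\mathcal{M}_k$ untouched; by \autoref{prop:preserving-connected} it also preserves connectivity. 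The guiding principle throughout is \autoref{lem:local-switching}: because the Perron vector is decreasing, for indices $i<j$ we have $x_{v_i}\ge x_{v_j}$, so a switching $\ls(v_i,\,\cdot\,,v_j,\,\cdot\,)$ of the form required is automatically proper whenever we can find the fourth vertex with the right (non)adjacency. \autoref{coro:common-use-1}, \autoref{lem:common-use}, and the just-proved \autoref{lem:forbidden-D-1} are the workhorses.

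First I would use \autoref{coro:common-use-1} (with the window starting at $k+1$) to force $v_{k+1}\sim v_{k+2}$; this realizes the pendant edge of $G_3$. Then, working inside $G\setminus\mathcal{M}_{k+1}$ — which is connected by \autoref{lem:G-delete-vertices-connected} — I would build the triangle-bowtie on $v_{k+2},v_{k+3},v_{k+4}$: use \autoref{coro:common-use-1} again to get $v_{k+2}\sim v_{k+3}$, then $v_{k+2}\sim v_{k+4}$ (the third edge at $v_{k+2}$ besides the pendant and $v_{k+3}$), and $v_{k+3}\sim v_{k+4}$. Each step is the familiar dichotomy: either the edge is already present, or $v_{k+2}$ (resp. $v_{k+3}$) has a neighbour $u$ and $v_{k+3}$ (resp. $v_{k+4}$) has a neighbour $w$ outside $\mathcal{M}_{k+1}$ with $u\nsim w$ — which must happen by connectivity of the relevant truncated graph — and $\ls(v_{k+2},u,v_{k+3},w)$ (a proper switching, as $x_{v_{k+2}}\ge x_{v_{k+3}}$) installs it, exactly as in steps B–D of \autoref{lem:transfer-first-few-vertices-Delta-3}. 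After this, $v_{k+2}$ has full degree $3$, and $v_{k+3},v_{k+4}$ each still need one more neighbour; by \autoref{lem:common-use} (applied to $v_{k+3}$, then $v_{k+4}$, with the appropriate shift $s$) I may assume $v_{k+3}\sim v_{k+5}$ and either $v_{k+4}\sim v_{k+5}$ or $v_{k+4}\sim v_{k+6}$.

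If $v_{k+3}\sim v_{k+5}$ and $v_{k+4}\sim v_{k+5}$, then $v_{k+5}$ closes the second triangle and we have exactly $G_3$ — done, with $\lambda_1(G)$ never decreasing. The remaining case is $v_{k+3}\sim v_{k+5}$, $v_{k+4}\nsim v_{k+5}$, and (by \autoref{lem:common-use}) $v_{k+4}\sim v_{k+6}$. Now observe that $v_{k+2},v_{k+3},v_{k+4},v_{k+5},v_{k+6}$ together with the pendant neighbour of $v_{k+2}$ form precisely the forbidden configuration $D_1$ of Fig.~\ref{fig:D1-D2}\,\subref{subfig:D1} (two parallel paths joined by the rung $v_{k+3}v_{k+4}$, with pendant-type edges at the ends): here the ``upper'' path is (pendant)$\,$–$\,v_{k+2}$–$v_{k+3}$–$v_{k+5}$ rotated appropriately, and the ``lower'' end $v_{k+6}$ hangs off $v_{k+4}$. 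More carefully, I would check that the edges forced so far make $\{v_{k+2},v_{k+3},v_{k+4},v_{k+5},v_{k+6}\}$ (plus one extra anchor vertex) induce a copy of $D_1$ with the roles $v_k\mapsto$ anchor, $v_{k+2}\mapsto v_{k+3}$ (of $D_1$), etc.; then \autoref{lem:forbidden-D-1} is contradicted, ruling this case out. Alternatively, if the index bookkeeping does not line up cleanly with $D_1$, I would instead finish directly as in the last line of \autoref{lem:transfer-first-few-vertices-Delta-3}: use \autoref{coro:common-use-1} once more to get $v_{k+5}\sim v_{k+7}$ and then apply the proper switching $\ls(v_{k+4},v_{k+6},v_{k+5},v_{k+7})$ — valid since $x_{v_{k+4}}\ge x_{v_{k+5}}$ — which re-routes the configuration back to the first case.

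The main obstacle I anticipate is the bookkeeping in the last case: making sure that after the chain of switchings the relevant six vertices genuinely induce $D_1$ (no stray chords, correct degrees at the pendant ends), so that \autoref{lem:forbidden-D-1} applies cleanly rather than merely "morally". A secondary subtlety is verifying at each switching that the fourth vertex $w$ lies outside $\mathcal{M}_k$ — this is where the hypothesis $N_G(v_i)\cap\mathcal{M}_k=\emptyset$ for $i\ge k+2$ is essential — and that the non-adjacency $u\nsim w$ needed by \autoref{lem:local-switching} is guaranteed by connectivity of the appropriately truncated graph. Everything else is a routine replay of the techniques already established in \autoref{lem:transfer-first-few-vertices-Delta-3}.
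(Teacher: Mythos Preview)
Your build-up to the triangle $\{v_{k+2},v_{k+3},v_{k+4}\}$ and the edge $v_{k+3}v_{k+5}$ matches the paper. The gap is in the final case $v_{k+4}\nsim v_{k+5}$, where both of your proposed fixes fail.

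For approach (a): the configuration you have is \emph{not} $D_1$. In $D_1$ the two middle vertices $v_{k+2},v_{k+3}$ have four pairwise distinct ``outer'' neighbours $v_k,v_{k+1},v_{k+4},v_{k+5}$; in your picture $v_{k+3}$ and $v_{k+4}$ share the neighbour $v_{k+2}$, so the induced subgraph is genuinely different and \autoref{lem:forbidden-D-1} does not apply at this stage.

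For approach (b): to run the switching $\ls(v_{k+4},v_{k+6},v_{k+5},v_{k+7})$ you need $v_{k+5}\sim v_{k+7}$ and $v_{k+6}\nsim v_{k+7}$. Neither follows from \autoref{coro:common-use-1}, because $v_{k+6}$ already has the neighbour $v_{k+4}\in\mathcal{M}_{k+5}$, violating the hypothesis. If you instead imitate step~D of \autoref{lem:transfer-first-few-vertices-Delta-3} more carefully (first force $v_{k+5}\sim v_{k+6}$, then $v_{k+5}\sim v_{k+7}$), you still hit the subcase $v_{k+6}\sim v_{k+7}$: there the switching would create the already-present edge $v_{k+6}v_{k+7}$, so it is invalid. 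This subcase is precisely the configuration $D_2$ of Fig.\,\ref{fig:D1-D2}\,\subref{subfig:D2}, in which (by the eigenvalue equations) $x_{k+3}=x_{k+4}$ and $x_{k+5}=x_{k+6}$, so every proper local switching among these vertices gives only equality in \autoref{lem:local-switching} and cannot produce $G_3$.

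The paper handles exactly this obstruction by a different mechanism: it replaces the block $D_2$ by $\widetilde{D}_2$ to form a new graph $\widetilde{G}$, defines a test vector $\bm{y}$ (equal to $\bm{x}$ off the block and chosen so that $\bm{y}^{\mathrm{T}}L(\widetilde{G})\bm{y}=\bm{x}^{\mathrm{T}}L(G)\bm{x}$), checks $\|\bm{y}\|_2^2>1$, and concludes via \eqref{eq:Delta-rho} that $\lambda_1(\widetilde{G})>\lambda_1(G)$, a contradiction. This Rayleigh-quotient replacement argument is the missing idea in your plan; local switchings and \autoref{lem:forbidden-D-1} alone do not suffice.
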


\begin{proof}
By repeatedly using \autoref{coro:common-use-1}, we have $v_{k+1}\sim v_{k+2}$, 
$v_{k+2}\sim v_{k+3}$ and $v_{k+2}\sim v_{k+4}$. By considering the neighbors of 
$v_{k+3}$ and $v_{k+4}$, we may assume $v_{k+3}\sim v_{k+4}$. Furthermore, we 
can connect $v_{k+3}$ to $v_{k+5}$ by \autoref{coro:common-use-1}. 

Next, we shall connect $v_{k+4}$ to $v_{k+5}$, and therefore finishing the proof.
If $v_{k+4}\nsim v_{k+5}$, we obtain the graph $D_2$ in Fig.\,\ref{fig:D1-D2}\,\subref{subfig:D2} 
by \autoref{lem:common-use} and \autoref{lem:forbidden-D-1}. By eigenvalue 
equations we deduce that $x_{k+3}=x_{k+4}$ and $x_{k+5}=x_{k+6}$.

To finish the proof we will construct a graph $\widetilde{G}$ by replacing $D_2$ 
with $\widetilde{D}_2$ in $G$ such that $\lambda_1(\widetilde{G}) > \lambda_1(G)$, 
which yields a contradiction. To compare $\lambda_1(\widetilde{G})$ with $\lambda_1(G)$, 
we define a vector $\bm{y}$ for $\widetilde{G}$ as follows:
\[
y_u =
\begin{cases}
    x_{k+1}, & u = v_{k+2}, \\
    x_{k+1} + x_{k+3} - x_{k+2}, & u = v_{k+3}, \\
    x_u, & \text{otherwise}.
\end{cases}
\]
Recall that $x_{k+3}=x_{k+4}$ and $x_{k+5}=x_{k+6}$. A straightforward calculation shows that 
\begin{equation}\label{eq:equal-sum-1}
\sum_{uv\in E(G)} (x_u-x_v)^2 = \sum_{uv\in E(\widetilde{G})} (y_u-y_v)^2.
\end{equation}
Therefore, in light of \eqref{eq:Delta-rho} and \eqref{eq:equal-sum-1}, we have
\begin{align*}
(3-\lambda_1(\widetilde{G}))\cdot\|\bm{y}\|_2^2 
& \leq \sum_{v\in V(\widetilde{G})} (3-d_{\widetilde{G}}(v)) y_v^2 + \sum_{uv\in E(\widetilde{G})} (y_u-y_v)^2 \\
& = 3 - \lambda_1(G).
\end{align*}
In the following we shall prove that $\|\bm{y}\|_2>1$, and therefore $3-\lambda_1(\widetilde{G}) < 3-\lambda_1(G)$,
which results in a contradiction. Indeed, 
\begin{align*}
\|\bm{y}\|_2^2 
& = \|\bm{x}\|_2^2 + (y_{k+2}^2+y_{k+3}^2) - (x_{k+2}^2 + x_{k+3}^2) \\
& = 1+ 2(x_{k+1}-x_{k+2}) (x_{k+1}+x_{k+3}) \\
& > 1,
\end{align*}
a contradiction completing the proof of \autoref{lem:induced-middle-vertices-Delta=3}.
\end{proof}

By \autoref{lem:transfer-first-few-vertices-Delta-3} we can transfer the first few 
vertices in $G\in\mathcal{G}(n,3)$ into one of the graphs as shown in 
Fig.\,\ref{fig:G1-G2-G3}\,\subref{subfig:G1}\,--\,\subref{subfig:G2}. Moreover, 
whatever we obtained, we end up with a cut vertex of degree two. Next, we 
can employ \autoref{lem:induced-middle-vertices-Delta=3} to reconnect the 
remaining vertices. Doing so, we again obtain a cut vertices with degree two.
By repeatedly using \autoref{lem:induced-middle-vertices-Delta=3} we can 
reconnect almost all vertices in $G$ except the last few vertices.

\subsection{Proof of \autoref{conj:degree-sequence} for $\Delta=3$}

Now we are ready to confirm the \autoref{conj:degree-sequence} for $\Delta=3$, 
which is stated as follows.

\begin{theorem}\label{thm:proof-conj-degree-sequ-for-degree3}
Let $G\in\mathcal{G}(n,3)$. Then $G$ has degree sequence $(3,\ldots,3,\delta)$, 
where
\[
\delta =
\begin{cases}
2, & n\ \text{is odd}, \\
1, & n\ \text{is even}.
\end{cases}
\]
\end{theorem}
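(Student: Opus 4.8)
The plan is to combine the structural reductions already developed in this section with a short, self-contained analysis of the last few vertices. Starting from an arbitrary $G\in\mathcal{G}(n,3)$, I would first invoke \autoref{lem:transfer-first-few-vertices-Delta-3} to assume the induced subgraph on $\mathcal{M}_k$ (for suitable $k$) is $G_1$ or $G_2$, so that $G[\mathcal{M}_k]$ ends in a cut vertex of degree two with $N_G(v_i)\cap\mathcal{M}_k=\emptyset$ for $i>k$. Then I would iterate \autoref{lem:induced-middle-vertices-Delta=3}: each application attaches a fresh $G_3$-block, not decreasing $\lambda_1(G)$, and again terminates in a degree-two cut vertex whose successors are disjoint from the portion already fixed. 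Since each block consumes a bounded number of vertices, after finitely many steps we are left with only $O(1)$ unprocessed vertices at the tail; because $G\in\mathcal{G}(n,3)$ is extremal and every operation used is either a proper local switching (\autoref{lem:local-switching}) or an edge rotation preserving non-regularity and connectivity (\autoref{lem:rotation}, \autoref{prop:preserving-connected}, \autoref{lem:G-delete-vertices-connected}), the resulting graph $\widehat G$ has $\lambda_1(\widehat G)\ge\lambda_1(G)$ and $\widehat G\in\mathcal{G}(n,3)$ as well — so it suffices to pin down the degree sequence of $\widehat G$.

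Next I would classify the tail. With all of $G$ except the last $O(1)$ vertices forced into the path-like pattern of $G_1/G_2/G_3$-blocks, the remaining vertices together with their attachment point form a small induced graph on a bounded number of vertices, all of whose degrees except possibly one are $3$. A finite case analysis — again pruning impossible tails with \autoref{lem:forbidden-D-1}, \autoref{lem:common-use}, and \autoref{coro:common-use-1}, and using that no vertex of $T$ is a local minimum — shows that the only extremal possibilities are: the path-like graph closing up so that exactly one vertex has degree $\delta$. A parity constraint then fixes $\delta$: the number of odd-degree vertices is even, so if all but one vertex has degree $3$ and $n$ is odd, then $n-1$ vertices have odd degree $3$, forcing the last vertex to have even degree, i.e.\ degree $2$ (it cannot be $0$ by connectivity, and degree $\ge 4$ is impossible since $\Delta=3$); if $n$ is even, then $n-1$ is odd, so the last vertex must have odd degree, and by maximality of $\lambda_1$ (a larger degree always helps, \autoref{lem:rotation}) together with the nonregularity requirement it is forced to be as large as possible while still $<3$ and of the correct parity — but $\delta=1$ is then dictated once one checks that $\delta=3$ would make $G$ regular; the subtlety is that $\delta$ could a priori be any odd value $<3$, and here the only odd value below $3$ compatible with connectivity is $1$, unless a degree-$2$ tail is also realizable, which the parity count rules out.

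Finally, I would argue that these configurations are in fact achieved: exhibit, for each parity of $n$, an explicit connected nonregular graph with degree sequence $(3,\ldots,3,\delta)$ built from the $G_1/G_2/G_3$-blocks, and use \eqref{eq:Delta-rho-sum-form} together with the preceding reductions to see that any $G\in\mathcal{G}(n,3)$ has spectral radius no larger than this explicit graph, hence must itself have the claimed degree sequence (degree sequences being preserved under the local switchings used, only the rotations change degrees, and those only ever increase a deficient degree toward $\Delta$).

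\textbf{Main obstacle.} I expect the genuine difficulty to lie not in the iterated block reduction — which is bookkeeping on top of \autoref{lem:transfer-first-few-vertices-Delta-3} and \autoref{lem:induced-middle-vertices-Delta=3} — but in the tail analysis: one must enumerate all ways the path-like structure can terminate on a bounded vertex set, show that each non-optimal closure admits a strictly improving local switching or rotation (which requires careful use of the eigenvalue equations to compare Perron components at the tail, as in the proof of \autoref{lem:forbidden-D-1}), and then extract the exact value of $\delta$ from the handshake parity together with the monotonicity $\lambda_1$-versus-degree. Getting the parity bookkeeping to interact cleanly with ``$\delta=\Delta-1$ or $\Delta-2$'' — rather than some other near-$\Delta$ value — is the step most likely to need a delicate argument.
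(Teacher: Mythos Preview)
Your reduction to the path-like structure via \autoref{lem:transfer-first-few-vertices-Delta-3} and \autoref{lem:induced-middle-vertices-Delta=3} is correct and matches the paper. The gap is in how you propose to finish: ruling out $|S|=2$ cannot be done by the tools you list. Local switchings preserve the degree sequence exactly, so no sequence of proper $\ls$ moves will ever take a graph with two deficient vertices to one with a single deficient vertex. Rotations (\autoref{lem:rotation}) can in principle change the multiset of degrees, but after the block reduction the $|S|=2$ tail (what the paper calls $D_3$ when $n$ is even) consists of two degree-$2$ vertices that are adjacent to each other and share their unique neighbour in $T$; there is simply no admissible rotation available. Your ``finite case analysis pruning with \autoref{lem:forbidden-D-1}, \autoref{lem:common-use}, \autoref{coro:common-use-1}'' therefore cannot conclude that $|S|=1$, and the subsequent parity argument is circular --- it only determines $\delta$ once $|S|=1$ is already known.

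What the paper actually does at this point is qualitatively different from anything in your proposal. For each parity it \emph{constructs by hand} a competitor graph $\widetilde G$ with $|S|=1$ (hence a different degree sequence, not reachable by switching or rotation), designs an explicit test vector $\bm y$ on $\widetilde G$ built from the Perron vector $\bm x$ of $G$, and then proves $\lambda_1(\widetilde G)>\lambda_1(G)$ via the Rayleigh inequality \eqref{eq:Delta-rho}. For $n$ even this is a local tail replacement ($D_3\to\widetilde D_3$) followed by a polynomial inequality in $\lambda$; for $n$ odd it is a \emph{global} reconstruction of the entire graph, with $\bm y$ defined block by block and the comparison carried out through summed estimates $\sum(P_i-Q_i)$ controlled by eigenvalue recursions. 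This analytic Rayleigh-quotient comparison is the heart of the proof, and your outline contains no mechanism that plays its role.
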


\begin{figure}[htbp]
\centering
\subcaptionbox{$D_3$ \label{subfig:D3}}[.3\textwidth]{%
\begin{tikzpicture}[scale=0.9]
\coordinate (v1) at (0,0);
\coordinate (v2) at (1,0);
\coordinate (v3) at (2.4,0);

\coordinate (u1) at (-0.7, 0.5);
\coordinate (w1) at (-0.7, -0.5);
\coordinate (u2) at (1.7, 0.5);
\coordinate (w2) at (1.7, -0.5);

\draw (v1) -- (u1) -- (w1) -- (v1) -- (v2) -- (u2) -- (w2) -- (v3) -- (2.6,0);
\draw (u2) -- (v3);
\draw (v2) -- (w2);

\node[above=2pt, scale=0.9] at (u1) {$v_n$};
\node[below=2pt, scale=0.9] at (w1) {$v_{n-1}$};
\node[above=2pt, scale=0.9] at (v1) {$v_{n-2}$};
\node[below=2pt, scale=0.9] at (0.9,0) {$v_{n-3}$};
\node[above=2pt, scale=0.9] at (u2) {$v_{n-4}$};
\node[below=2pt, scale=0.9] at (w2) {$v_{n-5}$};

\foreach \i in {u1,u2,v1,v2,v3,w1,w2}
\filldraw (\i) circle (0.07);
\end{tikzpicture} 
}
\subcaptionbox{$D_4$ \label{subfig:D4}}[.3\textwidth]{%
\begin{tikzpicture}[scale=0.9]
\coordinate (u1) at (0.7,0);
\coordinate (u2) at (2.1,0);
\coordinate (v1) at (0,0.5);
\coordinate (v2) at (1.4,0.5);
\coordinate (w1) at (0,-0.5);
\coordinate (w2) at (1.4,-0.5);

\draw (w1) -- (v1) -- (u1) -- (v2) -- (u2) -- (w2) -- (v2);
\draw (u1) -- (w2);
\draw (u2) -- (2.3,0);

\node[below=2pt, scale=0.9] at (w1) {$v_n$};
\node[above=2pt, scale=0.9] at (v1) {$v_{n-1}$};
\node[below=2pt, scale=0.9] at (0.6,0) {$v_{n-2}$};
\node[above=2pt, scale=0.9] at (v2) {$v_{n-3}$};
\node[below=2pt, scale=0.9] at (w2) {$v_{n-4}$};
\node[below=2pt, scale=0.9] at (2.3,0) {$v_{n-5}$};

\foreach \i in {u1,u2,v1,v2,w1,w2}
\filldraw (\i) circle (0.07);
\end{tikzpicture} 
}
\subcaptionbox{$\widetilde{D}_3$ \label{subfig:tilde-D3}}[.3\textwidth]{%
\begin{tikzpicture}[scale=0.9]
\coordinate (v1) at (0,0);
\coordinate (v2) at (1,0);
\coordinate (v3) at (3.6,0);

\coordinate (u1) at (1.8, 0.5);
\coordinate (w1) at (1.8, -0.5);
\coordinate (u2) at (2.8, 0.5);
\coordinate (w2) at (2.8, -0.5);

\draw (v1) -- (v2) -- (u1) -- (u2) -- (v3) -- (3.8,0);
\draw (v2) -- (w1) -- (w2) -- (v3);
\draw (u1) -- (w1);
\draw (u2) -- (w2);

\node[above=2pt, scale=0.9] at (u1) {$v_{n-2}$};
\node[below=2pt, scale=0.9] at (w1) {$v_{n-3}$};
\node[below=2pt, scale=0.9] at (v1) {$v_n$};
\node[below=2pt, scale=0.9] at (v2) {$v_{n-1}$};
\node[above=2pt, scale=0.9] at (u2) {$v_{n-4}$};
\node[below=2pt, scale=0.9] at (w2) {$v_{n-5}$};

\foreach \i in {u1,u2,v1,v2,v3,w1,w2}
\filldraw (\i) circle (0.07);
\end{tikzpicture}  
}
\caption{Graphs $D_3$, $D_4$ and $\widetilde{D}_3$ in \autoref{thm:proof-conj-degree-sequ-for-degree3}}
\label{fig:D3-D4-tilde-D3}
\end{figure}

\begin{proof}
Let $S$ be the set of vertices of $G$ with degree less than $3$. It suffices to show that 
$S=1$. We assume $|S|=2$ by contradiction in light of \autoref{coro:S-less-Delta-1}.

Since the local switching preserves degree sequence, it is enough to consider the graph 
$G$ obtained by \autoref{lem:transfer-first-few-vertices-Delta-3} and 
\autoref{lem:induced-middle-vertices-Delta=3}. So the induced subgraph on the last few 
vertices (with respect to the components of Perron vector $\bm{x}$) in $G$ is either 
$D_3$ or $D_4$ as shown in Fig.\,\ref{fig:D3-D4-tilde-D3}\,\subref{subfig:D3}\,--\,\subref{subfig:D4} 
depending on the parity of $n$. The remaining proof is split into the following two cases.
\vspace{1.5mm}

\noindent {\bfseries Case 1.} $n$ is even. The induced subgraph of the last few vertices 
must be $D_3$. Now, we construct a graph $\widetilde{G}$ from $G$ by replacing $D_3$ with 
$\widetilde{D}_3$, and define a vector $\bm{y}$ for $\widetilde{G}$ as follows:
\[
y_u =
\begin{cases}
    x_n + x_{n-3} - x_{n-2}, & u = v_{n-1}, \\
    x_n + x_{n-4} - x_{n-2}, & u \in \{v_{n-2}, v_{n-3}\}, \\
    x_u, & \text{otherwise}.
\end{cases}
\]
Since $x_n = x_{n-1}$ and $x_{n-4} = x_{n-5}$, we have 
$\bm{x}^{\mathrm{T}} L(G) \bm{x} = \bm{y}^{\mathrm{T}} L(\widetilde{G}) \bm{y}$.
Combining with \eqref{eq:Delta-rho}, we derive that
\[
3 - \lambda_1(\widetilde{G}) \leq \frac{3 - \lambda_1(G)}{\|\bm{y}\|_2^2}.
\]
Below we shall show that $\|\bm{y}\|_2 > 1$. Set for short, $a:= x_n=x_{n-1}$, 
$b:= x_{n-2}$, $c:= x_{n-3}$, $d:= x_{n-4}=x_{n-5}$, and $\lambda:=\lambda_1(G)$. 
By eigenvalue equations, we have
\[
(\lambda-1) a = b,~~ 
\lambda b = 2a+c,~~
\lambda c = b+2d,
\]
which is equivalent to
\begin{equation}\label{eq:equation-a-b-c}
b = (\lambda-1) a,~~
c = (\lambda^2 - \lambda -2) a,~~
d = \frac{\lambda^3-\lambda^2-3\lambda+1}{2} a.
\end{equation}
Observe that the square of the norm $\|\bm{y}\|$ is equal to
\begin{align*}
\|\bm{y}\|_2^2 
& = \|\bm{x}\|_2^2 + (y_n^2+y_{n-1}^2+2y_{n-2}^2) - (2x_n^2+x_{n-2}^2+x_{n-3}^2) \\
& = 1 + (a+c-b)^2+2(a+d-b)^2 - (a^2+b^2+c^2).
\end{align*}
Substituting \eqref{eq:equation-a-b-c} into the above equation, we deduce that
\[
\|\bm{y}\|_2^2 -1 = \frac{a^2}{2} [(\lambda^3 + 3\lambda^2 - \lambda - 8) (\lambda^2 - 3\lambda + 1) (\lambda - 2)-3] > 0,
\]
the last inequality is due to the fact $\lambda > \lambda_1(G_1) > 2.8$.
\vspace{1.5mm}

\noindent {\bfseries Case 2.} $n$ is odd. In this case, we first assume $n=4k+1$. One can deduce that $G$ is the 
graph as shown in Fig.\,\ref{fig:figure-in-thm3-1} (above). To finish the proof we 
shall construct a graph $\widetilde{G}$ shown in Fig.\,\ref{fig:figure-in-thm3-1} 
(below) such that $\lambda_1(\widetilde{G}) > \lambda_1(G)$, which yields the desired 
contradiction.

In order to compare $\lambda_1(\widetilde{G})$ with $\lambda_1(G)$ we define a vector 
$\bm{y}$ for $\widetilde{G}$ in the following way: 
\begin{equation}\label{eq:assignment-y}
\begin{split}
y_{3i} & = x_{3i}, ~~ i\in [k-1], \\
y_{3i-1} & = \frac{x_{3i-1} + x_{3i}}{\lambda-1}, ~~ i\in [k-1], \\
y_{3i-2} & = x_{3i-1}, ~~ i\in [k], \\
y_{3k-1} & = 2x_{3k-1}-x_{3k-2}, \\
y_{3k} & = x_{3k}+x_{3k-1}-x_{3k-2}.
\end{split}
\end{equation}
We will utilize \eqref{eq:Delta-rho} to get a contradiction. To this end, we need 
to estimate $\|\bm{y}\|_2$ and $\bm{y}^{\mathrm{T}} L(\widetilde{G}) \bm{y}$, respectively. 

Firstly, we give an estimation on the norm of $\bm{y}$. Observe that
\[
\|\bm{y}\|_2^2 = \sum_{i=1}^k y_{3i}^2 + 2\sum_{i=1}^k y_{3i-1}^2 + \sum_{i=1}^k y_{3i-2}^2 + y_{3k}^2,
\]
and the norm of $\bm{x}$ is equal to one, i.e.,
\[
1 = \sum_{i=1}^k x_{3i}^2 + \sum_{i=1}^k x_{3i-1}^2 + 2\sum_{i=1}^k x_{3i-2}^2 
+ x_{3k-1}^2+x_{3k}^2-x_1^2.
\]
Hence, the difference between $\|\bm{y}\|_2^2$ and $1$ is equal to
\begin{equation}\label{eq:norm-difference-y-and-x}
\|\bm{y}\|_2^2 - 1 = 2\sum_{i=1}^k (y_{3i-1}^2 - x_{3i-2}^2) + 2y_{3k}^2 - 2x_{3k}^2 - x_{3k-1}^2 + x_1^2.
\end{equation}
Using eigenvalue equations and \eqref{eq:assignment-y}, we deduce that
\begin{equation}\label{eq:relation-y-and-x}
y_{3k-1} = \frac{-\lambda^2+3\lambda+2}{2} x_{3k},~~
y_{3k} = \frac{-\lambda^2+2\lambda+5}{2} x_{3k}.
\end{equation}
Also we have
\begin{equation}\label{eq:x3k}
x_{3k-1} = \frac{\lambda - 1}{2} x_{3k},~~
x_{3k-2} = \frac{\lambda^2 - \lambda - 4}{2} x_{3k}.
\end{equation}
Substituting \eqref{eq:relation-y-and-x} and \eqref{eq:x3k} into \eqref{eq:norm-difference-y-and-x},
and noting that $x_{3i-2} = (x_{3i-3} + x_{3i-1})/(\lambda-1)$, we conclude that
\begin{align*}
\|\bm{y}\|_2^2 - 1 = & ~\frac{2}{(\lambda-1)^2} \sum_{i=2}^{k-1} (x_{3i} - x_{3i-3}) (x_{3i-3} + 2x_{3i-1} + x_{3i}) \\ 
& ~ + \frac{2\lambda^4 - 16\lambda^3 + 11\lambda^2 + 50\lambda + 17}{4} x_{3k}^2  
+ \frac{(2\lambda^2 - 1)(\lambda^2 + 2\lambda - 1)}{(\lambda - 1)^2} x_1^2.
\end{align*}
To proceed further, we observe that
\[
x_{3i-1} = \frac{2x_{3i-3} + (\lambda-1) x_{3i}}{(\lambda-2)(\lambda+1)},~~
i=2,3,\ldots,k-1,
\]
from which we acquire that
\[
x_{3i-3}+2x_{3i-1}+x_{3i} > \frac{\lambda^2+\lambda-4}{(\lambda-2)(\lambda+1)} (x_{3i-3}+x_{3i}).
\]
As a consequence,
\begin{align*}
\|\bm{y}\|_2^2-1 > & ~ \frac{2(\lambda^2+\lambda-4)}{(\lambda-1)^2(\lambda-2)(\lambda+1)} (x_{3k-3}^2-x_3^2) \\
& ~ + \frac{2\lambda^4-16\lambda^3+11\lambda^2+50\lambda+17}{4} x_{3k}^2 
+ \frac{(2\lambda^2 - 1)(\lambda^2 + 2\lambda - 1)}{(\lambda - 1)^2} x_1^2.
\end{align*}
Again, by eigenvalue equations we find $2x_{3k-3} = (\lambda-1)(\lambda^2-\lambda-5) x_{3k}$ and 
$x_3 = (\lambda^2 - 1)x_1$. Substituting them into the above inequality gives
\begin{align*}
\|\bm{y}\|_2^2 - 1 
& > (3 {-} \lambda) \bigg(-\frac{4\lambda^5 {-} 8\lambda^4 {-} 31\lambda^3 {-} 4\lambda^2 {+} 75\lambda {+} 78}{4(\lambda+1)(\lambda-2)} x_{3k}^2
+ \frac{\lambda^3 - 4\lambda^2 - 5\lambda + 6}{(3 - \lambda)(\lambda - 1)^2(\lambda - 2)} x_1^2\bigg) \\
& =: (3 - \lambda) f(\lambda).
\end{align*}

Secondly, we shall give an upper bound on $\bm{y}^{\mathrm{T}} L(\widetilde{G}) \bm{y}$ in 
terms of $3-\lambda$. To this end, set for short
\begin{align*}
P_i:= & ~ 2 (y_{3i-2}-y_{3i-1})^2 + 2 (y_{3i-1} - y_{3i})^2 + (y_{3i}-y_{3i+1})^2, \\
Q_i:= & ~ (x_{3i-1}-x_{3i})^2 + 2 (x_{3i} -x_{3i+1})^2 + 2 (x_{3i+1}-x_{3i+2})^2.
\end{align*}
Calculating the quadratic form $\bm{y}^{\mathrm{T}} L(\widetilde{G}) \bm{y}$ for the vector 
$\bm{y}$, we get
\[
\bm{y}^{\mathrm{T}} L(\widetilde{G}) \bm{y} 
= \sum_{i=1}^{k-1} P_i + 2 (y_{3k-2} - y_{3k-1})^2 + 4(y_{3k-1} - y_{3k})^2.
\]
Substituting \eqref{eq:assignment-y} and \eqref{eq:x3k} into the above equation gives rise to 

\begin{equation}\label{eq:yLy}
\bm{y}^{\mathrm{T}} L(\widetilde{G}) \bm{y} = \sum_{i=1}^{k-1} P_i + 
\frac{(\lambda - 3)^2 (\lambda + 1)^2}{2} x_{3k}^2 + 4(x_{3k-1} - x_{3k})^2.
\end{equation}
On the other hand, using \eqref{eq:Delta-rho-sum-form} and the fact $x_2=\lambda x_1$ we see
\begin{equation}\label{eq:3-lambda-Qi}
\begin{split}
3 - \lambda 
& = \sum_{i=1}^{k-1} Q_i + 4(x_{3k-1}-x_{3k})^2 + 2x_1^2+x_2^2+(x_1-x_2)^2 \\
& = \sum_{i=1}^{k-1} Q_i + 4(x_{3k-1} - x_{3k})^2 + (2\lambda^2 - 2\lambda +3) x_1^2.
\end{split}
\end{equation}
Now, let us estimate the difference between $P_i$ and $Q_i$. Using \eqref{eq:assignment-y} 
and $x_{3i}=(\lambda-1)x_{3i+1}-x_{3i+2}$ gives
\[
P_i - Q_i = \frac{(\lambda-3)^2}{(\lambda-1)^2} (x_{3i-1} - x_{3i+2}) (x_{3i-1} + 2(\lambda-1) x_{3i+1} - x_{3i+2}).
\]
Furthermore, by $(\lambda^2-\lambda-2) x_{3i+1} = x_{3i-1} + \lambda x_{3i+2}$ we see
\[
x_{3i-1} + 2(\lambda-1) x_{3i+1} - x_{3i+2} < \frac{\lambda^2-\lambda+2}{(\lambda+1)(\lambda-2)} (x_{3i-1}+x_{3i+2}).
\]
It follows that
\begin{equation}\label{eq:Pi-Qi}
P_i - Q_i < \frac{(\lambda-3)^2 (\lambda^2 - \lambda + 2)}{(\lambda + 1) (\lambda - 1)^2 (\lambda - 2)}
(x_{3i-1}^2 - x_{3i+2}^2).
\end{equation}
Putting \eqref{eq:yLy}, \eqref{eq:3-lambda-Qi} and \eqref{eq:Pi-Qi} together, we derive, in light of  
$y_1 = x_2$, that
\begin{align*}
& (y_1^2 + \bm{y}^{\mathrm{T}} L(\widetilde{G}) \bm{y}) - (3 - \lambda) \\
< & ~ \frac{(\lambda-3)^2(\lambda^2-\lambda+2)}{(\lambda-1)^2(\lambda+1)(\lambda-2)} (x_2^2-x_{3k-1}^2) 
+ \frac{(\lambda - 3)^2 (\lambda + 1)^2}{2} x_{3k}^2 - (\lambda^2 -2\lambda + 3) x_1^2 \\
= & ~ (\lambda-3)^2 \bigg(\frac{(\lambda+2) (2\lambda^3-2\lambda^2-3\lambda-3)}{4(\lambda+1)(\lambda-2)} x_{3k}^2
- \frac{2\lambda^5 - 7\lambda^4 + 13\lambda^3 - 23\lambda^2 + 13\lambda - 6}{(\lambda - 3)^2(\lambda+1) (\lambda-1)^2 (\lambda-2)} x_1^2\bigg) \\
=: & ~ (\lambda - 3)^2 g(\lambda).
\end{align*}

Finally, applying \eqref{eq:Delta-rho} to $\widetilde{G}$ and $\bm{y}$ we have 
\begin{align*}
3 - \lambda_1(\widetilde{G}) 
& \leq \frac{(3 - \lambda) + \big(y_1^2 + \bm{y}^{\mathrm{T}} L(\widetilde{G}) \bm{y} - (3 - \lambda)\big)}{\|\bm{y}\|_2^2} \\
& < (3 - \lambda)\cdot \frac{1 + (3 - \lambda)\cdot g(\lambda)}{1 + (3 - \lambda)\cdot f(\lambda)} \\
& < 3 - \lambda.
\end{align*}
The last inequality follows from the fact $g(\lambda) < f(\lambda)$ whenever $\lambda>2.8$.
This completes the proof of \autoref{thm:proof-conj-degree-sequ-for-degree3}. Using the same 
argument as above, we can also obtain a contradiction for the case $n\equiv 3 \!\!\mod 4$.
This completes the proof of the theorem.
\end{proof}

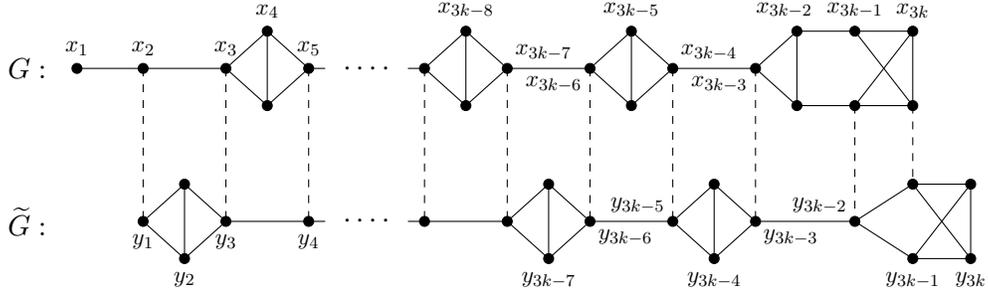
\begin{figure}[htbp]
\centering
\begin{tikzpicture}[scale=1.1]
\coordinate (v1) at (0,0);
\coordinate (v2) at (0.8,0);
\coordinate (v3) at (1.8,0);
\coordinate (v4) at (2.8,0);
\coordinate (v5) at (4.2,0);
\coordinate (v6) at (5.2,0);
\coordinate (v7) at (6.2,0);
\coordinate (v8) at (7.2,0);
\coordinate (v9) at (8.2,0);

\coordinate (u1) at (2.3,0.45);
\coordinate (u2) at (4.7,0.45);
\coordinate (u3) at (6.7,0.45);
\coordinate (u4) at (8.7,0.45);
\coordinate (u5) at (9.4,0.45);
\coordinate (u6) at (10.1,0.45);

\coordinate (w1) at (2.3,-0.45);
\coordinate (w2) at (4.7,-0.45);
\coordinate (w3) at (6.7,-0.45);
\coordinate (w4) at (8.7,-0.45);
\coordinate (w5) at (9.4,-0.45);
\coordinate (w6) at (10.1,-0.45);

\coordinate (a1) at (0.8,-1.85);
\coordinate (a2) at (1.8,-1.85);
\coordinate (a3) at (2.8,-1.85);
\coordinate (a4) at (4.2,-1.85);
\coordinate (a5) at (5.2,-1.85);
\coordinate (a6) at (6.2,-1.85);
\coordinate (a7) at (7.2,-1.85);
\coordinate (a8) at (8.2,-1.85);
\coordinate (a9) at (9.4,-1.85);

\coordinate (b1) at (1.3,-1.4);
\coordinate (b2) at (5.7,-1.4);
\coordinate (b3) at (7.7,-1.4);
\coordinate (b4) at (10.1,-1.4);
\coordinate (b5) at (10.8,-1.4);

\coordinate (c1) at (1.3,-2.3);
\coordinate (c2) at (5.7,-2.3);
\coordinate (c3) at (7.7,-2.3);
\coordinate (c4) at (10.1,-2.3);
\coordinate (c5) at (10.8,-2.3);

\node[above=2pt, scale=0.8] at (v1) {$x_1$};
\node[above=2pt, scale=0.8] at (v2) {$x_2$};
\node[above=2pt, scale=0.8] at (v3) {$x_3$};
\node[above=2pt, scale=0.8] at (u1) {$x_4$};
\node[above=2pt, scale=0.8] at (u2) {$x_{3k-8}$};
\node[above=2pt, scale=0.8] at (v4) {$x_5$};
\node[anchor=south west, scale=0.8] at (v6) {$x_{3k-7}$};
\node[anchor=north east, scale=0.8] at (v7) {$x_{3k-6}$};
\node[above=2pt, scale=0.8] at (u3) {$x_{3k-5}$};
\node[anchor=south west, scale=0.8] at (v8) {$x_{3k-4}$};
\node[anchor=north east, scale=0.8] at (v9) {$x_{3k{-}3}$};
\node[above=2pt, scale=0.8] at (8.55,0.45) {$x_{3k-2}$};
\node[above=2pt, scale=0.8] at (u5) {$x_{3k-1}$};
\node[above=2pt, scale=0.8] at (u6) {$x_{3k}$};
\node at (3.5,0) {$\cdots\cdot$};
\node at (-0.6,0) {$G:$};

\node[below=2pt, scale=0.8] at (a1) {$y_1$};
\node[below=2pt, scale=0.8] at (c1) {$y_2$};
\node[below=2pt, scale=0.8] at (a2) {$y_3$};
\node[below=2pt, scale=0.8] at (a3) {$y_4$};
\node[below=2pt, scale=0.8] at (c2) {$y_{3k-7}$};
\node[anchor=north west, scale=0.8] at (a6) {$y_{3k-6}$};
\node[anchor=south east, scale=0.8] at (a7) {$y_{3k-5}$};
\node[below=2pt, scale=0.8] at (c3) {$y_{3k-4}$};
\node[anchor=north west, scale=0.8] at (a8) {$y_{3k-3}$};
\node[anchor=south east, scale=0.8] at (a9) {$y_{3k-2}$};
\node[below=2pt, scale=0.8] at (c4) {$y_{3k-1}$};
\node[below=2pt, scale=0.8] at (c5) {$y_{3k}$};
\node at (3.5,-1.85) {$\cdots\cdot$};
\node at (-0.6,-1.85) {$\widetilde{G}:$};

\draw (v1) -- (v2) -- (v3) -- (u1) -- (v4) -- (w1) -- (v3);
\draw (v5) -- (u2) -- (v6) -- (w2) -- (v5);
\draw (u1) -- (w1);
\draw (u2) -- (w2);
\draw (v6) -- (v7) -- (u3) -- (v8) -- (w3) -- (v7);
\draw (u3) -- (w3);
\draw (v8) -- (v9) -- (u4) -- (u5) -- (u6) -- (w6) -- (w5) -- (w4) -- (v9);
\draw (u4) -- (w4);
\draw (u5) -- (w6);
\draw (w5) -- (u6);
\draw (v4) -- (3,0);
\draw (v5) -- (4,0);

\draw (a1) -- (b1) -- (a2) -- (a3) -- (3,-1.85);
\draw (a1) -- (c1) -- (b1);
\draw (c1) -- (a2);
\draw (4,-1.85) -- (a4) -- (a5) -- (b2) -- (a6) -- (c2) -- (a5);
\draw (a6) -- (a7) -- (b3) -- (a8) -- (c3) -- (a7);
\draw (a8) -- (a9) -- (b4) -- (c5) -- (b5) -- (c4) -- (a9);
\draw (b4) -- (b5);
\draw (b2) -- (c2);
\draw (b3) -- (c3);
\draw (c4) -- (c5);

\draw[dashed] (v2) -- (a1);
\draw[dashed] (v3) -- (a2);
\draw[dashed] (v4) -- (a3);
\draw[dashed] (v5) -- (a4);
\draw[dashed] (v6) -- (a5);
\draw[dashed] (v7) -- (a6);
\draw[dashed] (v8) -- (a7);
\draw[dashed] (v9) -- (a8);
\draw[dashed] (w5) -- (a9);
\draw[dashed] (w6) -- (b4);

\foreach \i in {u1,u2,u3,u4,u5,u6,v1,v2,v3,v4,v5,v6,v7,v8,v9,w1,w2,w3,%
w4,w5,w6,a1,a2,a3,a4,a5,a6,a7,a8,a9,b1,b2,b3,b4,b5,c1,c2,c3,c4,c5}
\filldraw (\i) circle (0.06);
\end{tikzpicture}
\caption{Graphs $G$ and $\widetilde{G}$ in Case 2 in the proof of 
\autoref{thm:proof-conj-degree-sequ-for-degree3}, where two ends of 
each vertical dashed line have the same component}
\label{fig:figure-in-thm3-1}
\end{figure}

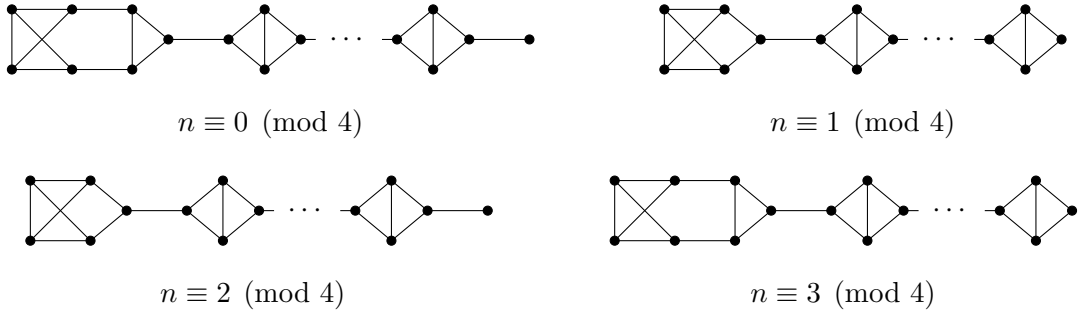
\begin{figure}[htbp]
\begin{minipage}{.51\textwidth}
\centering
\begin{tikzpicture}[scale=0.8]
\coordinate (v1) at (0,0.5);
\coordinate (v2) at (1,0.5);
\coordinate (v3) at (2,0.5);
\coordinate (v4) at (4.2,0.5);
\coordinate (v5) at (7,0.5);

\coordinate (w1) at (0,-0.5);
\coordinate (w2) at (1,-0.5);
\coordinate (w3) at (2,-0.5);
\coordinate (w4) at (4.2,-0.5);
\coordinate (w5) at (7,-0.5);

\coordinate (u1) at (2.6,0);
\coordinate (u2) at (3.6,0);
\coordinate (u3) at (4.8,0);
\coordinate (u4) at (6.4,0);
\coordinate (u5) at (7.6,0);
\coordinate (u6) at (8.6,0);

\draw (v1) -- (v2) -- (v3) -- (u1) -- (w3) -- (w2) -- (w1) -- (v1) -- (w2);
\draw (v2) -- (w1);
\draw (v3) -- (w3);
\draw (v4) -- (w4);
\draw (v5) -- (w5);
\draw (u1) -- (u2) -- (v4) -- (u3) -- (w4) -- (u2);
\draw (u3) -- (5.05,0);
\draw (6.15,0) -- (u4) -- (v5) -- (u5) -- (w5) -- (u4);
\draw (u5) -- (u6);

\foreach \i in {u1,u2,u3,u4,u5,u6,v1,v2,v3,v4,v5,w1,w2,w3,w4,w5}
\filldraw (\i) circle (0.075);

\node at (5.6,0) {$\cdots$};
\node at (4.3,-1.4) {$n\equiv 0\pmod{4}$};
\end{tikzpicture}
\end{minipage}
\begin{minipage}{.48\textwidth}
\centering
\begin{tikzpicture}[scale=0.8]
\coordinate (u1) at (1.6,0);
\coordinate (u2) at (2.6,0);
\coordinate (u3) at (3.8,0);
\coordinate (u4) at (5.4,0);
\coordinate (u5) at (6.6,0);

\coordinate (v1) at (0,0.5);
\coordinate (v2) at (1,0.5);
\coordinate (v3) at (3.2,0.5);
\coordinate (v4) at (6,0.5);

\coordinate (w1) at (0,-0.5);
\coordinate (w2) at (1,-0.5);
\coordinate (w3) at (3.2,-0.5);
\coordinate (w4) at (6,-0.5);

\draw (v1) -- (v2) -- (u1) -- (w2) -- (w1) -- (v1) -- (w2);
\draw (u1) -- (u2) -- (v3) -- (u3) -- (w3) -- (u2);
\draw (5.15,0) -- (u4) -- (v4) -- (u5) -- (w4) -- (u4);
\draw (v2) -- (w1);
\draw (v3) -- (w3);
\draw (v4) -- (w4); 
\draw (u3) -- (4.05,0);

\foreach \i in {u1,u2,u3,u4,u5,v1,v2,v3,v4,w1,w2,w3,w4}
\filldraw (\i) circle (0.075);
\node at (4.6,0) {$\cdots$};
\node at (3.3,-1.4) {$n\equiv 1 \pmod{4}$};
\end{tikzpicture}
\end{minipage}
\vspace{3mm}

\begin{minipage}{.49\textwidth}
\centering
\begin{tikzpicture}[scale=0.8]
\coordinate (u1) at (1.6,0);
\coordinate (u2) at (2.6,0);
\coordinate (u3) at (3.8,0);
\coordinate (u4) at (5.4,0);
\coordinate (u5) at (6.6,0);
\coordinate (u6) at (7.6,0);

\coordinate (v1) at (0,0.5);
\coordinate (v2) at (1,0.5);
\coordinate (v3) at (3.2,0.5);
\coordinate (v4) at (6,0.5);

\coordinate (w1) at (0,-0.5);
\coordinate (w2) at (1,-0.5);
\coordinate (w3) at (3.2,-0.5);
\coordinate (w4) at (6,-0.5);

\draw (v1) -- (v2) -- (u1) -- (w2) -- (w1) -- (v1) -- (w2);
\draw (u1) -- (u2) -- (v3) -- (u3) -- (w3) -- (u2);
\draw (5.15,0) -- (u4) -- (v4) -- (u5) -- (w4) -- (u4);
\draw (v2) -- (w1);
\draw (v3) -- (w3);
\draw (v4) -- (w4); 
\draw (u3) -- (4.05,0);
\draw (u5) -- (u6);

\foreach \i in {u1,u2,u3,u4,u5,u6,v1,v2,v3,v4,w1,w2,w3,w4}
\filldraw (\i) circle (0.075);
\node at (4.6,0) {$\cdots$};
\node at (3.7,-1.4) {$n\equiv 2 \pmod{4}$};
\end{tikzpicture}
\end{minipage}
\begin{minipage}{.49\textwidth}
\centering
\begin{tikzpicture}[scale=0.8]
\coordinate (v1) at (0,0.5);
\coordinate (v2) at (1,0.5);
\coordinate (v3) at (2,0.5);
\coordinate (v4) at (4.2,0.5);
\coordinate (v5) at (7,0.5);

\coordinate (w1) at (0,-0.5);
\coordinate (w2) at (1,-0.5);
\coordinate (w3) at (2,-0.5);
\coordinate (w4) at (4.2,-0.5);
\coordinate (w5) at (7,-0.5);

\coordinate (u1) at (2.6,0);
\coordinate (u2) at (3.6,0);
\coordinate (u3) at (4.8,0);
\coordinate (u4) at (6.4,0);
\coordinate (u5) at (7.6,0);

\draw (v1) -- (v2) -- (v3) -- (u1) -- (w3) -- (w2) -- (w1) -- (v1) -- (w2);
\draw (v2) -- (w1);
\draw (v3) -- (w3);
\draw (v4) -- (w4);
\draw (v5) -- (w5);
\draw (u1) -- (u2) -- (v4) -- (u3) -- (w4) -- (u2);
\draw (u3) -- (5.05,0);
\draw (6.15,0) -- (u4) -- (v5) -- (u5) -- (w5) -- (u4);

\foreach \i in {u1,u2,u3,u4,u5,v1,v2,v3,v4,v5,w1,w2,w3,w4,w5}
\filldraw (\i) circle (0.075);

\node at (5.6,0) {$\cdots$};
\node at (3.8,-1.4) {$n\equiv 3 \pmod{4}$};
\end{tikzpicture}
\end{minipage}
\caption{The unique extremal graph in $\mathcal{G}(n,3)$ depending on the value of $n\!\!\mod{4}$}
\label{fig:maximum-spectral-radius-Deltea-3}
\end{figure}

\subsection{Uniqueness of the extremal graph for $\Delta=3$}

\begin{theorem}\label{thm:extremal-graph-degree-3}
Among all connected nonregular graphs with $n$ vertices and maximum degree $3$, the graph 
depicted in Fig.\,\ref{fig:maximum-spectral-radius-Deltea-3} is the unique graph attaining 
maximum spectral radius.
\end{theorem}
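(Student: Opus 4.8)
The plan is to push $G$ through the reduction already developed in this section and then add a rigidity argument that upgrades ``can be transferred into'' to ``is isomorphic to''. Fix $G\in\mathcal{G}(n,3)$ with Perron vector $\bm{x}$. By \autoref{lem:transfer-first-few-vertices-Delta-3} a sequence of degree-preserving local switchings and edge rotations, none of which decreases $\lambda_1$, turns the subgraph induced on the first few vertices of $G$ into $G_1$ or $G_2$ and exposes a cut vertex of degree two; repeatedly invoking \autoref{lem:induced-middle-vertices-Delta=3} reconnects the interior into a chain of $K_4-e$ blocks, and the endgame of the proof of \autoref{thm:proof-conj-degree-sequ-for-degree3} pins down the last few vertices. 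Call the resulting graph $G^{\ast}$. Since $G$ is extremal and no step decreases $\lambda_1$, we have $\lambda_1(G^{\ast})=\lambda_1(G)=\lambda_1(n,3)$, hence $G^{\ast}\in\mathcal{G}(n,3)$ as well.

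The first substantive step is to show that $G^{\ast}$ is uniquely determined by $n$. Up to relabelling the head block is $G_1$ (the case $G_2$ amounting to $G_1$ together with one extra interior block), each interior block $G_3$ is joined to its predecessor at a single degree-two vertex and consumes four new vertices, and the single low-degree vertex — which exists because $|S|=1$ by \autoref{thm:proof-conj-degree-sequ-for-degree3}, with $\delta=2$ for odd $n$ and $\delta=1$ for even $n$ — sits at the very end. So the only remaining freedom is the number of interior blocks and the shape of the tail. A tail analysis in the spirit of the proof of \autoref{thm:proof-conj-degree-sequ-for-degree3}, forbidding the ladder segment $D_1$ via \autoref{lem:forbidden-D-1} and ruling out the competing tails by explicit vector replacements that would strictly increase $\lambda_1$, shows that the tail is forced by $n\bmod 4$; one then checks directly that the four surviving candidates are precisely the graphs in Fig.\,\ref{fig:maximum-spectral-radius-Deltea-3}. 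This part is essentially bookkeeping on the block decomposition combined with the forbidden-subgraph lemmas.

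The crux is to upgrade this to $G\cong G^{\ast}$. Every edge rotation occurring in the reduction is used only to derive a contradiction through \autoref{lem:rotation}; since $\lambda_1$ is already maximal it cannot strictly increase, so in an extremal graph all those configurations are genuinely absent and the operations actually performed on $G$ reduce to local switchings $\ls(s,t,v,u)$, each of which, by the equality clause of \autoref{lem:local-switching}, satisfies $x_s=x_u$ and $x_v=x_t$. I would then argue that each such switching must in fact be an isomorphism: otherwise, using the explicit Perron vector of $G^{\ast}$ — whose entries along the chain obey linear recurrences of the type already solved in Section \ref{sec:structure-Delta=3}, so that $G^{\ast}$ has no pair of distinct vertices with equal Perron value apart from the twin pairs forced inside each block — together with the equality relations $x_s=x_u$, $x_v=x_t$ and the eigenvalue equations, one exhibits either a forbidden induced subgraph or a further modification strictly increasing $\lambda_1$, contradicting extremality. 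Composing the resulting isomorphisms along the chain of switchings yields $G\cong G^{\ast}$.

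The main obstacle is this last step: making rigorous the claim that the impossibility of a strict increase forces every switching used in the reduction to act between two currently interchangeable vertices, and, at the points where the reduction in \autoref{lem:transfer-first-few-vertices-Delta-3} and \autoref{lem:induced-middle-vertices-Delta=3} branches (the alternative endings $G_1$ versus $G_2$, and the choice of tail), showing that the branches differ only by a relabelling. A secondary technical point is completing the tail cases left only sketched in the proof of \autoref{thm:proof-conj-degree-sequ-for-degree3} and reconciling them with $|S|=1$, so that exactly one graph survives for each residue of $n\bmod 4$.
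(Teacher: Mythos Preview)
Your proposal is essentially correct and follows the same strategy as the paper: reduce $G$ to the target graph $H$ via a chain of proper local switchings, use the equality clause of \autoref{lem:local-switching} at each step, and exploit the Perron vector of $H$ to force each switching to be an isomorphism. The paper resolves your ``main obstacle'' more directly than you suggest: working backward from $H=G_t$ to $G_{t-1}$, the equality relations $x_a=x_d$, $x_b=x_c$ mean the Perron vector $\bm{x}$ of $G_{t-1}$ is \emph{also} the Perron vector of $H$; since the entries of this vector on $H$ are strictly decreasing left to right except for the vertical twin pairs, and since $ac,bd\in E(H)$, the four vertices are forced into the head block $\{v_1,\ldots,v_6\}$, where one checks by hand that the reverse switching yields a graph isomorphic to $H$. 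Iterating gives $G\cong H$ with no need to manufacture further forbidden subgraphs or strict improvements at this stage.
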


\begin{proof}
According to \autoref{lem:transfer-first-few-vertices-Delta-3}, \autoref{lem:induced-middle-vertices-Delta=3} 
and \autoref{thm:proof-conj-degree-sequ-for-degree3}, any graph $G$ in $\mathcal{G}(n,3)$
can be transferred by proper local switchings into the graph depicted in 
Fig.\,\ref{fig:maximum-spectral-radius-Deltea-3}, say $H$. Let $\ls_1,\ldots,\ls_t$ be a sequence 
of proper switchings which turn $G$ into $H$. Consider the graphs $G=G_0,G_1,\ldots,G_t=H$ 
in which $G_i$ is obtained from $G_{i-1}$ by applying $\ls_i$. Since $G\in\mathcal{G}(n,3)$, 
we have $\lambda_1(G) = \lambda_1(H)$, and therefore $\lambda_1(G_i) = \lambda_1(H)$ for each $i\in [t]$.
Let $\bm{x}$ be the Perron vector of $G_{t-1}$ and $\ls_t = \ls (a,b,c,d)$. By \autoref{lem:local-switching}, 
we have $x_a=x_d$, $x_b=x_c$, and $\bm{x}$ is also the Perron vector of $H$. On the other hand, 
a simple argument implies that the components of $\bm{x}$ on the vertices of $H$ are strictly 
decreasing from left to right, and the vertices on the same vertical line have the same components.
Hence, $a$, $d$ (and $b$, $c$) must lie on the same vertical line in Fig.\,\ref{fig:maximum-spectral-radius-Deltea-3}.
Observe that $ac\in E(H)$ and $bd\in E(H)$. Then $\{a,b,c,d\}\subset \{v_1,v_2,\ldots,v_6\}$. 
Note that $\ls (a,c,b,d)$ is the reverse of $\ls (a,b,c,d)$, and therefore, when applied 
on $H$, yields $G_{t-1}$. Hence, $G_{t-1}$ must be isomorphic to $H$. Likewise, for 
$i=0,1,\ldots,t-2$, $G_i$ is isomorphic to $H$. This completes the proof of the theorem.
\end{proof}

\section{Structure of extremal graphs in $\mathcal{G}(n,4)$}
\label{sec:structure-Delta=4}

In this section, we determine the structure of extremal graphs $G$ in $\mathcal{G}(n,4)$,
and confirm \autoref{conj:degree-sequence} for $\Delta=4$.

\subsection{Forbidden induced subgraphs}

We first show that certain induced subgraphs cannot appear in $G$.

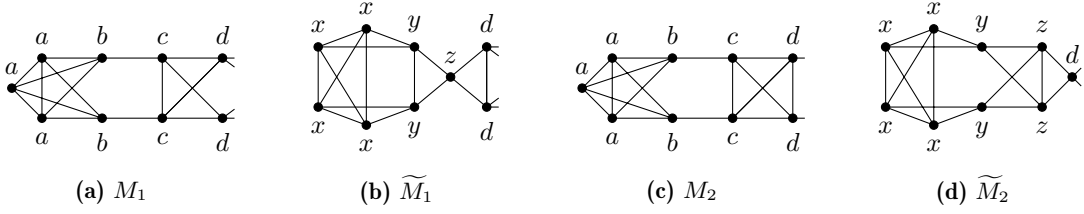
\begin{figure}[htbp]
\centering
\subcaptionbox{$M_1$ \label{subfig:M1}}[.26\textwidth]{%
\begin{tikzpicture}[scale=0.8]
\coordinate (u1) at (0,0);
\coordinate (v1) at (0.5,0.5);
\coordinate (v2) at (1.5,0.5);
\coordinate (v3) at (2.5,0.5);
\coordinate (v4) at (3.5,0.5);
\coordinate (w1) at (0.5,-0.5);
\coordinate (w2) at (1.5,-0.5);
\coordinate (w3) at (2.5,-0.5);
\coordinate (w4) at (3.5,-0.5);

\draw (u1) -- (v1) -- (v2) -- (v3) -- (v4) -- (w3) -- %
(w2) -- (w1) -- (u1) -- (v2) -- (w1) -- (v1) -- (w2) -- (u1);
\draw (3.7,0.5) -- (v4) -- (w3) -- (v3) -- (w4) -- (3.7,-0.5);
\draw (w3) -- (w4) -- (3.7,-0.35);
\draw (v4) -- (3.7,0.35);

\node[above=2pt, scale=0.9] at (v4) {$d$};
\node[below=2pt, scale=0.9] at (w4) {$d$};
\node[above=2pt, scale=0.9] at (v3) {$c$};
\node[below=2pt, scale=0.9] at (w3) {$c$};
\node[above=2pt, scale=0.9] at (v2) {$b$};
\node[below=2pt, scale=0.9] at (w2) {$b$};
\node[above=2pt, scale=0.9] at (v1) {$a$};
\node[below=2pt, scale=0.9] at (w1) {$a$};
\node[above=2pt, scale=0.9] at (u1) {$a$};

\foreach \i in {u1,v1,v2,v3,v4,w1,w2,w3,w4}
\filldraw (\i) circle (0.07);
\end{tikzpicture}
}
\subcaptionbox{$\widetilde{M}_1$ \label{subfig:tilde-M1}}[.21\textwidth]{%
\begin{tikzpicture}[scale=0.8]
\coordinate (u1) at (2.2,0);
\coordinate (v1) at (0,0.5);
\coordinate (v2) at (0.8,0.8);
\coordinate (v3) at (1.6,0.5);
\coordinate (v4) at (2.8,0.5);
\coordinate (w1) at (0,-0.5);
\coordinate (w2) at (0.8,-0.8);
\coordinate (w3) at (1.6,-0.5);
\coordinate (w4) at (2.8,-0.5);

\draw (v1) -- (v2) -- (v3) -- (u1) -- (v4) -- %
(w4) -- (u1) -- (w3) -- (w2) -- (w1) -- (v1) -- (w2) -- (v2) -- (w1);
\draw (v1) -- (v3) -- (w3) -- (w1);
\draw (3,0.35) -- (v4) -- (w4) -- (3,-0.35);
\draw (v4) -- (3,0.5);
\draw (w4) -- (3,-0.5);

\node[above=2pt, scale=0.9] at (v1) {$x$};
\node[below=2pt, scale=0.9] at (w1) {$x$};
\node[above=2pt, scale=0.9] at (v2) {$x$};
\node[below=2pt, scale=0.9] at (w2) {$x$};
\node[above=2pt, scale=0.9] at (v3) {$y$};
\node[below=2pt, scale=0.9] at (w3) {$y$};
\node[above=2pt, scale=0.9] at (u1) {$z$};
\node[above=2pt, scale=0.9] at (v4) {$d$};
\node[below=2pt, scale=0.9] at (w4) {$d$};

\foreach \i in {u1,v1,v2,v3,v4,w1,w2,w3,w4}
\filldraw (\i) circle (0.07);
\end{tikzpicture}
}
\subcaptionbox{$M_2$ \label{subfig:M2}}[.26\textwidth]{%
\begin{tikzpicture}[scale=0.8]
\coordinate (u1) at (0,0);
\coordinate (v1) at (0.5,0.5);
\coordinate (v2) at (1.5,0.5);
\coordinate (v3) at (2.5,0.5);
\coordinate (v4) at (3.5,0.5);
\coordinate (w1) at (0.5,-0.5);
\coordinate (w2) at (1.5,-0.5);
\coordinate (w3) at (2.5,-0.5);
\coordinate (w4) at (3.5,-0.5);

\draw (u1) -- (v1) -- (v2) -- (v3) -- (v4) -- (w3) -- %
(w2) -- (w1) -- (u1) -- (v2) -- (w1) -- (v1) -- (w2) -- (u1);
\draw (3.7,0.5) -- (v4) -- (w3) -- (v3) -- (w4) -- (3.7,-0.5);
\draw (w3) -- (w4) -- (v4);

\node[above=2pt, scale=0.9] at (v4) {$d$};
\node[below=2pt, scale=0.9] at (w4) {$d$};
\node[above=2pt, scale=0.9] at (v3) {$c$};
\node[below=2pt, scale=0.9] at (w3) {$c$};
\node[above=2pt, scale=0.9] at (v2) {$b$};
\node[below=2pt, scale=0.9] at (w2) {$b$};
\node[above=2pt, scale=0.9] at (v1) {$a$};
\node[below=2pt, scale=0.9] at (w1) {$a$};
\node[above=2pt, scale=0.9] at (u1) {$a$};

\foreach \i in {u1,v1,v2,v3,v4,w1,w2,w3,w4}
\filldraw (\i) circle (0.07);
\end{tikzpicture}
}
\subcaptionbox{$\widetilde{M}_2$ \label{subfig:tilde-M2}}[.22\textwidth]{%
\begin{tikzpicture}[scale=0.8]
\coordinate (u1) at (3.1,0);
\coordinate (v1) at (0,0.5);
\coordinate (v2) at (0.8,0.8);
\coordinate (v3) at (1.6,0.5);
\coordinate (v4) at (2.6,0.5);
\coordinate (w1) at (0,-0.5);
\coordinate (w2) at (0.8,-0.8);
\coordinate (w3) at (1.6,-0.5);
\coordinate (w4) at (2.6,-0.5);

\draw (v1) -- (v2) -- (v3) -- (v4) -- (u1) -- (w4) -- (w3) -- (w2) -- %
(w1) -- (v1) -- (w2) -- (v2) -- (w1) -- (w3) -- (v4) -- (w4) -- (v3) -- (v1);
\draw (3.25,0.15) -- (u1) -- (3.25,-0.15);

\node[above=2pt, scale=0.9] at (v1) {$x$};
\node[below=2pt, scale=0.9] at (w1) {$x$};
\node[above=2pt, scale=0.9] at (v2) {$x$};
\node[below=2pt, scale=0.9] at (w2) {$x$};
\node[above=2pt, scale=0.9] at (v3) {$y$};
\node[below=2pt, scale=0.9] at (w3) {$y$};
\node[above=2pt, scale=0.9] at (u1) {$d$};
\node[above=2pt, scale=0.9] at (v4) {$z$};
\node[below=2pt, scale=0.9] at (w4) {$z$};

\foreach \i in {u1,v1,v2,v3,v4,w1,w2,w3,w4}
\filldraw (\i) circle (0.07);
\end{tikzpicture}
}
\caption{Forbidden graphs $M_1$ and $M_2$}
\label{fig:M1-M2}
\end{figure}

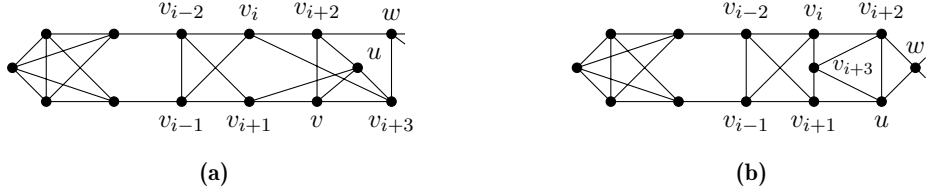
\begin{figure}[htbp]
\centering
\subcaptionbox{\label{subfig:graph1-in-forbbidden-M1}}[.45\textwidth]{%
\begin{tikzpicture}[scale=0.9]
\coordinate (u1) at (0,0);
\coordinate (u2) at (5.1,0);
\coordinate (v1) at (0.5,0.5);
\coordinate (w1) at (0.5,-0.5);
\coordinate (v2) at (1.5,0.5);
\coordinate (w2) at (1.5,-0.5);
\coordinate (v3) at (2.5,0.5);
\coordinate (w3) at (2.5,-0.5);
\coordinate (v4) at (3.5,0.5);
\coordinate (w4) at (3.5,-0.5);
\coordinate (v5) at (4.5,0.5);
\coordinate (w5) at (4.5,-0.5);
\coordinate (v6) at (5.6,0.5);
\coordinate (w6) at (5.6,-0.5);

\draw (u1) -- (v1) -- (v2) -- (v3) -- (v4) -- (v5) -- (v6) -- (w6) -- (w5) -- 
(w4) -- (w3) -- (w2) -- (w1) -- (u1) -- (v2) -- (w1) -- (v1) -- (w2) -- (u1);
\draw (v3) -- (w4) -- (u2) -- (v5) -- (w5) -- (u2) -- (w6) -- (v4) -- (w3) -- (v3);
\draw (5.8,0.5) -- (v6) -- (5.8,0.35);

\node[above=2pt, scale=0.9] at (v3) {$v_{i-2}$};
\node[below=2pt, scale=0.9] at (w3) {$v_{i-1}$};
\node[above=2pt, scale=0.9] at (v4) {$v_i$};
\node[below=2pt, scale=0.9] at (w4) {$v_{i+1}$};
\node[above=2pt, scale=0.9] at (v5) {$v_{i+2}$};
\node[below=2pt, scale=0.9] at (w5) {$v$};
\node[above=2pt, scale=0.9] at (v6) {$w$};
\node[below=2pt, scale=0.9] at (w6) {$v_{i+3}$};
\node[anchor=south west, scale=0.9] at (u2) {$u$};

\foreach \i in {u1,u2,v1,v2,v3,v4,v5,v6,w1,w2,w3,w4,w5,w6}
\filldraw (\i) circle (0.07);
\end{tikzpicture}
}
\subcaptionbox{\label{subfig:graph2-in-forbbidden-M1}}[.45\textwidth]{%
\begin{tikzpicture}[scale=0.9]
\coordinate (u1) at (0,0);
\coordinate (u2) at (3.5,0);
\coordinate (u3) at (5,0);
\coordinate (v1) at (0.5,0.5);
\coordinate (w1) at (0.5,-0.5);
\coordinate (v2) at (1.5,0.5);
\coordinate (w2) at (1.5,-0.5);
\coordinate (v3) at (2.5,0.5);
\coordinate (w3) at (2.5,-0.5);
\coordinate (v4) at (3.5,0.5);
\coordinate (w4) at (3.5,-0.5);
\coordinate (v5) at (4.5,0.5);
\coordinate (w5) at (4.5,-0.5);

\draw (u1) -- (v1) -- (v2) -- (v3) -- (v4) -- (v5) --%
(w5) -- (w4) -- (w3) -- (w2) -- (w1) -- (u1) -- (v2) -- (w1) --%
(v1) -- (w2) -- (u1);
\draw (v3) -- (w3) -- (v4) -- (u2) -- (w4) -- (v3);
\draw (v5) -- (u2) -- (w5);
\draw (v5) -- (u3) -- (w5);
\draw (5.15,0.15) -- (u3) -- (5.15,-0.15);

\node[above=2pt, scale=0.9] at (v3) {$v_{i-2}$};
\node[below=2pt, scale=0.9] at (w3) {$v_{i-1}$};
\node[above=2pt, scale=0.9] at (v4) {$v_i$};
\node[below=2pt, scale=0.9] at (w4) {$v_{i+1}$};
\node[right=4pt, scale=0.8] at (u2) {$v_{i+3}$};
\node[above=2pt, scale=0.9] at (v5) {$v_{i+2}$};
\node[below=2pt, scale=0.9] at (w5) {$u$};
\node[above=2pt, scale=0.9] at (u3) {$w$};

\foreach \i in {u1,u2,u3,v1,v2,v3,v4,v5,w1,w2,w3,w4,w5}
\filldraw (\i) circle (0.07);
\end{tikzpicture}
}
\caption{Two possible situations in \autoref{lem:forbidden-M1}}
\label{fig:graph12-in-forbidden-M1}
\end{figure}

\begin{lemma}\label{lem:forbidden-M1}
The graph $G$ does not contain the graph $M_1$ shown in Fig.\,\ref{fig:M1-M2} as an induced subgraph.
\end{lemma}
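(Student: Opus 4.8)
The plan is to argue by contradiction: suppose $G$ contains $M_1$ as an induced subgraph, with the labelled vertices carrying the Perron-vector values indicated in Fig.\,\ref{fig:M1-M2}\,\subref{subfig:M1}. The first step is to justify why the indicated equalities among components hold — that is, why the two copies of $a$, of $b$, of $c$, and of $d$ must actually have equal Perron-vector entries. This should follow from a sequence of proper local switchings: the symmetric ``ladder-like'' structure of $M_1$ admits switchings that swap the top and bottom rails, and since $G\in\mathcal{G}(n,\Delta)$ forces $\lambda_1$ to be unchanged, \autoref{lem:local-switching} forces equality of the swapped components. I would make this precise by identifying, for each symmetric pair, an explicit switch $\ls(s,t,v,u)$ with $s,t,v,u$ among the $M_1$-vertices such that the switch is degree-preserving, keeps the graph connected (\autoref{prop:preserving-connected}), and has $(x_s-x_u)(x_v-x_t)$ of a fixed sign, then invoke the equality case.

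Next, with these equalities in hand, I would write down the eigenvalue equations at the internal vertices of $M_1$ (the vertices $b$, $c$, $d$ and the degree-$4$ apex $a$ on the left) to pin down the ratios among $a,b,c,d$ in terms of $\lambda:=\lambda_1(G)$; here one uses that $\lambda$ is close to $\Delta=4$ (indeed $\lambda>\lambda_1$ of any fixed small block, which is bounded below by something like $3.9$ for $n$ large), so various polynomial expressions in $\lambda$ have controlled sign. The heart of the argument is then to replace the induced $M_1$ by the gadget $\widetilde M_1$ of Fig.\,\ref{fig:M1-M2}\,\subref{subfig:tilde-M1} — which has the same degree sequence on the affected vertices and the same attachment to the rest of $G$ — and to exhibit a test vector $\bm{y}$ on the modified graph $\widetilde G$ (equal to $\bm{x}$ outside the gadget, and set to the values $x,x,x,x,y,y,z,d,d$ shown, where $x,y,z$ are chosen as suitable linear combinations of $a,b,c,d$). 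Using \eqref{eq:Delta-rho} applied to $\widetilde G$ and $\bm{y}$, together with \eqref{eq:Delta-rho-sum-form} for $G$ and $\bm{x}$, I would show $\sum_{uv\in E(\widetilde G)}(y_u-y_v)^2 + \sum_v(\Delta-d_{\widetilde G}(v))y_v^2 \le \Delta - \lambda_1(G)$ while $\|\bm{y}\|_2 > 1 = \|\bm{x}\|_2$, forcing $\Delta-\lambda_1(\widetilde G) < \Delta-\lambda_1(G)$, i.e. $\lambda_1(\widetilde G)>\lambda_1(G)$, contradicting maximality. The two bookkeeping identities to verify are: the edge-difference sum is preserved (or decreased) by the swap — which is where the choice of $x,y,z$ is engineered — and the norm strictly increases, which reduces to a single inequality $\|\bm y\|_2^2 - 1 = (\text{positive multiple of }a^2)\cdot(\text{polynomial in }\lambda) > 0$ valid for $\lambda$ near $4$.

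One subtlety I anticipate is that $M_1$ may sit inside $G$ in more than one way relative to the ordering $x_1\ge\cdots\ge x_n$ and relative to how the right-hand stubs of $M_1$ connect onward; Fig.\,\ref{fig:graph12-in-forbidden-M1} suggests the authors distinguish the cases according to whether the two pendant edges at the $d$-vertices go to a common vertex $u$ or to separate structures, and whether $v,w$ form part of a further ladder block or attach to a degree-$4$ vertex. I would handle this by showing that in every such embedding the same replacement $M_1\rightsquigarrow\widetilde M_1$ is available and the external edges are unaffected, so the local computation is identical; alternatively, one first applies a few proper local switchings to normalise the neighbourhood of $M_1$ into a canonical form before performing the main swap.

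The main obstacle, I expect, is establishing the component equalities in the first step rigorously: unlike in \autoref{lem:forbidden-D-1}, where a single switch plus one eigenvalue equation sufficed, $M_1$ has several symmetric pairs, and one must find a consistent chain of switches that forces \emph{all} of them to be equalities without creating a disconnected graph or a repeated-degree violation along the way. Once those equalities are secured, the rest is the (somewhat lengthy but routine) algebra of choosing $x,y,z$ and verifying the two identities; the sign of the resulting polynomial in $\lambda$ near $4$ is then a finite check.
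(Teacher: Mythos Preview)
Your proposal is correct and follows essentially the paper's approach. One clarification on your ``main obstacle'': the $a$-, $b$-, and $c$-equalities follow directly from the eigenvalue equations (the three $a$-vertices have identical closed neighbourhoods, and the $b$- and $c$-differences satisfy a homogeneous system forcing them to vanish), so no switchings are needed there; the only genuine work is your stated alternative of normalising the two $d$-vertices to share their external neighbours via proper local switchings, which is precisely what the case analysis of Fig.~\ref{fig:graph12-in-forbidden-M1} accomplishes.
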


\begin{proof}
For a contradiction suppose that $G$ contains $M_1$ as an induced subgraph. We first show 
that the two rightmost vertices of $M_1$, say $v_i$ and $v_{i+1}$, share two common neighbors 
other than $v_{i-2}$, $v_{i-1}$ (see Fig.\,\ref{fig:graph12-in-forbidden-M1}). By 
\autoref{coro:common-use-1} we may assume $v_i\sim v_{i+2}$ and $v_i\sim v_{i+3}$. Let $u$, 
$v\in N(v_{i+1})\setminus\{v_{i-2}, v_{i-1}\}$. If $|\{v_{i+2}, v_{i+3}\}\cap \{u, v\}| = 0$,
then $u\sim v$, for otherwise, $\ls (v_{i+1},v, v_{i+2},u)$ connects $v_{i+1}$ to $v_{i+2}$,
and therefore $\{v_{i+2}, v_{i+3}\}\cap \{u, v\} \neq\emptyset$. So we arrive at the graph 
as illustrated in Fig.\,\ref{fig:graph12-in-forbidden-M1}\,\subref{subfig:graph1-in-forbbidden-M1}.
Also, $\ls (v_{i+1},u, v_{i+2},w)$ makes $v_{i+1}\sim v_{i+2}$. Hence, we assume 
$|\{v_{i+2}, v_{i+3}\}\cap \{u, v\}| = 1$ (see Fig.\,\ref{fig:graph12-in-forbidden-M1}\,\subref{subfig:graph2-in-forbbidden-M1}). 
Without loss of generality, assume that $v_{i+1}\sim v_{i+3}$. By eigenvalue equations, 
\[ 
(\lambda_1(G) + 1) (x_{i+2} - x_u) = x_i - x_{i+1}.
\] 
On the other hand, we have 
\[ 
\lambda_1(G) (x_i - x_{i+1}) = x_{i+2} - x_u.
\] 
So we deduce that $x_i=x_{i+1}$ and $x_{i+2}=x_u$. In particular, $x_{i+2}=x_{i+3}$. Again 
by eigenvalue equations for $v_{i+2}$ and $v_{i+3}$, we see $x_{i+1}=x_w$. Hence, we 
immediately have $x_i=\cdots=x_u=x_w$, yielding $\lambda_1(G)=4$, a contradiction.
Therefore, $v_i$ and $v_{i+1}$ share four neighbors. Hence, by eigenvalue equations, 
we can label the components of the Perron vector $\bm{x}$ on $M_1$ as depicted in 
Fig.\,\ref{fig:M1-M2}\,\subref{subfig:M1}. 

Next, we will construct a graph $\widetilde{G}$ such that $\lambda_1(\widetilde{G}) > \lambda_1(G)$, 
which yields a contradiction. Before doing this, set $\lambda:=\lambda_1(G)$. By eigenvalue equations we find
\begin{equation}\label{eq:equation-bcd-1-degree4}
b = \frac{\lambda-2}{2} a,~~
c = \frac{\lambda^2-2\lambda-6}{2} a,~~
d = \frac{\lambda^3-3\lambda^2-5\lambda+8}{4} a.
\end{equation}
Now we replace $M_1$ by $\widetilde{M}_1$ to obtain $\widetilde{G}$. We define a vector $\bm{y}$
on $V(\widetilde{G})$ such that its components on $\widetilde{M}_1$ are as given in 
Fig.\,\ref{fig:M1-M2}\,\subref{subfig:tilde-M1}, and on the rest of vertices agree with 
$\bm{x}$. Here, $x$, $y$, $z$ are given by 
\begin{equation}\label{eq:define-xyz-degree4}
\begin{split}
x & = \frac{(\sqrt{2}-1) a}{4} \big( -\lambda^3 + 5\lambda^2 - (2-\sqrt{3})(\sqrt{3}-\sqrt{2})\lambda - (\sqrt{6}-1)(\sqrt{6}-\sqrt{2})\big), \\
y & = \frac{(\sqrt{2}-1) a}{4} \big(-\lambda^3 + 5\lambda^2 + (\sqrt{2}+1)^2\lambda - 4\sqrt{2}-24 \big), \\
z & = \frac{(\sqrt{2}-1) a}{4} \big(-\lambda^3 + (7+2\sqrt{2})\lambda^2 - (3+4\sqrt{2})\lambda - 32-12\sqrt{2} \big).
\end{split}
\end{equation}
Taking \eqref{eq:equation-bcd-1-degree4} into consideration, we see that
\[
(x-y)^2=\frac{3(a-b)^2}{2},~~
(y-z)^2=(b-c)^2,~~
(z-d)^2=2(c-d)^2,
\]
from which we obtain that $\bm{x}^{\mathrm{T}} L(G) \bm{x} = \bm{y}^{\mathrm{T}} L(\widetilde{G}) \bm{y}$.
Using \eqref{eq:Delta-rho} we find that
\[
4-\lambda_1(\widetilde{G}) \leq \frac{4 -\lambda}{\|\bm{y}\|_2^2}.
\]
To complete the proof, we will prove that $\|\bm{y}\|_2>1$ below. Note that
\begin{equation}\label{eq:norm-y-degree4}
\|\bm{y}\|_2^2 = \|\bm{x}\|_2^2 + 4x^2+2y^2+z^2 - (3a^2+2b^2+2c^2).
\end{equation}
By substituting \eqref{eq:equation-bcd-1-degree4} and \eqref{eq:define-xyz-degree4} into 
\eqref{eq:norm-y-degree4}, we derive that $\|\bm{y}\|_2^2 = 1+f(\lambda)a^2$,
where $f(x)$ is given by
\begin{align*}
f(x) & =\frac{21-14\sqrt{2}}{16} x^6 + \frac{68\sqrt{2}-103}{8} x^5 + 
\Big(\frac{475}{16} - \frac{159\sqrt{2}}{8} + \sqrt{3} - \frac{\sqrt{6}}{2}\Big) x^4 \\
& ~~~ + \Big(\frac{455}{8} + \frac{9\sqrt{6}}{2} - 9\sqrt{3} - 34\sqrt{2}\Big) x^3 + 
\Big(\frac{1181\sqrt{2}}{8} + 19\sqrt{3} - \frac{21\sqrt{6}}{2} - \frac{3683}{16}\Big) x^2 \\
& ~~~ + \Big(\frac{21\sqrt{2}}{2} + 24\sqrt{3} - 6\sqrt{6} - 50\Big) x 
- 260\sqrt{2} - 80\sqrt{3} + 32\sqrt{6} + 451.
\end{align*}
It is straightforward to check that $f(0)>0$, $f(2)<0$, $f(3.5)>0$, $f(5)<0$ and $f(7)>0$.
Hence $f(x)=0$ has at least four positive roots. On the other hand, by Descartes's Rule of 
Signs (see \cite{Wang2004}), $f(x) =0$ has at most four positive roots. Hence, $f(x)$ has 
the same sign in interval $(3.5,4)$. Noting that $f(3.5)>0$ and $f(4)=0$, we have $f(\lambda)>0$
as $\lambda>\lambda_1(H_3)>3.7$, which yields that $\lambda_1(\widetilde{G}) > \lambda$, 
a contradiction completing the proof of \autoref{lem:forbidden-M1}.
\end{proof}

The proofs of the following two lemmas are similar to those of \autoref{lem:forbidden-M1}, 
and we defer them to Appendix \ref{appendix}.

\begin{lemma}\label{lem:forbidden-M2}
The graph $G$ does not contain the graph $M_2$ shown in Fig.\,\ref{fig:M1-M2} as an induced subgraph.
\end{lemma}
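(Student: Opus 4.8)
The proof follows the same scheme as that of \autoref{lem:forbidden-M1}, with $(M_1,\widetilde M_1)$ replaced by $(M_2,\widetilde M_2)$. So, assuming $G$ contains $M_2$ as an induced subgraph, the plan is to build a graph $\widetilde G$ with the same order and the same degree sequence as $G$ but with $\lambda_1(\widetilde G)>\lambda_1(G)$, contradicting $G\in\mathcal G(n,4)$. Throughout write $\lambda:=\lambda_1(G)$ and use $\lambda>\lambda_1(H_3)>3.7$.

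First I would fix the Perron vector on $M_2$. Labelling the vertices as in Fig.\,\ref{fig:M1-M2}\,\subref{subfig:M2}: the mirror pairs $\{u_1,v_1\}$ and $\{v_1,w_1\}$ each consist of two vertices whose neighbourhoods differ only by the transposition of the pair, so the eigenvalue equations give $(\lambda+1)(x_{u_1}-x_{v_1})=(\lambda+1)(x_{v_1}-x_{w_1})=0$ and a common value $a:=x_{u_1}=x_{v_1}=x_{w_1}$. Subtracting the eigenvalue equations of $w_2$ from $v_2$ and of $w_3$ from $v_3$ produces a homogeneous $2\times2$ system in $x_{v_2}-x_{w_2}$ and $x_{v_3}-x_{w_3}$ whose determinant is $\lambda^2+\lambda-1\neq0$, so $x_{v_2}=x_{w_2}=:b$ and $x_{v_3}=x_{w_3}=:c$. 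For the two rightmost vertices $v_4,w_4$ I would run a switching case analysis of exactly the flavour of the first paragraph of \autoref{lem:forbidden-M1}: by \autoref{coro:common-use-1} and \autoref{lem:local-switching} one pins down the neighbourhoods of $v_4$ and $w_4$ outside $M_2$ --- in particular their two outside neighbours are distinct and carry equal Perron components --- while the degenerate alternatives propagate equalities along eigenvalue equations down to $\lambda=4$, which is impossible. This forces $x_{v_4}=x_{w_4}=:d$, yielding the labelling in Fig.\,\ref{fig:M1-M2}\,\subref{subfig:M2}; since the left block $\{u_1,v_1,w_1,v_2,w_2,v_3,w_3\}$ of $M_2$ induces the same labelled graph as that of $M_1$, the eigenvalue equations at $u_1$, $v_2$, $v_3$ give precisely the relations \eqref{eq:equation-bcd-1-degree4}.

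Next comes the surgery. Replacing $M_2$ by $\widetilde M_2$ gives $\widetilde G$, with the two half-edges of $M_2$ (at $v_4$ and $w_4$) relocated to the single vertex $u_1$ of $\widetilde M_2$, so $\widetilde G$ has the same order and degree sequence as $G$. I would take $\bm y$ equal to $\bm x$ off $\widetilde M_2$ and equal to the values $x,y,z$ of Fig.\,\ref{fig:M1-M2}\,\subref{subfig:tilde-M2} on $\widetilde M_2$. Counting the edges of $G$ and of $\widetilde G$ by the Perron values of their endpoints, the edge-difference sums $\sum_{uv\in E(G)}(x_u-x_v)^2$ and $\sum_{uv\in E(\widetilde G)}(y_u-y_v)^2$ agree as soon as $(x-y)^2=\tfrac32(a-b)^2$, $(y-z)^2=\tfrac12(b-c)^2$, $(z-d)^2=2(c-d)^2$; with \eqref{eq:equation-bcd-1-degree4} this is a linear system that pins $x,y,z$ down (up to the obvious sign choices), producing closed forms. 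Then $\bm x^{\mathrm T}L(G)\bm x=\bm y^{\mathrm T}L(\widetilde G)\bm y$, and \eqref{eq:Delta-rho} applied to $\widetilde G$ and $\bm y$ gives $4-\lambda_1(\widetilde G)\le(4-\lambda)/\|\bm y\|_2^2$. It then remains to show $\|\bm y\|_2>1$: writing $\|\bm y\|_2^2=\|\bm x\|_2^2+(4x^2+2y^2+z^2)-(3a^2+2b^2+2c^2)$ and substituting \eqref{eq:equation-bcd-1-degree4} and the formulas for $x,y,z$ gives $\|\bm y\|_2^2-1=g(\lambda)\,a^2$ for an explicit polynomial $g$, and a few sign evaluations of $g$ together with Descartes' Rule of Signs (as in \autoref{lem:forbidden-M1}) show $g(\lambda)>0$ on $\lambda>\lambda_1(H_3)$. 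Hence $\lambda_1(\widetilde G)>\lambda$, the desired contradiction.

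I expect the main obstacle to be the first step: the mirror symmetry of the left block is immediate, but determining the Perron values at the two rightmost vertices of $M_2$ and at their outside neighbours needs the delicate switching case analysis that excludes the configurations forcing $\lambda=4$ (these are the only ones that block the argument, exactly as in \autoref{lem:forbidden-M1}). Once the symmetric labelling is in hand, the rest is routine: a linear solve for $x,y,z$ and a polynomial positivity check of the same difficulty as the degree-six estimate in \autoref{lem:forbidden-M1}.
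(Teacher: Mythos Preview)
Your approach is the paper's: replace $M_2$ by $\widetilde M_2$, choose $x,y,z$ so that the Laplacian quadratic forms agree, then show $\|\bm y\|_2>1$ and conclude via \eqref{eq:Delta-rho}. Your edge--difference conditions $(x-y)^2=\tfrac32(a-b)^2$, $(y-z)^2=\tfrac12(b-c)^2$, $(z-d)^2=2(c-d)^2$ are exactly right for the pair $(M_2,\widetilde M_2)$ and, after substituting \eqref{eq:equation-bcd-1-degree4}, solve to the paper's explicit closed forms for $x,y,z$.

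There is one concrete slip: your norm expression $\|\bm y\|_2^2=1+(4x^2+2y^2+z^2)-(3a^2+2b^2+2c^2)$ is the $\widetilde M_1$ count copied from \autoref{lem:forbidden-M1}, not the $\widetilde M_2$ count. In $\widetilde M_2$ there are \emph{two} vertices carrying $z$ (the pair adjacent to $u_1$) and \emph{one} carrying $d$ (namely $u_1$), whereas $M_2$ has two vertices carrying $d$; the correct formula is
\[
\|\bm y\|_2^2 \;=\; 1 + 4x^2 + 2y^2 + 2z^2 - (3a^2 + 2b^2 + 2c^2 + d^2),
\]
which is what the paper uses. With this correction, substituting the relations \eqref{eq:equation-bcd-1-degree4} and the solved $x,y,z$ produces the paper's degree--six polynomial $g(\lambda)$, and the Descartes' Rule argument you outline (checking signs at a few points, using $g(4)=0$) establishes $g(\lambda)>0$ on $(3.7,4)$. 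So the plan is sound; only the vertex count in the norm needs fixing.
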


\begin{lemma}\label{lem:forbidden-M3}
The graph $G$ does not contain the graph $M_3$ shown in Fig.\,\ref{fig:M3-M4} as an induced subgraph.
\end{lemma}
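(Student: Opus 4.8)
The plan is to follow the template of \autoref{lem:forbidden-M1} verbatim, making only the combinatorial bookkeeping match the shape of $M_3$. Assume for contradiction that $G$ contains $M_3$ (Fig.\,\ref{fig:M3-M4}) as an induced subgraph, with its vertices labelled so that the Perron components decrease from the "open" end (the side carrying the half-edges into the rest of $G$) toward the dense end. First I would use \autoref{coro:common-use-1} together with proper local switchings to normalize the adjacencies at the open end of $M_3$: exactly as in the first paragraph of the proof of \autoref{lem:forbidden-M1}, one forces the two boundary vertices to realize the prescribed neighbourhood pattern, discarding the competing configurations either by applying some $\ls(\cdot)$ that would create an edge inconsistent with the induced copy of $M_3$ (not decreasing $\lambda_1(G)$, by \autoref{lem:local-switching}, hence legitimate since $G$ is extremal), or, in the single borderline configuration, by reading the eigenvalue equations to force a chain of equalities $x_i=x_{i+1}=\cdots$ that collapses to $\lambda_1(G)=\Delta=4$, a contradiction. \autoref{prop:preserving-connected} guarantees connectivity is never lost along the way.

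Once the induced copy of $M_3$ is pinned down, the eigenvalue equations at its internal vertices express every Perron component on $M_3$ as an explicit polynomial in $\lambda:=\lambda_1(G)$ times a single scale $a$ (the smallest component, at the open end), the analogue of \eqref{eq:equation-bcd-1-degree4}. I would then replace the induced $M_3$ by a suitable graph $\widetilde{M}_3$ of the same order, keeping all boundary degrees equal to $4$ so that the modified graph $\widetilde{G}$ is still connected, nonregular, and of maximum degree $4$, i.e.\ an admissible competitor. On $\widetilde{M}_3$ I define a test vector $\bm{y}$ (equal to $\bm{x}$ elsewhere) whose values $x,y,z,\dots$ are again polynomials in $\lambda$ times $a$, chosen so that the edge contributions match term by term, that is $\sum_{uv\in E(\widetilde{G})}(y_u-y_v)^2=\sum_{uv\in E(G)}(x_u-x_v)^2$, hence $\bm{y}^{\mathrm{T}}L(\widetilde{G})\bm{y}=\bm{x}^{\mathrm{T}}L(G)\bm{x}$; since the relevant degrees are unchanged, \eqref{eq:Delta-rho} then yields $4-\lambda_1(\widetilde{G})\le (4-\lambda)/\|\bm{y}\|_2^2$.

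It remains to show $\|\bm{y}\|_2>1$. Substituting the chosen values into $\|\bm{y}\|_2^2-\|\bm{x}\|_2^2$ produces $\|\bm{y}\|_2^2-1=f(\lambda)\,a^2$ for an explicit one-variable polynomial $f$, exactly as in \eqref{eq:norm-y-degree4}. I would evaluate $f$ at a handful of points (e.g.\ $0,2,3.5,5,7$), exhibit enough sign changes to force $f$ to have the maximal number of positive real roots permitted by Descartes's Rule of Signs, conclude that $f$ has constant sign on the interval $(\lambda_0,4)$ for the relevant threshold $\lambda_0$, and then combine the endpoint value $f(3.5)>0$ with the a priori lower bound $\lambda_1(G)>3.7$ available for admissible extremal graphs (as used in \autoref{lem:forbidden-M1}) to get $f(\lambda)>0$. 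Hence $\|\bm{y}\|_2>1$, so $\lambda_1(\widetilde{G})>\lambda_1(G)$, contradicting $G\in\mathcal{G}(n,4)$ and proving \autoref{lem:forbidden-M3}.

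The main obstacle I expect is the joint design of $\widetilde{M}_3$ and of the auxiliary values $x,y,z,\dots$: they must simultaneously keep every boundary degree equal to $4$, make the Dirichlet energies agree \emph{exactly}, and render $f$ positive in a left-neighbourhood of $\lambda=4$. As in \eqref{eq:define-xyz-degree4}, balancing the energy-matching constraint against positivity of the norm surplus typically forces $\sqrt{2}$- and $\sqrt{3}$-type irrational coefficients, and the ensuing sign analysis of $f$ is delicate precisely because the count of sign variations must come out equal to the number of positive roots — otherwise constancy of sign on $(\lambda_0,4)$ cannot be concluded. Carrying out this engineering and checking the few numerical evaluations is the only genuinely technical part; the case analysis, the use of \eqref{eq:Delta-rho}, and the final contradiction all transfer with no new ideas from \autoref{lem:forbidden-M1}.
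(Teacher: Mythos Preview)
Your proposal is correct and follows essentially the same route as the paper's proof in the Appendix: assume $M_3$ occurs, use the eigenvalue equations to parametrize the Perron components on $M_3$ by $\lambda$ and a single scale $a$, replace $M_3$ by $\widetilde{M}_3$, choose a test vector $\bm{y}$ making the Laplacian quadratic forms coincide, and finish by showing $\|\bm{y}\|_2^2-1=h(\lambda)a^2>0$ via Descartes's Rule of Signs. Two small corrections: the Perron components actually decrease from the dense $K_4$ end toward the open end (not the reverse), and for $M_3$ only two auxiliary values $x,y$ are needed (the polynomial $h$ has degree~$4$), so the first normalization paragraph you import from \autoref{lem:forbidden-M1} is largely unnecessary here.
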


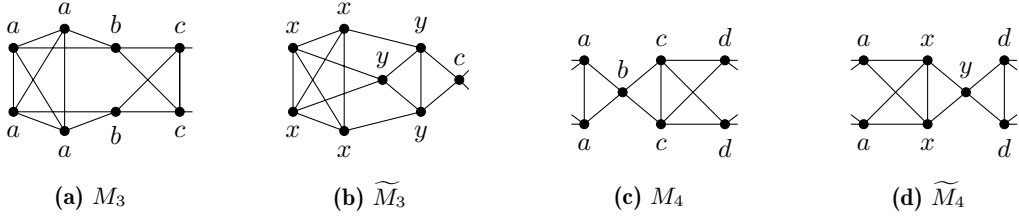
\begin{figure}[htbp]
\centering
\subcaptionbox{$M_3$ \label{subfig:M3}}[.23\textwidth]{%
\begin{tikzpicture}[scale=0.85]
\coordinate (v1) at (0,0.5);
\coordinate (v2) at (0.8,0.8);
\coordinate (v3) at (1.6,0.5);
\coordinate (v4) at (2.6,0.5);

\coordinate (w1) at (0,-0.5);
\coordinate (w2) at (0.8,-0.8);
\coordinate (w3) at (1.6,-0.5);
\coordinate (w4) at (2.6,-0.5);

\draw (v1) -- (v2) -- (v3) -- (v4) -- (w4) -- (w3) -- (w2) -- (w1) -- %
(v1) -- (v3) -- (w4) -- (v4) -- (w3) -- (w1) -- (v2) -- (w2) -- (v1);
\draw (2.8,0.5) -- (v4);
\draw (2.8,-0.5) -- (w4);

\node[above=2pt, scale=0.9] at (v1) {$a$};
\node[below=2pt, scale=0.9] at (w1) {$a$};
\node[above=2pt, scale=0.9] at (v2) {$a$};
\node[below=2pt, scale=0.9] at (w2) {$a$};
\node[above=2pt, scale=0.9] at (v3) {$b$};
\node[below=2pt, scale=0.9] at (w3) {$b$};
\node[above=2pt, scale=0.9] at (v4) {$c$};
\node[below=2pt, scale=0.9] at (w4) {$c$};

\foreach \i in {v1,v2,v3,v4,w1,w2,w3,w4}
\filldraw (\i) circle (0.07);
\end{tikzpicture}
}
\subcaptionbox{$\widetilde{M}_3$ \label{subfig:tilde-M3}}[.23\textwidth]{%
\begin{tikzpicture}[scale=0.85]
\coordinate (u1) at (1.4,0);
\coordinate (u2) at (2.6,0);
\coordinate (v1) at (0,0.5);
\coordinate (v2) at (0.8,0.8);
\coordinate (v3) at (2,0.5);
\coordinate (w1) at (0,-0.5);
\coordinate (w2) at (0.8,-0.8);
\coordinate (w3) at (2,-0.5);

\draw (v1) -- (v2) -- (v3) -- (u2) -- (w3) -- (w2) -- (w1) -- %
(v1) -- (w2) -- (v2) -- (w1) -- (u1) -- (v1);
\draw (u1) -- (v3) -- (w3) -- (u1);
\draw (2.75,0.15) -- (u2) -- (2.75,-0.15);

\node[above=2pt, scale=0.9] at (v1) {$x$};
\node[below=2pt, scale=0.9] at (w1) {$x$};
\node[above=2pt, scale=0.9] at (v2) {$x$};
\node[below=2pt, scale=0.9] at (w2) {$x$};
\node[above=1pt, scale=0.9] at (u1) {$y$};
\node[above=2pt, scale=0.9] at (v3) {$y$};
\node[below=2pt, scale=0.9] at (w3) {$y$};
\node[above=2pt, scale=0.9] at (u2) {$c$};

\foreach \i in {u1,u2,v1,v2,v3,w1,w2,w3}
\filldraw (\i) circle (0.07);
\end{tikzpicture}
}
\subcaptionbox{$M_4$ \label{subfig:M4}}[.23\textwidth]{%
\begin{tikzpicture}[scale=0.85]
\coordinate (u1) at (0,0);
\coordinate (v1) at (-0.6,0.5);
\coordinate (v2) at (0.6,0.5);
\coordinate (v3) at (1.6,0.5);

\coordinate (w1) at (-0.6,-0.5);
\coordinate (w2) at (0.6,-0.5);
\coordinate (w3) at (1.6,-0.5);

\draw (v1) -- (u1) -- (v2) -- (v3) -- (w2) -- (u1) -- (w1) -- (v1); 
\draw (w3) -- (w2) -- (v2) -- (w3);
\draw (-0.8,0.5) -- (v1) -- (-0.8,0.35);
\draw (-0.8,-0.5) -- (w1) -- (-0.8,-0.35);
\draw (1.8,0.5) -- (v3) -- (1.8,0.35);
\draw (1.8,-0.5) -- (w3) -- (1.8,-0.35);

\node[above=2pt, scale=0.9] at (v1) {$a$};
\node[below=2pt, scale=0.9] at (w1) {$a$};
\node[above=2pt, scale=0.9] at (u1) {$b$};
\node[above=2pt, scale=0.9] at (v2) {$c$};
\node[below=2pt, scale=0.9] at (w2) {$c$};
\node[above=2pt, scale=0.9] at (v3) {$d$};
\node[below=2pt, scale=0.9] at (w3) {$d$};

\foreach \i in {u1,v1,v2,v3,w1,w2,w3}
\filldraw (\i) circle (0.07);
\end{tikzpicture}
}
\subcaptionbox{$\widetilde{M}_4$ \label{subfig:tilde-M4}}[.23\textwidth]{%
\begin{tikzpicture}[scale=0.85]
\coordinate (u1) at (1.6,0);
\coordinate (v1) at (0,0.5);
\coordinate (v2) at (1,0.5);
\coordinate (v3) at (2.2,0.5);

\coordinate (w1) at (0,-0.5);
\coordinate (w2) at (1,-0.5);
\coordinate (w3) at (2.2,-0.5);

\draw (v1) -- (v2) -- (u1) -- (w2) -- (w1) -- (v2);
\draw (u1) -- (v3) -- (w3) -- (u1);
\draw (v1) -- (w2) -- (v2);
\draw (-0.2,0.5) -- (v1) -- (-0.2,0.35);
\draw (-0.2,-0.5) -- (w1) -- (-0.2,-0.35);
\draw (2.4,0.5) -- (v3) -- (2.4,0.35);
\draw (2.4,-0.5) -- (w3) -- (2.4,-0.35);

\node[above=2pt, scale=0.9] at (v1) {$a$};
\node[below=2pt, scale=0.9] at (w1) {$a$};
\node[above=2pt, scale=0.9] at (u1) {$y$};
\node[above=2pt, scale=0.9] at (v2) {$x$};
\node[below=2pt, scale=0.9] at (w2) {$x$};
\node[above=2pt, scale=0.9] at (v3) {$d$};
\node[below=2pt, scale=0.9] at (w3) {$d$};

\foreach \i in {u1,v1,v2,v3,w1,w2,w3}
\filldraw (\i) circle (0.07);
\end{tikzpicture}
}
\caption{Forbidden graphs $M_3$ and $M_4$}
\label{fig:M3-M4}
\end{figure}

Below we will forbid more subgraphs. The proof of the next lemma use similar arguments 
as before but involve a few new technical details on the choice of vector $\bm{y}$.

\begin{lemma}\label{lem:forbidden-M4}
The graph $G$ does not contain the graph $M_4$ shown in Fig.\,\ref{fig:M3-M4} as an induced subgraph.
\end{lemma}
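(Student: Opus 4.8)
The plan is to mimic the strategy of \autoref{lem:forbidden-M1}--\autoref{lem:forbidden-M3}. Assume for contradiction that $G$ contains $M_4$ as an induced subgraph, with the vertices labelled as in Fig.~\ref{fig:M3-M4}\,\subref{subfig:M4}, and write $\lambda:=\lambda_1(G)$. First I would pin down the Perron components on $M_4$. The two vertices labelled $c$ are adjacent and have exactly the same three other neighbours (the vertex $b$ and the two vertices labelled $d$), so the eigenvalue equations give $(\lambda+1)(x_{c}-x_{c'})=0$ and hence they carry a common value $c$; a twin/eigenvalue argument as in the proof of \autoref{lem:forbidden-M1} then lets me take the two $a$'s, and likewise the two $d$'s, to be equal. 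Since $b$ and both $c$-vertices have all of their neighbours inside $M_4$, the eigenvalue equations at $b$ and at a $c$-vertex read $\lambda b=2a+2c$ and $(\lambda-1)c=b+2d$, which I would solve to write $b$ and $c$ as rational functions of $a$, $d$ and $\lambda$ with denominator $(\lambda-2)(\lambda+1)$.

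Next I would form $\widetilde G$ from $G$ by deleting the ten internal edges of $M_4$ and inserting the ten internal edges of $\widetilde M_4$ (Fig.~\ref{fig:M3-M4}\,\subref{subfig:tilde-M4}), leaving untouched every vertex of $G$ and every edge meeting a vertex outside $M_4$. All seven vertices of $M_4$ have degree $4$ in both $M_4$ and $\widetilde M_4$, and the $a$- and $d$-vertices keep their external neighbours, so $\widetilde G$ has the same order and maximum degree, the same set $S$ of sub-$\Delta$ vertices (hence is still nonregular), and remains connected, since within $\widetilde M_4$ any two of its four boundary vertices are still joined. Thus $\widetilde G$ is an admissible competitor. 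Now put $\bm y=\bm x$ outside $M_4$, keep the $a$- and $d$-values, and assign a common value $x$ to the two vertices labelled $x$ in $\widetilde M_4$ and a value $y$ to the vertex labelled $y$. Because the degree-deficient vertices and the external edges are untouched, combining \eqref{eq:Delta-rho-sum-form} for $(G,\bm x)$ with \eqref{eq:Delta-rho} for $(\widetilde G,\bm y)$ yields $(4-\lambda_1(\widetilde G))\,\|\bm y\|_2^2\le 4-\lambda$ as soon as the internal edge sums coincide, that is,
\[
4(a-x)^2+2(x-y)^2+2(y-d)^2=2(a-b)^2+2(b-c)^2+4(c-d)^2 .
\]
This is a single scalar constraint on $(x,y)$, so there is a one-parameter family of admissible pairs.

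It then remains to choose $(x,y)$ on this constraint curve so that $\|\bm y\|_2>1$, equivalently $2x^2+y^2>b^2+2c^2$; applying \eqref{eq:Delta-rho} would then give $\lambda_1(\widetilde G)>\lambda$, the desired contradiction. This is exactly where the ``new technical detail'' lies: in \autoref{lem:forbidden-M1} one could match the edge differences link by link, but here the chain $a,x,y,d$ of $\widetilde M_4$ becomes over-determined if one does so, so instead I would use the single remaining degree of freedom to maximise $2x^2+y^2$ along the constraint (an explicit Lagrange computation; the optimum will involve $\sqrt2$, much as in \eqref{eq:define-xyz-degree4}). Substituting the formulas for $b$ and $c$ and eliminating $a$, $d$ through the remaining eigenvalue relations turns $2x^2+y^2-(b^2+2c^2)$ into a fixed scalar multiple of a one-variable polynomial in $\lambda$, whose positivity on the admissible range (roughly $\lambda>3.7$, with $\lambda$ bounded below exactly as in the earlier lemmas) I would verify by Descartes' rule of signs together with a handful of point evaluations, in the style of \autoref{lem:forbidden-M1}. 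I expect this final positivity check --- and, just upstream of it, confirming that the maximum of $2x^2+y^2$ along the constraint strictly exceeds $b^2+2c^2$ rather than merely equalling it --- to be the main obstacle.
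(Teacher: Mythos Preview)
Your overall framework---replace $M_4$ by $\widetilde M_4$ and compare via \eqref{eq:Delta-rho}---matches the paper, but the paper does \emph{not} impose the edge-sum equality and then optimise the norm. Instead it takes $(x,y)$ to be the solution of the eigenvalue equations for $\widetilde M_4$ with the boundary values $a,d$ held fixed, namely $(\lambda-1)x=2a+y$ and $\lambda y=2x+2d$, giving $x=\dfrac{2(\lambda a+d)}{\lambda^2-\lambda-2}$ and $y=\dfrac{4a+2(\lambda-1)d}{\lambda^2-\lambda-2}$. With this choice the edge-sum difference and the norm difference are computed together, and the whole quotient collapses to
\[
4-\lambda_1(\widetilde G)\ \le\ (4-\lambda)\cdot\frac{(\lambda-2)^2+2(a^2-d^2)(4-\lambda)}{(\lambda-2)^2+4(a^2-d^2)}\ <\ 4-\lambda,
\]
the last step using only $a>d$ and $\lambda>2$. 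No Lagrange multipliers, no Descartes.

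Your route has a concrete gap at the step ``eliminating $a$, $d$ through the remaining eigenvalue relations''. In $M_1$, $M_2$, $M_3$ the left end of the block was closed (the $a$-vertices had all neighbours inside), so every component was a polynomial in $\lambda$ times a single scale, and that scale cancelled. In $M_4$ \emph{both} ends carry external edges: the eigenvalue equations at the $a$- and $d$-vertices involve vertices outside the block, so there is no relation fixing $d/a$ as a function of $\lambda$. Hence $2x^2+y^2-(b^2+2c^2)$ cannot be reduced to a one-variable polynomial in $\lambda$; it genuinely depends on the free parameter $d/a$, and a Descartes-type argument does not apply. You could try to prove the required inequality uniformly in $d/a\in(0,1)$ after carrying out the constrained optimisation, but that is substantially harder than the paper's choice, whose payoff is precisely that the two-parameter dependence factors as the manifestly positive $a^2-d^2$.
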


\begin{proof}
Assume by contradiction that $G$ contains $M_4$ as an induced subgraph. Using similar 
arguments as the proof in \autoref{lem:forbidden-M1}, the two rightmost vertices 
share four common neighbors. Hence, by eigenvalue equations we can label the components 
of $\bm{x}$ on $M_4$ as depicted in Fig.\,\ref{fig:M3-M4}\,\subref{subfig:M4}.

Now we replace $M_4$ by $\widetilde{M}_4$ to obtain $\widetilde{G}$. Define a vector 
$\bm{y}$ on $V(\widetilde{G})$ such that its components on $\widetilde{M}_4$ are as 
given in Fig.\,\ref{fig:M3-M4}\,\subref{subfig:tilde-M4}, and on the rest of vertices 
agree with $\bm{x}$. Here, $x$, $y$ are given by 
\begin{equation}\label{eq:x-y-in-M4}
x = \frac{2(\lambda a + d)}{\lambda^2 - \lambda - 2},~~
y = \frac{4a + 2(\lambda-1) d}{\lambda^2 - \lambda - 2}.
\end{equation}
By \eqref{eq:Delta-rho} and the eigenvalue equations we obtain that
\begin{align*}
4 - \lambda_1(\widetilde{G}) 
& \leq \frac{\bm{y}^{\mathrm{T}} L(\widetilde{G})\bm{y} - \bm{x}^{\mathrm{T}} L(G)\bm{x} + (4-\lambda)}{\|\bm{y}\|_2^2} \\
& = \frac{ 4(a {-} x)^2 {+} 2(x {-} y)^2 {+} 2(y {-} d)^2 {-} \big(2(a {-} b)^2 {+} 2(b {-} c)^2 {+} 4(c {-} d)^2\big) {+} (4 {-} \lambda)}{\|\bm{y}\|_2^2} \\
& = (4-\lambda)\cdot \frac{(\lambda-2)^2 + 2(a^2-d^2) (4-\lambda)}{(\lambda-2)^2 + 4(a^2-d^2)} \\
& < 4-\lambda.
\end{align*}
The equality in the third line is duo to \eqref{eq:x-y-in-M4}, the eigenvalue equations 
and the fact $\|\bm{y}\|_2^2 = 1 + 2x^2+y^2-b^2-2c^2$. This completes the proof of the lemma.
\end{proof}

With the similar line of reasoning as \autoref{lem:forbidden-M4}, we can prove the 
following lemmas. The details can be found in Appendix \ref{appendix}.

\begin{lemma}\label{lem:forbidden-M5}
The graph $G$ does not contain the graph $M_5$ shown in Fig.\,\ref{fig:M5} as an induced subgraph.
\end{lemma}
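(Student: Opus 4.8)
The plan is to follow the template established in the proofs of Lemmas \ref{lem:forbidden-M1}--\ref{lem:forbidden-M4}. Suppose for contradiction that $G$ contains $M_5$ as an induced subgraph. First I would argue, exactly as in \autoref{lem:forbidden-M1}, that the two rightmost vertices of $M_5$ cannot have only two of their four neighbors inside the depicted block: using \autoref{coro:common-use-1} we may assume the second-from-right vertex on the top path is joined to the next two vertices in the Perron ordering, and then a short case analysis on the remaining neighbors of the rightmost bottom vertex—combined with a proper local switching via \autoref{lem:local-switching} whenever the two ``extra'' neighbors are non-adjacent, and with the eigenvalue equations (which force a chain of equalities yielding $\lambda_1(G)=4$) in the remaining case—shows that the two rightmost vertices share four common neighbors. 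Consequently the eigenvalue equations determine the Perron components on all vertices of $M_5$ up to a common scalar $a$, with the other values expressed as explicit polynomials in $\lambda:=\lambda_1(G)$ times $a$, as in \eqref{eq:equation-bcd-1-degree4}.

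Next I would build the competitor $\widetilde{G}$ by excising $M_5$ and gluing in a suitable path-like block $\widetilde{M}_5$ on the same vertex set, keeping every other edge of $G$, and define a test vector $\bm{y}$ that agrees with $\bm{x}$ off the block and takes carefully tuned values $x,y,z,\ldots$ on $\widetilde{M}_5$. Those values are chosen, in the spirit of \eqref{eq:define-xyz-degree4} and \eqref{eq:x-y-in-M4}, precisely so that the edge contributions balance, i.e.\ $\bm{y}^{\mathrm{T}} L(\widetilde{G})\bm{y} = \bm{x}^{\mathrm{T}} L(G)\bm{x}$, each squared difference along $\widetilde{M}_5$ matching a corresponding squared difference (or a fixed multiple thereof) along $M_5$. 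Since $\widetilde{M}_5$ contributes the same degree-deficiency term as $M_5$, \eqref{eq:Delta-rho} then gives $4-\lambda_1(\widetilde{G})\le (4-\lambda)/\|\bm{y}\|_2^2$, so it suffices to prove $\|\bm{y}\|_2>1$, equivalently that the correction $\|\bm{y}\|_2^2 - \|\bm{x}\|_2^2$—a quadratic form in $a$ whose coefficient is an explicit polynomial $f(\lambda)$—is positive.

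Finally I would establish $f(\lambda)>0$ on the relevant range. As in \autoref{lem:forbidden-M1}, $\lambda=\lambda_1(G)$ exceeds $\lambda_1$ of the smallest admissible path-like block (a constant larger than $3.7$ for $\Delta=4$), so it suffices to control the sign of $f$ on an interval such as $(3.7,4)$. I would evaluate $f$ at a handful of sample points to locate its real roots, invoke Descartes's Rule of Signs to bound the number of positive roots by the number of sign changes in its coefficient sequence, conclude that $f$ has constant sign on the interval in question, and check that this sign is positive (using $f(4)=0$ together with a point just below $4$). This yields $\|\bm{y}\|_2>1$, hence $\lambda_1(\widetilde{G})>\lambda_1(G)$, contradicting $G\in\mathcal{G}(n,4)$.

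The main obstacle is the same as in the earlier forbidden-subgraph lemmas but likely more delicate here: one must (a) guess the right replacement block $\widetilde{M}_5$, (b) solve the linear system forcing the two Laplacian quadratic forms to agree—so the number of free parameters in $\bm{y}$ must match the number of edge-balance constraints—and (c) verify that the resulting norm polynomial $f$ is positive on $(3.7,4)$, which can fail for a poor choice of $\widetilde{M}_5$. Making all three cohere simultaneously is the crux; the rest is bookkeeping with the eigenvalue equations.
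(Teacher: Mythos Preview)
Your outline follows the right general template, but it is modeled on the wrong earlier lemma and this creates a real gap. The block $M_5$ is an \emph{interior} block with two boundaries: the $a$-vertices on the left and the $h$-vertices on the right both carry dangling edges (see Fig.~\ref{fig:M5}\subref{subfig:M5}). Hence the eigenvalue equations do \emph{not} determine the internal components ``up to a common scalar $a$'' as you claim; rather $b,c,d$ are rational functions of $\lambda$ in the \emph{two} free parameters $a$ and $h$, exactly as in the $M_4$ proof. Your subsequent plan---match $\bm{y}^{\mathrm{T}}L(\widetilde{G})\bm{y}=\bm{x}^{\mathrm{T}}L(G)\bm{x}$ and then reduce $\|\bm{y}\|_2^2-1$ to a single-variable polynomial $f(\lambda)\,a^2$ whose sign you control by Descartes's Rule---cannot go through as stated, because the correction is a quadratic form in $(a,h)$, not a multiple of $a^2$. (Your cited equation \eqref{eq:x-y-in-M4} is in fact not an $L$-matching choice at all; in the $M_4$ proof the two Laplacian forms differ, and that difference is what is computed.)

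The paper's proof follows the $M_4$ template instead. One writes $b,c,d$ via the eigenvalue equations for $M_5$ in terms of $a,h,\lambda$, and then defines the test values $x,y,z$ on $\widetilde{M}_5$ by the eigenvalue equations for the \emph{replacement} block with the \emph{same} boundary data $a,h$; concretely these are obtained from $b,c,d$ by swapping the roles of $a$ and $h$ (the replacement moves the single cut-vertex from the $a$-side to the $h$-side). Plugging into \eqref{eq:Delta-rho} and simplifying, the inequality $4-\lambda_1(\widetilde{G})<4-\lambda$ comes down to $a>h$ (equivalently $a^2-h^2>0$), just as the $M_4$ argument reduced to $a^2-d^2>0$. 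No Descartes-type sign analysis of a univariate polynomial is needed, and none would suffice, because the comparison genuinely depends on both boundary values.
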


\begin{lemma}\label{lem:forbidden-M6}
The graph $G$ does not contain the graph $M_6$ shown in Fig.\,\ref{fig:M6-M7} as an induced subgraph.
\end{lemma}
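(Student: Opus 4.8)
The plan is to follow the template of \autoref{lem:forbidden-M1} and \autoref{lem:forbidden-M4}: suppose $G$ contains $M_6$ as an induced subgraph, normalise the Perron vector $\bm{x}$ on $M_6$, and then replace the block $M_6$ by the modified block $\widetilde{M}_6$ of Fig.\,\ref{fig:M6-M7} to produce a connected nonregular graph $\widetilde{G}$ on the same vertex set with maximum degree $4$ and $\lambda_1(\widetilde{G})>\lambda_1(G)$, contradicting $G\in\mathcal{G}(n,4)$.

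First I would carry out the reduction to the ``closed'' configuration, exactly as in the opening of the proofs of \autoref{lem:forbidden-M1} and \autoref{lem:forbidden-M4}. Using \autoref{coro:common-use-1} together with a suitable local switching (an $\ls$ move on one of the pendant-type edges at the end of $M_6$) one argues that the two rightmost vertices of $M_6$ either can be joined, contradicting the induced structure, or must share all four of their neighbours inside $M_6$; the borderline case in which no switching is available is excluded by a short pair of eigenvalue equations, which would otherwise force all the relevant components to coincide and hence $\lambda_1(G)=4$. Once $M_6$ is in this closed form, setting $\lambda:=\lambda_1(G)$ and letting $a$ be the component of $\bm{x}$ at the leftmost vertex, the eigenvalue equations propagate from left to right and express every component on $M_6$ as a fixed polynomial in $\lambda$ times $a$, in the manner of \eqref{eq:equation-bcd-1-degree4}.

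Next comes the surgery. I would replace $M_6$ by $\widetilde{M}_6$ to obtain $\widetilde{G}$ and define a test vector $\bm{y}$ on $V(\widetilde{G})$ that agrees with $\bm{x}$ off $\widetilde{M}_6$ and takes prescribed values on $\widetilde{M}_6$. The crucial step, as in \eqref{eq:x-y-in-M4}, is to \emph{solve} for those values so that the edge contributions match: after substituting the polynomial expressions for the components of $\bm{x}$ together with the eigenvalue relations, the difference $\bm{y}^{\mathrm{T}}L(\widetilde{G})\bm{y}-\bm{x}^{\mathrm{T}}L(G)\bm{x}$ should collapse to a simple expression, while the degree-defect terms $\sum_v(4-d_G(v))x_v^2$ and $\sum_v(4-d_{\widetilde{G}}(v))y_v^2$ differ only in the handful of modified coordinates since local switching preserves degrees. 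Feeding this into \eqref{eq:Delta-rho} gives
\[
4-\lambda_1(\widetilde{G})\ \le\ \frac{\bm{y}^{\mathrm{T}}L(\widetilde{G})\bm{y}-\bm{x}^{\mathrm{T}}L(G)\bm{x}+(4-\lambda)}{\|\bm{y}\|_2^2},
\]
and, with the chosen values and the identity $\|\bm{y}\|_2^2=1+(\text{correction in the modified coordinates})$, the right-hand side should reduce to the form $(4-\lambda)\cdot\frac{(\lambda-2)^2+c_1(\lambda)(4-\lambda)}{(\lambda-2)^2+c_2(\lambda)}$ with $c_1,c_2>0$ (because the components of $\bm{x}$ are strictly decreasing along $M_6$), which is manifestly $<4-\lambda$. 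Alternatively, following \autoref{lem:forbidden-M1}, one can instead show directly that $\|\bm{y}\|_2^2-1$ equals a fixed polynomial $f(\lambda)$ that is positive for $\lambda>3.7$, by bounding its number of positive roots via Descartes' Rule of Signs, evaluating $f$ at a few sample points, and invoking the lower bound on $\lambda$ coming from the path-like block structure established earlier in this section.

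The main obstacle is the combinatorial–algebraic choice of $\widetilde{M}_6$ and the solution for the entries of $\bm{y}$: one needs the modified block to ``unzip'' the cliques and short cycles of $M_6$ into a longer path-like piece in precisely the way that lets each term $(y_u-y_v)^2$ be matched against a corresponding term $(x_{u'}-x_{v'})^2$, so that the quadratic-form difference telescopes to something tractable. Once this termwise identity is in hand, the remainder is a finite (if lengthy) positivity check for a single-variable polynomial on the interval $(3.7,4)$, which is conceptually routine; the details are deferred to Appendix \ref{appendix}.
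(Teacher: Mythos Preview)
Your template is correct in spirit but misreads the geometry of $M_6$. Unlike $M_1$--$M_3$, the block $M_6$ is not a head block: its two leftmost vertices (labelled $a$) each carry two edges leaving the block, and its two rightmost vertices (labelled $y$) each carry one. Hence the eigenvalue equations do \emph{not} propagate from a single seed; the interior values $a,b,c,d$ are determined by \emph{two} boundary parameters, namely the common component $x$ of the left neighbours and the value $y$ on the right. Your step ``express every component on $M_6$ as a fixed polynomial in $\lambda$ times $a$'' therefore fails, and with it both of your proposed endgames (the $M_4$-style rational inequality and the single-variable Descartes check on $\|\bm y\|_2^2-1$), since the relevant quantities are now homogeneous quadratics in $(x,y)$ rather than multiples of $a^2$.

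The paper's proof instead exploits a reflection symmetry that your proposal does not anticipate. The replacement block $\widetilde{M}_6$ is essentially the mirror image of $M_6$, and the test values $\alpha_1,\alpha_2,\alpha_3,\alpha_4$ are precisely the formulas for $d,c,b,a$ with the roles of $x$ and $y$ swapped (e.g.\ $\alpha_1(x,y)=d(y,x)$, $\alpha_4(x,y)=a(y,x)$). Because of this swap, the difference between the two Rayleigh quotients is governed by terms antisymmetric in $(x,y)$, and the inequality $4-\lambda_1(\widetilde G)<4-\lambda$ then follows from $x>y$. This also explains a point your outline overlooks: the surgery $M_6\to\widetilde{M}_6$ rewires the left boundary (from two vertices with two outside edges each to one vertex with two outside edges plus two vertices with one), so one must first know that all left neighbours share the same component $x$---an assumption supplied by the path-like structure already built on $\mathcal{M}_k$, not by any ``closed configuration'' reduction on the right.
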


\begin{lemma}\label{lem:forbidden-M7}
The graph $G$ does not contain the graph $M_7$ shown in Fig.\,\ref{fig:M6-M7} as an induced subgraph.
\end{lemma}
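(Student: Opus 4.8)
The plan is to follow the template already established for Lemmas \ref{lem:forbidden-M1}--\ref{lem:forbidden-M6}. Assume for contradiction that $G$ contains $M_7$ (Fig.~\ref{fig:M6-M7}) as an induced subgraph. First I would use Corollary \ref{coro:common-use-1} together with Lemma \ref{lem:local-switching}, exactly as in the opening paragraph of the proof of Lemma \ref{lem:forbidden-M1}, to show that the two rightmost vertices of $M_7$ have equal Perron components and in fact share four common neighbours. Whenever a local switching would otherwise create an edge inside the induced copy of $M_7$ (which the induced-subgraph hypothesis forbids), the eigenvalue equations at the relevant vertices force a chain of equalities among components and ultimately yield $\lambda_1(G)=4$, contradicting nonregularity. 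With the neighbourhood structure thus pinned down, the eigenvalue equations at the vertices of $M_7$ express every component on $M_7$ as an explicit polynomial in $\lambda:=\lambda_1(G)$ times the single parameter $a$ (the smallest component), in direct analogy with \eqref{eq:equation-bcd-1-degree4} and \eqref{eq:x-y-in-M4}.

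Next I would define $\widetilde{G}$ by substituting the gadget $\widetilde{M}_7$ for $M_7$ and set $\bm{y}$ equal to $\bm{x}$ off the gadget, choosing its components on $\widetilde{M}_7$ to satisfy the edge-matching identities $\sum_{uv\in E(\widetilde{M}_7)}(y_u-y_v)^2=\sum_{uv\in E(M_7)}(x_u-x_v)^2$, mirroring the identities $(x-y)^2=\tfrac{3}{2}(a-b)^2$, $(y-z)^2=(b-c)^2$, $(z-d)^2=2(c-d)^2$ in Lemma \ref{lem:forbidden-M1}. Solving this small linear/quadratic system for the new components again yields closed forms as polynomials in $\lambda$ times $a$; this guarantees $\bm{y}^{\mathrm{T}}L(\widetilde{G})\bm{y}=\bm{x}^{\mathrm{T}}L(G)\bm{x}$, so \eqref{eq:Delta-rho} gives $4-\lambda_1(\widetilde{G})\le (4-\lambda)/\|\bm{y}\|_2^2$. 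The proof then reduces to the single inequality $\|\bm{y}\|_2^2 = 1+f(\lambda)a^2 > 1$ for an explicitly computed polynomial $f$. I would verify $f(\lambda)>0$ on the relevant range by evaluating $f$ at a handful of points, bounding the number of positive roots via Descartes' Rule of Signs (as in Lemma \ref{lem:forbidden-M1}), and invoking $\lambda>\lambda_1(H)>3.7$ for the appropriate base block $H$; alternatively, as in Lemma \ref{lem:forbidden-M4}, the ratio $\bigl((\lambda-2)^2+c_1(a^2-d^2)(4-\lambda)\bigr)/\bigl((\lambda-2)^2+c_2(a^2-d^2)\bigr)$ may already be visibly less than $1$, avoiding the polynomial sign analysis altogether.

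The main obstacle will be the bookkeeping in the middle step: choosing the components of $\bm{y}$ on $\widetilde{M}_7$ so that the Laplacian quadratic form is preserved \emph{exactly} while simultaneously forcing $\|\bm{y}\|_2>1$. Since $M_7$ carries more vertices and edges than the gadgets treated earlier, the edge-matching system has more equations, and one must check that it remains consistent; the appearance of the surds $\sqrt{2},\sqrt{3},\sqrt{6}$ in \eqref{eq:define-xyz-degree4} suggests that an auxiliary eigenvalue relation may be needed to close it. Once the components are in hand, the norm computation and the sign analysis of $f$ are routine but lengthy, so I would relegate the explicit polynomial manipulations to Appendix \ref{appendix}, as is already done for Lemmas \ref{lem:forbidden-M5}--\ref{lem:forbidden-M7}.
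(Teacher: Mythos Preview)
Your outline follows the wrong template. The gadgets $M_1,M_2,M_3$ are one-sided: their leftmost vertices have all neighbours inside the gadget, so the eigenvalue equations determine every component as a polynomial in $\lambda$ times the \emph{single} free parameter $a$. By contrast, $M_7$ (like $M_4,M_5,M_6$) has boundary vertices on \emph{both} ends: the two $a$-vertices and the two $h$-vertices each have two neighbours outside the gadget. The eigenvalue equations therefore express $b,c,d$ as rational functions of $\lambda$ in the \emph{two} independent parameters $a$ and $h$ (see the formulas in Appendix~\ref{appendix}). Your ``single parameter $a$'' reduction is simply false here, and with it the plan of writing $\|\bm y\|_2^2=1+f(\lambda)a^2$ and doing a one-variable sign analysis collapses.

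The paper's actual argument for $M_7$ follows the $M_4$ template you mention only as an afterthought. One does \emph{not} try to match the Laplacian quadratic form exactly. Instead one observes that $\widetilde{M}_7$ is essentially $M_7$ read right-to-left, and takes $x,y,z$ on $\widetilde{M}_7$ to be the formulas for $d,c,b$ with the roles of $a$ and $h$ interchanged; equivalently, $x,y,z$ are chosen so that the $\lambda$-eigenvalue equations hold at the interior vertices of $\widetilde{M}_7$ with boundary data $a,h$. One then computes both the numerator change $\bm y^{\mathrm T}L(\widetilde G)\bm y-\bm x^{\mathrm T}L(G)\bm x$ and the norm change $\|\bm y\|_2^2-1$ together and checks directly that the Rayleigh ratio drops, using $a>h$. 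There is no Descartes-rule step; the inequality comes out as in the $M_4$ proof, not as in the $M_1$ proof.
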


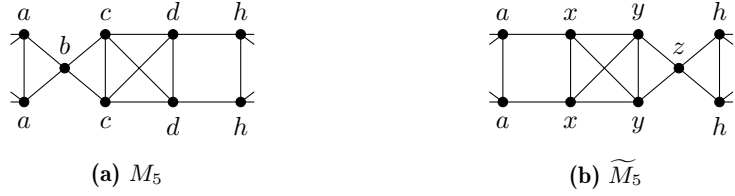
\begin{figure}[htbp]
\centering
\subcaptionbox{$M_5$ \label{subfig:M5}}[.4\textwidth]{%
\begin{tikzpicture}[scale=0.9]
\coordinate (u1) at (0,0);
\coordinate (v1) at (-0.6,0.5);
\coordinate (v2) at (0.6,0.5);
\coordinate (v3) at (1.6,0.5);
\coordinate (v4) at (2.6,0.5);

\coordinate (w1) at (-0.6,-0.5);
\coordinate (w2) at (0.6,-0.5);
\coordinate (w3) at (1.6,-0.5);
\coordinate (w4) at (2.6,-0.5);

\draw (v1) -- (u1) -- (v2) -- (v3) -- (v4) -- (w4) -- (w3) -- (w2) -- (u1) -- (w1) -- (v1); 
\draw (v2) -- (w2) -- (v3) -- (w3) -- (v2);
\draw (-0.8,0.5) -- (v1) -- (-0.8,0.35);
\draw (-0.8,-0.5) -- (w1) -- (-0.8,-0.35);
\draw (2.8,0.5) -- (v4) -- (2.8,0.35);
\draw (2.8,-0.5) -- (w4) -- (2.8,-0.35);

\node[above=2pt, scale=0.9] at (v1) {$a$};
\node[below=2pt, scale=0.9] at (w1) {$a$};
\node[above=2pt, scale=0.9] at (u1) {$b$};
\node[above=2pt, scale=0.9] at (v2) {$c$};
\node[below=2pt, scale=0.9] at (w2) {$c$};
\node[above=2pt, scale=0.9] at (v3) {$d$};
\node[below=2pt, scale=0.9] at (w3) {$d$};
\node[above=2pt, scale=0.9] at (v4) {$h$};
\node[below=2pt, scale=0.9] at (w4) {$h$};

\foreach \i in {u1,v1,v2,v3,v4,w1,w2,w3,w4}
\filldraw (\i) circle (0.07);
\end{tikzpicture}
}
\subcaptionbox{$\widetilde{M}_5$ \label{subfig:tilde-M5}}[.4\textwidth]{%
\begin{tikzpicture}[scale=0.9]
\coordinate (u1) at (2.6,0);
\coordinate (v1) at (0,0.5);
\coordinate (v2) at (1,0.5);
\coordinate (v3) at (2,0.5);
\coordinate (v4) at (3.2,0.5);

\coordinate (w1) at (0,-0.5);
\coordinate (w2) at (1,-0.5);
\coordinate (w3) at (2,-0.5);
\coordinate (w4) at (3.2,-0.5);

\draw (v1) -- (v2) -- (v3) -- (u1) -- (w3) -- (w2) -- (w1) -- (v1);
\draw (v2) -- (w2) -- (v3) -- (w3) -- (v2);
\draw (u1) -- (v4) -- (w4) -- (u1);
\draw (-0.2,0.5) -- (v1) -- (-0.2,0.35);
\draw (-0.2,-0.5) -- (w1) -- (-0.2,-0.35);
\draw (3.4,0.5) -- (v4) -- (3.4,0.35);
\draw (3.4,-0.5) -- (w4) -- (3.4,-0.35);

\node[above=2pt, scale=0.9] at (v1) {$a$};
\node[below=2pt, scale=0.9] at (w1) {$a$};
\node[above=2pt, scale=0.9] at (v2) {$x$};
\node[below=2pt, scale=0.9] at (w2) {$x$};
\node[above=2pt, scale=0.9] at (v3) {$y$};
\node[below=2pt, scale=0.9] at (w3) {$y$};
\node[above=2pt, scale=0.9] at (u1) {$z$};
\node[above=2pt, scale=0.9] at (v4) {$h$};
\node[below=2pt, scale=0.9] at (w4) {$h$};

\foreach \i in {u1,v1,v2,v3,v4,w1,w2,w3,w4}
\filldraw (\i) circle (0.07);
\end{tikzpicture}
}
\caption{Forbidden graph $M_5$}
\label{fig:M5}
\end{figure}

\begin{figure}[htbp]
\centering
\subcaptionbox{$M_6$ \label{subfig:M6}}[.23\textwidth]{%
\begin{tikzpicture}[scale=0.9]
\coordinate (u1) at (0,0);
\coordinate (u2) at (1.2,0);
\coordinate (v1) at (-0.6,0.5);
\coordinate (v2) at (0.6,0.5);
\coordinate (v3) at (1.8,0.5);

\coordinate (w1) at (-0.6,-0.5);
\coordinate (w2) at (0.6,-0.5);
\coordinate (w3) at (1.8,-0.5);

\draw (v1) -- (w1) -- (u1) -- (v1);
\draw (u1) -- (v2) -- (v3) -- (w3) -- (w2) -- (u1); 
\draw (w2) -- (v2) -- (u2) -- (v3);
\draw (w2) -- (u2) -- (w3);
\draw (-0.8,0.5) -- (v1) -- (-0.8,0.35);
\draw (-0.8,-0.5) -- (w1) -- (-0.8,-0.35);
\draw (v3) -- (2,0.5);
\draw (w3) -- (2,-0.5);

\node[above=2pt, scale=0.9] at (v1) {$a$};
\node[below=2pt, scale=0.9] at (w1) {$a$};
\node[above=2pt, scale=0.9] at (u1) {$b$};
\node[above=2pt, scale=0.9] at (v2) {$c$};
\node[below=2pt, scale=0.9] at (w2) {$c$};
\node[above, scale=0.9] at (u2) {$d$};
\node[above=2pt, scale=0.9] at (v3) {$y$};
\node[below=2pt, scale=0.9] at (w3) {$y$};

\foreach \i in {u1,u2,v1,v2,v3,w1,w2,w3}
\filldraw (\i) circle (0.07);
\end{tikzpicture}
}
\subcaptionbox{$\widetilde{M}_6$ \label{subfig:tilde-M6}}[.23\textwidth]{%
\begin{tikzpicture}[scale=0.9]
\coordinate (u1) at (0,0);
\coordinate (u2) at (1.2,0);
\coordinate (v1) at (0.6,0.5);
\coordinate (v2) at (1.8,0.5);
\coordinate (v3) at (2.8,0.5);

\coordinate (w1) at (0.6,-0.5);
\coordinate (w2) at (1.8,-0.5);
\coordinate (w3) at (2.8,-0.5);

\draw (0.4,-0.5) -- (w1) -- (u1) -- (v1) -- (u2) -- (w1) -- (v1) -- (0.4,0.5);
\draw (u2) -- (v2) -- (v3) -- (w3) -- (w2) -- (u2);
\draw (v2) -- (w2) -- (v3) -- (3,0.5);
\draw (v2) -- (w3) -- (3,-0.5);
\draw (-0.15,0.15) -- (u1) -- (-0.15,-0.15);

\node[above=2pt, scale=0.9] at (u1) {$\alpha_1$};
\node[above=2pt, scale=0.9] at (v1) {$\alpha_2$};
\node[below=2pt, scale=0.9] at (w1) {$\alpha_2$};
\node[above=2pt, scale=0.9] at (u2) {$\alpha_3$};
\node[above=2pt, scale=0.9] at (v2) {$\alpha_4$};
\node[below=2pt, scale=0.9] at (w2) {$\alpha_4$};
\node[above=2pt, scale=0.9] at (v3) {$\alpha_5$};
\node[below=2pt, scale=0.9] at (w3) {$\alpha_5$};

\foreach \i in {u1,u2,v1,v2,v3,w1,w2,w3}
\filldraw (\i) circle (0.07);
\end{tikzpicture}
}
\subcaptionbox{$M_7$ \label{subfig:M7}}[.23\textwidth]{%
\begin{tikzpicture}[scale=0.9]
\coordinate (u1) at (0.6,0);
\coordinate (u2) at (1.8,0);
\coordinate (v1) at (0,0.5);
\coordinate (v2) at (1.2,0.5);
\coordinate (v3) at (2.4,0.5);

\coordinate (w1) at (0,-0.5);
\coordinate (w2) at (1.2,-0.5);
\coordinate (w3) at (2.4,-0.5);

\draw (v1) -- (w1) -- (u1) -- (v1);
\draw (u1) -- (v2) -- (w2) -- (u1);
\draw (v2) -- (v3) -- (u2) -- (w2) -- (w3) -- (u2) -- (v2);
\draw (-0.2,0.5) -- (v1) -- (-0.2,0.35);
\draw (-0.2,-0.5) -- (w1) -- (-0.2,-0.35);
\draw (2.6,0.5) -- (v3) -- (2.6,0.35);
\draw (2.6,-0.5) -- (w3) -- (2.6,-0.35);

\node[above=2pt, scale=0.9] at (v1) {$a$};
\node[below=2pt, scale=0.9] at (w1) {$a$};
\node[above=2pt, scale=0.9] at (u1) {$b$};
\node[above=2pt, scale=0.9] at (v2) {$c$};
\node[below=2pt, scale=0.9] at (w2) {$c$};
\node[above=1pt, scale=0.9] at (u2) {$d$};
\node[above=2pt, scale=0.9] at (v3) {$h$};
\node[below=2pt, scale=0.9] at (w3) {$h$};

\foreach \i in {u1,u2,v1,v2,v3,w1,w2,w3}
\filldraw (\i) circle (0.07);
\end{tikzpicture}
}
\subcaptionbox{$\widetilde{M}_7$ \label{subfig:tilde-M7}}[.23\textwidth]{%
\begin{tikzpicture}[scale=0.9]
\coordinate (u1) at (0.6,0);
\coordinate (u2) at (1.8,0);
\coordinate (v1) at (0,0.5);
\coordinate (v2) at (1.2,0.5);
\coordinate (v3) at (2.4,0.5);

\coordinate (w1) at (0,-0.5);
\coordinate (w2) at (1.2,-0.5);
\coordinate (w3) at (2.4,-0.5);

\draw (v1) -- (v2) -- (w2) -- (w1) -- (u1) -- (v2) -- (u2) -- (w2) -- (u1) -- (v1);
\draw (u2) -- (v3) -- (w3) -- (u2);
\draw (-0.2,0.5) -- (v1) -- (-0.2,0.35);
\draw (-0.2,-0.5) -- (w1) -- (-0.2,-0.35);
\draw (2.6,0.5) -- (v3) -- (2.6,0.35);
\draw (2.6,-0.5) -- (w3) -- (2.6,-0.35);

\node[above=2pt, scale=0.9] at (u1) {$x$};
\node[above=2pt, scale=0.9] at (v1) {$a$};
\node[below=2pt, scale=0.9] at (w1) {$a$};
\node[above=2pt, scale=0.9] at (u2) {$z$};
\node[above=2pt, scale=0.9] at (v2) {$y$};
\node[below=2pt, scale=0.9] at (w2) {$y$};
\node[above=2pt, scale=0.9] at (v3) {$h$};
\node[below=2pt, scale=0.9] at (w3) {$h$};

\foreach \i in {u1,u2,v1,v2,v3,w1,w2,w3}
\filldraw (\i) circle (0.07);
\end{tikzpicture}
}
\caption{Forbidden graphs $M_6$ and $M_7$}
\label{fig:M6-M7}
\end{figure}
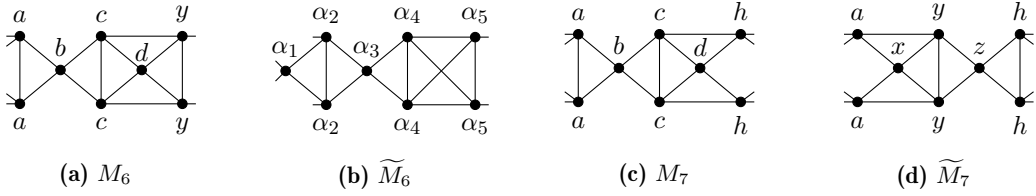

\subsection{Transferring the extremal graphs into the path-like structure}

We first prove that the first few vertices of $G$ can be reconnected to obtain one of 
graphs shown in Fig.\,\ref{fig:H1-H6}.

\begin{lemma}\label{lem:transfer-first-few-vertices-Delta-4}
Let $n\geq 10$. The induced subgraph on the first few vertices in $G$ can be transferred 
into one of $H_1$, $H_2$, $H_3$, $H_4$ and $H_5$.
\end{lemma}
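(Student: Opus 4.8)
The plan is to mirror the four-step strategy used in the proof of \autoref{lem:transfer-first-few-vertices-Delta-3}, now equipped with the stronger pruning tools provided by \autoref{lem:forbidden-M1} through \autoref{lem:forbidden-M7}. First I would apply \autoref{coro:common-use-2} to reconnect $v_1$ to $v_2,v_3,v_4,v_5$; since $d(v_1)=4$, this fixes $N(v_1)=\{v_2,v_3,v_4,v_5\}$ after a sequence of proper local switchings. Next, using \autoref{lem:G-delete-vertices-connected} I know that $G\setminus\{v_1\}$, $G\setminus\{v_1,v_2\}$, $G\setminus\{v_1,v_2,v_3\}$, $\ldots$ are all connected, and I would combine this with \autoref{coro:common-use-1} and \autoref{lem:common-use} to push the neighbors of each small-index vertex as far toward the front as possible. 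In particular, connectivity of $G\setminus\{v_1\}$ forces $v_2$ to be adjacent to at least one of $v_3,v_4,v_5$, and the Principle of Proximity applied to $v_2$ (which still has neighbors not yet pinned down after accounting for $v_1$) lets me assume its remaining neighbors are taken greedily from $v_3,v_4,v_5,v_6,\ldots$; similarly for $v_3$ and $v_4$.

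Then comes the main case analysis, branching on how $v_2,v_3,v_4,v_5$ are joined among themselves and to $v_6,v_7,\ldots$. At every node of the case tree I would invoke exactly one of three mechanisms: (i) a proper local switching (via \autoref{lem:local-switching}, or an edge rotation via \autoref{lem:rotation}) that strictly increases the number of edges inside $\mathcal{M}_k$ without decreasing $\lambda_1(G)$, reducing to an earlier, denser case; (ii) a configuration violating connectivity of some $G\setminus\mathcal{M}_k$, which contradicts \autoref{prop:preserving-connected} or \autoref{lem:G-delete-vertices-connected}; or (iii) an occurrence of one of the forbidden induced subgraphs $M_1,\ldots,M_7$, contradicting the corresponding lemma. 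The list $M_1,\ldots,M_7$ is precisely what is needed to eliminate the dense ``near-miss'' patterns that are not one of $H_1,\ldots,H_5$, so the branches that survive all three mechanisms are exactly the five graphs in Fig.\,\ref{fig:H1-H6}. As in step D of \autoref{lem:transfer-first-few-vertices-Delta-3}, a few branches will need one last switching to collapse two a priori distinct endings into the same $H_i$, and this is justified by \autoref{lem:local-switching} together with the equalities among Perron components that the eigenvalue equations force in those configurations.

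The main obstacle will be the size and organization of the case tree: with $\Delta=4$ each of $v_2,\ldots,v_5$ still has several neighbors not yet pinned down after $N(v_1)$ is fixed, so the number of a priori possible local patterns is far larger than in the $\Delta=3$ argument, and one must verify that every pattern is dispatched by one of the three mechanisms above --- equivalently, that the forbidden list $M_1,\ldots,M_7$ together with the connectivity constraints is complete for ruling out everything outside $\{H_1,\ldots,H_5\}$. Ensuring that no pattern slips through the analysis is where essentially all the work lies; the individual switchings and eigenvalue computations at each node of the tree are routine once \autoref{lem:rotation}, \autoref{lem:local-switching}, \autoref{lem:common-use}, and the forbidden-subgraph lemmas are in hand.
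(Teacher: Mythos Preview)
Your plan is correct and matches the paper's four-step approach essentially exactly: fix $N(v_1)=\{v_2,\ldots,v_5\}$, force $v_2\sim v_3$ and $v_2\sim v_4$, then branch on whether $v_2\sim v_5$, closing each branch with one of your three mechanisms. Two small refinements: only $M_1,M_2,M_3$ are actually invoked in this lemma (the remaining $M_4,\ldots,M_7$ are reserved for the middle-block \autoref{lem:induced-middle-vertices-Delta=4}), and several branches in the paper's Case~2 are killed not by a forbidden subgraph but by eigenvalue equations that force enough equal Perron components to give $\lambda_1(G)=4$, so you should add that contradiction as a fourth pruning mechanism alongside your (i)--(iii).
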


\begin{proof}
We divide our proof into four steps.

{\bfseries A.} Connecting $v_1$ to $v_i$, $i=2,3,4,5$. This can be done by \autoref{coro:common-use-2}.

{\bfseries B.} Connecting $v_2$ to $v_3$. If $v_2$ is not adjacent to $v_3$, then we may 
assume that $v_2$, $v_3$ share three neighbors in $G\setminus\{v_1\}$, and all the three 
neighbors are adjacent to each other. This contradicts the fact that $G\setminus\{v_1\}$ 
is connected by \autoref{lem:G-delete-vertices-connected}.    

{\bfseries C.} Connecting $v_2$ to $v_4$. Let $u_1$, $u_2$ be the neighbors of $v_2$ other 
than $v_1$, $v_3$; and $u_3$, $u_4$, $u_5$ be the neighbors of $v_4$ other than $v_1$. 
We may assume that $v_3$ is not adjacent to $v_4$, for otherwise, at least one of $u_1$, 
$u_2$, say $u_1$, is not adjacent to $v_3$. Then $\ls(v_2,u_1,v_4,v_3)$ connects $v_2$ to $v_4$.

Now we continue the process by considering the intersection of $\{u_1,u_2\}$ and $\{u_3,u_4,u_5\}$.
If $|\{u_1,u_2\}\cap\{u_3,u_4,u_5\}|=0$, in this case we may assume $u_i\sim u_j$, 
$i=1,2$, $j=3,4,5$. Hence, there is a neighbor of $v_3$ other than $v_1,v_2$ (say $w$) 
is not adjacent to a neighbor of $v_4$ other than $v_1$ (say $u_3$). Then $\ls(v_3,w, v_4,u_3)$ 
makes $v_3$ adjacent to $v_4$, a contradiction to our assumption. 
If $|\{u_1,u_2\}\cap\{u_3,u_4,u_5\}|=1$, then there is a neighbor of $v_2$ (say $u_1$) 
is not adjacent to some neighbor of $v_4$ (say $u_5$). Then $\ls (v_2,u_1, v_4,u_5)$
connects $v_2$ to $v_4$. If $|\{u_1,u_2\}\cap\{u_3,u_4,u_5\}|=2$, the desired switching to 
$v_3\sim v_4$ is available similarly, a contradiction to our assumption. 

{\bfseries D.} Based on previous construction, we shall reconnect the subsequent few vertices 
by considering the following two cases.
\vspace{1mm}

\noindent {\bfseries Case 1.} $v_2\sim v_5$. In this case, we first connect $v_3$ to $v_4$.

(1). Connecting $v_3$ to $v_4$.
If $v_4\sim v_5$, we can choose a vertex $u\in N(v_3)\setminus\{v_1,v_2\}$ such that $u\nsim v_5$, 
and then $\ls (v_3,u,v_4,v_5)$ connects $v_3$ to $v_4$. So we assume $v_4$ is not adjacent to 
$v_5$. Furthermore, we can also assume $v_3\nsim v_5$. Indeed, if $v_3\sim v_5$, then there 
exists a vertex $v\in N(v_4)\setminus\{v_1,v_2\}$ such that $v$ is not adjacent to $v_5$. Then 
$\ls (v_3,v_5,v_4,v)$ makes $v_3\sim v_4$. Therefore, we obtain one of graphs in 
Fig.\,\ref{fig:graphs-in-case1}\,\subref{subfig:a1} -- \subref{subfig:c1}. For the one 
in Fig.\,\ref{fig:graphs-in-case1}\,\subref{subfig:a1}, $v_5$ has a neighbor $s$ satisfying 
$s\nsim u$. Then $\ls (u,v_4,s,v_5)$ and $\ls (u,v_3,v_5,v_4)$ make $v_3\sim v_4$. For the 
other two cases, we can similarly connect $v_3$ to $v_4$.

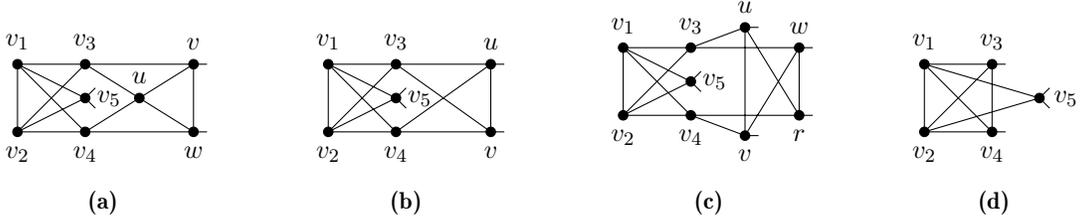
\begin{figure}[htbp]
\centering
\subcaptionbox{\label{subfig:a1}}[.25\textwidth]{%
\begin{tikzpicture}[scale=0.9]
\coordinate (u1) at (0,0.5);
\coordinate (u2) at (1,0.5);
\coordinate (u3) at (2.6,0.5);
\coordinate (v1) at (1,0);
\coordinate (v2) at (1.8,0);
\coordinate (w1) at (0,-0.5);
\coordinate (w2) at (1,-0.5);
\coordinate (w3) at (2.6,-0.5);

\draw (u2) -- (v2) -- (w2) -- (u1) -- (w1) -- (w2) -- (w3) --%
(u3) -- (u2) -- (u1) -- (v1) -- (w1) -- (u2);
\draw (u3) -- (v2) -- (w3);
\draw (1.15,0.15) -- (v1) -- (1.15,-0.15);
\draw (2.8,0.5) -- (u3);
\draw (2.8,-0.5) -- (w3);

\node[above=2pt, scale=0.9] at (u1) {$v_1$};
\node[below=2pt, scale=0.9] at (w1) {$v_2$};
\node[above=2pt, scale=0.9] at (u2) {$v_3$};
\node[below=2pt, scale=0.9] at (w2) {$v_4$};
\node[right=1pt, scale=0.9] at (v1) {$v_5$};
\node[above=2pt, scale=0.9] at (u3) {$v$};
\node[below=2pt, scale=0.9] at (w3) {$w$};
\node[above=2pt, scale=0.9] at (v2) {$u$};

\foreach \i in {u1,u2,u3,v1,v2,w1,w2,w3}
\filldraw (\i) circle (0.07);
\end{tikzpicture}
}
\subcaptionbox{\label{subfig:b1}}[.25\textwidth]{%
\begin{tikzpicture}[scale=0.9]
\coordinate (u1) at (0,0.5);
\coordinate (u2) at (1,0.5);
\coordinate (u3) at (2.4,0.5);
\coordinate (v1) at (1,0);
\coordinate (w1) at (0,-0.5);
\coordinate (w2) at (1,-0.5);
\coordinate (w3) at (2.4,-0.5);

\draw (u3) -- (w2) -- (u1) -- (w1) -- (w2) -- (w3) --%
(u3) -- (u2) -- (u1) -- (v1) -- (w1) -- (u2) -- (w3);
\draw (1.15,0.15) -- (v1) -- (1.15,-0.15);
\draw (2.6,0.5) -- (u3);
\draw (2.6,-0.5) -- (w3);

\node[above=2pt, scale=0.9] at (u1) {$v_1$};
\node[below=2pt, scale=0.9] at (w1) {$v_2$};
\node[above=2pt, scale=0.9] at (u2) {$v_3$};
\node[below=2pt, scale=0.9] at (w2) {$v_4$};
\node[right=1pt, scale=0.9] at (v1) {$v_5$};
\node[above=2pt, scale=0.9] at (u3) {$u$};
\node[below=2pt, scale=0.9] at (w3) {$v$};

\foreach \i in {u1,u2,u3,v1,w1,w2,w3}
\filldraw (\i) circle (0.07);
\end{tikzpicture}
}
\subcaptionbox{\label{subfig:c1}}[.25\textwidth]{%
\begin{tikzpicture}[scale=0.9]
\coordinate (u1) at (0,0.5);
\coordinate (u2) at (1,0.5);
\coordinate (u3) at (1.8,0.8);
\coordinate (u4) at (2.6,0.5);
\coordinate (v1) at (1,0);
\coordinate (w1) at (0,-0.5);
\coordinate (w2) at (1,-0.5);
\coordinate (w3) at (1.8,-0.8);
\coordinate (w4) at (2.6,-0.5);

\draw (w2) -- (u1) -- (w1) -- (w2) -- (w3);
\draw (u3) -- (u2) -- (u1) -- (v1) -- (w1) -- (u2);
\draw (w2) -- (w4) -- (u3) -- (w3) -- (u4) -- (u2);
\draw (w4) -- (u4);
\draw (1.15,0.15) -- (v1) -- (1.15,-0.15);
\draw (u3) -- (2,0.8);
\draw (w3) -- (2,-0.8);
\draw (u4) -- (2.8,0.5);
\draw (w4) -- (2.8,-0.5);

\node[above=2pt, scale=0.9] at (u1) {$v_1$};
\node[below=2pt, scale=0.9] at (w1) {$v_2$};
\node[above=2pt, scale=0.9] at (u2) {$v_3$};
\node[below=2pt, scale=0.9] at (w2) {$v_4$};
\node[right=1pt, scale=0.9] at (v1) {$v_5$};
\node[above=2pt, scale=0.9] at (u3) {$u$};
\node[below=2pt, scale=0.9] at (w3) {$v$};
\node[above=2pt, scale=0.9] at (u4) {$w$};
\node[below=2pt, scale=0.9] at (w4) {$r$};

\foreach \i in {u1,u2,u3,u4,v1,w1,w2,w3,w4}
\filldraw (\i) circle (0.07);
\end{tikzpicture}
}
\subcaptionbox{\label{subfig:d1}}[.22\textwidth]{%
\begin{tikzpicture}[scale=0.9]
\coordinate (u1) at (1.7,0);
\coordinate (v1) at (0,0.5);
\coordinate (w1) at (0,-0.5);
\coordinate (v2) at (1,0.5);
\coordinate (w2) at (1,-0.5);

\draw (u1) -- (v1) -- (v2) -- (w2) -- (w1) -- (v2);
\draw (w2) -- (v1) -- (w1) -- (u1);
\draw (v2) -- (1.2,0.5);
\draw (w2) -- (1.2,-0.5);
\draw (1.85,0.15) -- (u1) -- (1.85,-0.15);

\node[above=2pt, scale=0.9] at (v1) {$v_1$};
\node[below=2pt, scale=0.9] at (w1) {$v_2$};
\node[above=2pt, scale=0.9] at (v2) {$v_3$};
\node[below=2pt, scale=0.9] at (w2) {$v_4$};
\node[right=2pt, scale=0.9] at (u1) {$v_5$};

\foreach \i in {u1,v1,v2,w1,w2}
\filldraw (\i) circle (0.07);
\end{tikzpicture}
}
\caption{Some possible situations in Case 1 in \autoref{lem:transfer-first-few-vertices-Delta-4}}
\label{fig:graphs-in-case1}
\end{figure}

Until now we have transferred the first five vertices into the one in 
Fig.\,\ref{fig:graphs-in-case1}\,\subref{subfig:d1} under the condition 
$v_2\sim v_5$. We continue to connect the remaining vertices by the following steps.

(2). If $v_3\sim v_5$, in this case, we have $v_4\nsim v_5$. From \autoref{lem:common-use}, 
we may assume $v_4\sim v_6$. Now, if $v_5\sim v_6$, we obtain $H_3$. So we assume 
$v_5\nsim v_6$. Again by \autoref{lem:common-use}, we can connect $v_5$ to $v_7$. 
Furthermore, we may assume $v_6\sim v_7$. Otherwise, there are $u\in N(v_6)\setminus\{v_4\}$
and $v\in N(v_7)\setminus\{v_5\}$ such that $u\nsim v$. Then $\ls (v_6,u, v_7,v)$ connects
$v_6$ to $v_7$. If $N(v_6)\setminus\{v_4,v_7\} \neq N(v_7)\setminus\{v_5,v_6\}$, then 
there is a vertex $w\in N(v_6)\setminus\{v_4,v_7\}$ such that $w$ is not adjacent to $v_7$. 
Then $\ls (v_5,v_7, v_6,w)$ connects $v_5$ to $v_6$, and we obtain $H_3$. If 
$N(v_6)\setminus\{v_4,v_7\} = N(v_7)\setminus\{v_5,v_6\}$, by \autoref{coro:common-use-1}
we may assume $N(v_6)\setminus\{v_4,v_7\}=\{v_8,v_9\}$. By \autoref{lem:forbidden-M1} 
and \autoref{lem:forbidden-M2} we obtain $H_1$. 

If $v_3\nsim v_5$, then we can assume $v_3\sim v_6$. We consider the following two subcases.
\par\vspace{1mm}

\noindent {\bfseries Subcase 1.1.} $v_4\sim v_6$. If $v_5\sim v_6$ we obtain the graph in 
Fig.\,\ref{fig:graphs-in-claim1-and-case2}\,\subref{subfig:a2}, which is isomorphic to $H_4$. 
Likewise, if $v_5\nsim v_6$ we obtain the graph which is isomorphic to $H_2$.
\par\vspace{1mm}

\noindent {\bfseries Subcase 1.2.} $v_4\nsim v_6$. We have the following claim.

\begin{claim}\label{claim1}
$v_4\sim v_5$
\end{claim}

\noindent {\it Proof of \autoref{claim1}.}
If $v_4\nsim v_5$, by \autoref{lem:common-use}, we assume $v_4\sim v_7$. Obviously, we may assume 
$v_5\sim v_6$ or $v_5\sim v_7$. Furthermore, we may assume $v_5\sim v_6$ and $v_5\sim v_7$.
Since $v_3\nsim v_5$, we may assume $v_6\sim v_7$. Next, we shall prove 
$N(v_6)\setminus\{v_3,v_5,v_7\} = N(v_7)\setminus\{v_4,v_5,v_6\}$, and therefore 
obtain $H_5$. Otherwise, we obtain the one in 
Fig.\,\ref{fig:graphs-in-claim1-and-case2}\,\subref{subfig:b2}. Since $\ls (v_4,v_7, v_6,v_8)$ 
is a proper local switching, by \autoref{lem:local-switching} we see $x_4=x_5=\cdots=x_8$.
Also we have $x_3=x_4$. So by eigenvalue equations for $v_6$, we have $\lambda_1(G)=4$, 
a contradiction. This completes the proof of the claim.
\par\vspace{1mm}

Finally, we can arrive at the graph
either $H_3$ or $H_1$ depending on $v_5\sim v_6$ or $v_5\nsim v_6$.

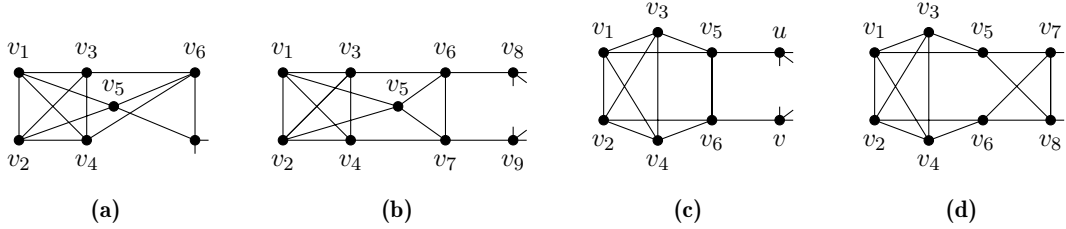
\begin{figure}[htbp]
\centering
\subcaptionbox{\label{subfig:a2}}[.23\textwidth]{%
\begin{tikzpicture}[scale=0.9]
\coordinate (u1) at (1.4,0);
\coordinate (v1) at (0,0.5);
\coordinate (v2) at (1,0.5);
\coordinate (v3) at (2.6,0.5);
\coordinate (w1) at (0,-0.5);
\coordinate (w2) at (1,-0.5);
\coordinate (w3) at (2.6,-0.5);

\draw (v1) -- (w1) -- (w2) -- (v3);
\draw (v3) -- (u1) -- (w3) -- (v3) -- (v2) -- (v1) -- (u1) -- (w1) -- (v2) -- (w2) -- (v1);
\draw (2.8,-0.5) -- (w3) -- (2.6,-0.7);

\node[above=2pt, scale=0.9] at (v1) {$v_1$};
\node[below=2pt, scale=0.9] at (w1) {$v_2$};
\node[above=2pt, scale=0.9] at (v2) {$v_3$};
\node[below=2pt, scale=0.9] at (w2) {$v_4$};
\node[above=1pt, scale=0.9] at (u1) {$v_5$};
\node[above=2pt, scale=0.9] at (v3) {$v_6$};

\foreach \i in {u1,v1,v2,v3,w1,w2,w3}
\filldraw (\i) circle (0.07);
\end{tikzpicture}
}
\subcaptionbox{\label{subfig:b2}}[.25\textwidth]{%
\begin{tikzpicture}[scale=0.9]
\coordinate (u1) at (1.7,0);
\coordinate (v1) at (0,0.5);
\coordinate (v2) at (1,0.5);
\coordinate (v3) at (2.4,0.5);
\coordinate (v4) at (3.4,0.5);
\coordinate (w1) at (0,-0.5);
\coordinate (w2) at (1,-0.5);
\coordinate (w3) at (2.4,-0.5);
\coordinate (w4) at (3.4,-0.5);

\node[above=2pt, scale=0.9] at (v1) {$v_1$};
\node[below=2pt, scale=0.9] at (w1) {$v_2$};
\node[above=2pt, scale=0.9] at (v2) {$v_3$};
\node[below=2pt, scale=0.9] at (w2) {$v_4$};
\node[above=2pt, scale=0.9] at (v3) {$v_6$};
\node[below=2pt, scale=0.9] at (w3) {$v_7$};
\node[above=1pt, scale=0.9] at (u1) {$v_5$};
\node[above=2pt, scale=0.9] at (v4) {$v_8$};
\node[below=2pt, scale=0.9] at (w4) {$v_9$};

\draw (3.4,-0.3) -- (w4) -- (w3) -- (w2) -- (w1) -- (v1) -- (u1) -- (w1) -- (v2) %
-- (w2) -- (v1) -- (v2) -- (v3) -- (v4) -- (3.4,0.3);
\draw (u1) -- (v3) -- (w3) -- (u1);
\draw (w1) -- (v2);
\draw (3.6,0.5) -- (v4) -- (3.6,0.35);
\draw (3.6,-0.5) -- (w4) -- (3.6,-0.35); 

\foreach \i in {u1,v1,v2,v3,v4,w1,w2,w3,w4}
\filldraw (\i) circle (0.07);
\end{tikzpicture}
}
\subcaptionbox{\label{subfig:c2}}[.23\textwidth]{%
\begin{tikzpicture}[scale=0.9]
\coordinate (v1) at (0,0.5);
\coordinate (v2) at (0.8,0.8);
\coordinate (v3) at (1.6,0.5);
\coordinate (v4) at (2.6,0.5);

\coordinate (w1) at (0,-0.5);
\coordinate (w2) at (0.8,-0.8);
\coordinate (w3) at (1.6,-0.5);
\coordinate (w4) at (2.6,-0.5);

\draw (v1) -- (v2) -- (v3) -- (w3) -- (w2) -- %
(w1) -- (v1) -- (v3) -- (w3) -- (w1) -- (v2) -- (w2) -- (v1);
\draw (v3) -- (v4) -- (2.8,0.35);
\draw (w3) -- (w4) -- (2.8,-0.35);
\draw (2.8,0.5) -- (v4) -- (2.6,0.3);
\draw (2.8,-0.5) -- (w4) -- (2.6,-0.3);

\node[above=2pt, scale=0.9] at (v1) {$v_1$};
\node[below=2pt, scale=0.9] at (w1) {$v_2$};
\node[above=2pt, scale=0.9] at (v2) {$v_3$};
\node[below=2pt, scale=0.9] at (w2) {$v_4$};
\node[above=2pt, scale=0.9] at (v3) {$v_5$};
\node[below=2pt, scale=0.9] at (w3) {$v_6$};
\node[above=2pt, scale=0.9] at (v4) {$u$};
\node[below=2pt, scale=0.9] at (w4) {$v$};

\foreach \i in {v1,v2,v3,v4,w1,w2,w3,w4}
\filldraw (\i) circle (0.07);
\end{tikzpicture}
}
\subcaptionbox{\label{subfig:d2}}[.22\textwidth]{%
\begin{tikzpicture}[scale=0.9]
\coordinate (v1) at (0,0.5);
\coordinate (v2) at (0.8,0.8);
\coordinate (v3) at (1.6,0.5);
\coordinate (v4) at (2.6,0.5);

\coordinate (w1) at (0,-0.5);
\coordinate (w2) at (0.8,-0.8);
\coordinate (w3) at (1.6,-0.5);
\coordinate (w4) at (2.6,-0.5);

\draw (v1) -- (v2) -- (v3) -- (v4) -- (w4) -- (w3) -- (w2) -- (w1) -- %
(v1) -- (v3) -- (w4) -- (v4) -- (w3) -- (w1) -- (v2) -- (w2) -- (v1);
\draw (2.8,0.5) -- (v4);
\draw (2.8,-0.5) -- (w4);

\node[above=2pt, scale=0.9] at (v1) {$v_1$};
\node[below=2pt, scale=0.9] at (w1) {$v_2$};
\node[above=2pt, scale=0.9] at (v2) {$v_3$};
\node[below=2pt, scale=0.9] at (w2) {$v_4$};
\node[above=2pt, scale=0.9] at (v3) {$v_5$};
\node[below=2pt, scale=0.9] at (w3) {$v_6$};
\node[above=2pt, scale=0.9] at (v4) {$v_7$};
\node[below=2pt, scale=0.9] at (w4) {$v_8$};

\foreach \i in {v1,v2,v3,v4,w1,w2,w3,w4}
\filldraw (\i) circle (0.07);
\end{tikzpicture}
}
\caption{Some auxiliary graphs in the proof of \autoref{lem:transfer-first-few-vertices-Delta-4}}
\label{fig:graphs-in-claim1-and-case2}
\end{figure}

\vspace{1mm}
\noindent {\bfseries Case 2.} $v_2\nsim v_5$. In this case, we may assume $v_2\sim v_6$.

(1). First, we may assume that either $v_5\sim v_3$ or $v_5\sim v_4$. Otherwise,
we have $v_5\nsim v_i$, $i=2,3,4$. Hence, there exist $u\in N(v_5)\setminus\{v_1\}$
and $v\in (N(v_2)\cup N(v_3)\cup N(v_4))\setminus\{v_1,v_2,v_3,v_4\}$ such that $u\nsim v$.
Then the desired switch to $v_5\sim v_i$ for some $i\in\{2,3,4\}$ exists. Likewise, 
we can also assume that $v_3\sim v_4$ or $v_3\sim v_5$. 

(2). Connecting $v_3$ to $v_5$. If $v_3\nsim v_5$, then $v_4\sim v_5$ and $v_3\sim v_4$.
Applying $\ls (v_2,v_6, v_5,v_4)$ to $G$, we connect $v_2$ to $v_5$, which contradicts 
to our assumption.

(3). Connecting $v_3$ to $v_4$.  If $v_3\nsim v_4$, we may assume $v_3\sim v_6$. 
Otherwise, $\ls (v_2,v_6, v_5,v_3)$ connects $v_2$ to $v_5$. By considering the 
neighbors of $v_3$ and $v_4$, the desired switch for $v_3\sim v_4$ is available.

\begin{claim}\label{claim2}
$v_4\nsim v_5$.
\end{claim} 

\noindent {\it Proof of \autoref{claim2}.}
Suppose by contradiction that $v_4\sim v_5$. Using eigenvalue equations we immediately 
have $x_1=x_2=x_3=x_4$. Since $\ls (v_2,v_6, v_5,v_3)$ is a proper local switching, we 
have $x_5=x_6$ by \autoref{lem:local-switching}. We deduce that the components of the 
other three neighbors of $v_6$ other that $v_2$ must be $x_1$. Hence, $x_1=x_2=\cdots=x_6$, 
a contradiction to the fact that $\lambda_1(G) < 4$. This completes the proof of this claim.
\vspace{2mm}

\begin{claim}\label{claim3}
$v_4\sim v_6$.
\end{claim} 

\noindent {\it Proof of \autoref{claim3}.}
Suppose by contradiction that $v_4\nsim v_6$. Using eigenvalue equations for $v_2$ and $v_3$ 
we immediately have $x_2=x_3$ and $x_5=x_6$. Together with the eigenvalue equation for $v_1$
gives $x_1=x_2$. If $v_5\nsim v_6$, we may assume that $v_5$, $v_6$ share two common neighbors,
say $u$, $v$. Letting $w$ be the fourth neighbor of $v_6\notin\{u,v,v_2\}$ and using the 
eigenvalue equations for $v_5$ and $v_6$, we deduce that $0=\lambda_1(G) (x_5-x_6)=x_1-x_w$.
Thus, $w=v_4$, for otherwise, $\lambda_1(G)=4$. Using the same arguments, we can deduce a contradiction
for the case $v_5\sim v_6$. This completes the proof of this claim.
\par\vspace{2mm}

(4). If $v_5\sim v_6$, 
then we obtain $H_4$ when $v_5$ and $v_6$ share a common neighbor. If $N(v_5)\cap N(v_6)=\emptyset$, 
we obtain the one in Fig.\,\ref{fig:graphs-in-claim1-and-case2}\,\subref{subfig:c2}. 
Since $\ls (v_2,v_6, v_5,u)$ is a proper local switching, we have $x_2=x_7$. By eigenvalue 
equation for $v_1$ we see $x_1=x_2$, and therefore $x_1=\cdots=x_7$. Then $\lambda_1(G)=4$, 
a contradiction. If $v_5\nsim v_6$, similarly to the proof as before, we obtain the graph 
in Fig.\,\ref{fig:graphs-in-claim1-and-case2}\,\subref{subfig:d2}. By \autoref{lem:forbidden-M3},
we obtain $H_2$. This completes the proof of the lemma.
\end{proof}

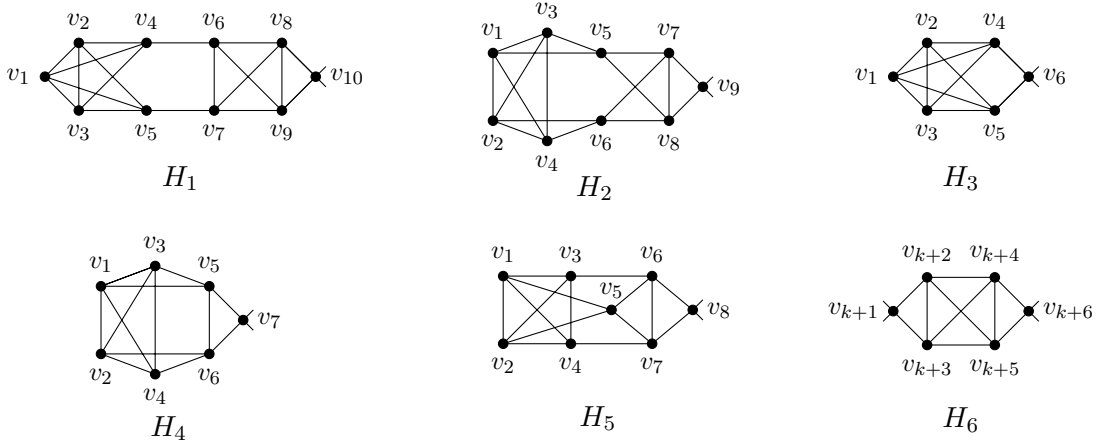
\begin{figure}[htbp]
\begin{minipage}{.39\textwidth}
\centering
\begin{tikzpicture}[scale=0.9]
\coordinate (u1) at (0,0);
\coordinate (u2) at (4,0);
\coordinate (v1) at (0.5,0.5);
\coordinate (v2) at (1.5,0.5);
\coordinate (v3) at (2.5,0.5);
\coordinate (v4) at (3.5,0.5);
\coordinate (w1) at (0.5,-0.5);
\coordinate (w2) at (1.5,-0.5);
\coordinate (w3) at (2.5,-0.5);
\coordinate (w4) at (3.5,-0.5);

\draw (u1) -- (v1) -- (v2) -- (v3) -- (v4) -- (u2) -- (w4) -- %
(w3) -- (w2) -- (w1) -- (u1) -- (v2) -- (w1) -- (v1) -- (w2) -- (u1);
\draw (v3) -- (w4) -- (v4) -- (w3) -- (v3);
\draw (v4) -- (u2) -- (w4);
\draw (4.15,0.15) -- (u2) -- (4.15,-0.15);

\node[left=2pt, scale=0.9] at (u1) {$v_1$};
\node[right=2pt, scale=0.9] at (u2) {$v_{10}$};
\node[above=2pt, scale=0.9] at (v1) {$v_2$};
\node[below=2pt, scale=0.9] at (w1) {$v_3$};
\node[above=2pt, scale=0.9] at (v2) {$v_4$};
\node[below=2pt, scale=0.9] at (w2) {$v_5$};
\node[above=2pt, scale=0.9] at (v3) {$v_6$};
\node[below=2pt, scale=0.9] at (w3) {$v_7$};
\node[above=2pt, scale=0.9] at (v4) {$v_8$};
\node[below=2pt, scale=0.9] at (w4) {$v_9$};

\foreach \i in {u1,u2,v1,v2,v3,v4,w1,w2,w3,w4}
\filldraw (\i) circle (0.07);
\node at (2,-1.5) {$H_1$};
\end{tikzpicture}
\end{minipage}
\begin{minipage}{.32\textwidth}
\centering
\begin{tikzpicture}[scale=0.9]
\coordinate (u1) at (3.1,0);
\coordinate (v1) at (0,0.5);
\coordinate (v2) at (0.8,0.8);
\coordinate (v3) at (1.6,0.5);
\coordinate (v4) at (2.6,0.5);
\coordinate (w1) at (0,-0.5);
\coordinate (w2) at (0.8,-0.8);
\coordinate (w3) at (1.6,-0.5);
\coordinate (w4) at (2.6,-0.5);

\draw (v3) -- (v1) -- (v2) -- (v3) -- (v4) -- (u1) -- (w4) -- (w3) --%
(w2) -- (w1) -- (v1) -- (w2) -- (v2) -- (w1);
\draw (v3) -- (w4) -- (v4) -- (w3) -- (w1); 
\draw (3.25,0.15) -- (u1) -- (3.25,-0.15);

\node[above=2pt, scale=0.9] at (v1) {$v_1$};
\node[below=2pt, scale=0.9] at (w1) {$v_2$};
\node[above=2pt, scale=0.9] at (v2) {$v_3$};
\node[below=2pt, scale=0.9] at (w2) {$v_4$};
\node[above=2pt, scale=0.9] at (v3) {$v_5$};
\node[below=2pt, scale=0.9] at (w3) {$v_6$};
\node[above=2pt, scale=0.9] at (v4) {$v_7$};
\node[below=2pt, scale=0.9] at (w4) {$v_8$};
\node[right=2pt, scale=0.9] at (u1) {$v_9$}; 

\foreach \i in {u1,v1,v2,v3,v4,w1,w2,w3,w4}
\filldraw (\i) circle (0.07);
\node at (1.5,-1.5) {$H_2$};
\end{tikzpicture}
\end{minipage}
\begin{minipage}{.26\textwidth}
\centering
\begin{tikzpicture}[scale=0.9]
\coordinate (u1) at (0,0);
\coordinate (u2) at (2,0);
\coordinate (v1) at (0.5,0.5);
\coordinate (v2) at (1.5,0.5);
\coordinate (w1) at (0.5,-0.5);
\coordinate (w2) at (1.5,-0.5);

\draw (u1) -- (v1) -- (v2) -- (u2) -- (w2) -- (w1) -- (u1) --%
(v2) -- (w1) -- (v1) -- (w2) -- (u1);
\draw (v2) -- (u2) -- (w2); 
\draw (2.15,0.15) -- (u2) -- (2.15,-0.15);

\node[left=2pt, scale=0.9] at (u1) {$v_1$};
\node[above=2pt, scale=0.9] at (v1) {$v_2$};
\node[below=2pt, scale=0.9] at (w1) {$v_3$};
\node[above=2pt, scale=0.9] at (v2) {$v_4$};
\node[below=2pt, scale=0.9] at (w2) {$v_5$};
\node[right=2pt, scale=0.9] at (u2) {$v_6$};

\foreach \i in {u1,u2,v1,v2,w1,w2}
\filldraw (\i) circle (0.07);
\node at (1,-1.5) {$H_3$};
\end{tikzpicture}
\end{minipage}
\vspace{2mm}

\begin{minipage}{.39\textwidth}
\centering
\begin{tikzpicture}[scale=0.9]
\coordinate (u1) at (2.1,0);
\coordinate (v1) at (0,0.5);
\coordinate (v2) at (0.8,0.8);
\coordinate (v3) at (1.6,0.5);
\coordinate (w1) at (0,-0.5);
\coordinate (w2) at (0.8,-0.8);
\coordinate (w3) at (1.6,-0.5);

\draw (v1) -- (v2) -- (v3) -- (u1) -- (w3) -- (w2) -- (w1) -- (v1) -- %
(v2) -- (w2) -- (v1) -- (v3) -- (w3) -- (w1) -- (v2);
\draw (2.25,0.15) -- (u1) -- (2.25,-0.15);

\node[above=2pt, scale=0.9] at (v1) {$v_1$};
\node[below=2pt, scale=0.9] at (w1) {$v_2$};
\node[above=2pt, scale=0.9] at (v2) {$v_3$};
\node[below=2pt, scale=0.9] at (w2) {$v_4$};
\node[above=2pt, scale=0.9] at (v3) {$v_5$};
\node[below=2pt, scale=0.9] at (w3) {$v_6$};
\node[right=2pt, scale=0.9] at (u1) {$v_7$};

\foreach \i in {u1,v1,v2,v3,w1,w2,w3}
\filldraw (\i) circle (0.07);
\node at (1,-1.6) {$H_4$};
\end{tikzpicture}
\end{minipage}
\begin{minipage}{.32\textwidth}
\centering
\begin{tikzpicture}[scale=0.9]
\coordinate (u1) at (1.6,0);
\coordinate (u2) at (2.8,0);
\coordinate (v1) at (0,0.5);
\coordinate (v2) at (1,0.5);
\coordinate (v3) at (2.2,0.5);
\coordinate (w1) at (0,-0.5);
\coordinate (w2) at (1,-0.5);
\coordinate (w3) at (2.2,-0.5);

\draw (v1) -- (v2) -- (v3) -- (u2) -- (w3) -- (w2) --%
(w1) -- (v1) -- (w2) -- (v2) -- (w1) -- (u1) -- (v1);
\draw (u1) -- (v3) -- (w3) -- (u1);
\draw (2.95,0.15) -- (u2) -- (2.95,-0.15);

\node[above=2pt, scale=0.9] at (v1) {$v_1$};
\node[below=2pt, scale=0.9] at (w1) {$v_2$};
\node[above=2pt, scale=0.9] at (v2) {$v_3$};
\node[below=2pt, scale=0.9] at (w2) {$v_4$};
\node[above=1pt, scale=0.9] at (u1) {$v_5$};
\node[above=2pt, scale=0.9] at (v3) {$v_6$};
\node[below=2pt, scale=0.9] at (w3) {$v_7$};
\node[right=2pt, scale=0.9] at (u2) {$v_8$};

\foreach \i in {u1,u2,v1,v2,v3,w1,w2,w3}
\filldraw (\i) circle (0.07);
\node at (1.4,-1.6) {$H_5$};
\end{tikzpicture}
\end{minipage}
\begin{minipage}{.26\textwidth}
\centering
\begin{tikzpicture}[scale=0.9]
\coordinate (u1) at (0,0);
\coordinate (u2) at (2,0);
\coordinate (v1) at (0.5,0.5);
\coordinate (w1) at (0.5,-0.5);
\coordinate (v2) at (1.5,0.5);
\coordinate (w2) at (1.5,-0.5);

\draw (u1) -- (v1) -- (v2) -- (u2) -- (w2) -- (w1) -- (v2) -- (w2) -- (v1) -- (w1) -- (u1);
\draw (-0.15,0.15) -- (u1) -- (-0.15,-0.15);
\draw (2.15,0.15) -- (u2) -- (2.15,-0.15);

\node[left=2pt, scale=0.9] at (u1) {$v_{k+1}$};
\node[above=2pt, scale=0.9] at (v1) {$v_{k+2}$};
\node[below=2pt, scale=0.9] at (w1) {$v_{k+3}$};
\node[above=2pt, scale=0.9] at (v2) {$v_{k+4}$};
\node[below=2pt, scale=0.9] at (w2) {$v_{k+5}$};
\node[right=2pt, scale=0.9] at (u2) {$v_{k+6}$};

\foreach \i in {u1,u2,v1,v2,w1,w2}
\filldraw (\i) circle (0.07);
\node at (1,-1.6) {$H_6$};
\end{tikzpicture}  
\end{minipage}
\caption{Graphs $H_1$ -- $H_6$}
\label{fig:H1-H6}
\end{figure}

Below we continue reconnecting $G$ by a sequence of proper local switchings to 
construct structure on the remaining vertices.

\begin{lemma}\label{lem:induced-middle-vertices-Delta=4}
Let $H$ be an induced subgraph of $G$ on vertices $v_{k+1}$,$\ldots$,$v_{k+6}$, where 
the first vertex $v_{k+1}$ has degree two in $H$, and $N_G(v_i)\cap\mathcal{M}_k=\emptyset$, 
$i\geq k+2$. Then $H$ can be transferred into $H_6$ as an induced subgraph of $G$, 
not decreasing the spectral radius of $G$.
\end{lemma}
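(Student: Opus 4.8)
The plan is to follow the template of the proof of \autoref{lem:induced-middle-vertices-Delta=3}: reconstruct the six vertices $v_{k+1},\dots,v_{k+6}$ into the target block $H_6$ of Fig.\,\ref{fig:H1-H6} by a sequence of proper local switchings, carrying out the reconnection in an order for which the Principle of Proximity (\autoref{lem:common-use} and \autoref{coro:common-use-1}) stays applicable, and removing the obstructions that arise in one of three ways: by the forbidden-subgraph lemmas \autoref{lem:forbidden-M1}--\ref{lem:forbidden-M7}; by eigenvalue-equation arguments that force $\lambda_1(G)=\Delta=4$, impossible since $G$ is nonregular; or, for the configurations not covered by $M_1,\dots,M_7$, by a Rayleigh-quotient perturbation modelled on the $D_2$--$\widetilde{D}_2$ comparison inside the proof of \autoref{lem:induced-middle-vertices-Delta=3}.

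First I would fix the backbone. In a middle application $v_{k+1}\in T$, so $d_G(v_{k+1})=4$ (if $v_{k+1}\in S$ then, away from the last few vertices, \autoref{lem:T-min-S-max} and \autoref{coro:S-less-Delta-1} are violated); since $v_{k+1}$ has degree $2$ in $H$ and $N_G(v_i)\cap\mathcal{M}_k=\emptyset$ for $i\ge k+2$, \autoref{lem:common-use} lets us assume its two neighbours among $v_{k+2},\dots,v_{k+6}$ are $v_{k+2}$ and $v_{k+3}$. Next I would show $v_{k+2}\sim v_{k+3}$: both lie in the connected graph $G\setminus\mathcal{M}_{k+1}$ (\autoref{lem:G-delete-vertices-connected}), their only $\mathcal{M}_{k+1}$-neighbour being $v_{k+1}$, so each has three further neighbours among the later vertices. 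If $v_{k+2}\nsim v_{k+3}$, then for suitable distinct $a\in N(v_{k+2})\setminus\{v_{k+1}\}$, $b\in N(v_{k+3})\setminus\{v_{k+1}\}$ with $a\nsim b$ the switch $\ls (v_{k+2},a,v_{k+3},b)$ is proper (the inequality of \autoref{lem:local-switching} holds because $a,b$ have larger index, hence smaller Perron entries, than $v_{k+2},v_{k+3}$); the only way no such pair exists is that every neighbour of $v_{k+2}$ off $v_{k+1}$ is adjacent to every neighbour of $v_{k+3}$ off $v_{k+1}$, and a degree count with $\Delta=4$ then makes $\{v_{k+2},v_{k+3}\}$ together with these neighbours a saturated subgraph attached to the rest of $G$ only through $v_{k+1}$, contradicting \autoref{lem:G-delete-vertices-connected} (applicable since $n-k$ is not small in a middle application), or else forcing enough entries of $\bm{x}$ to coincide that $\lambda_1(G)=4$. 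With $v_{k+1}\sim v_{k+2}\sim v_{k+3}$ and $v_{k+1}$ saturated, I apply \autoref{coro:common-use-1} to $v_{k+2}$ (its only $\mathcal{M}_{k+2}$-neighbour is $v_{k+1}$, and $v_{k+3}$ lies in the exception set) to connect $v_{k+2}$ to $v_{k+4}$ and $v_{k+5}$, so $N(v_{k+2})=\{v_{k+1},v_{k+3},v_{k+4},v_{k+5}\}$.

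It then remains to make $\{v_{k+3},v_{k+4},v_{k+5}\}$ a triangle and to attach $v_{k+6}$ to $v_{k+4}$ and $v_{k+5}$; this is where the bulk of the work lies. Since $v_{k+4}$ and $v_{k+5}$ now have the neighbour $v_{k+2}\in\mathcal{M}_{k+3}$, \autoref{coro:common-use-1} is no longer available for these edges, so, as in Steps~C--D of the proof of \autoref{lem:transfer-first-few-vertices-Delta-4}, I would examine the remaining neighbours of $v_{k+3},v_{k+4},v_{k+5}$ and of $v_{k+6}$ and, by proper local switchings, reduce to a short list of local configurations. Each of these either contains an induced copy of one of $M_1,\dots,M_7$, contradicting \autoref{lem:forbidden-M1}--\ref{lem:forbidden-M7}, or, by comparing eigenvalue equations along the $4$-cycles and near-paths it contains, forces $x_{k+3}=x_{k+4}=x_{k+5}=\cdots$ and hence $\lambda_1(G)=4$. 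For the at most one residual family not covered by $M_1,\dots,M_7$, I would run a spectral perturbation as in the $D_2$-to-$\widetilde{D}_2$ step: replace the local piece by the corresponding piece of $H_6$ to obtain $\widetilde{G}$, define a test vector $\bm{y}$ that agrees with $\bm{x}$ off the rewired vertices and, using the forced coincidences among the entries of $\bm{x}$, satisfies $\bm{y}^{\mathrm{T}}L(\widetilde{G})\bm{y}=\bm{x}^{\mathrm{T}}L(G)\bm{x}$, and then check $\|\bm{y}\|_2^2-1>0$; by \eqref{eq:Delta-rho} this gives $\lambda_1(\widetilde{G})>\lambda_1(G)$, contradicting maximality, so the residual configuration was already $H_6$. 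Once the $K_4$ on $\{v_{k+2},v_{k+3},v_{k+4},v_{k+5}\}$ and the edges $v_{k+4}v_{k+6}$, $v_{k+5}v_{k+6}$ are present, $H$ has become $H_6$.

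The main obstacle, relative to the degree-$3$ case, is the proliferation of local configurations in the third step together with the bookkeeping required to keep the hypotheses of \autoref{coro:common-use-1} and \autoref{lem:common-use} satisfied, i.e.\ ordering the reconnection so that no vertex still to be reached has already picked up a neighbour among the vertices already fixed. A secondary technical point is the positivity check $\|\bm{y}\|_2>1$ in the perturbation step: after substituting the eigenvalue relations, in the manner of \eqref{eq:norm-difference-y-and-x}--\eqref{eq:norm-y-degree4}, it reduces to the sign of an explicit polynomial in $\lambda=\lambda_1(G)$ on an interval $\lambda>\lambda_1(H_i)$ fixed by the relevant building block, which can be settled as in \autoref{lem:forbidden-M1} by Descartes's rule of signs.
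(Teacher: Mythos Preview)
Your plan is essentially the paper's own proof: build the backbone $v_{k+1}\sim v_{k+2},v_{k+3}$, then $v_{k+2}\sim v_{k+3},v_{k+4},v_{k+5}$ via the Principle of Proximity, and finish the triangle and the attachment of $v_{k+6}$ by local switchings together with the forbidden-subgraph lemmas. The one place you over-hedge is the anticipated ``residual family'' requiring a $D_2$-style Rayleigh perturbation: in the paper no such step is needed here, because the package $M_4$--$M_7$ was tailored to close every branch of this case analysis. Concretely, after a short two-case switch to force $v_{k+3}\sim v_{k+4}$, the failure of $v_{k+3}\sim v_{k+5}$ leads (via \autoref{lem:common-use}) to the configurations of Fig.\,\ref{fig:graph-in-Lem-47}, which are exactly $M_6$ and $M_7$; once $v_{k+3}\sim v_{k+5}$, the absence of $v_{k+4}\sim v_{k+5}$ gives $M_4$, and finally the absence of a common neighbour of $v_{k+4},v_{k+5}$ gives $M_5$. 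So the perturbation machinery you sketch is already absorbed into Lemmas~\ref{lem:forbidden-M4}--\ref{lem:forbidden-M7} and need not reappear in the proof of this lemma.
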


\begin{proof}
By \autoref{coro:common-use-1}, we may assume $v_{k+1}\sim v_{k+2}$ and $v_{k+1}\sim v_{k+3}$.
Also, we may assume $v_{k+2}\sim v_{k+3}$, otherwise the desired switch for $v_{k+2}\sim v_{k+3}$ 
is available. Again by \autoref{coro:common-use-1}, we have $v_{k+2}\sim v_{k+4}$ and 
$v_{k+2}\sim v_{k+5}$. Below we will make $v_{k+3}\sim v_{k+4}$ by consider the following two cases.
\par\vspace{1mm}

\noindent {\bfseries Case 1.} $v_{k+3}\sim v_{k+5}$. In this case, if $v_{k+3}\nsim v_{k+4}$, 
then $v_{k+4}$ has a neighbor $u$ which is not adjacent to $v_{k+5}$. Then 
$\ls (v_{k+3},v_{k+5}, v_{k+4},u)$ makes $v_{k+3}$ and $v_{k+4}$ adjacent.

\noindent {\bfseries Case 2.} $v_{k+3}\nsim v_{k+5}$. We can assume $v_{k+4}\sim v_{k+5}$. Otherwise, 
$v_{k+4}$ and $v_{k+5}$ must share three neighbors all of which are adjacent to each other, which 
is a contradiction to the fact that $G\setminus\mathcal{M}_{k+2}$ is connected. We may assume that 
$v_{k+3}$ and $v_{k+5}$ share two common neighbors $v$ and $w$, otherwise we can connect $v_{k+3}$ 
to $v_{k+4}$ by a proper local switching. Furthermore, we may assume that $v\sim v_{k+4}$, 
$w\sim v_{k+4}$ and $v\sim w$ due to our assumption $v_{k+3}\nsim v_{k+5}$. This is a contradiction 
to \autoref{prop:preserving-connected}.
\par\vspace{1mm}

Next, we will connect $v_{k+3}$ to $v_{k+5}$. We may assume $v_{k+3}\sim v_{k+5}$ or $v_{k+4}\sim v_{k+5}$
using similar arguments as before. If $v_{k+3}\nsim v_{k+5}$, then we have $v_{k+4}\sim v_{k+5}$, and
$v_{k+3}\sim v_{k+6}$ from \autoref{lem:common-use}. If $v_{k+5}\sim v_{k+6}$, we arrive at the one as 
shown in Fig.\,\ref{fig:graph-in-Lem-47}\,\subref{subfig:graph1-case2-lem-47}, which is impossible 
by \autoref{lem:forbidden-M6}. Likewise, if $v_{k+5}\nsim v_{k+6}$, we arrive at the graph in 
Fig.\,\ref{fig:graph-in-Lem-47}\,\subref{subfig:graph2-case2-lem-47}, which is a contradiction 
to \autoref{lem:forbidden-M7}.

Finally, we have $v_{k+4}\sim v_{k+5}$. Otherwise, $G$ contains $M_4$ as an induced subgraph,
contradicting to \autoref{lem:forbidden-M4}. Furthermore, we conclude that $v_{k+4}$ and $v_{k+5}$ 
share a common neighbor other than $v_{k+2}$, $v_{k+3}$ by \autoref{lem:forbidden-M5}.
That is exactly the graph $H_6$.
\end{proof}

\begin{figure}[htbp]
\centering
\subcaptionbox{\label{subfig:graph1-case2-lem-47}}[.4\textwidth]{%
\begin{tikzpicture}[scale=0.9]
\coordinate (u1) at (0,0);
\coordinate (u2) at (1.5,0);
\coordinate (v1) at (0.6,0.55);
\coordinate (w1) at (0.6,-0.55);
\coordinate (v2) at (2.4,0.55);
\coordinate (w2) at (2.4,-0.55);

\draw (u1) -- (v1) -- (v2) -- (w2) -- (w1) -- (u1);
\draw (v1) -- (w1) -- (u2) -- (v2) -- (2.6,0.55);
\draw (v1) -- (u2) -- (w2) -- (2.6,-0.55);
\draw (-0.15,0.15) -- (u1) -- (-0.15,-0.15);

\node[left=2pt, scale=0.9] at (u1) {$v_{k+1}$};
\node[above=2pt, scale=0.9] at (v1) {$v_{k+2}$};
\node[below=2pt, scale=0.9] at (w1) {$v_{k+3}$};
\node[above=2.5pt, scale=0.8] at (u2) {$v_{k+4}$};
\node[above=2pt, scale=0.9] at (v2) {$v_{k+5}$};
\node[below=2pt, scale=0.9] at (w2) {$v_{k+6}$};

\foreach \i in {u1,u2,v1,v2,w1,w2}
\filldraw (\i) circle (0.07);
\end{tikzpicture}
}
\subcaptionbox{\label{subfig:graph2-case2-lem-47}}[.4\textwidth]{%
\begin{tikzpicture}[scale=0.9]
\coordinate (u1) at (0,0);
\coordinate (u2) at (1.3,0);
\coordinate (v1) at (0.6,0.55);
\coordinate (w1) at (0.6,-0.55);
\coordinate (v2) at (2,0.55);
\coordinate (w2) at (2,-0.55);
\coordinate (v3) at (3,0.55);
\coordinate (w3) at (3,-0.55);

\draw (u1) -- (v1) -- (v2) -- (v3) -- (w3) -- (w2) -- (w1) -- (u1);
\draw (v1) -- (w1) -- (u2) -- (v1);
\draw (v2) -- (u2) -- (w2) -- (v3);
\draw (v2) -- (w3);
\draw (-0.15,0.15) -- (u1) -- (-0.15,-0.15);
\draw (3.2,0.55) -- (v3);
\draw (3.2,-0.55) -- (w3);

\node[left=2pt, scale=0.9] at (u1) {$v_{k+1}$};
\node[above=2pt, scale=0.9] at (v1) {$v_{k+2}$};
\node[below=2pt, scale=0.9] at (w1) {$v_{k+3}$};
\node[right=2pt, scale=0.9] at (u2) {$v_{k+4}$};
\node[above=2pt, scale=0.9] at (v2) {$v_{k+5}$};
\node[below=2pt, scale=0.9] at (w2) {$v_{k+6}$};

\foreach \i in {u1,u2,v1,v2,v3,w1,w2,w3}
\filldraw (\i) circle (0.07);
\end{tikzpicture}
}
\caption{Two possible situations in \autoref{lem:induced-middle-vertices-Delta=4}}
\label{fig:graph-in-Lem-47}
\end{figure}
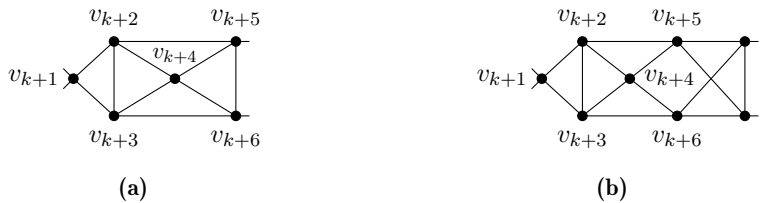

\begin{figure}[htbp]
\centering
\subcaptionbox{}[.2\textwidth]{%
\begin{tikzpicture}[scale=0.8]
\coordinate (u1) at (0,0);
\coordinate (u2) at (1,0);
\coordinate (u3) at (3,0);

\coordinate (v1) at (0.5,0.5);
\coordinate (w1) at (0.5,-0.5);
\coordinate (v2) at (1.5,0.5);
\coordinate (w2) at (1.5,-0.5);
\coordinate (v3) at (2.5,0.5);
\coordinate (w3) at (2.5,-0.5);

\draw (u1) -- (w1) -- (u2) -- (v1) -- (u1);
\draw (v1) -- (w1);
\draw (u2) -- (v2) -- (v3) -- (u3) -- (w3) -- (w2) -- (u2);
\draw (v2) -- (w2) -- (v3) -- (w3) -- (v2);
\draw (u3) -- (3.15,0.16);
\draw (u3) -- (3.15,-0.16);

\foreach \i in {u1,u2,u3,v1,v2,v3,w1,w2,w3}
\filldraw (\i) circle (0.07);
\end{tikzpicture}
}
\subcaptionbox{}[.25\textwidth]{%
\begin{tikzpicture}[scale=0.8]
\coordinate (u1) at (1.5,0);

\coordinate (v1) at (0,0.5);
\coordinate (w1) at (0,-0.5);
\coordinate (v2) at (1,0.5);
\coordinate (w2) at (1,-0.5);
\coordinate (v3) at (2,0.5);
\coordinate (w3) at (2,-0.5);
\coordinate (v4) at (3,0.5);
\coordinate (w4) at (3,-0.5);

\draw (v1) -- (w1) -- (w2) -- (v2) -- (v1) -- (w2) -- (u1) -- (v2) -- (w1);
\draw (u1) -- (v3) -- (v4) -- (w4) -- (w3) -- (u1);
\draw (w4) -- (v3) -- (w3) -- (v4);
\draw (v4) -- (3.2,0.5);
\draw (w4) -- (3.2,-0.5);

\foreach \i in {u1,v1,v2,v3,v4,w1,w2,w3,w4}
\filldraw (\i) circle (0.07);
\end{tikzpicture}
}
\subcaptionbox{}[.2\textwidth]{%
\begin{tikzpicture}[scale=0.9]
\coordinate (u1) at (0.5,0);
\coordinate (u2) at (2.5,0);

\coordinate (v1) at (0,0.5);
\coordinate (w1) at (0,-0.5);
\coordinate (v2) at (1,0.5);
\coordinate (w2) at (1,-0.5);
\coordinate (v3) at (2,0.5);
\coordinate (w3) at (2,-0.5);

\draw (v1) -- (w1) -- (u1) -- (v1);
\draw (u1) -- (v2) -- (v3) -- (u2) -- (w3) -- (w2) -- (u1);
\draw (v2) -- (w2) -- (v3) -- (w3) -- (v2);
\draw (u2) -- (2.65,0.15);
\draw (u2) -- (2.65,-0.15);

\foreach \i in {u1,u2,v1,v2,v3,w1,w2,w3}
\filldraw (\i) circle (0.07);
\end{tikzpicture}
}
\subcaptionbox{}[.25\textwidth]{%
\begin{tikzpicture}[scale=0.8]
\coordinate (u1) at (2.5,0);
\coordinate (u2) at (4.5,0);

\coordinate (v1) at (0,0.5);
\coordinate (w1) at (0,-0.5);
\coordinate (v2) at (1,0.5);
\coordinate (w2) at (1,-0.5);
\coordinate (v3) at (2,0.5);
\coordinate (w3) at (2,-0.5);
\coordinate (v4) at (3,0.5);
\coordinate (w4) at (3,-0.5);
\coordinate (v5) at (4,0.5);
\coordinate (w5) at (4,-0.5);

\draw (w1) -- (v1) -- (v2) -- (v3) -- (u1) -- (w3) -- (w2) -- (v1);
\draw (v2) -- (w2) -- (v3) -- (w3) -- (v2);
\draw (u1) -- (v4) -- (v5) -- (u2) -- (w5) -- (w4) -- (v4) -- (w5) -- (v5) -- (w4) -- (u1);
\draw (u2) -- (4.65,0.16);
\draw (u2) -- (4.65,-0.16);

\foreach \i in {u1,u2,v1,v2,v3,v4,v5,w1,w2,w3,w4,w5}
\filldraw (\i) circle (0.07);
\end{tikzpicture}
}
\caption{Some possible situations of the last few vertices}
\label{fig:last-few-vertices-S-degree4}
\end{figure}
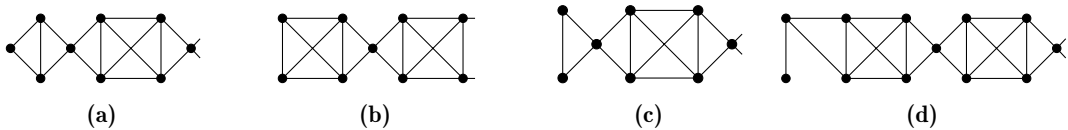

\subsection{Proof of \autoref{conj:degree-sequence} for $\Delta=4$}

The following theorem confirms the \autoref{conj:degree-sequence} for $\Delta=4$.

\begin{theorem}\label{thm:proof-conj-degree-sequ-for-degree4}
Let $G\in\mathcal{G}(n,4)$. Then $G$ has degree sequence $(4,\ldots,4,2)$.
\end{theorem}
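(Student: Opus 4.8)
The plan is to run the $\Delta=4$ analogue of the proof of \autoref{thm:proof-conj-degree-sequ-for-degree3}. A proper local switching preserves the degree sequence and, by \autoref{lem:local-switching}, does not decrease $\lambda_1$; applied to an extremal $G\in\mathcal G(n,4)$ it therefore keeps $\lambda_1$ equal to $\lambda_1(n,4)$, so it is enough to prove the claim for the canonical representative produced by \autoref{lem:transfer-first-few-vertices-Delta-4} and \autoref{lem:induced-middle-vertices-Delta=4}: one reconnects the first few vertices into some $H_i$ ($1\le i\le5$) and then repeatedly applies \autoref{lem:induced-middle-vertices-Delta=4}, each time peeling off an $H_6$-block glued along a degree-two cut vertex, until only a bounded tail remains. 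Now $|S|\le3$ by \autoref{coro:S-less-Delta-1}, $G[S]$ is a clique by \autoref{lem:S-clique}, and since every vertex of $T$ has even degree $4$ the handshake lemma forces $\sum_{v\in S}d(v)$ to be even, while connectivity rules out $S$ being an isolated clique once $n$ is large; hence the configurations to be excluded are $|S|=2$ with degree multiset $\{2,2\}$, $\{1,3\}$ or $\{3,3\}$, and $|S|=3$ with degree multiset $\{2,3,3\}$. Once these are shown non-extremal we get $|S|=1$, and the single vertex of $S$ has degree $<4$, even by the above parity count, and $\ge1$ by connectivity, hence degree $2$ — exactly the asserted degree sequence $(4,\dots,4,2)$.

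So the substance is to contradict the maximality of $\lambda_1(G)$ in each of the $|S|\in\{2,3\}$ cases. First I would push the local-switching bookkeeping of \autoref{lem:transfer-first-few-vertices-Delta-4}, \autoref{lem:induced-middle-vertices-Delta=4} and the forbidden-subgraph lemmas \autoref{lem:forbidden-M1}--\autoref{lem:forbidden-M7} one step further, forcing in each case all the low-degree vertices into one of the finitely many explicit tail blocks depicted in Fig.\,\ref{fig:last-few-vertices-S-degree4}; the eigenvalue equations at the intervening degree-two cut vertices then express every Perron component along the tail as a rational function of $\lambda:=\lambda_1(G)$. For each such $G$ — a long path-like chain of $H_6$-blocks capped on the left by some $H_i$ and on the right by one of these tails — I would build a competitor $\widetilde G$ on the same vertex set, connected, nonregular, with maximum degree $4$, by replacing the tail (together with an $O(1)$ portion of the already-canonicalized part) with a more favourable block, in the spirit of the substitutions $D_3\to\widetilde D_3$, $D_2\to\widetilde D_2$ and $M_i\to\widetilde M_i$ in the previous sections, and I would check connectivity of $\widetilde G$ directly as there. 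I would then define a test vector $\bm y$ that agrees with the Perron vector $\bm x$ of $G$ off the modified region and is chosen on it, using the Perron ratios, so that both $\sum_v(4-d_{\widetilde G}(v))y_v^2=\sum_v(4-d_G(v))x_v^2$ and $\bm y^{\mathrm T}L(\widetilde G)\bm y=\bm x^{\mathrm T}L(G)\bm x$ (or, as in \autoref{lem:forbidden-M4}, so that the difference of the Laplacian quadratic forms is controlled and $\|\bm y\|_2\ge1$). Feeding this into \eqref{eq:Delta-rho} for $(\widetilde G,\bm y)$ and \eqref{eq:Delta-rho-sum-form} for $(G,\bm x)$ gives $(4-\lambda_1(\widetilde G))\|\bm y\|_2^2\le 4-\lambda_1(G)$, so it remains only to verify $\|\bm y\|_2>1$; after substituting the Perron ratios this collapses to a one-variable polynomial inequality $p(\lambda)>0$.

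The hard part will be verifying the inequalities $p(\lambda)>0$ uniformly over the interval in which $\lambda$ can lie. The bound $\lambda<4$ is automatic; for a usable lower bound I would use that, for $n$ large, the canonical $G$ contains a long induced path-like subchain of $H_6$-blocks whose spectral radius exceeds any prescribed constant below $4$ (equivalently, $\lambda_1(n,4)$ is nondecreasing in $n$ and tends to $4$), so $\lambda$ is confined to a narrow interval $(\lambda_\ast,4)$ with $\lambda_\ast$ close to $4$; on such an interval one checks $p>0$ by evaluating $p$ at a few points and bounding the number of positive roots by Descartes' Rule of Signs, exactly as in the proof of \autoref{lem:forbidden-M1}. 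As in the $\Delta=3$ case, the residues of $n$ for which the $H_6$-chain is too short to localize $\lambda$ must be treated as genuinely global configurations: there $G$ and $\widetilde G$ are written out in full and the norm comparison is carried out with a vector $\bm y$ defined block by block, mirroring Case 2 of the proof of \autoref{thm:proof-conj-degree-sequ-for-degree3} and Fig.\,\ref{fig:figure-in-thm3-1}; the finitely many orders below the threshold $n\ge10$ of \autoref{lem:transfer-first-few-vertices-Delta-4} are then verified by direct inspection. Combining all these contradictions forces $|S|=1$ and completes the proof.
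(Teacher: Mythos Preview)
Your proposal is correct and follows essentially the same approach as the paper. The paper's own proof is a one-paragraph sketch: it assumes $|S|\in\{2,3\}$, invokes \autoref{lem:S-clique} and \autoref{lem:subset-neighbours} to reduce the tail to one of the four configurations in Fig.\,\ref{fig:last-few-vertices-S-degree4} (which are exactly your list $\{2,2\}$, $\{1,3\}$, $\{3,3\}$ for $|S|=2$ and $\{2,3,3\}$ for $|S|=3$), and then says the remaining competitor-construction and Rayleigh-quotient comparison ``goes along the same lines as \autoref{thm:proof-conj-degree-sequ-for-degree3} and is therefore omitted.'' Your plan spells out precisely this omitted part, including the parity reduction, the block-replacement/test-vector strategy, and the Descartes'-rule verification of the resulting polynomial inequalities.
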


\begin{proof}
Assume by contradiction that $|S|\in\{2,3\}$. By \autoref{lem:S-clique}, $S$ induced 
a clique in $G$. Since the local switching preserves degree sequence, we may assume 
the induced subgraphs on the last few vertices are shown as in Fig.\,\ref{fig:last-few-vertices-S-degree4}
by \autoref{lem:subset-neighbours}. The remaining proof goes along the same lines as 
\autoref{thm:proof-conj-degree-sequ-for-degree3} and is therefore omitted.
\end{proof}

\subsection{Uniqueness of the extremal graph for $\Delta=4$}

\begin{theorem}\label{thm:extremal-graph-degree-4}
Among all connected nonregular graphs with $n$ vertices and maximum degree $4$, the graph 
depicted in Fig.\,\ref{fig:maximum-spectral-radius-Deltea-4} is the unique graph attaining 
maximum spectral radius.
\end{theorem}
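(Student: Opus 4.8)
The plan is to mirror the strategy used for \autoref{thm:extremal-graph-degree-3}. By \autoref{lem:transfer-first-few-vertices-Delta-4}, \autoref{lem:induced-middle-vertices-Delta=4} and \autoref{thm:proof-conj-degree-sequ-for-degree4}, any $G\in\mathcal{G}(n,4)$ can be carried, through a finite sequence of proper local switchings, into the graph $H$ depicted in Fig.\,\ref{fig:maximum-spectral-radius-Deltea-4}. First I would write this sequence as $\ls_1,\ldots,\ls_t$, set $G=G_0,G_1,\ldots,G_t=H$ with $G_i=\ls_i(G_{i-1})$, and observe that since $G$ is extremal we have $\lambda_1(G)=\lambda_1(H)$; as each proper switching does not decrease the spectral radius, it follows that $\lambda_1(G_i)=\lambda_1(H)$ for every $i$. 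The aim is then to prove, by downward induction on $i$, that each $G_i$ is isomorphic to $H$, which in particular yields $G\cong H$.

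For the inductive step, let $\bm{x}$ be the Perron vector of $G_{t-1}$ and write $\ls_t=\ls(a,b,c,d)$. Because $\lambda_1(G_t)=\lambda_1(G_{t-1})$, the equality case of \autoref{lem:local-switching} forces $x_a=x_d$ and $x_b=x_c$, and moreover $\bm{x}$ is simultaneously the Perron vector of $H$. Next I would establish the structural fact that on $V(H)$ the Perron vector is strictly decreasing from block to block, while two vertices lying on a common vertical line of the drawing carry equal entries; this follows from the eigenvalue equations together with the block symmetry of $H$, exactly as in the cubic case but now for the richer blocks occurring in Fig.\,\ref{fig:maximum-spectral-radius-Deltea-4}. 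Consequently $a$ and $d$ (respectively $b$ and $c$) must sit on the same vertical line, and since $ac,bd\in E(H)$ the four vertices are confined to the first block of $H$. Applying the reverse switching $\ls(a,c,b,d)$ to $H$ recovers $G_{t-1}$; checking the few admissible edge pairs inside a block shows the outcome is again isomorphic to $H$, and \autoref{prop:preserving-connected} keeps every intermediate graph connected. Iterating gives $G_i\cong H$ for $i=t-1,t-2,\ldots,0$.

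The hard part will be the enumeration hidden in the claim that those four vertices are confined to one block. For $\Delta=4$ the extremal graph is assembled from larger blocks with more internal edges than in the cubic case, so one must check that every switching $\ls(a,b,c,d)$ satisfying $x_a=x_d$, $x_b=x_c$ and $ac,bd\in E(H)$ is reversible to a graph isomorphic to $H$, and also that no such switching can straddle two consecutive blocks. The auxiliary monotonicity statement — that distinct blocks of $H$ receive strictly separated Perron values — is the real crux: once it is proved cleanly, rather than block-by-block, the remaining case analysis on edge pairs within a single block is short and essentially routine.
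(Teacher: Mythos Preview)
Your proposal is correct and follows exactly the approach the paper intends: the paper's own proof consists of a single sentence stating that the argument is analogous to that of \autoref{thm:extremal-graph-degree-3}, and you have faithfully transcribed that analogy, including the use of \autoref{lem:transfer-first-few-vertices-Delta-4}, \autoref{lem:induced-middle-vertices-Delta=4}, \autoref{thm:proof-conj-degree-sequ-for-degree4}, the equality case of \autoref{lem:local-switching}, and the downward induction on the switching sequence. Your identification of the block-monotonicity of the Perron vector and the confinement of the switching vertices to a single block as the points requiring verification is apt, and these are precisely the details the paper leaves implicit.
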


\begin{proof}
The proof of this theorem can be completed by the method analogous to that used in \autoref{thm:extremal-graph-degree-3}.
\end{proof}

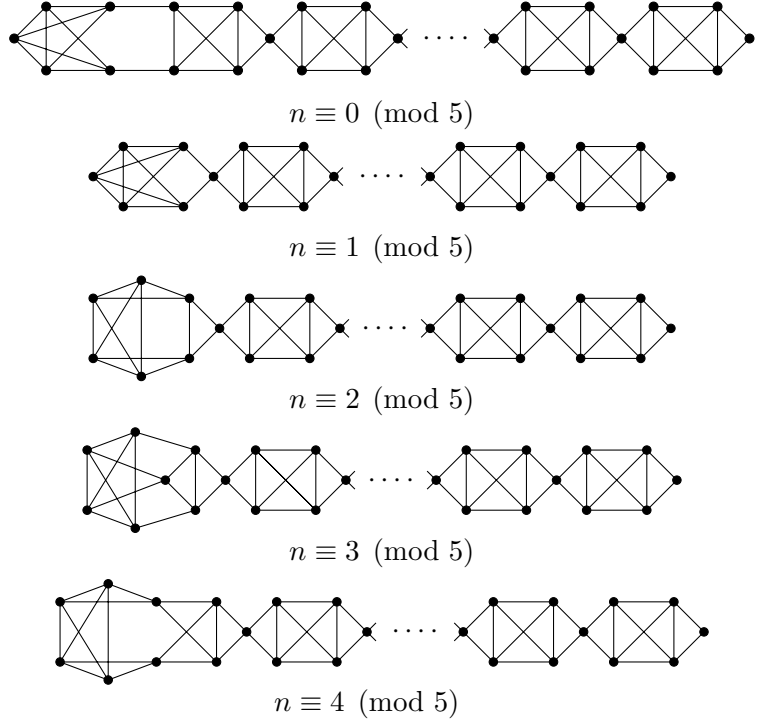
\begin{figure}[htbp]
\centering
\begin{tikzpicture}[scale=0.85]
\coordinate (u1) at (0,0);
\coordinate (u2) at (4,0);
\coordinate (u3) at (6,0);
\coordinate (u4) at (7.5,0);
\coordinate (u5) at (9.5,0);
\coordinate (u6) at (11.5,0);

\coordinate (v1) at (0.5,0.5);
\coordinate (v2) at (1.5,0.5);
\coordinate (v3) at (2.5,0.5);
\coordinate (v4) at (3.5,0.5);
\coordinate (v5) at (4.5,0.5);
\coordinate (v6) at (5.5,0.5);
\coordinate (v7) at (8,0.5);
\coordinate (v8) at (9,0.5);
\coordinate (v9) at (10,0.5);
\coordinate (v10) at (11,0.5);

\coordinate (w1) at (0.5,-0.5);
\coordinate (w2) at (1.5,-0.5);
\coordinate (w3) at (2.5,-0.5);
\coordinate (w4) at (3.5,-0.5);
\coordinate (w5) at (4.5,-0.5);
\coordinate (w6) at (5.5,-0.5);
\coordinate (w7) at (8,-0.5);
\coordinate (w8) at (9,-0.5);
\coordinate (w9) at (10,-0.5);
\coordinate (w10) at (11,-0.5);

\draw (u1) --(v1) -- (v2) -- (v3) -- (v4) -- (u2) -- (w4) -- %
(w3) -- (w2) -- (w1) -- (u1) -- (v2) -- (w1) -- (v1) -- (w2) -- (u1);
\draw (v3) -- (w3) -- (v4) -- (w4) -- (v3);
\draw (u2) -- (v5) -- (v6) -- (u3) -- (w6) -- (w5) -- (u2);
\draw (v5) -- (w6) -- (v6) -- (w5) -- (v5);
\draw (u4) -- (v7) -- (v8) -- (u5) -- (w8) -- (w7) -- (u4);
\draw (v7) -- (w8) -- (v8) -- (w7) -- (v7);
\draw (u5) -- (v9) -- (v10) -- (u6) -- (w10) -- (w9) -- (u5);
\draw (v9) -- (w9) -- (v10) -- (w10) -- (v9);
\draw (6.15,0.15) -- (u3) -- (6.15,-0.15);
\draw (7.35,0.15) -- (u4) -- (7.35,-0.15);

\foreach \i in {u1,u2,u3,u4,u5,u6,v1,v2,v3,v4,v5,v6,v7,v8,v9,v10,w1,w2,w3,w4,w5,w6,w7,w8,w9,w10}
\filldraw (\i) circle (0.07);
\node at (6.75,0) {$\cdots\cdot$};
\node at (5.75,-1.2) {$n\equiv 0\pmod{5}$};
\end{tikzpicture}

\begin{tikzpicture}[scale=0.8]
\coordinate (u1) at (0,0);
\coordinate (u2) at (2,0);
\coordinate (u3) at (4,0);
\coordinate (u4) at (5.6,0);
\coordinate (u5) at (7.6,0);
\coordinate (u6) at (9.6,0);

\coordinate (v1) at (0.5,0.5);
\coordinate (v2) at (1.5,0.5);
\coordinate (v3) at (2.5,0.5);
\coordinate (v4) at (3.5,0.5);
\coordinate (v5) at (6.1,0.5);
\coordinate (v6) at (7.1,0.5);
\coordinate (v7) at (8.1,0.5);
\coordinate (v8) at (9.1,0.5);

\coordinate (w1) at (0.5,-0.5);
\coordinate (w2) at (1.5,-0.5);
\coordinate (w3) at (2.5,-0.5);
\coordinate (w4) at (3.5,-0.5);
\coordinate (w5) at (6.1,-0.5);
\coordinate (w6) at (7.1,-0.5);
\coordinate (w7) at (8.1,-0.5);
\coordinate (w8) at (9.1,-0.5);

\draw (u1) --(v1) -- (v2) -- (u2) -- (w2) -- (w1) -- (u1) -- (v2) -- %
(w1) -- (v1) -- (w2) -- (u1);
\draw (u2) -- (v3) -- (v4) -- (u3) -- (w4) -- (w3) -- (u2);
\draw (v3) -- (w4) -- (v4) -- (w3) -- (v3);
\draw (u4) -- (v5) -- (v6) -- (u5) -- (w6) -- (w5) -- (u4);
\draw (v5) -- (w6) -- (v6) -- (w5) -- (v5);
\draw (u5) -- (v7) -- (v8) -- (u6) -- (w8) -- (w7) -- (u5); 
\draw (v7) -- (w8) -- (v8) -- (w7) -- (v7);

\draw (4.15,-0.15) -- (u3) -- (4.15,0.15);
\draw (5.45,-0.15) -- (u4) -- (5.45,0.15);
\foreach \i in {u1,u2,u3,u4,u5,u6,v1,v2,v3,v4,v5,v6,v7,v8,w1,w2,w3,w4,w5,w6,w7,w8}
\filldraw (\i) circle (0.07);
\node at (4.8,0) {$\cdots\cdot$};
\node at (4.8,-1.2) {$n\equiv 1\pmod{5}$};
\end{tikzpicture}

\begin{tikzpicture}[scale=0.8]
\coordinate (u1) at (2.1,0);
\coordinate (u2) at (4.1,0);
\coordinate (u3) at (5.6,0);
\coordinate (u4) at (7.6,0);
\coordinate (u5) at (9.6,0);

\coordinate (v1) at (0,0.5);
\coordinate (v2) at (0.8,0.8);
\coordinate (v3) at (1.6,0.5);
\coordinate (v4) at (2.6,0.5);
\coordinate (v5) at (3.6,0.5);
\coordinate (v6) at (6.1,0.5);
\coordinate (v7) at (7.1,0.5);
\coordinate (v8) at (8.1,0.5);
\coordinate (v9) at (9.1,0.5);

\coordinate (w1) at (0,-0.5);
\coordinate (w2) at (0.8,-0.8);
\coordinate (w3) at (1.6,-0.5);
\coordinate (w4) at (2.6,-0.5);
\coordinate (w5) at (3.6,-0.5);
\coordinate (w6) at (6.1,-0.5);
\coordinate (w7) at (7.1,-0.5);
\coordinate (w8) at (8.1,-0.5);
\coordinate (w9) at (9.1,-0.5);

\draw (v1) -- (v2) -- (v3) -- (u1) -- (w3) -- (w2) -- (w1) -- %
(v1) -- (v3) -- (w3) -- (w1) -- (v2) -- (w2) -- (v1);
\draw (u1) -- (v4) -- (v5) -- (u2) -- (w5) -- (w4) -- (u1);
\draw (v4) -- (w5) -- (v5) -- (w4) -- (v4);
\draw (u3) -- (v6) -- (v7) -- (u4) -- (w7) -- (w6) -- (u3);
\draw (v6) -- (w7) -- (v7) -- (w6) -- (v6);
\draw (u4) -- (v8) -- (v9) -- (u5) -- (w9) -- (w8) -- (u4);
\draw (v8) -- (w9) -- (v9) -- (w8) -- (v8);
\draw (4.25,-0.15) -- (u2) -- (4.25,0.15);
\draw (5.45,-0.15) -- (u3) -- (5.45,0.15);
\foreach \i in {u1,u2,u3,u4,u5,v1,v2,v3,v4,v5,v6,v7,v8,v9,w1,w2,w3,w4,w5,w6,w7,w8,w9}
\filldraw (\i) circle (0.07);
\node at (4.85,0) {$\cdots\cdot$};
\node at (4.8,-1.2) {$n\equiv 2\pmod{5}$};
\end{tikzpicture}

\begin{tikzpicture}[scale=0.8]
\coordinate (u1) at (1.3,0);
\coordinate (u2) at (2.3,0);
\coordinate (u3) at (4.3,0);
\coordinate (u4) at (5.8,0);
\coordinate (u5) at (7.8,0);
\coordinate (u6) at (9.8,0);

\coordinate (v1) at (0,0.5);
\coordinate (v2) at (0.8,0.8);
\coordinate (v3) at (1.8,0.5);
\coordinate (v4) at (2.8,0.5);
\coordinate (v5) at (3.8,0.5);
\coordinate (v6) at (6.3,0.5);
\coordinate (v7) at (7.3,0.5);
\coordinate (v8) at (8.3,0.5);
\coordinate (v9) at (9.3,0.5);

\coordinate (w1) at (0,-0.5);
\coordinate (w2) at (0.8,-0.8);
\coordinate (w3) at (1.8,-0.5);
\coordinate (w4) at (2.8,-0.5);
\coordinate (w5) at (3.8,-0.5);
\coordinate (w6) at (6.3,-0.5);
\coordinate (w7) at (7.3,-0.5);
\coordinate (w8) at (8.3,-0.5);
\coordinate (w9) at (9.3,-0.5);

\draw (v1) -- (v2) -- (v3) -- (u2) -- (w3) -- (w2) -- (w1) -- %
(v1) -- (u1) -- (w1) -- (v2) -- (w2) -- (v1);
\draw (v3) -- (u1) -- (w3) -- (v3);
\draw (u2) -- (v4) -- (v5) -- (u3) -- (w5) -- (w4) -- (u2);
\draw (v4) -- (w5) -- (v5) -- (w4) -- (v4) -- (w5);
\draw (u4) -- (v6) -- (v7) -- (u5) -- (w7) -- (w6) -- (u4);
\draw (v6) -- (w7) -- (v7) -- (w6) -- (v6);
\draw (u5) -- (v8) -- (v9) -- (u6) -- (w9) -- (w8) -- (u5);
\draw (v8) -- (w9) -- (v9) -- (w8) -- (v8);

\draw (4.45,-0.15) -- (u3) -- (4.45,0.15);
\draw (5.65,-0.15) -- (u4) -- (5.65,0.15);
\foreach \i in {u1,u2,u3,u4,u5,u6,v1,v2,v3,v4,v5,v6,v7,v8,v9,w1,w2,w3,w4,w5,w6,w7,w8,w9}
\filldraw (\i) circle (0.07);
\node at (5.05,0) {$\cdots\cdot$};
\node at (4.9,-1.2) {$n\equiv 3\pmod{5}$};
\end{tikzpicture}

\begin{tikzpicture}[scale=0.8]
\coordinate (u1) at (3.1,0);
\coordinate (u2) at (5.1,0);
\coordinate (u3) at (6.7,0);
\coordinate (u4) at (8.7,0);
\coordinate (u5) at (10.7,0);

\coordinate (v1) at (0,0.5);
\coordinate (v2) at (0.8,0.8);
\coordinate (v3) at (1.6,0.5);
\coordinate (v4) at (2.6,0.5);
\coordinate (v5) at (3.6,0.5);
\coordinate (v6) at (4.6,0.5);
\coordinate (v7) at (7.2,0.5);
\coordinate (v8) at (8.2,0.5);
\coordinate (v9) at (9.2,0.5);
\coordinate (v10) at (10.2,0.5);

\coordinate (w1) at (0,-0.5);
\coordinate (w2) at (0.8,-0.8);
\coordinate (w3) at (1.6,-0.5);
\coordinate (w4) at (2.6,-0.5);
\coordinate (w5) at (3.6,-0.5);
\coordinate (w6) at (4.6,-0.5);
\coordinate (w7) at (7.2,-0.5);
\coordinate (w8) at (8.2,-0.5);
\coordinate (w9) at (9.2,-0.5);
\coordinate (w10) at (10.2,-0.5);

\draw (v1) -- (v2) -- (v3) -- (v4) -- (u1) -- (w4) -- (w3) -- (w2) -- %
(w1) -- (v1) -- (w2) -- (v2) -- (w1) -- (w3) -- (v4) -- (w4) -- (v3) -- (v1);
\draw (u1) -- (v5) -- (v6) -- (u2) -- (w6) -- (w5) -- (u1);
\draw (v5) -- (w6) -- (v6) -- (w5) -- (v5);
\draw (u3) -- (v7) -- (v8) -- (u4) -- (w8) -- (w7) -- (u3);
\draw (v7) -- (w8) -- (v8) -- (w7) -- (v7);
\draw (u4) -- (v9) -- (v10) -- (u5) -- (w10) -- (w9) -- (u4);
\draw (v9) -- (w10) -- (v10) -- (w9) -- (v9); 

\draw (5.25,-0.15) -- (u2) -- (5.25,0.15);
\draw (6.55,-0.15) -- (u3) -- (6.55,0.15);
\foreach \i in {u1,u2,u3,u4,u5,v1,v2,v3,v4,v5,v6,v7,v8,v9,v10,w1,w2,w3,w4,w5,w6,w7,w8,w9,w10}
\filldraw (\i) circle (0.07);
\node at (5.9,0) {$\cdots\cdot$};
\node at (5.1,-1.2) {$n\equiv 4\pmod{5}$};
\end{tikzpicture}
\caption{The unique extremal graph in $\mathcal{G} (n,4)$ depending on the value of $n\!\!\mod{5}$}
\label{fig:maximum-spectral-radius-Deltea-4}
\end{figure}

\section{Disproof of Conjecture \ref{conj:limit}}
\label{sec:disprove-conj-1-1}

\subsection{Bounding the difference between $\Delta$ and $\lambda_1(n,\Delta)$}

To state our results, we need more definitions. The \emph{coalescence} (at the vertex $v$) of two 
disjoint graphs $G_1$ and $G_2$, denoted by $G_1\odot G_2$, is the graph attained by identifying 
a vertex $v_1\in V(G_1)$ and a vertex $v_2\in V(G_2)$, merging the two vertices into a single vertex 
$v$. A finite sequence of nonnegative integers is called (connected) \emph{graphic} if the terms 
in the sequence can be realized as the degrees of vertices of a finite (connected) simple graph.

The coming lemma, due to Liu and Li \cite{LiuLi2008}, guarantees the connectedness of graphs with 
a given degree sequence.

\begin{lemma}[\cite{LiuLi2008}]\label{lem:connected-graphic}
The sequence $(d_1, d_2,\ldots, d_n)$ with $d_1 \geq d_2 \geq\cdots\geq d_n$ and $d_{n-1}\geq2$, 
$d_n\geq 1$, is graphic if and only if it is connected graphic.
\end{lemma}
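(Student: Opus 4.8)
The forward implication needs no work: a connected graphic sequence is in particular graphic. For the converse, the plan is to take an arbitrary simple realization of $(d_1,\dots,d_n)$ and repair it by a two-switch until it becomes connected, using the hypotheses $d_{n-1}\ge 2$ and $d_n\ge 1$ to guarantee the local structure needed to perform the switch.

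Concretely, I would choose among all simple graphs with degree sequence $(d_1,\dots,d_n)$ one, call it $G$, having the fewest connected components, and then argue $G$ is connected. Suppose not. Since the minimum degree of $G$ equals $d_n\ge 1$, the graph $G$ has no isolated vertex, so every component has at least two vertices and hence at least one edge; and since $d_{n-1}\ge 2$, at most one vertex of $G$ has degree $1$. Because a tree on two or more vertices has at least two leaves, these two remarks together force every component of $G$ to contain a cycle.

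Now fix two components $C_1$ and $C_2$ of $G$. By the previous paragraph I may pick an edge $ab$ lying on a cycle of $C_1$, so that $C_1-ab$ is still connected, and pick any edge $cd$ of $C_2$. Set $G':=(G-\{ab,cd\})+\{ac,bd\}$. Since $a,c$ lie in distinct components of $G$, and likewise $b,d$, neither $ac$ nor $bd$ is an edge of $G$, so $G'$ is simple; moreover the switch preserves the degree of every vertex, so $G'$ again realizes $(d_1,\dots,d_n)$. In $G'$ the vertex set $V(C_1)\cup V(C_2)$ induces a connected subgraph: $C_1-ab$ is connected and contains both $a$ and $b$, while each of the (at most two) pieces into which deleting $cd$ can break $C_2$ is attached to this set through the new edge $ac$ or $bd$. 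Every other component of $G$ is untouched, so $G'$ has strictly fewer components than $G$, contradicting the choice of $G$. Hence $G$ is connected, and it witnesses that $(d_1,\dots,d_n)$ is connected graphic.

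The one genuinely load-bearing point is the claim that every component of the extremal realization contains a cycle: this is exactly what makes the edge $ab$ available, and it is precisely where the hypothesis $d_{n-1}\ge 2$ (at most one vertex of degree one) enters, together with $d_n\ge1$ to exclude isolated vertices. The remainder is the routine two-switch bookkeeping, where the only things to verify are that the two newly added edges are non-edges of $G$ and that the switch genuinely amalgamates $C_1$ and $C_2$ into a single component.
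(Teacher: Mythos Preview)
Your proof is correct and follows essentially the same approach as the paper's: both use a two-switch (local switching) on a non-cut edge of one component and an arbitrary edge of another to merge components while preserving the degree sequence. Your version is somewhat more carefully written---you frame it via an extremal realization with fewest components and explicitly verify that every component contains a cycle, whereas the paper simply says ``suppose $G$ has two components $G_1$ and $G_2$'' and notes that one of them (the one avoiding the unique possible degree-$1$ vertex) has minimum degree at least two, hence a non-cut edge---but the underlying idea is identical.
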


\begin{proof}
It suffices to prove the necessity. Assume that $G$ is a simple graph with degree sequence $(d_1,d_2,\ldots,d_n)$ 
satisfying the condition in \autoref{lem:connected-graphic}. Suppose, without loss of generality, 
$G$ has two components $G_1$ and $G_2$. Since $d_{n-1}\geq 2$ and $d_n\geq 1$, we may suppose that 
each vertex in $G_1$ has degree at least two. Hence, there exists an edge $u_1v_1$ in $G_1$, which 
is not a cut edge. Let $u_2v_2$ be an edge in $G_2$. Then $\ls (G; u_1,v_1, u_2,v_2)$ is connected 
with the same degree sequence as $G$. 
\end{proof}

Applying the Erd\H os-Gallai theorem \cite[p.\,11]{BondyMurty2008} to the $n$-tuple $(\Delta,\ldots,\Delta,\Delta-i)$,
we know that $(\Delta,\ldots,\Delta,\Delta-i)$ is graphic if $n\Delta-i$ is even and $n\geq\Delta+2$. 
By \autoref{lem:connected-graphic}, for $i=1,2$ we denote by $\mathcal{T}_n^{(i)}$ the set of connected 
graphs on $n$ vertices having degree sequence $(\Delta,\ldots,\Delta,\Delta-i)$.

Let $G_1,\ldots,G_{k-1}$ be $(k-1)$ disjoint copies of the complete graph $K_{\Delta}$, and 
$\{v_1^{(i)}, v_2^{(i)},\ldots,v_{\Delta}^{(i)}\}$ be the vertex set of $G_i$ for each $i\in [k-1]$.
Pick $k$ new vertices $u_1,u_2,\ldots,u_k$, and join $u_i$ to each $v_1^{(i)},\ldots,v_p^{(i)}$, 
$v_{p+1}^{(i-1)},\ldots,v_{\Delta}^{(i-1)}$ for $i\in [k]$ (define $\{v_{p+1}^{(0)},\ldots,v_{\Delta}^{(0)}\} = \emptyset$
and $v_1^{(k)},\ldots,v_p^{(k)}=\emptyset$). We denote by $G_{\Delta,k-1}^p$ the resulting graphs. 

Let $\Delta$ be a fixed integer and $n=k(\Delta+1)+\alpha$, where $\alpha$ is given by 
\[
\alpha = \begin{cases}
n\hspace{-2mm}\mod{(\Delta + 1)}, & \text{if}\ n\hspace{-2mm}\mod{(\Delta + 1)} > 0, \\
\Delta + 1, & \text{if}\ n\hspace{-2mm}\mod{(\Delta + 1)} =0.
\end{cases}
\]
We define two sets $\mathcal{H}_n^{(1)}$ and $\mathcal{H}_n^{(2)}$ as follows:
\begin{enumerate}
\item[$(1)$] Let $\Delta$ be odd. If $n$ is odd, we denote by $\mathcal{H}_n^{(1)}$ the set of connected 
graphs $G_{\Delta, k-1}^{\Delta-1}\odot F$, where $G_{\Delta, k-1}^{\Delta-1}\odot F$ is the coalescence 
of $G_{\Delta, k-1}^{\Delta-1}$ and $F$ at the vertex $u_k$, and $F$ is a graph in $\mathcal{T}_{\Delta+\alpha+1}^{(1)}$;
if $n$ is even, we denote by $\mathcal{H}_n^{(1)}$ the set of graphs obtain from $G_{\Delta, k-1}^{\Delta-1}\odot F$ 
by adding a pendant edge at $u_1$, where $F\in\mathcal{T}_{\Delta+\alpha}^{(1)}$. Fig.\,\ref{fig:example-G-524} 
shows an example of graph $G_{5,2}^4\odot F$ in $\mathcal{H}_n^{(1)}$.

\item[$(2)$] Let $\Delta$ be even. We denote by $\mathcal{H}_n^{(2)}$ the set of connected graphs  
$G_{\Delta, k-1}^{\Delta-2}\odot F$, where $F$ is a graph in $\mathcal{T}_{\Delta+\alpha+1}^{(2)}$.
\end{enumerate}

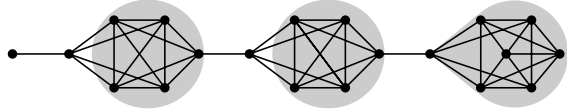
\begin{figure}[htbp]
\centering
\begin{tikzpicture}[scale=0.75, line width=.6pt]
\coordinate (u0) at (-1,0);
\coordinate (u1) at (0,0);
\coordinate (u2) at (2.3,0);
\coordinate (u3) at (3.2,0);
\coordinate (u4) at (5.5,0);
\coordinate (u5) at (6.4,0);
\coordinate (u6) at (7.75,0);
\coordinate (u7) at (8.7,0);

\coordinate (v1) at (0.8,0.6);
\coordinate (v2) at (1.7,0.6);
\coordinate (v3) at (4,0.6);
\coordinate (v4) at (4.9,0.6);
\coordinate (v5) at (7.3,0.6);
\coordinate (v6) at (8.2,0.6);

\coordinate (w1) at (0.8,-0.6);
\coordinate (w2) at (1.7,-0.6);
\coordinate (w3) at (4,-0.6);
\coordinate (w4) at (4.9,-0.6);
\coordinate (w5) at (7.3,-0.6);
\coordinate (w6) at (8.2,-0.6);

\filldraw[fill=gray!40, draw=gray!40] (1.4,0) ellipse [x radius=0.98,y radius=0.95];
\filldraw[fill=gray!40, draw=gray!40] (4.6,0) ellipse [x radius=0.98,y radius=0.95];
\fill[gray!40] (u5) -- (7.25,0.7) -- (7.25,-0.7) -- cycle;
\fill[gray!40] (7.9,0) circle (0.95);

\draw (w2) -- (u1) -- (v1) -- (v2) -- (u2) -- (w2) -- (w1) -- (u1) -- (v2) --%
(w2) -- (v1) -- (w1) -- (u2) -- (v1);
\draw (u0) -- (u1);
\draw (v2) -- (w1);
\draw (u2) -- (u3) -- (v3) -- (v4) -- (u4) -- (w4) -- (w3) -- (u4) -- (v3) -- %
(w3) -- (u3) -- (v4) -- (w4) -- (v3) -- (w4) -- (u3);
\draw (v4) -- (w3);
\draw (u4) -- (u5) -- (v5) -- (v6) -- (u7) -- (w6) -- (w5) -- (u5) -- (v6) --%
(w6) -- (u6) -- (u7) -- (v5) -- (u6) -- (w5) -- (v5);
\draw (w5) -- (u7);
\draw (v6) -- (u6);
\draw (u5) -- (w6);

\foreach \i in {u0,u1,u2,u3,u4,u5,u6,u7,v1,v2,v3,v4,v5,v6,w1,w2,w3,w4,w5,w6}
\filldraw (\i) circle (0.07);
\end{tikzpicture}
\caption{An example of $G_{5,2}^4\odot F$ in $\mathcal{H}_n^{(1)}$}
\label{fig:example-G-524}
\end{figure}

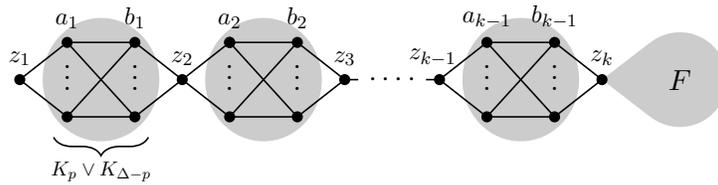
\begin{figure}[htbp]
\centering
\begin{tikzpicture}[scale=0.9, line width=.6pt]
\coordinate (u1) at (0,0);
\coordinate (u2) at (2.4,0);
\coordinate (u3) at (4.8,0);
\coordinate (u4) at (6.2,0);
\coordinate (u5) at (8.6,0);

\coordinate (v1) at (0.7,0.55);
\coordinate (w1) at (0.7,-0.55);
\coordinate (v2) at (1.7,0.55);
\coordinate (w2) at (1.7,-0.55);
\coordinate (v3) at (3.1,0.55);
\coordinate (w3) at (3.1,-0.55);
\coordinate (v4) at (4.1,0.55);
\coordinate (w4) at (4.1,-0.55);
\coordinate (v5) at (6.9,0.55);
\coordinate (w5) at (6.9,-0.55);
\coordinate (v6) at (7.9,0.55);
\coordinate (w6) at (7.9,-0.55);

\filldraw[fill=gray!40, draw=gray!40] (1.2,0) ellipse [x radius=0.85,y radius=0.9];
\filldraw[fill=gray!40, draw=gray!40] (3.6,0) ellipse [x radius=0.85,y radius=0.9];
\filldraw[fill=gray!40, draw=gray!40] (7.4,0) ellipse [x radius=0.85,y radius=0.9];
\foreach \i in {0.7,1.7,3.1,4.1,6.9,7.9}
\node at (\i,0.15) {$\vdots$};

\fill[gray!40] (8.6,0) -- (9.3,0.54) -- (9.3,-0.54) -- cycle;
\fill[gray!40] (9.75,0) circle (0.7);

\node[above=2pt, scale=0.85] at (v1) {$a_1$};
\node[above=2pt, scale=0.85] at (v2) {$b_1$};
\node[above=2pt, scale=0.85] at (v3) {$a_2$};
\node[above=2pt, scale=0.85] at (v4) {$b_2$};
\node[above=2pt, scale=0.85] at (v5) {$a_{k-1}$};
\node[above=2pt, scale=0.85] at (v6) {$b_{k-1}$};
\node[above=2pt, scale=0.85] at (u1) {$z_1$};
\node[above=2pt, scale=0.85] at (u2) {$z_2$};
\node[above=2pt, scale=0.85] at (u3) {$z_3$};
\node[above=2pt, scale=0.85] at (6.1,0) {$z_{k-1}$};
\node[above=2pt, scale=0.85] at (u5) {$z_k$};

\node at (5.5,0) {$\cdots\cdot$};
\node at (9.75,0) {$F$};

\draw [braket] (1.9,-0.5) -- (0.5,-0.5) node [midway,yshift=-13pt, scale=0.7] {$K_p\vee K_{\Delta-p}$};

\draw (u1) -- (v1) -- (v2) -- (u2) -- (w2) -- (w1) -- (u1);
\draw (u2) -- (v3) -- (v4) -- (u3) -- (w4) -- (w3) -- (u2);
\draw (u4) -- (v5) -- (v6) -- (u5) -- (w6) -- (w5) -- (u4);
\draw (v1) -- (w2);
\draw (w1) -- (v2);
\draw (v3) -- (w4);
\draw (v4) -- (w3);
\draw (v5) -- (w6);
\draw (w5) -- (v6);
\draw (u3) -- (5,0);
\draw (u4) -- (6,0);

\foreach \i in {u1,u2,u3,u4,u5,v1,v2,v3,v3,v4,v5,v6,w1,w2,w3,w4,w5,w6}
\filldraw (\i) circle (0.07);
\end{tikzpicture}
\caption{$G_{\Delta,k-1}^p\odot F$}
\label{fig:label-G-odot-F}
\end{figure}

The following theorem give a negative answer to \autoref{conj:limit}.

\begin{theorem}\label{thm:limit-upper-bound}
Let $\Delta$ be a fixed positive number with $\Delta\geq 3$. If $\Delta$ is odd, then
\[
\varlimsup_{n\to\infty} \frac{n^2(\Delta-\lambda_1(n,\Delta))}{\Delta-1} \leq \frac{\pi^2}{4}.
\]
If $\Delta$ is even, then
\[
\varlimsup_{n\to\infty} \frac{n^2(\Delta-\lambda_1(n,\Delta))}{\Delta-2} \leq \frac{\pi^2}{2}.
\]
\end{theorem}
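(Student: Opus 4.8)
The strategy is to use the fact that $\lambda_1(n,\Delta)$ is a maximum: it suffices, for each large $n$, to exhibit one connected nonregular graph $G_n$ of order $n$ with maximum degree $\Delta$ for which $\Delta-\lambda_1(G_n)$ is as small as claimed, since then $\Delta-\lambda_1(n,\Delta)\le\Delta-\lambda_1(G_n)$ and the bound on the $\varlimsup$ follows. I would take $p=\Delta-1$, $q=\Delta-p=1$ and $G_n$ a member of $\mathcal H_n^{(1)}$ when $\Delta$ is odd, and $p=\Delta-2$, $q=2$ and $G_n\in\mathcal H_n^{(2)}$ when $\Delta$ is even; in both cases such a $G_n$ exists for all sufficiently large $n$ by the Erd\H os--Gallai remark preceding the theorem together with \autoref{lem:connected-graphic}. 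Recall that $G_n$ (see Fig.\,\ref{fig:label-G-odot-F}) is built from $k=k(n)$ cut vertices $z_1,\dots,z_k$ and $k-1$ blocks, each a copy of $K_p\vee K_{\Delta-p}=K_\Delta$ strung along a path, with $z_j$ joined to the $p$-set $A_j$ of block $j$ and to the $q$-set $B_{j-1}$ of block $j-1$, and capped by $u_1$ (or a pendant edge at $u_1$) on one end and by a small graph $F$ on the other; moreover $n=(\Delta+1)k+O_\Delta(1)$, so $k=n/(\Delta+1)+O_\Delta(1)$.

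The underlying reason the bound holds is a resistor-network/Sturm--Liouville picture. In one segment of the path, the effective resistance from $z_j$ to $z_{j+1}$ is $\tfrac1p+\tfrac1{pq}+\tfrac1q=\tfrac{\Delta+1}{pq}$ (shorting $A_j$ and $B_j$ kills the internal clique edges, leaving $K_{p,q}$ of conductance $pq$ in series with the $p$ parallel edges at $z_j$ and the $q$ parallel edges at $z_{j+1}$), while the mass of a segment, counted for $\|\bm y\|_2^2$, is $1+p+q=\Delta+1$. The only deficient vertex ($z_1=u_1$, or the pendant vertex) carries a potential $\Delta-d\ge 1$ in the operator $\operatorname{diag}(\Delta-d(v))+L(G_n)$, which is enormous compared with the $O(k^{-2})$ spectral gap, so it effectively forces a Dirichlet condition there, whereas at the $F$-end all degrees are $\Delta$ and the profile is effectively Neumann. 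Hence the relevant lowest mode is the quarter-wave $\phi(x)=\sin(\tfrac{\pi x}{2k})$ along the coarse path of $k$ segments, and $\Delta-\lambda_1(G_n)\approx\dfrac{\text{conductance}}{\text{mass}}\cdot\dfrac{\pi^2}{4k^2}=\dfrac{pq}{(\Delta+1)^2}\cdot\dfrac{\pi^2}{4k^2}=\dfrac{\pi^2\,pq}{4n^2}\,(1+o(1))$.

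To make this rigorous I would apply \eqref{eq:Delta-rho} with an explicit test vector $\bm y=\bm y^{(n)}$ that is constant on each cell of the equitable partition: value $r_j$ on $z_j$, value $s_j$ on $A_j$, value $t_j$ on $B_j$, and constant on $F$ (equal to its value at the coalescence vertex). I would take $s_j,t_j$ to be samples of a single cosine profile $\cos(\omega\cdot)$ at the block positions (with $\omega$ chosen so that $\omega$ times the number of segments equals $\tfrac\pi2+O(1/k)$), and $r_j$ the \emph{resistance-weighted average} of its block neighbours' values, which is exactly what annihilates the first-order differences across the $\Delta$ edges meeting $z_j$. Then the within-block edges contribute $0$ to $\bm y^{\mathrm T}L(G_n)\bm y$, each cut vertex contributes only second-order, so in aggregate
\[
\bm y^{\mathrm T}L(G_n)\bm y\le\frac{\pi^2\,pq}{4(\Delta+1)^2k^2}\,\|\bm y\|_2^2\,(1+o(1)),\qquad \sum_{v}(\Delta-d(v))\,y_v^2=o\!\left(\frac{\|\bm y\|_2^2}{k^2}\right),
\]
the latter because the deficient vertex lies within $O(1)$ of a zero of the profile, so its $\bm y$-value is $O(\omega)=O(1/k)$. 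Dividing by $\|\bm y\|_2^2$ and using $k=n/(\Delta+1)+O_\Delta(1)$ gives $\Delta-\lambda_1(G_n)\le\frac{\pi^2 pq}{4n^2}(1+o(1))$. Substituting the chosen $p,q$ finishes the proof: for odd $\Delta$, $pq=\Delta-1$, so $\varlimsup_n n^2(\Delta-\lambda_1(n,\Delta))/(\Delta-1)\le\pi^2/4$; for even $\Delta$, $pq=2(\Delta-2)$, so $\varlimsup_n n^2(\Delta-\lambda_1(n,\Delta))/(\Delta-2)\le\pi^2/2$.

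I expect the main obstacle to be the third paragraph: one must define $\bm y$ precisely enough that all edges incident to cut vertices contribute only at second order after the first-order cancellation — this is exactly where the block structure $K_p\vee K_{\Delta-p}$, hence the constant $(\Delta+1)^2/(pq)$, genuinely enters — and one must separately control the two boundary regions (the cap at $u_1$ or the pendant vertex, and the attached graph $F$), which contribute only lower-order terms but require the profile to be nearly Dirichlet at $u_1$ and nearly Neumann at $F$. A clean way to organize the bookkeeping is to pass to the induced generalized eigenvalue (``weighted path'') problem on the cells: its segment recurrence is governed by a $2\times2$ transfer matrix $N(\mu)$ with $\det N(\mu)\equiv1$, $\operatorname{tr}N(0)=2$ and $\operatorname{tr}N'(0)=-(\Delta+1)^2/(pq)$, and these identities, combined with the boundary analysis above, pin down the constant and supply the required asymptotic estimate of the Rayleigh quotient.
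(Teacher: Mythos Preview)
Your approach is essentially the same as the paper's: both construct the path-like graph $G_{\Delta,k-1}^p\odot F$ and bound $\Delta-\lambda_1$ via the Rayleigh inequality \eqref{eq:Delta-rho} with a sinusoidal test vector that is constant on the natural cells, yielding exactly your effective-conductance constant $pq/(\Delta+1)$ per segment. The only difference is cosmetic: the paper takes the cut-vertex values $z_j=\sin\frac{(2j-1)\pi}{4k}$ as primary and sets the block values $a_j,b_j$ to the optimal affine combinations (your ``resistance-weighted averages''), then evaluates the resulting sums in closed form by elementary trigonometric identities, so the transfer-matrix bookkeeping you anticipate is not needed.
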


\begin{proof}
Let $n=k(\Delta +1) + \alpha$ and $\alpha$ be defined as before. Let $G_{\Delta,k-1}^p \odot F$ 
be a graph in $\mathcal{H}_n^{(1)}$ or $\mathcal{H}_n^{(2)}$ depending on the parity of 
$\Delta$, where $p\in\{\Delta-1, \Delta-2\}$ and 
$F\in(\mathcal{T}_{\Delta+\alpha}^{(1)} \cup \mathcal{T}_{\Delta+\alpha+1}^{(1)} \cup \mathcal{T}_{\Delta+\alpha+1}^{(2)})$.
Obviously, $G_{\Delta,k-1}^p \odot F\in\mathcal{G} (n,\Delta)$. Since the largest eigenvalue of a 
graph is monotone with respect to vertex addition, it suffices to prove the assertion for the graph 
$G_{\Delta,k-1}^p \odot F$\,\footnote{If $n\!\!\mod{(\Delta+1)}=0$, it is enough to consider the graph 
$G_{\Delta,k-2}^p\odot F$. This is not essential and does not affect the results at all.} as shown 
in Fig.\,\ref{fig:label-G-odot-F}.

To finish the proof, we define a vector $\bm{z}$ for $G_{\Delta,k-1}^p \odot F$ such that its 
components on $G_{\Delta,k-1}^p$ are as given in Fig.\,\ref{fig:label-G-odot-F}, where
\begin{equation}\label{eq:xj}
z_j = \sin \frac{(2j-1)\pi}{4k},~ j\in [k],
\end{equation}
and for each $j\in [k-1]$, 
\begin{equation}\label{eq:aj-bj}
a_j = \frac{(p+1) z_j + (\Delta - p) z_{j+1}}{\Delta+1},~~
b_j = \frac{p z_j + (\Delta+1-p) z_{j+1}}{\Delta+1}.
\end{equation}
The components of $\bm{z}$ on $F$ are all equal to $z_k$. Set for short, 
$\lambda_1:=\lambda_1(G_{\Delta,k-1}^p \odot F)$. 
By \eqref{eq:Delta-rho} we conclude that
\begin{align*}
\Delta {-} \lambda_1 
& \leq \frac{(\Delta-p) z_1^2 + \bm{z}^{\mathrm{T}} L(G_{\Delta,k-1}^p) \bm{z}}{\|\bm{z}\|_2^2} \\
& = \frac{(\Delta - p) z_1^2 + \frac{p(\Delta-p)}{\Delta+1} \sum_{j=1}^{k-1} (z_j - z_{j+1})^2}%
{(\Delta + \alpha) z_k^2 + \sum_{j=1}^k z_j^2 + p\sum_{j=1}^{k-1} a_j^2 + (\Delta-p)\sum_{j=1}^{k-1} b_j^2},
\end{align*}
which simplifies via \eqref{eq:aj-bj} to 
\begin{equation}\label{eq:Delta-lambda-p}
\Delta {-} \lambda_1 \leq \frac{(\Delta-p) (\Delta+1)^2 z_1^2 + p(\Delta-p) (\Delta+1) \sum\limits_{j=1}^{k-1} (z_j - z_{j+1})^2}%
{[\Delta^3 {-} (2p {-} 3) \Delta^2 {+} (2p^2 {-} 4p {+} 3) \Delta {+} 4p^2 {+} 1] \sum\limits_{j=1}^k z_j^2 
{+} 2p(\Delta {-} p) (\Delta {+} 2) \sum\limits_{j=1}^{k-1} z_j z_{j+1}}.
\end{equation}

In what follows we shall estimate the terms in the right-hand side of \eqref{eq:Delta-lambda-p}, respectively.
Notice that the identities $\sin\theta_1 - \sin\theta_2 =2\cos\, (\theta_1+\theta_2)/2 \cdot \sin\, (\theta_1-\theta_2)/2$
and $2\cos^2\theta=1+\cos 2\theta$. In light of \eqref{eq:xj} we deduce that
\[ 
\sum_{j=1}^{k-1} (z_j - z_{j+1})^2 = 4\cdot\sin^2\frac{\pi}{4k}\cdot \sum_{j=1}^{k-1} \cos^2\frac{\pi j}{2k}
=2\cdot\sin^2\frac{\pi}{4k}\cdot\bigg(k-1+\sum_{j=1}^{k-1} \cos\frac{\pi j}{k}\bigg).
\]
In addition, using the fact $2\cos\theta=\e^{\theta\imgi} + \e^{-\theta\imgi}$, we have 
\begin{equation}\label{eq:sum-cos-1}
\begin{split}
2\sum_{j=1}^{k-1} \cos\frac{\pi j}{k} 
& = \sum_{j=1}^{k-1} \big(\e^{\pi j\imgi/k} + \e^{-\pi j\imgi/k}\big) \\
& = \frac{\e^{\pi\imgi/k} (1-\e^{\pi (k-1)\imgi/k})}{1-\e^{\pi\imgi/k}}
+\frac{\e^{-\pi\imgi/k} (1-\e^{-\pi (k-1)\imgi/k})}{1-\e^{-\pi\imgi/k}} \\
& = 0.
\end{split}
\end{equation}
Therefore, we immediately obtain that
\begin{equation}\label{eq:sum-difference-xi-square}
\sum_{j=1}^{k-1} (z_j - z_{j+1})^2 = 2(k-1)\cdot\sin^2\frac{\pi}{4k}.
\end{equation}
Next, we consider the term $\sum_{j=1}^k z_j^2$. According to \eqref{eq:xj} we conclude that
\[
\sum_{j=1}^k z_j^2 = \sum_{j=1}^k \sin^2\frac{(2j-1)\pi}{4k} 
=\frac{k}{2} - \frac{1}{2}\sum_{j=1}^k \cos\frac{(2j-1)\pi}{2k}
=\frac{k+1}{2}.
\]
Finally, we consider the term $\sum_{j=1}^{k-1} z_j z_{j+1}$. Since
$2\sin\theta_1\cdot\sin\theta_2 = \cos\, (\theta_1-\theta_2)-\cos\, (\theta_1+\theta_2)$, we obtain that
\begin{align*}
\sum_{j=1}^{k-1} z_j z_{j+1} 
& = \sum_{j=1}^{k-1} \sin\frac{(2j-1)\pi}{4k}\cdot\sin\frac{(2j+1)\pi}{4k} \\
& = \frac{1}{2} \sum_{j=1}^{k-1} \Big( \cos\frac{\pi}{2k} - \cos\frac{\pi j}{k} \Big) \\
& = \frac{k-1}{2} \cos\frac{\pi}{2k} - \frac{1}{2}\sum_{j=1}^{k-1} \cos\frac{\pi j}{k},
\end{align*}
which, together with \eqref{eq:sum-cos-1}, gives
\begin{equation}\label{eq:sum-xi-times-xi1}
\sum_{j=1}^{k-1} z_j z_{j+1} = \frac{k-1}{2}\cos\frac{\pi}{2k}.
\end{equation}

If $\Delta$ is odd, we choose $p=\Delta-1$ in \eqref{eq:Delta-lambda-p}.
Putting \eqref{eq:Delta-lambda-p} -- \eqref{eq:sum-xi-times-xi1} together, we have
\begin{align*}
\Delta - \lambda_1 
& \leq\frac{\big( 2(\Delta+1)^2 + 4(k-1)(\Delta^2-1)\big) \cdot\sin^2\frac{\pi}{4k}}{(\Delta^3+\Delta^2+\Delta+5) (k+1)%
+2(\Delta+2)(\Delta-1) (k-1)\cdot\cos\frac{\pi}{2k} }.
\end{align*}
Noting that $n=k(\Delta+1)+\alpha$, we thus obtain that
\[
\varlimsup_{n\to\infty} \frac{n^2 (\Delta-\lambda_1)}{\Delta-1} \leq\frac{\pi^2}{4}.
\]
If $\Delta$ is even, we choose $p=\Delta-2$ in \eqref{eq:Delta-lambda-p}.
Likewise, we get
\[
\varlimsup_{n\to\infty} \frac{n^2 (\Delta-\lambda_1)}{\Delta-2} \leq \frac{\pi^2}{2}.
\]
This completes the proof of the theorem.
\end{proof}

\subsection{Asymptotic behavior of $\Delta-\lambda_1(n,\Delta)$ for $\Delta\in\{3,4\}$}

\begin{lemma}\label{lem:x-min-x-max-bound}
Let $G\in\mathcal{G}(n,\Delta)$ with unit Perron vector $\bm{x}$. Then $x_{\min} = O\big(n^{-3/2}\big)$,
$x_{\max} = O(n^{-1/2})$.
\end{lemma}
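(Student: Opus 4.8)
The plan is to exploit the path-like structure of the extremal graph established in the previous sections, together with the identity \eqref{eq:Delta-rho-sum-form} and the Rayleigh-type bound \eqref{eq:Delta-rho}. First I would note that by \autoref{thm:extremal-graph-degree-3} and \autoref{thm:extremal-graph-degree-4} (or, for general $\Delta$, by the comparison with the graphs $G_{\Delta,k-1}^p\odot F$ constructed in \autoref{thm:limit-upper-bound}), we have $\Delta-\lambda_1(G) = O(n^{-2})$. Plugging any normalized test vector into \eqref{eq:Delta-rho} and using the explicit trigonometric estimates from the proof of \autoref{thm:limit-upper-bound} gives this upper bound on $\Delta-\lambda_1(n,\Delta)$; I would simply cite that computation.

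Next, for the bound on $x_{\max}$: from \eqref{eq:Delta-rho-sum-form}, since every summand on the right is nonnegative and $\|\bm{x}\|_2=1$, we get $\sum_{v}(\Delta-d(v))x_v^2 \le \Delta-\lambda_1(G) = O(n^{-2})$. By \autoref{thm:proof-conj-degree-sequ-for-degree3} and \autoref{thm:proof-conj-degree-sequ-for-degree4} (and \autoref{coro:S-less-Delta-1} in general), there is at least one vertex $v_0$ with $\Delta-d(v_0)\ge 1$, so $x_{v_0}^2 = O(n^{-2})$, i.e.\ $x_{\min} \le x_{v_0} = O(n^{-1})$ — but this is weaker than claimed, so I will need the path structure to do better. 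The key step is to walk along the backbone path: using the eigenvalue equation $\lambda_1(G) x_u = \sum_{w\sim u} x_w$ at each vertex, the differences $x_u - x_w$ across consecutive blocks telescope, and \eqref{eq:Delta-rho-sum-form} also bounds $\sum_{uv\in E(G)} (x_u-x_v)^2 = O(n^{-2})$. Since the backbone has $\Theta(n)$ edges, a Cauchy–Schwarz / telescoping argument shows the total variation of $\bm{x}$ along the path is $O(1)\cdot\sqrt{n}\cdot O(n^{-1}) = O(n^{-1/2})$, whence $x_{\max} - x_{\min} = O(n^{-1/2})$; combined with $\|\bm{x}\|_2 = 1$ over $n$ coordinates (so $x_{\max} \ge n^{-1/2}$ and the bulk of coordinates is $\Theta(n^{-1/2})$) this forces $x_{\max} = \Theta(n^{-1/2})$, in particular $x_{\max} = O(n^{-1/2})$.

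For the sharper bound $x_{\min} = O(n^{-3/2})$, I would localize near the defect vertex. The defect vertex $v_n$ (the unique vertex of degree $\delta<\Delta$, sitting at the far end of the path) is a local minimum, so $x_n = x_{\min}$. Writing the eigenvalue equations for the last few blocks and using $x_{\max} = O(n^{-1/2})$ as input, the "Dirichlet energy" $\sum_{uv\in E}(x_u-x_v)^2 = O(n^{-2})$ must be spread over $\Theta(n)$ edges, so a typical consecutive difference is $O(n^{-3/2})$ in an $\ell_2$-averaged sense; but near the boundary the Perron vector must actually decay to a value comparable to one such increment, because the boundary eigenvalue equation relates $x_n$ to a single neighbor difference. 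Concretely, $(\Delta - \delta)x_n = \Delta x_n - d(v_n)x_n = \Delta x_n - \lambda_1(G)x_n - \sum_{w\sim v_n}(x_w - x_n)$-type manipulation shows $x_n$ is controlled by $(\Delta-\lambda_1(G))x_{\max} + (\text{a bounded number of edge differences near } v_n)$. The main obstacle is making this last boundary estimate rigorous: one must show the near-boundary consecutive differences are individually $O(n^{-3/2})$, not merely $O(n^{-2})$ in aggregate. I expect this to follow by combining the explicit block-transfer matrices for the path-like structure (the $2\times2$ or $3\times3$ recurrences governing $z_j, a_j, b_j$ analogous to \eqref{eq:aj-bj}) with the estimate $1-\lambda_1(G)/\Delta = O(n^{-2})$: the transfer recurrence has eigenvalues $1 \pm \Theta(n^{-1})$, forcing the solution to behave like a discretized sine with amplitude $O(n^{-1/2})$ and hence consecutive increments $O(n^{-3/2})$, with the boundary condition pinning $x_{\min}$ down to one such increment, i.e.\ $O(n^{-3/2})$.
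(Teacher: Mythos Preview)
Your argument for $x_{\max}$ is essentially sound and close to the paper's: both use Cauchy--Schwarz on the edge-difference sum in \eqref{eq:Delta-rho-sum-form} along a path of length $O(n)$ to deduce $x_{\max}-x_{\min}=O(n^{-1/2})$. The paper's version is slightly cleaner in that it takes an arbitrary shortest path from the max vertex to the min vertex rather than invoking the backbone structure, so it applies directly for every fixed $\Delta$ (whereas the path-like structure has only been established for $\Delta\in\{3,4\}$).

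The real gap is in your treatment of $x_{\min}$. You correctly observe that the quadratic identity \eqref{eq:Delta-rho-sum-form} only yields $x_{\min}=O(n^{-1})$, and you then propose to close the gap via transfer-matrix analysis of the block recurrence near the boundary---but, as you yourself note, this step is not made rigorous, and it again leans on structural results proved only for small $\Delta$. You are missing the key tool: the \emph{linear} identity \eqref{eq:identity-sum-xi}, namely $\sum_{v}(\Delta-d(v))x_v=(\Delta-\lambda_1(G))\sum_v x_v$. Since the vertex $u$ attaining $x_{\min}$ necessarily has $d(u)<\Delta$ (otherwise $u$ would be a local minimum of degree $\Delta$, forcing $\lambda_1(G)\ge\Delta$), one term on the left already dominates $x_{\min}$, and Cauchy--Schwarz on the right gives $\sum_v x_v\le\sqrt{n}$. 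Thus
\[
x_{\min}\le\sum_{v}(\Delta-d(v))x_v=(\Delta-\lambda_1(G))\sum_v x_v\le\sqrt{n}\,(\Delta-\lambda_1(G))=O(n^{-3/2}),
\]
using $\Delta-\lambda_1(G)=O(n^{-2})$ from \autoref{thm:limit-upper-bound}. This two-line argument replaces your entire boundary analysis and works for all $\Delta$.
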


\begin{proof}
Let $u$, $v$ be two vertices attaining $x_{\min}$ and $x_{\max}$, respectively. Obviously, 
$d(u) < \Delta$. By \eqref{eq:identity-sum-xi}, we find that 
\[ 
x_u < \sum_{w\in V(G)} (\Delta - d(w)) x_w 
= (\Delta-\lambda_1(G)) \sum_{w\in V(G)} x_w  < \sqrt{n}\, (\Delta-\lambda_1(G)).
\]
The last inequality follows from Cauchy--Schwarz inequality and $\|\bm{x}\|_2 = 1$.
Together with \autoref{thm:limit-upper-bound} gives the desired assertion.

Now we estimate $x_{\max}$. Let $P: v=u_0,u_1,\ldots,u_{\ell-1},u_{\ell}=u$ be a shortest 
path from $v$ to $u$. Applying \eqref{eq:Delta-rho-sum-form} and Cauchy--Schwarz inequality, 
we see 
\[ 
\Delta-\lambda_1(G) > \sum_{i=0}^{\ell-1} (x_{u_i} - x_{u_{i+1}})^2
\geq \frac{(x_v - x_u)^2}{\ell} > \frac{(x_v - x_u)^2}{n}.
\]
On the other hand, \autoref{thm:limit-upper-bound} implies that $\Delta - \lambda_1(G) = O(n^{-2})$,
from which and the above inequality we have $x_{\max} = O(n^{-1/2})$.
\end{proof}

With \autoref{thm:limit-upper-bound} and \autoref{lem:x-min-x-max-bound} in hand, we are ready 
to give an exact coefficient for the leading term of $\Delta-\lambda_1(n,\Delta)$ for $\Delta\in\{3,4\}$. 
Let $G\in\mathcal{G}(n,\Delta)$. Observe that $G\in\mathcal{H}_n^{(1)}$ for $\Delta=3$ and $G\in\mathcal{H}_n^{(2)}$ 
for $\Delta=4$. So we need to consider the asymptotic behavior of $\Delta-\lambda_1(G_{\Delta,k-1}^p\odot F)$,
where $p\in\{\Delta-1, \Delta-2\}$.

\begin{theorem}\label{thm:order-G-odot-F}
The following conclusions hold:
\begin{enumerate}
\item[$(1)$] Let $\Delta$ be odd and $G_{\Delta,k-1}^{\Delta-1}\odot F \in\mathcal{H}_n^{(1)}$. Then 
\[
\Delta - \lambda_1\big(G_{\Delta,k-1}^{\Delta-1}\odot F\big) = (1+o(1)) \frac{(\Delta-1)\pi^2}{4n^2}.
\]

\item[$(2)$] Let $\Delta$ be even and $G_{\Delta,k-1}^{\Delta-2}\odot F \in\mathcal{H}_n^{(2)}$. Then 
\[
\Delta - \lambda_1\big(G_{\Delta,k-1}^{\Delta-2}\odot F\big) = (1+o(1)) \frac{(\Delta-2)\pi^2}{2n^2}.
\]
\end{enumerate}
\end{theorem}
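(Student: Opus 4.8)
The plan is to prove, uniformly for $p=\Delta-1$ (when $\Delta$ is odd) and $p=\Delta-2$ (when $\Delta$ is even), that $\Delta-\lambda=(1+o(1))\,p(\Delta-p)\pi^2/(4n^2)$, where $\lambda:=\lambda_1(G')$ and $G':=G_{\Delta,k-1}^{p}\odot F$; since $\tfrac14 p(\Delta-p)$ equals $\tfrac14(\Delta-1)$ resp. $\tfrac12(\Delta-2)$, this is exactly (1) and (2). Throughout $n=k(\Delta+1)+\alpha$ with $0\le\alpha\le\Delta+1$, so $n^2=(\Delta+1)^2k^2(1+o(1))$. The upper bound is already available: substituting $p=\Delta-1$ (resp. $p=\Delta-2$) into \eqref{eq:Delta-lambda-p} for $G'$ and inserting \eqref{eq:sum-difference-xi-square}, $\sum_j z_j^2=(k+1)/2$ and \eqref{eq:sum-xi-times-xi1} — this is precisely the computation concluding the proof of \autoref{thm:limit-upper-bound} — gives $\Delta-\lambda\le(1+o(1))p(\Delta-p)\pi^2/(4n^2)$. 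Hence only the matching lower bound requires work.

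For the lower bound we first use symmetry. Every permutation within $\{v_1^{(i)},\dots,v_p^{(i)}\}$ and within $\{v_{p+1}^{(i)},\dots,v_{\Delta}^{(i)}\}$ ($1\le i\le k-1$) is an automorphism of $G'$, so the unit Perron vector $\bm x$ is constant on orbits; write $t_i:=x_{z_i}$ ($i\in[k]$) and let $a_i,b_i$ be its common values on those two sets ($i\in[k-1]$). Discarding the nonnegative degree-deficiency term and the edges inside $F$ from \eqref{eq:Delta-rho-sum-form}, and then applying for each $i$ the elementary "three springs in series" identity $\min_{a,b}\bigl(p(s-a)^2+p(\Delta-p)(a-b)^2+(\Delta-p)(b-t)^2\bigr)=\frac{p(\Delta-p)}{\Delta+1}(s-t)^2$ (whose minimizer is precisely \eqref{eq:aj-bj}), we obtain the exact inequality $\Delta-\lambda\ge\frac{p(\Delta-p)}{\Delta+1}\sum_{i=1}^{k-1}(t_i-t_{i+1})^2$. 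Combined with the upper bound, this forces $\sum_{i=1}^{k-1}(t_i-t_{i+1})^2\le(1+o(1))\frac{\pi^2}{4(\Delta+1)k^2}$.

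Next we pin down the shape of $(t_i)$. The eigenvalue equations at the $a$- and $b$-vertices of block $i$, namely $(\lambda-p+1)a_i-(\Delta-p)b_i=t_i$ and $-pa_i+(\lambda-\Delta+p+1)b_i=t_{i+1}$, have determinant $D(\lambda)=\Delta+1+O(\Delta-\lambda)\ne0$; solving for $a_i,b_i$ and substituting into the equation at $z_i$ ($2\le i\le k-1$) yields a genuine three-term recurrence $t_{i-1}+t_{i+1}=\kappa\,t_i$, and a Taylor expansion at $\lambda=\Delta$ gives $\mu:=2-\kappa=\frac{(\Delta+1)^2}{p(\Delta-p)}(\Delta-\lambda)\bigl(1+O(\Delta-\lambda)\bigr)>0$, while the equation at $z_1$ gives $t_2=\kappa_1(\lambda)t_1$ with $\kappa_1(\Delta)=\frac{\Delta+p+1}{p}$. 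Since $\lambda\to\Delta$ we have $\mu\to0^{+}$; writing $\kappa=2\cos\theta$ with $\theta>0$, $\theta\to0$, positivity of $\bm x$ forces $t_i=R\sin(\theta(i-\hat z))$ for some $R>0$ with $\theta(i-\hat z)\in(0,\pi)$ on $[1,k]$, and the $z_1$-relation forces $\hat z\to\hat z_0:=\frac{\Delta-p+1}{\Delta+1}\in(0,1)$. Using the classical bound $\Delta-\lambda_1(\cdot)=\Omega(n^{-2})$ from the introduction (so $\theta=\Omega(1/k)$, whence $\sum_{i=1}^k\sin^2(\theta(i-\hat z))\to\infty$ and $R^2=o(\sum_i t_i^2)$), the approximation $a_i=\frac{(p+1)t_i+(\Delta-p)t_{i+1}}{\Delta+1}(1+o(1))$, and the fact that $F$ contributes only $O(t_k^2)$ to $\|\bm x\|_2^2$, a routine bookkeeping gives $N:=\sum_{i=1}^k t_i^2=(1+o(1))\frac1{\Delta+1}$.

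Finally we conclude. With $\psi:=\theta k$ one has $\sum_{i=1}^{k-1}(t_i-t_{i+1})^2=\mu R^2\sum_{i=1}^{k-1}\cos^2\!\bigl(\theta(i+\tfrac12-\hat z)\bigr)$ and $N=R^2\sum_{i=1}^k\sin^2(\theta(i-\hat z))$; since $\theta\to0$ and $\psi\le(1+o(1))\pi/2$ (from the upper bound), Riemann-sum comparison gives $\frac{\sum\cos^2}{\sum\sin^2}=(1+o(1))h(\psi)$ with $h(\psi)=\frac{2\psi+\sin2\psi}{2\psi-\sin2\psi}$. Substituting $N=(1+o(1))/(\Delta+1)$ and $\mu=\theta^2(1+o(1))$ into the second-step bound turns it into $\psi^2 h(\psi)\le(1+o(1))\pi^2/4$ (which in particular keeps $\psi$ bounded away from $0$, as $\psi^2h(\psi)\to3$ when $\psi\to0$). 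Since $\min_{\psi>0}\psi^2 h(\psi)=\pi^2/4$, attained only at $\psi=\pi/2$ — a one-variable verification — it follows that $\psi\to\pi/2$, hence $\theta=(1+o(1))\frac\pi{2k}$ and $\mu=(1+o(1))\frac{\pi^2}{4k^2}$, and therefore $\Delta-\lambda=\frac{p(\Delta-p)}{(\Delta+1)^2}\mu(1+o(1))=(1+o(1))\frac{p(\Delta-p)\pi^2}{4(\Delta+1)^2k^2}=(1+o(1))\frac{p(\Delta-p)\pi^2}{4n^2}$, as claimed. The two delicate points are the precise $\lambda$-dependence of the recurrence — it is the coefficient $(\Delta+1)^2$ in $\mu=2-\kappa$, together with the "three springs" constant $\frac{p(\Delta-p)}{\Delta+1}$, that makes the limiting constant come out to $p(\Delta-p)\pi^2/4$ — and the calculus fact $\min_{\psi>0}\psi^2 h(\psi)=\pi^2/4$, which is exactly what produces the factor $\pi^2/4$; the norm bookkeeping $N=(1+o(1))/(\Delta+1)$, while routine, needs some care because of the two ends of the path.
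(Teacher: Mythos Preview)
Your argument is correct, but it takes a genuinely different route from the paper's for the lower bound. After the common ``three springs'' step
\[
\Delta-\lambda \;\ge\; \frac{p(\Delta-p)}{\Delta+1}\sum_{i=1}^{k-1}(t_i-t_{i+1})^2,
\]
the paper does \emph{not} analyse the three-term recurrence at all. Instead it mirrors the sequence, setting $z_{-i}:=z_i$, and views $\sum_{i}(z_i-z_{i+1})^2$ (doubled) as the Laplacian quadratic form of the path $P_{2k}$ plus the two negligible boundary terms $z_{\pm 1}^2=O(n^{-3})$ (this uses \autoref{lem:x-min-x-max-bound}). Then the Rayleigh inequality \eqref{eq:Delta-rho} for $P_{2k}$, together with the explicit value $\lambda_1(P_{2k})=2\cos\frac{\pi}{2k+1}$ and the easy one-sided bound $\sum z_i^2\ge \frac{1}{\Delta+1}-O(n^{-1})$ (from $a_i\le b_i\le z_{i+1}$), immediately gives the matching lower bound. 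No recurrence, no boundary analysis at $u_1$, no calculus minimisation.

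Your approach is heavier but self-contained: you derive the recurrence $t_{i-1}+t_{i+1}=\kappa t_i$, identify the key coefficient $2-\kappa=\frac{(\Delta+1)^2}{p(\Delta-p)}(\Delta-\lambda)(1+o(1))$, write $t_i=R\sin(\theta(i-\hat z))$, squeeze $\psi=\theta k$ into a compact interval using both the upper bound of \autoref{thm:limit-upper-bound} and the $\Omega(n^{-2})$ bound from the introduction, and then close the argument with the one-variable fact $\min_{\psi>0}\psi^2\frac{2\psi+\sin2\psi}{2\psi-\sin2\psi}=\pi^2/4$ at $\psi=\pi/2$. Each step checks out (in particular your Taylor expansion of $\kappa$ is correct, and the bookkeeping for $N=(1+o(1))/(\Delta+1)$ goes through since $|a_i-t_i|+|b_i-t_i|=O(|t_{i+1}-t_i|)+o(t_i)$ and the $F$-block contributes $O(x_{\max}^2)=O(n^{-1})$). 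What the paper's mirroring trick buys is that the entire last paragraph of your argument---the Riemann-sum comparison and the calculus lemma on $\psi^2 h(\psi)$---is replaced by the single known formula for $\lambda_1(P_{2k})$; what your route buys is that it makes the mechanism (the sinusoidal shape of the Perron vector and the role of the left boundary $\hat z_0=\frac{\Delta-p+1}{\Delta+1}$) completely explicit.

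Two small remarks. First, since you only need the minimisation on $\psi\in(0,(1+o(1))\pi/2]$, it suffices to check that $\psi^2 h(\psi)$ is decreasing there with value $\pi^2/4$ at $\pi/2$; this is indeed a routine verification but you should say so. Second, the pendant edge at $u_1$ in the even-$n$, odd-$\Delta$ case perturbs the $z_1$-boundary relation by $O(\Delta-\lambda)$ only, so it does not affect $\hat z_0$; you implicitly absorb this into the $o(1)$, which is fine.
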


\begin{proof}
Referring to the proof of \autoref{thm:limit-upper-bound}, it is enough to give a matched lower bound 
on $\Delta - \lambda_1\big(G_{\Delta,k-1}^p\odot F\big)$ for $p\in\{\Delta-1, \Delta-2\}$. Furthermore, 
by the monotonicity of the spectral radius of graphs with respect to vertex addition, we only need to 
consider the graph shown in Fig.\,\ref{fig:label-G-odot-F}, whose components of Perron vector $\bm{x}$ 
are also labeled in Fig.\,\ref{fig:label-G-odot-F}. 

From \eqref{eq:Delta-rho-sum-form}, a routine computation gives rise to
\[
\Delta - \lambda_1\big(G_{\Delta,k-1}^p\odot F\big) > p\sum_{i=1}^{k-1} (z_i - a_i)^2 + 
p(\Delta - p) \sum_{i=1}^{k-1} (a_i - b_i)^2 + (\Delta - p) \sum_{i=1}^{k-1} (b_i - z_{i+1})^2.
\]  
Consider the function $f_i(x,y) := p(z_i - x)^2 + p(\Delta - p) (x - y)^2 + (\Delta - p) (y - z_{i+1})^2$.
One can check that $f_i(x,y)$ attaining minimum at 
\[
x = \frac{(p+1) z_i + (\Delta - p) z_{i+1}}{\Delta+1},~~
y = \frac{p z_i + (\Delta - p + 1) z_{i+1}}{\Delta + 1}.
\]
Putting all these together, we conclude that
\begin{equation}\label{eq:lower-bound-Delta-lambda-F}
\Delta - \lambda_1\big(G_{\Delta,k-1}^p\odot F\big) > \frac{p (\Delta - p)}{\Delta + 1} 
\sum_{i=1}^{k-1} (z_i - z_{i+1})^2.
\end{equation}
Below we shall give an estimation for the right-hand side of the above inequality.
By eigenvalue equations, we find that
\[
a_i\leq b_i \leq z_{i+1},~~i\in [k-1].
\]
Noting the fact that $\|\bm{x}\|_2=1$, we obtain that 
\begin{align*}
1 & = \sum_{i=1}^k z_i^2 + p \sum_{i=1}^{k-1} a_i^2 + (\Delta - p) \sum_{i=1}^{k-1} b_i^2 + 
\sum_{u\in V(F)\setminus V(G_{\Delta,k-1}^p)} x_u^2 \\
& \leq (\Delta + 1) \sum_{i=1}^k z_i^2 + \sum_{u\in V(F)\setminus V(G_{\Delta,k-1}^p)} x_u^2 \\
& = (\Delta + 1) \sum_{i=1}^k z_i^2 + O(n^{-1}).
\end{align*}
The last equality uses the fact that $x_{\max}=O(n^{-1/2})$. It follows that
\begin{equation}\label{eq:sum-z}
\sum_{i=1}^k z_i^2 \geq \frac{1}{\Delta + 1} - O(n^{-1}).
\end{equation}
For each $i\in [k]$, let $z_{-i} = z_i$, and denote $\bm{z} := (z_{-1},\ldots,z_{-k},z_k,\ldots,z_1)^{\mathrm{T}}$.
Consider the path $P_{2k}$ on $2k$ vertices, and notice that $z_{-1}=z_1=O(n^{-3/2})$ by \autoref{lem:x-min-x-max-bound}. 
According to \eqref{eq:lower-bound-Delta-lambda-F} we deduce that
\begin{align*}
\Delta - \lambda_1\big(G_{\Delta,k-1}^p\odot F\big) 
& > \frac{p (\Delta - p)}{2(\Delta + 1)} \sum_{\substack{i=-k,\\ i\notin\{-1,0\}}}^{k-1} (z_i - z_{i+1})^2 \\
& = \frac{p (\Delta - p)}{2(\Delta + 1)} \big(z_{-1}^2 + z_1^2 + \bm{z}^{\mathrm{T}} L(P_{2k}) \bm{z}\big) - O(n^{-3}) \\
& \geq \frac{p (\Delta - p)}{2(\Delta + 1)} \big(2 - \lambda_1(P_{2k})\big) \cdot\|\bm{z}\|_2^2 - O(n^{-3}).
\end{align*} 
The last inequality follows from \eqref{eq:Delta-rho}. Recall that $\lambda_1(P_{2k}) = 2\cos\pi/(2k+1)$ (see, e.g. \cite[p.9]{BrouwerHaemers2011}). 
It follows from \eqref{eq:sum-z} that
\begin{align*}
\Delta - \lambda_1\big(G_{\Delta,k-1}^p\odot F\big)  
& > \frac{p(\Delta - p) \pi^2}{(\Delta + 1)^2 (2k+1)^2} - O(n^{-3}) \\
& = \frac{p(\Delta - p) \pi^2}{4n^2} - O(n^{-3}),
\end{align*}
where the last equality use the fact $k=(n-\alpha)/(\Delta+1)$. Finally, we obtain the desired 
results by letting $p=\Delta-1$ for odd $\Delta$ and $p=\Delta-2$ for even $\Delta$, respectively.
\end{proof}

We immediately obtain the asymptotic behavior of $\Delta-\lambda_1(n,\Delta)$ for $\Delta\in\{3,4\}$.

\begin{theorem}\label{thm:asymptotic-behavior-Delta-3-4}
If $\Delta = 3$, then
\[
\lim_{n\to\infty} n^2 (3 - \lambda_1(n,3)) = \frac{\pi^2}{2}.
\]
If $\Delta = 4$, then 
\[
\lim_{n\to\infty} n^2 (4 - \lambda_1(n,4)) = \pi^2.
\]
\end{theorem}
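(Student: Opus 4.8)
The plan is to deduce this statement directly from the exact description of the extremal graphs together with the asymptotic estimate \autoref{thm:order-G-odot-F}. By \autoref{thm:extremal-graph-degree-3} the unique graph $G\in\mathcal{G}(n,3)$ is the path-like graph depicted in Fig.~\ref{fig:maximum-spectral-radius-Deltea-3}, and by \autoref{thm:extremal-graph-degree-4} the unique graph $G\in\mathcal{G}(n,4)$ is the one in Fig.~\ref{fig:maximum-spectral-radius-Deltea-4}; in either case $\lambda_1(n,\Delta)=\lambda_1(G)$. So the first step is to recognise this $G$ as a member of the family $G_{\Delta,k-1}^{p}\odot F$ of Fig.~\ref{fig:label-G-odot-F} for the appropriate value of $p$, and the second step is simply to quote \autoref{thm:order-G-odot-F}.

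For the identification, consider first $\Delta=3$. The blocks of Fig.~\ref{fig:maximum-spectral-radius-Deltea-3} are copies of $K_3$ strung together through cut vertices of degree two, each cut vertex being joined to two vertices of the block on one side and to one vertex of the block on the other side; this is precisely the construction of $G_{3,k-1}^{p}$ with $p=\Delta-1=2$, the number of $K_3$-blocks being $k-1$ with $k$ and $\alpha$ as in the definition of $\mathcal{H}_n^{(1)}$, and the small remainder graph $F$ at one end has the degree sequence $(3,\dots,3,2)$ or $(3,\dots,3,1)$ forced by \autoref{thm:proof-conj-degree-sequ-for-degree3}; when $n$ is even one additionally attaches the single pendant edge that turns the degree-two vertex into the degree-one vertex. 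Hence $G\in\mathcal{H}_n^{(1)}$. The case $\Delta=4$ is entirely analogous: the blocks of Fig.~\ref{fig:maximum-spectral-radius-Deltea-4} are copies of $K_4$ (the diamonds with their diagonals) joined through cut vertices of degree two, each joined to two vertices of the block on either side, which is exactly $G_{4,k-1}^{p}$ with $p=\Delta-2=2$, and $F$ has degree sequence $(4,\dots,4,2)$ by \autoref{thm:proof-conj-degree-sequ-for-degree4}, so $G\in\mathcal{H}_n^{(2)}$. Note that in both cases the relevant exponent is $p=2$.

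With this identification in place the conclusion is immediate. For $\Delta=3$, \autoref{thm:order-G-odot-F}(1) applied with $p=\Delta-1=2$ gives $3-\lambda_1(n,3)=3-\lambda_1(G)=(1+o(1))\,(\Delta-1)\pi^2/(4n^2)=(1+o(1))\,\pi^2/(2n^2)$, whence $n^2\bigl(3-\lambda_1(n,3)\bigr)\to\pi^2/2$. For $\Delta=4$, \autoref{thm:order-G-odot-F}(2) applied with $p=\Delta-2=2$ gives $4-\lambda_1(n,4)=(1+o(1))\,(\Delta-2)\pi^2/(2n^2)=(1+o(1))\,\pi^2/n^2$, whence $n^2\bigl(4-\lambda_1(n,4)\bigr)\to\pi^2$. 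The matching upper bounds are in any event already contained in \autoref{thm:limit-upper-bound} and subsumed by \autoref{thm:order-G-odot-F}, so no separate argument is needed.

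The substantive work lies entirely in the earlier sections; the only point in the present proof that requires any care is the bookkeeping of the identification step, in particular verifying that the exceptional end-block $F$ contributes only a bounded number of vertices, so that its weight in $\|\bm{x}\|_2^2$ is $O(n^{-1})$ by \autoref{lem:x-min-x-max-bound} and hence irrelevant to the leading term. But this is precisely the reduction already carried out in the proof of \autoref{thm:order-G-odot-F} (via monotonicity of $\lambda_1$ under vertex addition), and the pendant-edge subcase for $\Delta=3$ with $n$ even is likewise absorbed into the $o(1)$ term there, so the argument above is complete as stated.
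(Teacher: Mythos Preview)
Your proposal is correct and follows exactly the paper's approach: the paper states this theorem as an immediate consequence of \autoref{thm:order-G-odot-F}, having already observed just before that theorem that the extremal graph $G\in\mathcal{G}(n,\Delta)$ lies in $\mathcal{H}_n^{(1)}$ for $\Delta=3$ and in $\mathcal{H}_n^{(2)}$ for $\Delta=4$. You have simply made that observation explicit by matching the path-like structures of Figs.~\ref{fig:maximum-spectral-radius-Deltea-3} and~\ref{fig:maximum-spectral-radius-Deltea-4} to the template $G_{\Delta,k-1}^{p}\odot F$ with $p=2$, which is the content of the paper's one-line remark.
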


\section{Concluding remarks and open problems}
\label{sec:concluding-remarks}

In this paper, we study the structural properties of extremal graphs in $\mathcal{G}(n,\Delta)$, which disprove 
\autoref{conj:limit} for $\Delta\geq 3$ and confirm \autoref{conj:degree-sequence} when $\Delta\in\{3,4\}$.
Although \autoref{conj:limit} is not true as the inconsistent asymptotic behavior of $\Delta-\lambda_1(n,\Delta)$ 
for $\Delta$ that have different parity, we can still ask what is the exact leading term of $\Delta-\lambda_1(n,\Delta)$. 
\autoref{thm:asymptotic-behavior-Delta-3-4} answers this problem for $\Delta=3$ and $\Delta=4$. However, 
for general $\Delta$, it seems to be difficult to solve it. Based on some numerical experiments and heuristic 
arguments, we present the following conjecture.

\begin{conjecture}\label{conj:concluding-conj-1}
Let $G$ be a graph attaining the maximum spectral radius among all connected 
nonregular graph with $n$ vertices and maximum degree $\Delta$. Then
the limit of $n^2 (\Delta-\lambda_1(G))$ always exists. Furthermore,

\begin{enumerate}
\item[$(1)$] if $\Delta$ is odd, then 
\[
\lim_{n\to\infty} \frac{n^2(\Delta-\lambda_1(G))}{\Delta-1} = \frac{\pi^2}{4}.
\]

\item[$(2)$] if $\Delta$ $(\Delta>2)$ is even, then 
\[
\lim_{n\to\infty} \frac{n^2 (\Delta-\lambda_1(G))}{\Delta-2} = \frac{\pi^2}{2}.
\]  
\end{enumerate}
\end{conjecture}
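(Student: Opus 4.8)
The plan is to build on the fact that one half of the conjecture is already in place. \autoref{thm:limit-upper-bound} exhibits, for every $\Delta$, a connected nonregular graph $G_{\Delta,k-1}^p\odot F$ with $n$ vertices and maximum degree $\Delta$ whose gap yields $\varlimsup n^2(\Delta-\lambda_1(n,\Delta))/(\Delta-1)\le\pi^2/4$ for odd $\Delta$ and $\varlimsup n^2(\Delta-\lambda_1(n,\Delta))/(\Delta-2)\le\pi^2/2$ for even $\Delta$; and \autoref{thm:order-G-odot-F} shows this same graph attains exactly these values in the limit. Thus the entire content of the conjecture---including the assertion that the limit exists---reduces to the matching lower bound: proving that every extremal $G$ satisfies $\Delta-\lambda_1(G)\ge(1-o(1))\,p(\Delta-p)\pi^2/(4n^2)$, with $p=\Delta-1$ for odd $\Delta$ and $p=\Delta-2$ for even $\Delta$. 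Once this is established, $n^2(\Delta-\lambda_1(n,\Delta))$ is squeezed between two sequences sharing a common limit, so the limit exists and equals the stated value.

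I would attack the lower bound in two tiers. The first, soft, tier recovers the correct \emph{order} $\Theta(n^{-2})$: an extremal graph must be poorly connected, since any graph with spectral gap of order $n^{-1}$ (as forced by genuine expansion) is far from extremal; combining the bound $\Delta-\lambda_1(G)\ge c/(nD)$ of Cioab\u a with the trivial estimate $D\le n$ on the diameter $D$ yields $\Delta-\lambda_1(G)=\Theta(n^{-2})$ and, more usefully, forces $D=\Theta(n)$, i.e.\ a path-like shape. The second, hard, tier is the \emph{exact constant}, where I would generalize the structural analysis of Sections \ref{sec:structure-Delta=3}--\ref{sec:structure-Delta=4}. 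The target is a single block-decomposition lemma: every $G\in\mathcal{G}(n,\Delta)$ is a chain of $k\sim n/(\Delta+1)$ copies of $K_\Delta$ linked through degree-$\Delta$ junction vertices, carrying one deficient vertex at an end. The driving mechanism is \autoref{lem:local-switching}, which applied globally should force the Perron vector to be constant on each clique cross-section and strictly monotone along the chain, exactly as for $G_{\Delta,k-1}^p$.

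Granting this structure, the constant is extracted precisely as in \autoref{thm:order-G-odot-F}. The Perron vector is asymptotically constant on each cross-section and varies like a discretized quarter-sine along the chain; feeding this into \eqref{eq:Delta-rho-sum-form} and reflecting the chain to a path $P_{2k}$---so that the single free endpoint is encoded and $2-\lambda_1(P_{2k})=2-2\cos\frac{\pi}{2k+1}\sim\pi^2/(4k^2)$---reproduces the bound \eqref{eq:lower-bound-Delta-lambda-F} for the true extremal graph, giving $p(\Delta-p)\pi^2/(4n^2)$. The two distinct constants then arise from a parity-driven optimization of the interface width $p$: for odd $\Delta$ one may use the narrowest interface $p=\Delta-1$, with $p(\Delta-p)=\Delta-1$, whereas for even $\Delta$ the constraint that $n\Delta$ is even forces the deficient degree to be $\Delta-2$ and hence the wider interface $p=\Delta-2$, with $p(\Delta-p)=2(\Delta-2)$. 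This is exactly the arithmetic underlying the two cases of the conjecture.

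The main obstacle is the structural classification for general $\Delta$. The proofs for $\Delta\in\{3,4\}$ proceed by forbidding an ever-growing list of local configurations ($D_1$ and $M_1$--$M_7$), and the number of such configurations---governed by the ways of gluing $K_\Delta$ blocks and distributing the deficient degree---explodes with $\Delta$, so a naive case-by-case generalization is hopeless. What is needed instead is a uniform argument that bypasses enumeration: a proof of \autoref{conj:degree-sequence} for all $\Delta$ (itself open), together with a switching-based rigidity statement showing that, among graphs with the prescribed degree sequence, the clique-chain is the unique local-switching-stable configuration whose Perron vector is cross-section-constant. Deriving this rigidity directly from \autoref{lem:local-switching}, in a way whose complexity does not grow with $\Delta$, is the crux on which the whole conjecture rests.
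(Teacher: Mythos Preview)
The statement you are addressing is \autoref{conj:concluding-conj-1}, which the paper explicitly presents as an \emph{open conjecture} in Section~\ref{sec:concluding-remarks}, supported only by ``numerical experiments and heuristic arguments.'' There is no proof in the paper to compare your attempt against: the paper establishes the conjecture only for $\Delta\in\{3,4\}$ (\autoref{thm:asymptotic-behavior-Delta-3-4}), and it does so precisely by the route you sketch---the full structural classification of Sections~\ref{sec:structure-Delta=3}--\ref{sec:structure-Delta=4} shows the extremal graph is a clique chain, after which \autoref{thm:order-G-odot-F} supplies the matching lower bound.

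Your proposal is therefore a research programme rather than a proof, and as such it is accurate and well-calibrated. You correctly isolate that the upper bound is already in hand via \autoref{thm:limit-upper-bound}, that the missing ingredient is a lower bound valid for the \emph{true} extremal graph (not just for $G_{\Delta,k-1}^p\odot F$), and that this reduces to a structural rigidity statement whose known proof technique---the forbidden-subgraph enumeration $D_1$, $M_1,\dots,M_7$---does not scale with $\Delta$. You also correctly flag that \autoref{conj:degree-sequence} is itself an obstacle; indeed the paper immediately follows \autoref{conj:concluding-conj-1} with a competing degree-sequence conjecture that disagrees with \autoref{conj:degree-sequence} when $\Delta$ is odd and $n$ is even, predicting $\delta=1$ rather than $\Delta-2$.

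One small caution on your soft tier: the Cioab\u a bound combined with \autoref{thm:limit-upper-bound} does force $D=\Theta(n)$, but linear diameter alone is far from implying a clique-chain decomposition (many bounded-degree graphs have linear diameter without any block structure), so essentially all of the work sits in your second tier. Until someone produces the uniform switching-rigidity lemma you describe---or an entirely different lower-bound argument that bypasses classifying the extremal graph---\autoref{conj:concluding-conj-1} remains open, exactly as the paper states.
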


By analyzing the structural properties of the extremal graphs, we confirm \autoref{conj:degree-sequence}
for small $\Delta$. However, we cannot expect an affirmative answer to \autoref{conj:degree-sequence} for
general $\Delta$. In fact, we have some evidence for the following speculation.

\begin{conjecture}
Let $G$ be a graph attaining the maximum spectral radius among all connected 
nonregular graph with $n$ vertices and maximum degree $\Delta$ $(\Delta\geq 3)$.
For each fixed $\Delta$ and sufficiently large $n$, $G$ has degree sequence $(\Delta,\ldots,\Delta,\delta)$, 
where
\[
\delta =
\begin{cases}
\Delta-1, & \Delta\ \text{is odd},\ n\ \text{is odd}, \\
1, & \Delta\ \text{is odd},\ n\ \text{is even}, \\
\Delta-2, & \Delta\ \text{is even}.
\end{cases}
\]
\end{conjecture}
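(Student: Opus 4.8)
The plan is to push the program of Sections~\ref{sec:general-results}--\ref{sec:structure-Delta=4} from $\Delta\in\{3,4\}$ up to arbitrary fixed $\Delta$, in three stages. \textbf{Stage 1 (reduce to a path-like graph).} By \autoref{lem:S-clique} and \autoref{coro:S-less-Delta-1} the deficient set $S$ induces a clique of size at most $\Delta-1$; by \autoref{lem:subset-neighbours} the sets $N_T(u)$ with $u\in S$ form a chain; and because $G$ is extremal, every proper local switching supplied by \autoref{lem:rotation}, \autoref{lem:local-switching} and the Principle of Proximity (\autoref{lem:common-use}) leaves $\lambda_1$ unchanged. Exactly as in \autoref{lem:transfer-first-few-vertices-Delta-4} and \autoref{lem:induced-middle-vertices-Delta=4}, I would argue that $G$ can be transferred by a sequence of proper switchings into a chain of blocks, each block a copy of $K_\Delta$, with consecutive blocks sharing a cut vertex $u_i$ of degree $\Delta$ that sends $p_i$ edges into the block on one side and $\Delta-p_i$ into the block on the other, the chain being terminated by two small end gadgets that carry the deficiency; in the notation of Section~\ref{sec:disprove-conj-1-1} this is the family $G_{\Delta,k-1}^{p}\odot F$, possibly with a pendant edge attached at the terminal cut vertex $u_1$, where $F$ is a connected graph on $O(\Delta)$ vertices with a single deficient vertex.

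\textbf{Stage 2 (pin down the interior).} This is the step where I expect the real difficulty to lie. One must show that in every middle block the split $p_i$ equals $\Delta-1$ when $\Delta$ is odd and $\Delta-2$ when $\Delta$ is even, excluding all other values and all other ways in which neighbouring cut vertices can interact. The engine is the forbidden-subgraph method of \autoref{lem:forbidden-M1}--\autoref{lem:forbidden-M7}: for a ``wrong'' configuration build $\widetilde G$ by local rewiring, define a test vector $\bm y$ agreeing with the Perron vector $\bm x$ away from the modified region and tuned so that $\bm x^{\mathrm T}L(G)\bm x=\bm y^{\mathrm T}L(\widetilde G)\bm y$ up to a controlled amount, and show $\|\bm y\|_2>1$; by \eqref{eq:Delta-rho} this gives $\Delta-\lambda_1(\widetilde G)<\Delta-\lambda_1(G)$, contradicting extremality. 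What is missing is \emph{uniformity in $\Delta$}: the seven ad hoc configurations for $\Delta=4$ must be replaced by a single parametrised family of forbidden blocks; the block eigenvalue equations express every coordinate as an explicit rational function of $\lambda$ and the adjacent cut-vertex values (compare \eqref{eq:equation-bcd-1-degree4} and \eqref{eq:x-y-in-M4}), which fixes the natural choice of $\bm y$; and the Descartes-rule-and-numerics argument used for $\Delta=4$ must be replaced by a sign analysis of the resulting polynomials in $\lambda$ valid for all $\Delta$. Since $\Delta-\lambda_1(n,\Delta)=\Theta(n^{-2})$ (upper bound from \autoref{thm:limit-upper-bound}; the matching lower bound is classical, see Section~\ref{sec:introduction}), only $\lambda$ near $\Delta$ is relevant, so the cleanest route is to evaluate these polynomials at $\lambda=\Delta$ and control the first derivative there. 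Getting this family and this sign control to be both finite and $\Delta$-uniform is where I expect the bulk of the work.

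\textbf{Stage 3 (pin down the ends and hence $\delta$).} With the interior forced, the end gadgets become connected graphs on $O(\Delta)$ vertices with a single deficient vertex, and \autoref{lem:common-use} together with the forbidden-subgraph lemmas should reduce them to the candidates underlying $\mathcal T_{\Delta+\alpha}^{(i)}$ and $\mathcal T_{\Delta+\alpha+1}^{(i)}$. The handshake identity and \autoref{lem:connected-graphic} then restrict $\delta$ to a short list: for even $\Delta$ to $\{2,4,\ldots,\Delta-2\}$, and for odd $\Delta$ to $\{2,4,\ldots,\Delta-1\}$ if $n$ is odd and to $\{1,3,\ldots,\Delta-2\}$ if $n$ is even. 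The winner is selected by one more perturbation of the same type, which should force $\delta=\Delta-2$ for even $\Delta$ and $\delta=\Delta-1$ for odd $\Delta$ with $n$ odd. The delicate case is odd $\Delta\ge5$ with $n$ even, where one must prove that converting the deficient end vertex of degree $\Delta-1$ into a full-degree vertex and hanging a pendant off it (so that $\delta=1$, as in $\mathcal H_n^{(1)}$) beats keeping a single vertex of degree $\Delta-2$; the heuristic is that a pendant carries Perron weight only $O(n^{-3/2})$ by \autoref{lem:x-min-x-max-bound} and so contributes negligibly to the right-hand side of \eqref{eq:Delta-rho-sum-form}, whereas a vertex of degree $\Delta-2$ must sit inside a block and deforms the surrounding chain. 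There is currently no proved instance of this last phenomenon --- for $\Delta=3$ one has $\Delta-2=1$, so the two alternatives coincide --- so making this comparison rigorous for general odd $\Delta$, together with the uniformity problem of Stage 2, are the two places where I expect to get stuck.
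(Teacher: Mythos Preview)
The statement you are asked to prove is \emph{not} proved in the paper: it appears as Conjecture~7.2 in the concluding remarks, introduced with the words ``we have some evidence for the following speculation.'' The paper establishes only the cases $\Delta\in\{3,4\}$ (Theorems~\ref{thm:proof-conj-degree-sequ-for-degree3} and~\ref{thm:proof-conj-degree-sequ-for-degree4}), and even those do not confirm the full conjecture, since for $\Delta=3$ the two candidate values $\Delta-2$ and $1$ coincide. There is therefore no proof in the paper to compare your proposal against.

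Your write-up is not a proof but an honest research outline, and you correctly flag its two genuine obstructions. Stage~1 is plausible in spirit but already optimistic: the paper's reduction to a path-like structure for $\Delta=4$ (Lemmas~\ref{lem:transfer-first-few-vertices-Delta-4} and~\ref{lem:induced-middle-vertices-Delta=4}) relies on a hand-tailored case analysis and on the seven forbidden-subgraph lemmas~\ref{lem:forbidden-M1}--\ref{lem:forbidden-M7}, each proved by an ad hoc choice of test vector and a numerical sign check; nothing in the paper suggests these arguments organise into a single $\Delta$-uniform family as you hope in Stage~2. Your Stage~3 identifies exactly the new phenomenon the conjecture asserts---that for odd $\Delta\ge 5$ and even $n$ a pendant vertex beats a vertex of degree $\Delta-2$---and you are right that the paper contains no proved instance of this, so this step is entirely open. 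In short, your plan is a reasonable extrapolation of the paper's methods, but it remains a programme rather than a proof, and the paper itself offers no more.
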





Finally, we remark that for a connected nonregular graph $G$ of order $n$ with diameter $D$, maximum 
degree $\Delta$ and minimum degree $\delta$, Cioab\u a, Gregory and Nikiforov \cite{CioabaGregoryNikiforov2007}
present a lower bound of $\Delta-\lambda_1(G)$ in terms of the diameter $D$, which implies
$\Delta-\lambda_1(G)>1/(n(D+1))$. They also suggest that 
\[
\Delta - \lambda_1(G) > \frac{\sqrt{\Delta - \delta}}{nD}.
\]
The following example shows that the above inequality is not true in general.

\begin{example}
Let $\Delta$ be odd and $n=k(\Delta+1)+\alpha$ be even. Let $F\in\mathcal{T}_{\Delta+\alpha}^{(1)}$ and
$G=G_{\Delta, k-1}^{\Delta-1}\odot F$ be a graph in $\mathcal{H}_n^{(1)}$ (see Fig.\,\ref{fig:label-G-odot-F-in-example}).
By \autoref{thm:order-G-odot-F}, 
\[
\Delta - \lambda_1(G) = (1+o(1)) \frac{(\Delta - 1)\pi^2}{4n^2}.
\] 
Obviously, the diameter $D$ of $G$ is at most
\[
3(k - 1) + (\Delta + \alpha) \leq 3k + 2\Delta - 2
= \frac{3n}{\Delta + 1} + O(1).
\]
One can check that $\Delta - \lambda_1(G) < \sqrt{\Delta - \delta}/(nD)$ when $\Delta\geq 53$ and sufficiently large $n$.
\end{example}

\begin{figure}[htbp]
\centering
\begin{tikzpicture}[scale=0.9]
\coordinate (u1) at (0,0);
\coordinate (u2) at (0.9,0);
\coordinate (u3) at (2.3,0);
\coordinate (u4) at (3.2,0);
\coordinate (u5) at (4.6,0);
\coordinate (u6) at (6.2,0);
\coordinate (u7) at (7.6,0);
\coordinate (u8) at (8.5,0);

\coordinate (v1) at (1.6,0.5);
\coordinate (w1) at (1.6,-0.5);
\coordinate (v2) at (3.9,0.5);
\coordinate (w2) at (3.9,-0.5);
\coordinate (v3) at (6.9,0.5);
\coordinate (w3) at (6.9,-0.5);

\foreach \i in {1.6,3.9,6.9}
{
\filldraw[fill=gray!40, draw=gray!40] (\i,0) ellipse [x radius=0.45,y radius=0.7];
\node at (\i,0.15) {$\vdots$};
}

\fill[gray!40] (8.5,0) -- (9,0.54) -- (9,-0.54) -- cycle;
\fill[gray!40] (9.45,0) circle (0.7);

\node at (5.4,0) {$\cdots\cdot$};
\node at (9.45,0) {$F$};

\draw (u1) -- (u2) -- (v1) -- (u3) -- (w1) -- (u2);
\draw (u3) -- (u4) -- (v2) -- (u5) -- (w2) -- (u4);
\draw (u6) -- (v3) -- (u7) -- (w3) -- (u6);
\draw (u5) -- (4.8,0);
\draw (u6) -- (6,0);
\draw (u7) -- (u8);

\foreach \i in {u1,u2,u3,u4,u5,u6,u7,u8,v1,v2,v3,v3,w1,w2,w3}
\filldraw (\i) circle (0.07);
\end{tikzpicture}
\caption{A graph $G_{\Delta,k-1}^{\Delta-1}\odot F$ in $\mathcal{H}_n^{(1)}$. Each shadow except $F$ 
represents the complete graph $K_{\Delta-1}$}
\label{fig:label-G-odot-F-in-example}
\end{figure}
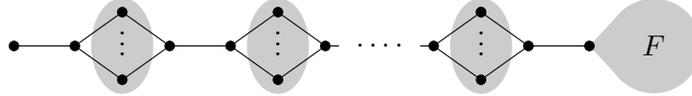

It is natural to ask what is the best constant $c$ such that $\Delta-\lambda_1(G) > c/(nD)$ for all 
connected nonregular graphs with maximum degree $\Delta$. Cioab\u a's bound \cite{Cioaba2007} implies 
that $c\geq 1$. In \cite{CioabaGregoryNikiforov2007}, the authors find a small graph, which forces 
$c < 1.355$ for small $n$. Cioab\u a \cite{Cioaba2007} also comments that there are infinite families 
of nonregular graphs with maximum degree $\Delta$ such that $\Delta - \lambda_1(G) \leq c/(nD)$. 
In \cite{CioabaGregoryNikiforov2007} the authors describe such a family with $c=4\pi^2$. Liu, Shen and 
Wang \cite{LiuShenWang2007} find an infinite family with $c=3\pi^2$. In this paper, we find an infinite 
family with $c=3\pi^2/4$.

\section*{Acknowledgements}

The author is grateful to Haiying Shan for providing help for the code.
The author is supported by the National Natural Science Foundation of China (No. 12001370).

\appendix

\section{Appendix: Proofs of \autoref{lem:forbidden-M2}, \autoref{lem:forbidden-M3} 
and \autoref{lem:forbidden-M5} -- \autoref{lem:forbidden-M7}}
\label{appendix}

\noindent {\bfseries Proof of \autoref{lem:forbidden-M2}:}
We prove this lemma by contradiction. Suppose on the contrary that $G$ contains $M_2$ with 
the components of $\bm{x}$ as depicted in Fig.\,\ref{fig:M1-M2}\,\subref{subfig:M2}. Set 
$\lambda:=\lambda_1(G)$. By eigenvalue equations we have
\[
b = \frac{\lambda-2}{2} a,~~
c = \frac{\lambda^2-2\lambda-6}{2} a,~~
d = \frac{\lambda^3-3\lambda^2-5\lambda+8}{4} a.
\]

Let $\widetilde{G}$ be the graph obtained by replacing $M_2$ with $\widetilde{M}_2$. We define 
a vector $\bm{y}$ on $V(\widetilde{G})$ such that its components on $\widetilde{M}_2$ are as given in 
Fig.\,\ref{fig:M1-M2}\,\subref{subfig:tilde-M2}, and on the rest of vertices agree with $\bm{x}$. 
Here, $x$, $y$, $z$ are given by 
\begin{align*}
x & = \frac{a}{4} \big((1-\sqrt{2})\lambda^3 + (4\sqrt{2}-3)\lambda^2 + (4\sqrt{2}-\sqrt{6}-5)\lambda -16\sqrt{2}+4\sqrt{6}+8\big), \\
y & = \frac{a}{4} \big((1-\sqrt{2})\lambda^3 + (4\sqrt{2}-3)\lambda^2 + (4\sqrt{2}-5)\lambda - 16\sqrt{2}+8\big), \\
z & = \frac{a}{4} \big((1-\sqrt{2})\lambda^3 + (5\sqrt{2}-3)\lambda^2 + (\sqrt{2}-5)\lambda - 20\sqrt{2}+8\big).
\end{align*}
A routine computation gives rise to
\begin{equation}\label{eq:square-x-y-2}
\sum_{uv\in E(G)} (x_u-x_v)^2 = \sum_{uv\in E(\widetilde{G})} (y_u-y_v)^2.
\end{equation}
In view of \eqref{eq:Delta-rho} and \eqref{eq:square-x-y-2} we see
\[
4-\lambda_1(\widetilde{G}) \leq \frac{4 -\lambda}{\|\bm{y}\|_2^2}.
\]
On the other hand, we have 
\begin{equation}\label{eq:appendix-1}
\|\bm{y}\|_2^2 = 1+ 4x^2+2y^2+2z^2 - (3a^2+2b^2+2c^2+d^2) := 1 + g(\lambda) a^2,
\end{equation}
where $g(x)$ is given by 
\begin{align*}
g(x) & = \frac{23-16\sqrt{2}}{16} x^6 + \frac{58\sqrt{2}-89}{8} x^5 + 
\Big(\sqrt{3}-\frac{9\sqrt{2}}{2}-\frac{\sqrt{6}}{2}+\frac{173}{16}\Big) x^4 \\
& ~~~ + \Big(\frac{7\sqrt{6}}{2}-8\sqrt{3}-56\sqrt{2}+\frac{661}{8}\Big) x^3 
+ \Big(\frac{275\sqrt{2}}{4}+12\sqrt{3}-\frac{7\sqrt{6}}{2}-\frac{2221}{16}\Big) x^2 \\
& ~~~ + \Big(111\sqrt{2}+32\sqrt{3}-14\sqrt{6}-163\Big) x - 136\sqrt{2}-64\sqrt{3}+16\sqrt{6}+321.
\end{align*}
It can be checked that $g(0)>0$, $g(3)<0$, $g(3.7)>0$, $g(5)<0$ and $g(32)>0$. Hence $g(x)=0$ 
has at least four positive roots. On the other hand, by Descartes's Rule of Signs, we obtain 
$g(x) =0$ has at most four positive roots. Hence, $g(x)$ has the same sign in interval $(3.7,4)$. 
Since $g(3.7)>0$ and $g(4)=0$, we see $g(\lambda)>0$, and therefore $\|\bm{y}\|_2>1$. This 
yields that $\lambda_1(\widetilde{G}) > \lambda$, a contradiction completing the proof 
of \autoref{lem:forbidden-M2}.
\hfill\ensuremath{\Box}
\par\vspace{3mm}

\noindent {\bfseries Proof of \autoref{lem:forbidden-M3}:}
For a contradiction suppose that $G$ contains $M_3$ with the components of $\bm{x}$ as depicted in 
Fig.\,\ref{fig:M3-M4}\,\subref{subfig:M3}. Set $\lambda:=\lambda_1(G)$. By eigenvalue equations we have
\[
b = (\lambda-3) a,~~
c = \frac{\lambda^2-3\lambda-2}{2} a.
\]

Now we replace $M_3$ by $\widetilde{M}_3$ to obtain $\widetilde{G}$. We define a vector $\bm{y}$
on $V(\widetilde{G})$ such that its components on $\widetilde{M}_3$ are as given in 
Fig.\,\ref{fig:M3-M4}\,\subref{subfig:tilde-M3}, and on the rest of vertices agree with $\bm{x}$. 
Here, $x$, $y$, $z$ are given by 
\begin{align*}
x & = \frac{\sqrt{2}-1}{2} (-\lambda^2+5\lambda+2\sqrt{2}-2) a, \\
y & = \frac{\sqrt{2}-1}{2} (-\lambda^2+(2\sqrt{2}+7)\lambda-10-6\sqrt{2}) a.
\end{align*}
By simple algebra we have
\[
(x-y)^2 = (a-b)^2,~~
(y-c)^2 = 2(b-c)^2,
\]
and therefore $\bm{x}^{\mathrm{T}} L(G) \bm{x} = \bm{y}^{\mathrm{T}} L(\widetilde{G}) \bm{y}$.
In view of \eqref{eq:Delta-rho} we see
\[
4-\lambda_1(\widetilde{G}) \leq \frac{4 -\lambda}{\|\bm{y}\|_2^2}
\]
On the other hand, we have 
\[
\|\bm{y}\|_2^2 = 1+ 4x^2+3y^2- (4a^2+2b^2+c^2) := 1 + h(\lambda) a^2,
\]
where $h(x)$ is given by 
\[
h(x) = \frac{10-7\sqrt{2}}{2} \lambda^4 + (32\sqrt{2}-48) \lambda^3 + 
\frac{306-191\sqrt{2}}{2} \lambda^2 + (103\sqrt{2}-182) \lambda +72-36\sqrt{2}.
\]
One can check that $h(0)>0$, $h(2)<0$, $h(3)>0$, $h(5)<0$ and $h(50)>0$. Noting that $h(4)=0$, 
we have $h(\lambda)>0$, and therefore $\|\bm{y}\|_2>1$. This yields that $\lambda_1(\widetilde{G}) > \lambda$, 
a contradiction completing the proof of \autoref{lem:forbidden-M3}.
\hfill\ensuremath{\Box}
\par\vspace{3mm}

\noindent {\bfseries Proof of \autoref{lem:forbidden-M5}:}
For a contradiction suppose that $G$ contains $M_5$ with the components of $\bm{x}$ as depicted 
in Fig.\,\ref{fig:M5}\,\subref{subfig:M5}. Set $\lambda:=\lambda_1(G)$ for short. By eigenvalue 
equations we see
\begin{align*}
b & = \frac{2(\lambda-3)(\lambda+1) a + 4h}{\lambda^3 - 2\lambda^2 - 5\lambda + 2}, \\[1mm]
c & = \frac{2\big((\lambda-1)a + \lambda h\big)}{\lambda^3 - 2\lambda^2 - 5\lambda + 2}, \\[1mm]
d & = \frac{4a + (\lambda - 2)(\lambda + 1)h}{\lambda^3 - 2\lambda^2 - 5\lambda +2}.
\end{align*}
Now we replace $M_5$ by $\widetilde{M}_5$ to obtain $\widetilde{G}$. We define a vector on 
$V(\widetilde{G})$ such that its components on $\widetilde{M}_5$ are as given in Fig.\,\ref{fig:M5}\,\subref{subfig:tilde-M5},
and on the remaining vertices agree with $\bm{x}$. Here, $x$, $y$, $z$ are given by 
\begin{align*}
x & = \frac{(\lambda - 2)(\lambda + 1)a + 4h}{\lambda^3 - 2\lambda^2 - 5\lambda +2}, \\[1mm]
y & = \frac{2\big(\lambda a + (\lambda-1)h\big)}{\lambda^3 - 2\lambda^2 - 5\lambda + 2}, \\[1mm]
z & = \frac{4a + 2(\lambda-3)(\lambda+1) h}{\lambda^3 - 2\lambda^2 - 5\lambda + 2}.
\end{align*}
By \eqref{eq:Delta-rho} and tedious calculation, we obtain that $4-\lambda_1(\widetilde{G}) < 4 - \lambda$, 
a contradiction completing the proof of \autoref{lem:forbidden-M5}.
\hfill\ensuremath{\Box}
\par\vspace{3mm}

\noindent {\bfseries Proof of \autoref{lem:forbidden-M6}:}
For a contradiction suppose that $G$ contains $M_6$ as an induced subgraph.
We can assume that for the two leftmost vertices, all their neighbors on their
left have the same component, denoted by $x$. Set $\lambda:=\lambda_1(G)$ for short. 
By eigenvalue equations, we have
\begin{align*}
a & = \frac{2(\lambda^3 - \lambda^2 - 4\lambda) x + 2(\lambda + 2) y}{\lambda^4 - 2\lambda^3 - 5\lambda^2 + 6\lambda + 4}, \\[1mm]
b & = \frac{2(2\lambda^2 - 2\lambda - 4) x + 2(\lambda^2 + \lambda -2) y}{\lambda^4 - 2\lambda^3 - 5\lambda^2 + 6\lambda + 4}, \\[1mm]
c & = \frac{4\lambda x + (\lambda^3 + \lambda^2 - 4\lambda - 4) y}{\lambda^4 - 2\lambda^3 - 5\lambda^2 + 6\lambda + 4}, \\[1mm]
d & = \frac{8x + 2(\lambda^3 - \lambda^2 - 4\lambda + 2) y}{\lambda^4 - 2\lambda^3 - 5\lambda^2 + 6\lambda + 4}.
\end{align*}
Now we replace $M_6$ by $\widetilde{M}_6$ to obtain $\widetilde{G}$. We define a vector on 
$V(\widetilde{G})$ such that its components on $\widetilde{M}_6$ are as given in Fig.\,\ref{fig:M6-M7}\,\subref{subfig:tilde-M6},
and on the rest of vertices agree with $\bm{x}$. Here, $\alpha_1$, $\alpha_2$, $\alpha_3$ and $\alpha_4$ are given by 
\begin{align*}
\alpha_1 & = \frac{2(\lambda^3 - \lambda^2 - 4\lambda + 2) x + 8y}{\lambda^4 - 2\lambda^3 - 5\lambda^2 + 6\lambda + 4}, \\[1mm]
\alpha_2 & = \frac{(\lambda^3 + \lambda^2 - 4\lambda - 4) x + 4\lambda y}{\lambda^4 - 2\lambda^3 - 5\lambda^2 + 6\lambda + 4}, \\[1mm]
\alpha_3 & = \frac{2(\lambda^2 + \lambda -2) x + 2(2\lambda^2 - 2\lambda - 4) y}{\lambda^4 - 2\lambda^3 - 5\lambda^2 + 6\lambda + 4}, \\[1mm]
\alpha_4 & = \frac{2(\lambda + 2) x + 2(\lambda^3 - \lambda^2 - 4\lambda) y}{\lambda^4 - 2\lambda^3 - 5\lambda^2 + 6\lambda + 4}.
\end{align*}
In view of \eqref{eq:Delta-rho}, we obtain that $4-\lambda_1(\widetilde{G}) < 4 - \lambda$ after tedious calculation, 
a contradiction completing the proof of \autoref{lem:forbidden-M6}.
\hfill\ensuremath{\Box}
\par\vspace{3mm}

\noindent {\bfseries Proof of \autoref{lem:forbidden-M7}:}
For a contradiction suppose that $G$ contains $M_7$ with the components of $\bm{x}$ as depicted 
in Fig.\,\ref{fig:M6-M7}\,\subref{subfig:M7}. Set $\lambda:=\lambda_1(G)$ for short. By eigenvalue 
equations we see
\begin{align*}
b & = \frac{2(\lambda^2 - \lambda - 2) a + 2(\lambda + 2) h}{\lambda (\lambda^2 - \lambda - 4)}, \\[1mm]
c & = \frac{2a + (\lambda + 2) h}{\lambda^2 - \lambda - 4}, \\[1mm]
d & = \frac{4a + 2(\lambda^2 - 2) h}{\lambda (\lambda^2 - \lambda - 4)}. 
\end{align*}
Now we replace $M_7$ by $\widetilde{M}_7$ to obtain $\widetilde{G}$, and define a vector on 
$V(\widetilde{G})$ such that its components on $\widetilde{M}_7$ are as given in Fig.\,\ref{fig:M6-M7}\,\subref{subfig:tilde-M7}
and on the rest of vertices agree with $\bm{x}$. Here, $x$, $y$ and $z$ are given by 
\begin{align*}
x & = \frac{2(\lambda^2 - 2) a + 4h}{\lambda (\lambda^2 - \lambda - 4)}, \\[1mm]
y & = \frac{(\lambda + 2) a + 2h}{\lambda^2 - \lambda - 4}, \\[1mm]
z & = \frac{2(\lambda + 2) a + 2(\lambda^2 - \lambda - 2) h}{\lambda (\lambda^2 - \lambda - 4)}. 
\end{align*}
By \eqref{eq:Delta-rho} and tedious calculation, we obtain that $4-\lambda_1(\widetilde{G}) < 4 - \lambda$, 
a contradiction completing the proof of \autoref{lem:forbidden-M7}.
\hfill\ensuremath{\Box}


\begin{thebibliography}{99}

\bibitem{AbdiGhorbaniImrich2021} M. Abdi, E. Ghorbani, W. Imrich, Regular graphs with
minimum spectral gap, {\it European J. Combin.}, {\bfseries 95} (2021) 103328.

\bibitem{AbdiGhorbani2020} M. Abdi, E. Ghorbani, Quartic graphs with minimum spectral gap, 
\href{https://arxiv.org/abs/2008.03144v1}{arXiv: 2008.03144}, 2020.

\bibitem{AlonSudakov2000} N. Alon, B. Sudakov, Bipartite subgraphs and the smallest eigenvalue, 
{\it Combin. Probab. Comput.} {\bfseries 9}\,(1) (2000) 1--12.

\bibitem{BondyMurty2008} J.A. Bondy, U.S.R. Murty, Graph theory, Springer, New York, 2008.

\bibitem{BrandGuiduliImrich2007} C. Brand, B. Guiduli, W. Imrich, Characterization of trivalent graphs with minimal eigenvalue gap,
{\it Croat. Chem. Acta}, {\bfseries 80}\,(2) (2007) 193--201. 

\bibitem{BrouwerHaemers2011} A.E. Brouwer, W.H. Haemers, Spectra of Graphs, Springer, Berlin, 2011.

\bibitem{ChenHou2014} X. Chen, Y. Hou, The extreme eigenvalues and maximum degree of $k$-connected irregular graphs,
{\it Linear Algebra Appl.}, {\bfseries 463} (2014) 33--44.

\bibitem{Cioaba2007} S.M. Cioab\u a, The spectral radius and the maximum degree of irregular graphs, 
{\it Electron. J. Comb.} {\bfseries 14} (2007), \#\,R38.

\bibitem{CioabaGregoryNikiforov2007} S.M. Cioab\u a, D.A. Gregory, V. Nikiforov, Extreme eigenvalues of nonregular graphs,
{\it J. Comb. Theory Ser. B} {\bfseries 97} (2007) 483--486.

\bibitem{CvetkovicRowlinson2009} D. Cvetkovi\'c, P. Rowlinson, S. Simi\'c, An Introduction to the
Theory of Graph Spectra, Cambridge University Press, 2009.

\bibitem{FengZhang2021} R. Feng, W. Zhang, A note on spectral radius and maximum degree of irregular graphs, 
{\it Graphs and Combinatorics}, 2021.

\bibitem{Guiduli1997} B. Guiduli, The structure of trivalent graphs with minimal 
eigenvalue gap, {\it J. Algebraic Combin.}, {\bfseries 6} (1997) 321--329.

\bibitem{LiuShenWang2007} B. Liu, J. Shen, X. Wang, On the largest eigenvalue of non-regular graphs,
{\it J. Combin. Theory Ser. B}, {\bfseries 97} (2007) 1010--1018.

\bibitem{LiuLi2008} B. Liu, G. Li, A note on the largest eigenvalue of non-regular graphs,
{\it Electron. J. Linear Algebra}, {\bfseries 17} (2008) 54--61.

\bibitem{LiuHuangYou2009} B. Liu, Y. Huang, Z. You, On $\lambda_1$-extremal non-regular graphs,
{\it Electron. J. Linear Algebra}, {\bfseries 18} (2009) 735--744.

\bibitem{LiuLiuYou2009} B. Liu, M. Liu, Z. You, Erratum to `A note on the largest eigenvalue of non-regular graphs',
{\it Electron. J. Linear Algebra}, {\bfseries 18} (2009) 64--68. 

\bibitem{NingLiLu2013} W. Ning, H. Li, M. Lu, On the signless Laplacian spectral radius of irregular graphs,
{\it Linear Algebra Appl.}, {\bfseries 438} (2013) 2280--2288.

\bibitem{NingLuWangJiang2018} W. Ning, M. Lu, K. Wang, D. Jiang, The signless Laplacian spectral radius of $k$-connected irregular graphs,
{\it Linear Algebra Appl.}, {\bfseries 553} (2018) 117--128.

\bibitem{Shi2009} L. Shi, The spectral radius of irregular graphs,
{\it Linear Algebra Appl.}, {\bfseries 431} (2009) 189--196.

\bibitem{ShiuHuangSun2017} W. Shiu, P. Huang, P. Sun, The signless Laplacian spectral radius of $k$-connected irregular graphs,
{\it Linear Multilinear Algebra}, {\bfseries 65}\,(4) (2017) 830--839.

\bibitem{Stevanovic2004} D. Stevanovi\'c, The largest eigenvalue of nonregular graphs, 
{\it J. Combin. Theory Ser. B} {\bfseries 91} (2004) 143--146.

\bibitem{Wang2004} X. Wang, A simple proof of descartes's rule of signs, 
{\it Amer. Math. Monthly}, {\bfseries 111}\,(6) (2004) 525--526.

\bibitem{Zhang2021} W. Zhang, A new result on spectral radius and maximum degree of irregular graphs,
{\it Graphs Combin.}, {\bfseries 37} (2021) 1103--1119.

\bibitem{Zhang2005} X. Zhang, Eigenvectors and eigenvalues of nonregular graphs, 
{\it Linear Algebra Appl.} {\bfseries 409} (2005) 79--86.
\end{thebibliography}
\end{document}